\documentclass[reqno]{amsart}
\usepackage{amsmath,amsrefs,color}
\usepackage{geometry}
\usepackage{amssymb}
\usepackage{float}
\usepackage{graphicx,picture,%showkeys,
cleveref}
\usepackage[colorinlistoftodos]{todonotes}
\numberwithin{equation}{section}
\usepackage{multirow}
    \usepackage{amsmath}
    \usepackage{array}
\usepackage{makecell}
\usepackage{tabularx}

\renewcommand{\[}{\left[}

\def\neweq#1{\begin{equation}\label{#1}}
\def\endeq{\end{equation}}

\newtheorem{theorem}{Theorem}[section]

\newtheorem{proposition}[theorem]{Proposition}
\newtheorem{lemma}[theorem]{Lemma}
\newtheorem{cor}[theorem]{Corollary}

\newtheorem{rem}[theorem]{Remark}

\newcolumntype{E}{>{$\displaystyle}c<{$}} % E for Equation, centered display math
\newcolumntype{L}{>{\raggedright\arraybackslash}p{3cm}}

\begin{document}

\title[]{Elliptic equations with Hardy potentials and gradient-dependent absorption: \\
existence and refined asymptotics}

\author{Florica C. C\^irstea}

\address{Florica C. C\^irstea, School of Mathematics and Statistics, The University of Sydney, NSW 2006, Australia}
\email{florica.cirstea@sydney.edu.au}

\author{Maria F\u arc\u a\c seanu}

\address{Maria F\u arc\u a\c seanu, School of Mathematics and Statistics, The University of Sydney, NSW 2006, Australia, and \newline
"Gheorghe Mihoc - Caius Iacob" Institute of Mathematical Statistics and Applied Mathematics of the Romanian Academy, 050711 Bucharest, Romania}
\email{farcaseanu.maria@yahoo.com}

\date{}

\begin{abstract} 
Under sharp conditions, we prove the existence and refined asymptotic behaviour near zero (resp., at infinity) for all positive radial solutions to elliptic equations of the form %of nonlinear elliptic equations with singular potentials and gradient-dependent nonlinearities of the form 
\begin{equation}\label{eq11} \tag{*}
  \mathbb L_{\rho,\lambda}(u)=\Delta u+ (2-N-2\rho)\, \frac{x\cdot \nabla u}{|x|^2}+ \frac{\lambda}{|x|^2}u=|x|^{\theta}\,u^q\, |\nabla u|^m\quad \mbox{in } \Omega\setminus\{0\},
\end{equation}
where $\Omega=B_R(0)$ (resp., $\Omega=\mathbb R^{N}\setminus B_{1/R}(0)$) for suitable $R>0$ and 
$N\geq 2$. The dynamics of such solutions is very rich and complicated since we let  
$\rho, \lambda,\theta\in \mathbb R$ be arbitrary, with $ m>0$, $q\geq 0$ and $\kappa:=m+q-1>0$. 
To our knowledge, this is the first study of the local properties of the positive solutions of \eqref{eq11} with arbitrary $m>0$ and $\lambda\not=0$.   
 We identify all profiles near zero (and at infinity via a modified Kelvin transform) under optimal conditions, depending on how $\Theta:=(\theta+2-m)/\kappa$ relates to $0$ or the roots 
$\Theta_\pm$ of $t^2+2\rho t+\lambda$ when $\lambda\leq \rho^2$. For each profile, we advance new methods that unearth the higher order terms in the asymptotic expansion, illuminating the influence of the operator $\mathbb L_{\rho,\lambda}$ and the absorption not visible just from the leading term. Our refined asymptotics is new even when $m=0$ and $\rho=(2-N)/2$.

We highlight two new asymptotic profiles near zero due to
the competition between the Hardy potential with $\lambda>0$ and the gradient-dependent absorption term: (i) a blow-up profile 
$\left[ \lambda \left( \frac{\kappa}{m} \right)^m \right]^{\frac{1}{\kappa}}
| \log |x||^{\frac{m}{\kappa}}
$ when $\Theta=0$  
and (ii) a bounded profile when $\Theta<0$. Any radial solution of \eqref{eq11} with $\lim_{r\to 0^+} u(r)=\gamma\in \mathbb R_+$ satisfies 
$(P_\pm)$ $u(r)=\gamma\pm \lambda^{1/m} \gamma^{1-\kappa/m} (1/\sigma)\,
r^\sigma(1+o(1))$ as $r\to 0^+$, where $\sigma=-\kappa \Theta/m$. Moreover,  for any $\gamma\in \mathbb R_+$, there exists $R>0$ such that \eqref{eq11} has a radial solution (and infinitely many) satisfying $(P_-)$ (and $(P_+)$).  
We discover three different higher order asymptotic corrections, each unveiling the influence of the Laplacian and the second term in $\mathbb L_{\rho,\lambda}(\cdot)$.

  \end{abstract}
\maketitle

\tableofcontents

\section{Introduction and main results} \label{Sec1}

The classification of local and global solutions and their symmetry properties are central themes in the study of nonlinear elliptic and parabolic equations (see, e.g., \cite{Soup} and \cite{bov}). The last decades have brought remarkable achievements in the study of isolated singularities, see V\'eron \cite{bov}. A key question, which has generated extensive research, is 
deciphering how gradient-dependent lower order terms affect the local and global classification of singularities. 
Answering this question will also shed light on the conditions leading to Liouville type results. 
For recent contributions in these directions, see e.g., \cites{BiGV,BVGV,BGV0,BGV3,BGV1,CCM,BNV1,BVer1,CC2015,CC2,FPS,FPS2}.

In this paper, 
we give the existence and refined asymptotics near zero (resp., at infinity) of positive radial solutions for nonlinear elliptic equations with Hardy potentials and gradient-dependent nonlinearities of the form
\begin{equation}\label{eq1}
  \mathbb L_{\rho,\lambda}(u):= \Delta u+ (2-N-2\rho)\, \frac{x\cdot \nabla u}{|x|^2}+\frac{\lambda}{|x|^2}u=|x|^{\theta}\,u^q\, |\nabla u|^m\quad \mbox{in } \Omega\setminus\{0\},
\end{equation}
where $\Omega=B_R(0)$, the ball in $\mathbb R^N$ centered at zero of radius $R>0$ (resp., $\Omega=\mathbb R^N\setminus B_{1/R}(0)$) and $N\geq 2$.   
 We assume throughout that $\rho,\lambda,\theta,m,$ and $q $ are real parameters satisfying
\begin{equation}\label{cond1}
 m>0,\quad q\geq 0\quad \mbox{and}\quad \kappa:=m+q-1>0.
\end{equation}

The main feature of our study is the introduction of the gradient factor $|\nabla u|^m$ in the absorption term of \eqref{eq1}, allowing any $m>0$, in the presence of the Hardy potential. To our best knowledge, this is the first work dealing with the existence and local behaviour of the positive solutions of \eqref{eq1} near zero (resp., at infinity). 

An important innovation of our study is the introduction of the second-term in the left-hand side of \eqref{eq1}, where $\rho\in \mathbb R$ is arbitrary, along with the weight $|x|^\theta$ ($\theta\in \mathbb R$) in the right-hand side of \eqref{eq1}. 
Then, via the
modified Kelvin transform, $\widetilde u(x)=u(x/|x|^2)$ for $|x|\not=0$, 
one can obtain the existence and asymptotics at infinity for the positive radial solutions of \eqref{eq1} in
$\mathbb R^N\setminus B_{1/R}(0)$ from our results (Theorems~\ref{greu1}--\ref{zzz0}) applied to an equation of the same type as \eqref{eq1}, namely,
$$ \mathbb L_{-\rho,\lambda}(u)=|x|^{2m-4-\theta} u^q |\nabla u|^m \quad \mbox{in } B_{R}(0)\setminus \{0\}. 
$$
%\eqref{eq1} in $B_{R}(0)\setminus \{0\}$, where $\rho$ and $\theta$ are replaced by $-\rho$ and $2m-4-\theta$, respectively. 
The above modified Kelvin transform was proved very useful in \cite{CCM} for a related but different class of nonlinear problems with gradient-dependent potentials.
We point out that unlike \cite{MF1} where $m=0$, the standard Kelvin transform does not work in our context because of the introduction of the gradient factor $|\nabla u|^m$ ($m>0$) in \eqref{eq1}.

Our main results reveal the existence of positive (radial) solutions with all the possible profiles near zero when $\Omega=B_{R}(0)$, respectively at infinity 
if $\Omega=\mathbb R^N\setminus B_{1/R}(0)$. In a sequel to this paper, we will confirm that the profiles identified in this work (see Tables~\ref{tabel1}--\ref{tabel3}) are exhaustive. 

A distinctive feature of our paper is that we provide a very detailed asymptotic description of the positive radial solutions of \eqref{eq1} near zero (and, hence, at $\infty$), which shed new light even in particular cases studied in the literature \cites{Cmem,MF1,CC2015} for $\rho=(2-N)/2$ and $m\in [0,2)$. Even in these works, the local behaviour near zero of the positive solutions proved to be varied and rich, according to various ranges of the parameters. 
Our problem \eqref{eq1} has five parameters restricted only by \eqref{cond1}, their interplay 
leading to eleven different asymptotic profiles near zero. 

It is interesting and challenging to reveal the dominant player in the various competitions between the terms in the operator $\mathbb L_{\rho,\lambda}(\cdot)$ and the gradient-dependent absorption nonlinearity. We develop new and robust methods for identifying not only the dominant behaviour of the positive radial solutions of \eqref{eq1} near zero  but also higher order terms in the asymptotic expansion.

We note that the existence methods in \cite{CC2015} for $\Delta u=u^q |\nabla u|^m$  used the sub-super-solutions method and gradient estimates, which required the assumption  $m\in (0,2)$.  The existence of positive solutions and their behaviour near zero was left open in the range $m\geq 2$. 
Here, we close this gap for any $m>0$ for radially symmetric solutions, treating a significantly more general class of problems such as \eqref{eq1}, involving Hardy potentials $\lambda u/|x|^2$ for any $\lambda\in \mathbb R$.

Equation \eqref{eq1} with
$\Omega=\mathbb R^N$ is invariant under the scaling
transformation $T_j[u]$ ($j>0$), where 
\begin{equation} \label{scale}
T_j[u](x)=j^\Theta\, u(jx) \quad  \mbox{with } \Theta:=\frac{\theta+2-m}{\kappa}.
\end{equation}

\subsection{New profiles due to the Hardy potential and gradient-dependent absorption} \label{subsec1}

In our framework \eqref{cond1}, we discover new asymptotic profiles of the positive radial solutions of \eqref{eq1} near zero as shown in Table~\ref{tabel1} below. 
The first two profiles, which have no analogues for $m=0$, are generated by the competition between the Hardy potential $\lambda u/|x|^2$ with $\lambda>0$ and the absorption term featuring the gradient factor $|\nabla u|^m$ with $m>0$, see Theorems~\ref{greu1} and \ref{tieo10}. The last two profiles in Table~\ref{tabel1} give the optimal ranges for which \eqref{eq1} has positive radial non-constant solutions converging to a positive constant as $|x|\to 0$, see Theorem~\ref{profff1} and \ref{equa1}.

\begin{table}[H]
\centering
\caption{New asymptotic profiles of solutions near zero}
\label{tabel1}
\small
\begin{tabular}{|>{\raggedright\arraybackslash}p{7cm}|p{4cm}|p{2cm}|}
\hline
Sharp range for the profile & Asymptotic profile & Theorems \\
\hline

\makecell[l]{
$\Theta<0, \lambda>0, \rho\in \mathbb{R}, \displaystyle\sigma:=-\frac{\kappa\Theta}{m}$
}
&
\begin{tabular*}{4cm}{@{}l@{\extracolsep{\fill}}r@{}}
$\displaystyle\lim_{|x|\rightarrow 0}\frac{u(x)-\gamma}{|x|^\sigma}=\pm\frac{\lambda^{\frac{1}{m}}}{\sigma \gamma^{\frac{\kappa}{m}-1}} $\\
$ \mbox{where } \gamma\in (0,\infty) \ \mbox{is arbitrary}$ %& \AddLabel{consta}
\end{tabular*}
& \makecell[l]{Theorem \ref{greu1}} \\ \hline

\makecell[l]{
$E_0(|x|):=\left[ \lambda \left( \frac{\kappa}{m} \right)^m \right]^{\frac{1}{\kappa}}
\left( \log \frac{1}{|x|} \right)^{\frac{m}{\kappa}}$\\
where $\Theta=0, \lambda>0, \rho\in \mathbb{R}$
}
&
\begin{tabular*}{4cm}{@{}l@{\extracolsep{\fill}}r@{}}
$\displaystyle \lim_{|x|\to 0}\frac{u(x)}{E_0(|x|)}=1$ %& \AddLabel{tuna}
\end{tabular*}
& Theorem \ref{tieo10} \\ \hline

\makecell[l]{
%$\Theta<0, \lambda>0, \rho\in \mathbb{R}$\\
$\lambda=0, \beta:=\kappa\,\Theta+2\rho\left(m-1\right)\neq 0$
}
&
\begin{tabular*}{4cm}{@{}l@{\extracolsep{\fill}}r@{}}
$\displaystyle \lim_{|x|\to 0}u(x)=\gamma\in (0,\infty)$ %& \AddLabel{consta}
\end{tabular*}
& \makecell[l]{Theorem \ref{profff1}} \\ \hline

\makecell[l]{
%$\Theta<0, \lambda>0, \rho\in \mathbb{R}$\\
$\lambda=0, \beta:=\kappa\,\Theta+2\rho\left(m-1\right)=0$
}
&
\begin{tabular*}{4cm}{@{}l@{\extracolsep{\fill}}r@{}}
$\displaystyle \lim_{|x|\to 0}u(x)=\gamma\in (0,\infty)$ %& \AddLabel{consta}
\end{tabular*}
& \makecell[l]{Theorem \ref{equa1}} \\ \hline

\end{tabular}
\end{table}

We next introduce the main results corresponding to each of the profiles in Table~\ref{tabel1}.

For the top profile, when $\Theta<0$ and $\lambda>0$, we obtain that for every $\gamma\in (0,\infty)$, there exists $R>0$ small such that \eqref{eq1} in $B_R(0)\setminus \{0\}$
has infinitely many positive solutions satisfying 
\begin{equation} \label{consta}  \lim_{|x|\to 0}u(x)=\gamma\in (0,\infty). \end{equation}
Such solutions arise from the competition between the Hardy potential and the absorption term, which besides the dominant term, 
reveals the next-order term as $|x|\to 0$, namely,  
\begin{equation} \label{hcal1} \tag{$P_{\pm}$} 
u(r)=\gamma \pm \frac{\lambda^{\frac{1}{m}}}{\sigma \gamma^{\frac{\kappa}{m}-1}} r^\sigma (1+o(1)), \quad \mbox{where } \sigma:=-\frac{\kappa \Theta}{m}>0. 
\end{equation}
%\mathcal H_\pm(|x|)=\gamma\pm  \frac{\lambda^{\frac{1}{m}}}{\sigma \gamma^{\frac{\kappa}{m}-1}} |x|^\sigma .
This surprising phenomenon has distinct existence findings for the solutions of \eqref{eq1} satisfying $(P_+)$ compared with those satisfying $(P_-)$ (see Theorem~\ref{greu1} (II)).  Yet, nothing in the asymptotics in \eqref{hcal1} indicates the influence of the Laplacian and the second term in the left-hand side of \eqref{eq1}. To identify such influence, we look at the next terms in the asymptotic expansion of $u$ near zero. To this aim, we define 
\begin{equation} \label{d1d2b1b2}
\begin{aligned}
&d_1=\sigma\left(\sigma-2\rho\right) +\lambda\left(1-q\right), \quad 
d_2=\left(2-m\right)d_1-2\lambda mq +\left(3\sigma-2\rho\right)\sigma, \\
&  \mathfrak b_1=\left( \frac{\lambda^{\frac{1}{m}}}{\sigma \gamma^{\frac{\kappa}{m}}} \right)^2  \frac{d_1}{2\lambda m}\quad \mbox{and}\quad
 \mathfrak b_2=\left( \frac{\lambda^{\frac{1}{m}}}{\sigma \gamma^{\frac{\kappa}{m}}} \right)^3\frac{d_1d_2+\lambda^2 mq \left(1-q\right)}{6\lambda^2m^2}.
\end{aligned}
\end{equation}

Our analysis 
unveils the third and fourth order terms of power-type, see \eqref{firum} when $\mathfrak b_1^2+\mathfrak b_2^2\not=0$. Otherwise, we have  \eqref{iotaa} or (only for $(P_+)$) \eqref{miror} holds, revealing in the latter situation an exponential decay in the higher order asymptotics.

Our main result on the first profile in Table~\ref{tabel1} is as follows.

\begin{theorem} [Constant Profile, $\Theta<0<\lambda$] \label{greu1}
Let \eqref{cond1} hold, $\rho\in \mathbb{R}$, $\Theta<0$ and $\lambda>0$. 
\begin{itemize}
 \item[(I)] 
 Let $u$ be any positive radial solution of \eqref{eq1} in $B_{R}(0)\setminus \{0\}$ with $R>0$ satisfying \eqref{consta}. 
Then, $u$ satisfies either $(P_+)$ or $(P_-)$ as $r\to 0^+$. 
%\begin{equation} \label{frum} \tag{$P_{\pm}$} 
%u(r)=\gamma \left( 1\pm \frac{\lambda^{\frac{1}{m}}}{\sigma \gamma^{\frac{\kappa}{m}}} r^\sigma (1+o(1)) \right), \quad \mbox{where } \sigma:=-\frac{\kappa \Theta}{m}>0. 
%\end{equation}
%We define $ d_1=\sigma\left(\sigma-2\rho\right) +\lambda\left(1-q\right)$ and $d_2=\left(2-m\right)d_1-2\lambda mq +\left(3\sigma-2\rho\right)\sigma$,  
%$$  \mathfrak b_1=\left( \frac{\lambda^{\frac{1}{m}}}{\sigma \gamma^{\frac{\kappa}{m}}} \right)^2  \frac{d_1}{2\lambda m}\quad \mbox{and}\quad
% \mathfrak b_2=\left( \frac{\lambda^{\frac{1}{m}}}{\sigma \gamma^{\frac{\kappa}{m}}} \right)^3\frac{d_1d_2+\lambda^2 mq \left(1-q\right)}{6\lambda^2m^2}.
%$$
\begin{itemize} 
\item[(a)]
If $\mathfrak b_1^2+\mathfrak b_2^2\not=0$, then 
$u$ satisfies as $r\to 0^+$
\begin{equation} \label{firum} 
u(r)=\gamma \left( 1\pm \frac{\lambda^{\frac{1}{m}}}{\sigma \gamma^{\frac{\kappa}{m}}} r^\sigma 
+\mathfrak b_1 r^{2\sigma}\pm \mathfrak b_2 r^{3\sigma}
(1+o(1)) \right).
\end{equation}
\item[(b)] If $\mathfrak b_1^2+\mathfrak b_2^2=0$ (that is, $d_1=0$ and $q\left(1-q\right)=0$) and $u$ satisfies $(P_\pm)$, then  
\begin{equation} \label{iotaa} u(r)= \gamma \left( 1\pm \frac{\lambda^{\frac{1}{m}}}{\sigma \gamma^{\frac{\kappa}{m}}} r^\sigma\right) \quad \mbox{for every } r>0\ \mbox{small} \end{equation}
or (only for $(P_+)$) there exists a constant $C\in \mathbb R\setminus \{0\}$ such that as $r\to 0^+$
\begin{equation} \label{miror} u(r)=\left\{ 
\begin{aligned}
& \gamma+ \frac{\lambda^{\frac{1}{m}}}{\sigma} r^\sigma +C r^{2\sigma+\frac{\lambda m}{\sigma}} 
\exp\left(-\frac{m\gamma \lambda^{1-\frac{1}{m}}}{\sigma} r^{-\sigma}\right)(1+o(1)) && \mbox{if }q=1,&\\
& \gamma+ \frac{(\lambda \gamma)^{\frac{1}{m}}}{\sigma} r^\sigma +C r^{2\rho+\sigma} \exp\left(-\frac{m}{\sigma}\left( \lambda \gamma\right)^{\frac{m-1}{m}} r^{-\sigma}\right) (1+o(1)) && \mbox{if }q=0.&
\end{aligned}
\right. 
\end{equation}
\end{itemize}
\item[(II)] For every $\gamma\in \mathbb R_+$, there exists $R>0$ such that \eqref{eq1} in $B_R(0)\setminus \{0\}$ has a positive radial solution satisfying {\rm ($P_-$)} and infinitely many positive radial solutions satisfying {\rm ($P_+$)}. 
\end{itemize}
\end{theorem}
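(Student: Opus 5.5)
Write the radial equation as $u''+\frac{1-2\rho}{r}u'+\frac{\lambda}{r^2}u=r^\theta u^q|u'|^m$, or equivalently $(r^{1-2\rho}u')'=r^{1-2\rho}\bigl(r^\theta u^q|u'|^m-\lambda u/r^2\bigr)$. For part (I), let $u(r)\to\gamma>0$. The heuristic is that, near zero, the balance is between the Hardy term and the absorption, $r^\theta \gamma^q|u'|^m\approx \lambda\gamma r^{-2}$. This gives $|u'|\approx(\lambda\gamma^{1-q})^{1/m}r^{-(\theta+2)/m}=\lambda^{1/m}\gamma^{1-\kappa/m}r^{\sigma-1}$, because $-(\theta+2)/m+1=-\kappa\Theta/m=\sigma$. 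Integrating yields $(P_\pm)$.

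To make this rigorous, the first step is to show that $u'$ has eventually constant sign near $0$ and that $r^{1-\sigma}|u'|\to\lambda^{1/m}\gamma^{1-\kappa/m}$. Substitute $t=\log(1/r)$ and $w(t)=r^{1-\sigma}|u'(r)|$. The equation becomes a first-order nonautonomous equation of the form
$$
\mp r^{\sigma}\bigl(\dots\bigr)=r^{\sigma}\bigl[u^q w^m-\lambda u\bigr]\cdot(\text{positive})+\text{lower order},
$$
where the terms $r u''$ and $r u'$ are of size $r^{\sigma}w$, while the right-hand side, after multiplying by $r^{2}$, is $r^{0}(u^qw^m-\lambda u)$. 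So $u^q w^m-\lambda u$ must tend to zero. Otherwise $(r^{1-2\rho}u')'$ would have a fixed sign of size $r^{-1-2\rho}$, which is incompatible with $u\to\gamma$ finite. The sign of $u'$ must be eventually fixed: at a zero of $u'$ we have $u''=-\lambda u/r^2<0$, so $u'$ can change sign only from $+$ to $-$, hence at most once. Then a comparison/ODE lemma for $w$ forces $w^m\to\lambda\gamma^{1-q}$. Integrating $u'=\pm w r^{\sigma-1}$ from $0$ gives $(P_\pm)$.

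For (a) and (b), set $u=\gamma(1\pm a r^\sigma+v)$ with $a=\lambda^{1/m}/(\sigma\gamma^{\kappa/m})$, and linearize. In the variable $s=r^\sigma$, the equation for $u'$ is a first-order equation $\mathcal F(s,u,u')=0$ whose solution with the prescribed limit is essentially a formal power series in $s$ determined by $(\gamma,\pm)$. Expanding $u'^m$ and $u^q$ to third order and matching coefficients of $s^2$ and $s^3$ produces $\mathfrak b_1$ and $\mathfrak b_2$; this is lengthy but routine algebra. The remainder $z=v-\mathfrak b_1 s^2\mp\mathfrak b_2 s^3$ satisfies a linear equation $z''+\dots\pm c\,r^{-1-\sigma}z'\approx O(s^4/r^2)$, with $c=m\gamma\lambda^{1-1/m}$ (up to $\gamma$-factors). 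Its homogeneous solutions are constants and $\exp(\mp\frac{c}{\sigma}r^{-\sigma})$ times powers. For $(P_-)$ the exponential blows up, which rules it out; for $(P_+)$ it decays, which is exactly the freedom seen in \eqref{miror}. A variation-of-constants estimate then gives \eqref{firum}. In case (b) the explicit function in \eqref{iotaa} is an exact solution: check that $d_1=0$ and $q(1-q)=0$ make all residual terms vanish. The difference satisfies the homogeneous linearization exactly, up to a nonlinear correction, which yields \eqref{iotaa} or the exponential correction in \eqref{miror}, with the exponent of the power prefactor read off from the first-order coefficient.

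For (II), reverse the construction. For $(P_-)$, solve the fixed-point problem $u'=-\bigl(\lambda u^{1-q}r^{-\theta-2}+r^{-\theta}(\text{correction}\ u'',u')\bigr)^{1/m}$, formulated as an integral equation near $0$ with data $u(0)=\gamma$. Use a contraction in a weighted space $\{|u-\gamma\mp a\gamma r^\sigma|\le Cr^{2\sigma}\}$ for small $R$, which produces the unique solution. For $(P_+)$, the one-parameter family of decaying exponential modes yields infinitely many solutions: prescribe the value $u(R)$ or $u'(R)$ within an interval and shoot toward $0$, using the attractivity of the exponential mode to show that each such shot converges to a $(P_+)$ solution.

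The main obstacle is the first step of (I): rigorously proving $w^m\to\lambda\gamma^{1-q}$ without oscillation. This requires a careful Lyapunov/monotonicity argument on the first-order reduced system, because the terms involving $u''$ are only formally smaller.
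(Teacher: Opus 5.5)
Your heuristics are sound. The balance $r^\theta\gamma^q|u'|^m\approx\lambda\gamma r^{-2}$ is right, as is the observation that zeros of $u'$ are strict maxima, so $u'$ eventually has one sign. The profile $\gamma(1\pm a r^\sigma)$ is indeed exact when $d_1=0=q(1-q)$, the fast-mode rate $c=m\lambda^{1-1/m}\gamma^{\kappa/m}$ is correct, and formal matching does give $\mathfrak b_1=a^2d_1/(2\lambda m)$.

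The gap is in the step everything else rests on. The identity $\pm r^{\sigma}\bigl(rw'+(\sigma-2\rho)w\bigr)=u^qw^m-\lambda u$ tells you nothing about the right side, since $rw'$ is not controlled. Your integration argument via $(r^{1-2\rho}u')'=r^{-1-2\rho}(u^qw^m-\lambda u)$ only rules out $u^qw^m-\lambda u$ staying $\ge c>0$ (or $\le -c$) on a whole interval $(0,r_1)$. It says nothing about oscillation, which is exactly what you defer to an unspecified ``comparison/ODE lemma''.

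The paper's key device is the scale-invariant reduction $t=r^{-\sigma}u^{\kappa/m}$, $X(t)=t\,ru'/u-\mathcal C$, $\mathcal C=\pm\lambda^{1/m}$. This turns \eqref{ez} into the scalar first-order equation \eqref{1xeq} plus a quadrature for $r$. $X$ is then eventually monotone, because along any sequence of critical points $t_n\to\infty$ the sign of $X''(t_n)$ is forced; the case $\sigma=2\rho$, $q=1$ is handled by $X'=0\Leftrightarrow X=0$. The limit must then be $0$.

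Within your own variables you could instead set $\phi=u^{q-1}w^m/\lambda$. The factor $r^{-\sigma}$ in the $w$-equation makes the band $\{|\phi-1|\le\varepsilon\}$ invariant as $r\downarrow0$ when $u'>0$, and the two components of its complement invariant when $u'<0$. Combined with your integration argument this yields $\phi\to1$. But some such argument must actually be given.

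The later steps are also only formal.
\begin{itemize}
\item For (a)--(b) you need an actual bootstrap for the nonlinear, singularly perturbed remainder equation. In (b) you also need a uniqueness statement for the nonlinear problem: under $(P_-)$ the difference from the explicit profile vanishes identically, and under $(P_+)$ it is either $\equiv0$ or carries a nonzero fast-mode coefficient.
\item The sign of the $z'$ term in your linearized equation is reversed: for $(P_+)$ it must be $-c\,r^{-1-\sigma}z'$ for the fast mode to decay.
\item The paper gets all of this from the first-order equation \eqref{sfpo} for $Y(s)$, $s=u/(r|u'|)$. It proves $sY(s)\to d_1/(\lambda m)$ with the same critical-point technique, and uses the auxiliary functions $\widetilde Y$, $\widetilde{\mathcal Y}$ in the degenerate case.
\item In (II), your $(P_-)$ construction fails as stated. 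The map has $u''$ on its right-hand side, so it loses derivatives. The set $\{|u-\gamma+a\gamma r^\sigma|\le Cr^{2\sigma}\}$ gives no control of $u''$, so the map is not even well defined there, let alone a contraction.
\item The construction has to be posed as a first-order problem. In \eqref{1xeq}, $X=0$ is repelling for $\mathcal C=-\lambda^{1/m}$, so only one solution on $[T_0,\infty)$ tends to $0$. It is attracting for $\mathcal C=\lambda^{1/m}$, giving a one-parameter family. Then $r(t)$ is recovered from \eqref{billi}, $u=(tr^\sigma)^{m/\kappa}$, and the scaling \eqref{scale} adjusts $\gamma$.
\item Your $(P_+)$ shooting also omits this last step: varying the data at $r=R$ changes the limit $\gamma$, which the theorem prescribes.
\end{itemize}
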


The second profile in Table~\ref{tabel1}, namely $E_0$, models the blow-up rate of the positive solutions of \eqref{eq1} when $\Theta=0$ and $\lambda>0$. In Theorem~\ref{tieo10}, we establish the existence of infinitely many positive radial solutions of \eqref{eq1} 
satisfying 
 \begin{equation} \label{tuna}
 \lim_{|x|\to 0}\frac{u(x)}{E_0(|x|)}=1,\quad \mbox{where } E_0(|x|)=\left[ \lambda \left( \frac{\kappa}{m} \right)^m \right]^{\frac{1}{\kappa}}
\left( \log \frac{1}{|x|} \right)^{\frac{m}{\kappa}}. \end{equation}
Moreover, we reveal the precise form of the higher order terms in the expansion near zero.  
%that blow-up at zero as a power of a logarithm . 

\begin{theorem}[Profile $E_0$] \label{tieo10}
 Let \eqref{cond1} hold, $\Theta=0$, $\lambda>0$ and $\rho\in \mathbb{R}$. 
 For every $R>0$, equation \eqref{eq1} in $B_R(0)\setminus \{0\}$ has infinitely many positive radial solutions satisfying \eqref{tuna}.
Moreover, let $u$ be any positive radial solution of \eqref{eq1} in $B_R(0)\setminus \{0\}$ for $R>0$ satisfying \eqref{tuna}. 
 
 $\bullet$ If $\rho=0$ and $q=1$, then $\lim_{r\nearrow R} u(r):=u(R)\in [0,\infty)$ and 
\begin{equation} \label{aluxi3} u(r)=\lambda^{\frac{1}{m}}\,\log \,(R/r)+u(R)\quad \mbox{for all }r\in (0,R). \end{equation}  
 
$\bullet$ If $\rho\not=0$, then 
\begin{equation} \label{simp1}
\frac{u(r)}{E_0(r)}=1+\frac{2\rho\,m}{\lambda \kappa^2}\,\frac{\log \log \, (1/r)}{\log \, (1/r)} (1+o(1))\quad \mbox{as } r\to 0^+.
\end{equation}

$\bullet$ If $\rho=0$ and $q\not=1$, then there exists a constant $C\in \mathbb R$ 
such that as $r\to 0^+$ 
 \begin{equation} \label{aluxi1} 
\frac{u(r)}{E_0(r)}=
 1+\frac{C}{\log \,(1/r)} + \left( 
 \frac{m}{\lambda \kappa^3}-\frac{C^2}{2m} 
 \right) \frac{q-1}{\log^2\,(1/r) }(1+o(1)).
\end{equation}
\end{theorem}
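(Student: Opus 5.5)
The plan is to reduce everything to an autonomous ODE. For a radial solution $u(r)$, set $t=\log(1/r)$ and $y(t)=u(r)$. Then $u'=-y'/r$ and $u''=(y''+y')/r^2$. Since $\Theta=0$ means $\theta=m-2$, the weight $r^{\theta}|u'|^m=r^{-2}|y'|^m$ and the factor $r^{-2}$ cancels throughout. Equation \eqref{eq1} becomes
\begin{equation*}
y''+2\rho\, y'+\lambda y = y^q |y'|^m,\qquad t>\log(1/R),
\end{equation*}
and \eqref{tuna} reads $y(t)\sim [\lambda(\kappa/m)^m]^{1/\kappa} t^{m/\kappa}$ as $t\to\infty$. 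At leading order the absorption balances the potential: $y^q|y'|^m\approx \lambda y$, so $y'\approx \lambda^{1/m}y^{(1-q)/m}$. Integrating gives $(m/\kappa)y^{\kappa/m}\approx \lambda^{1/m}t$, which is exactly $E_0$. For $\rho=0$ and $q=1$ the equation is $y''+\lambda y=y|y'|^m$, which is solved exactly by every line $y=\lambda^{1/m}t+c$; this is the source of \eqref{aluxi3}.

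\textbf{Existence.} Since $y'>0$ along the relevant solutions, I will use the phase-plane reduction $p(y)=y'$, which satisfies
\begin{equation*}
p\,p'+2\rho p+\lambda y=y^q p^m .
\end{equation*}
For large $y$ this has the slow curve $p_0(y)=(\lambda y^{1-q})^{1/m}$. I will find two barriers, $p_\pm(y)=p_0(y)\bigl(1\pm \epsilon(y)\bigr)$ with $\epsilon(y)\to0$ suitably slowly, forming a trapping region. The key checks are that $p'$ points inward on both boundaries and that $p\,p'$ and $2\rho p$ are $o(y)$ there. A continuity (Ważewski-type) argument then yields a trajectory defined for all large $y$ with $p/p_0\to1$. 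Integrating $dt=dy/p$ gives a solution on $[T,\infty)$ obeying \eqref{tuna}.

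Because the ODE is autonomous, the translates $y(\cdot+c)$ are again solutions. Taking $c$ large, they are defined on $[\log(1/R),\infty)$ for any prescribed $R$, and they are distinct for distinct $c$. This gives infinitely many solutions for every $R>0$.

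\textbf{Asymptotics.} Take any solution satisfying \eqref{tuna} and write $y=E_0(t)(1+w(t))$ with $w\to0$. First I will show that $y'/p_0(y)\to1$. This follows from the convexity/monotonicity information in the ODE together with \eqref{tuna}, via a L'Hôpital-type argument. Next I will linearize, using $z=y^{\kappa/m}$, which is natural because then $z'\approx \frac{\kappa}{m}\lambda^{1/m}$ to leading order. Dividing the ODE by $y^q|y'|^{m}$ produces a first-order equation of the form
\begin{equation*}
z'=\tfrac{\kappa}{m}\lambda^{1/m}\Bigl(1+\tfrac{1}{m\lambda}\bigl(y''/y+2\rho y'/y\bigr)+\dots\Bigr).
\end{equation*}
The correction terms then behave as follows.
\begin{itemize}
\item[$\bullet$] Since $y'/y\approx \frac{m}{\kappa t}$, the term $2\rho y'/y$ contributes $\frac{2\rho m}{\kappa t}$ to $z'/z'_{\rm lead}$. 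Integrating produces $\log t$ in $z$, and hence the $\frac{\log\log(1/r)}{\log(1/r)}$ term of \eqref{simp1} with the stated coefficient $\frac{2\rho m}{\lambda\kappa^2}$.
\item[$\bullet$] When $\rho=0$, the first correction comes from $y''/y=O(t^{-2})$, which is integrable in $t$. It therefore gives a free constant $C$ (reflecting the translation invariance $t\mapsto t+c$) and a $t^{-2}$ term whose coefficient is computed from the expansion. That coefficient vanishes when $q=1$, consistent with the factor $q-1$ in \eqref{aluxi1}.
\item[$\bullet$] For $\rho=0$ and $q=1$ I will show that $y''\equiv0$. With $v=y'$ the equation is $v'=y(v^m-\lambda)$. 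If $v(t_1)\ne\lambda^{1/m}$ at some point, then $v-\lambda^{1/m}$ keeps its sign and grows, since $y\to\infty$. This forces either blow-up of $v$ or $v\to0$, and both contradict \eqref{tuna}. Hence $v\equiv\lambda^{1/m}$, which gives \eqref{aluxi3}. The limit $u(R)$ exists and is finite because $y$ is affine, and it is nonnegative because $u>0$.
\end{itemize}

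\textbf{Main obstacle.} I expect the hardest step to be the rigorous control of the remainders, in particular proving $y''=o(y/t)$ and obtaining its precise asymptotics for an arbitrary solution satisfying \eqref{tuna}. The plan is to bootstrap. From $w\to0$ and the first-order relation I get $y'\sim p_0(y)$. Differentiating the relation $y'=p(y)$ then expresses $y''=p\,p'(y)$ in terms of known quantities. Iterating once more should sharpen the error to $o(t^{-2})$, which is what is needed for the second-order coefficient in \eqref{aluxi1}.
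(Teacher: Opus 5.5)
\textbf{Existence and the case $\rho=0$, $q=1$.} These parts work and follow the paper. Your translations $y(\cdot+c)$ are exactly the paper's scalings $T_j[u](r)=u(jr)$, and your argument that $y'\equiv\lambda^{1/m}$ is Lemma~\ref{step1}. One correction: the strip between $p_\pm$ is \emph{not} trapping. The slow curve $p_0$ is repelling as $y$ increases: with $\epsilon=Ky^{-\kappa/m}$ and $K$ large, trajectories cross $p_+$ upward and $p_-$ downward. So what you must check is that both boundaries are strict exit sets. That is precisely the hypothesis of Wa\.{z}ewski's principle, and it matches the repelling first-order equation \eqref{xeq} in the paper. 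With inward-pointing boundaries you would wrongly obtain a whole family of phase-plane trajectories, whereas only time translates exist.

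\textbf{The gap: the refined asymptotics.} Put $v:=y'/p_0(y)$. The ODE gives the exact identities $z'=\frac{\kappa}{m}\lambda^{1/m}v$ and $\lambda(v^m-1)=(y''+2\rho y')/y$. So your expansion of $z'$ carries no information until $y''$ is controlled.
\begin{itemize}
\item For \eqref{simp1} you need $y''/y=o(1/t)$.
\item For \eqref{aluxi1} you need $y''/y=\frac{1-q}{m}(y'/y)^2(1+o(1))$.
\end{itemize}
Equivalently, you need $v-1$ to order $1/t$, respectively $1/t^2$, and that is exactly the unknown.

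Your bootstrap is circular. Differentiating $y'=p(y)$ just returns $pp'=\lambda y(v^m-1)-2\rho p$, and an asymptotic relation $y'\sim p_0(y)$ cannot be differentiated. From $v\to1$ alone you get only $y''=o(y)$. Even $v\to1$ does not come from L'H\^opital, which runs in the wrong direction here. It needs a barrier argument: once $|v-1|>\delta$ at a large $t$, it stays so, contradicting $z/t\to\frac{\kappa}{m}\lambda^{1/m}$.

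\textbf{What is missing} is a mechanism that rules out oscillation at the finer scales. The paper supplies it in two stages:
\begin{itemize}
\item It shows that $\Psi_1=r\frac{d}{dr}\bigl(\frac{u}{ru'}\bigr)=1-\frac{yy''}{(y')^2}$ has no critical points near $r=0$, via the constant sign of $\Psi_1''$ at putative critical points. Hence $\Psi_1\to\kappa/m$.
\item It writes the first-order equation \eqref{xeq} for $X=\frac{\kappa}{m}-\Psi_1$ in $t=\Psi_0^2$. It proves eventual monotonicity and a priori bounds for $X$, and then obtains the rates \eqref{som} with an integrating factor and L'H\^opital.
\end{itemize}

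In your own variables there is a shorter fix that uses the repulsion. The function $w=v-1$ satisfies $w'=A(t)w+f(t)$, where $A=\lambda^{1-1/m}y^{\kappa/m}\frac{v^m-1}{v-1}\sim\kappa\lambda t$ and $f=-2\rho v+\frac{q-1}{m}\lambda^{1/m}y^{-\kappa/m}v^2$. Since $w$ is bounded, $w(t)=-\int_t^\infty e^{-\int_t^sA}f\,ds\sim -f(t)/A(t)$. This gives $w\sim\frac{2\rho}{\kappa\lambda t}$ if $\rho\neq0$, and $w\sim\frac{1-q}{\lambda\kappa^2t^2}$ if $\rho=0$. Integrating $z'=\frac{\kappa}{m}\lambda^{1/m}(1+w)$ then yields \eqref{simp1} and \eqref{aluxi1}, including the free constant $C$. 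Without an argument of this kind, the claimed expansions are not proved.
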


The third and fourth profiles in Table~\ref{tabel1} provide the sharp range for the existence of positive {\em non-constant} radial solutions of \eqref{eq1} such that $\lim_{r\to 0^+} u(r)=\gamma\in (0,\infty)$, differentiating between $\beta\not=0$ in Theorem~\ref{profff1} and $\beta=0$ in Theorem~\ref{equa1} to reveal the second / third term in the asymptotic expansion of $u$ near zero. We note that our findings are novel and complement known results from \cites{BiGV,Cmem,CC2015,MF1} on \eqref{eq1} with $\rho=(2-N)/2$ and $\lambda=0$, that is,
 \begin{equation} \label{mirac} \Delta u=|x|^\theta u^q |\nabla u|^m \quad \mbox{in } B_R(0)\setminus \{0\}, \end{equation}
where $q\geq 0$, $0\leq m<2$, $\theta\in \mathbb R$ and $q+m>1$. More precisely, in Theorem~\ref{profff1} (II), we extend with a different method the conclusions of \cite[Proposition 3.1 (b)]{Cmem}, where $m=0$ and $\theta>-2$, as well as of \cite[Theorem 2.2]{CC2015}, where $q\geq 0$, $m\in (0,1)$ and $\theta=0$. Using the Leray--Schauder fixed point theorem, it was proved in \cite[Theorem 2.2]{CC2015} that for every $\eta>0$, there exists a positive radial increasing solution of $\Delta u=u^q|\nabla u|^m$ in $B_1(0)\setminus \{0\}$, subject to $u=\eta$ on $\partial B_1(0)$. The latter Dirichlet boundary condition yields a unique positive value $\gamma$ of the limit of $u$ at zero, where the decay rate of $u(r)-\gamma$ as $r\to 0^+$ remained unknown. We close this gap in Theorems~\ref{profff1} and \ref{equa1}
by starting with an arbitrary $\gamma\in (0,\infty)$ and for $R=R(\gamma)>0$ small, prove under optimal conditions the existence of positive non-constant radial solutions $u$ of \eqref{eq1} in $B_R(0)\setminus \{0\}$ with $\lim_{|x|\to 0} u(x)=\gamma$, along with a refined asymptotics near zero.

In \cite[Proposition 3.1]{BiGV} when $\theta=q=0$ and $ N/(N-1)\leq m<2$ (resp., $1<m<N/(N-1)$), it was proved that the only positive radial solutions $u$ of \eqref{mirac} in $B_1(0)\setminus \{0\}$ (resp., satisfying $\lim_{r\to 0^+} u(r)\in (0,\infty)$) are constant. We mention that \cite{BiGV} studies local and global properties of non-negative and signed solutions for $p$-Laplacian type equations of the form $\Delta_p u=|\nabla u|^m$ in a domain $\Omega\subset \mathbb R^N$ with $N\geq p>1$ and $m>p-1$.

%%%%

%%%%

\vspace{0.2cm}
In Theorem~\ref{profff1} (resp., Theorem~\ref{equa1}) for arbitrary $\gamma\in \mathbb R_+$, we give the existence of positive radial solutions of \eqref{eq1} in $B_R(0)\setminus \{0\}$, where $R=R(\gamma)>0$ is small, such that
 \begin{equation} 
\label{animat}
\lim_{r\to 0^+} u(r)=\gamma\in (0,\infty)\quad \mbox{and}\quad r^*(u)=\sup\,\{r\in (0,R):\ u\equiv \gamma\ \mbox{on } (0,r) \}=0,
\end{equation}
assuming that $\lambda=0$ and $\beta\not=0$ (resp., $\beta=0$), where 
$$\beta:=\kappa \Theta+2\rho \left(m-1\right).$$  

For simplicity of reference, given $\gamma\in \mathbb R_+$ and $R>0$, we define by $\mathcal S_{\gamma,R}$ the set of positive radial solutions of \eqref{eq1} in $B_R(0)\setminus \{0\}$ such that \eqref{animat} holds.  

When $\beta\not=0$, we show that there exists $R>0$ such that $\mathcal S_{\gamma, R}\not=\emptyset$ 
%\eqref{eq1} has a positive radial solution in $B_R(0)\setminus \{0\}$ satisfying \eqref{animat} 
precisely in the following cases
(see Lemma~\ref{nelem} for Case (I) and Lemma~\ref{maria} for Cases (II) and (III)): 

\begin{itemize}
\item[(I)] $\beta<0$ and $m=1$.

 \item[(II)] $\beta\not=0$, $m\not=1$, $\Theta\not=0$ and ${\rm sgn}\,(\Theta)={\rm sgn}\, (1-m)$.

 \item[(III)] $\beta>0$ and $\rho>0$. 
\end{itemize}

In Case {\rm (II)}, we define $\zeta$ as follows
$$\zeta:=\frac{\kappa \,\Theta}{1-m}>0.$$

\begin{theorem}[Constant Profile, $\lambda=0\not=\beta$] \label{profff1}
Let \eqref{cond1} hold, $\lambda=0\not=\beta$ and  
$\gamma\in \mathbb R_+$ be arbitrary. 
\begin{itemize}
%\item[(A)] Then, 
\item[(A)] Let Case {\rm (I)} hold. 
%{\rm (I)} hold and $R>0$. 
Then, given $R>0$, any $u\in \mathcal S_{\gamma,R}$ satisfies $u'>0$ on $(0,R)$ and 
there exists a constant $C>0$ such that 
 \begin{equation}
 \label{iuted}
 u(r)=\gamma+ C r^{2\rho-1-\theta} 
  \exp\left( \frac{\gamma^q}{\theta+1} r^{\theta+1}\right)  (1+o(1))\quad \mbox{as } r\to 0^+. 	
 \end{equation}
 Moreover, there exists $R>0$ such that $\mathcal S_{\gamma,R}$ is infinite. 
%\eqref{eq1} in $B_R(0)\setminus \{0\}$ has infinitely many positive radial solutions satisfying \eqref{animat}. 
\item[(B)] Let Case {\rm (II)} and Case {\rm (III)} hold. Then, given $R>0$, any $u\in \mathcal S_{\gamma,R}$ satisfies either 
 \begin{equation}
 \label{uite1}
 u(r)=\gamma+  \frac{{\rm sgn}\,(\zeta-2\rho)}{\zeta} \left( \frac{\gamma^q}{|\zeta-2\rho|} \right)^\frac{1}{1-m}
r^{\zeta} \,(1+o(1)) \quad \mbox{as } r\to 0^+ 
 \end{equation}
or there exists a constant $C_1\in \mathbb R\setminus \{0\}$ such that 
\begin{equation} \label{refina}
		u(r)=\gamma+ C_1\,r^{2\rho}+\frac{\gamma^q (2\rho\,|C_1|)^m}{\beta \left(2\rho+\beta\right)}r^{2\rho+\beta}(1+o(1))\quad \mbox{as }
r\to 0^+.		
\end{equation} 
\item[(C)] Let Case {\rm (II)} hold. 
Then, if $\beta>0$ (resp., $\beta<0$), then there exist $R>0$
and $u\in \mathcal S_{\gamma,R}$ (resp., infinitely many $u\in \mathcal S_{\gamma,R}$) satisfying \eqref{uite1}. In addition, if Case {\rm (III)} fails, then for $R>0$, any
$u\in \mathcal S_{\gamma,R}$
 %positive radial solution of \eqref{eq1} in $B_R(0)\setminus \{0\}$ with the property \eqref{animat} 
satisfies only \eqref{uite1} as $r\to 0^+$.	
%as $r\to 0^+$
 
 \item[(D)] Let Case {\rm (III)} hold. %but not Case {\rm (II)}. 
% Then, for every $R>0$ and 
%$u\in \mathcal S_{\gamma,R}$, there exists $C_1\in \mathbb R\setminus \{0\}$ such that 	
Then, for every $C_1\in \mathbb R\setminus \{0\}$, there exist $R>0$ and  
$u\in \mathcal S_{\gamma,R} $ so that \eqref{refina} holds. If, in addition, Case {\rm (II)} fails, then for every $R>0$ and 
$u\in \mathcal S_{\gamma,R}$, there exists $C_1\in \mathbb R\setminus \{0\}$ such that \eqref{refina} is satisfied.

 \end{itemize}
	\end{theorem}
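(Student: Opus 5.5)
Throughout I work with the radial form of \eqref{eq1} with $\lambda=0$:
\[
u''+\frac{1-2\rho}{r}\,u'=r^{\theta}u^q|u'|^m,\qquad\text{equivalently}\qquad (r^{1-2\rho}u')'=r^{1-2\rho+\theta}u^q|u'|^m .
\]
The plan is to reduce everything to a first-order equation for $w:=u'$ along a solution $u\in\mathcal S_{\gamma,R}$.

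\textbf{Reduction to a Bernoulli-type ODE.} Since $u\to\gamma$, I replace $u^q$ by $\gamma^q(1+o(1))$. Then $w$ solves
\[
w'=-\frac{1-2\rho}{r}\,w+\gamma^q r^\theta |w|^m(1+o(1)).
\]
By uniqueness for this ODE where the right-hand side is Lipschitz, and by $r^*(u)=0$, the function $w$ has constant sign near $0$. Integrability of $w$ at $0$ is forced by $u\to\gamma$.

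\textbf{The linear case $m=1$, part (A).} Here the equation is linear in $w$: $w'=w\,[(2\rho-1)/r+\gamma^q r^\theta(1+o(1))]$. Integrating gives
\[
w=C\,r^{2\rho-1}\exp\Bigl(\int \gamma^q r^\theta(1+o(1))\Bigr).
\]
The condition $\beta=\kappa\Theta<0$ means $\theta+1<0$, so the exponential dominates. I will have to refine the $o(1)$ using $u-\gamma$, which decays exponentially. The sign of $w$ is fixed positive: a negative $w$ would make $u$ blow up or contradict $u>\gamma$ being reachable, which I check via the monotonicity of $r^{1-2\rho}u'$. Integrating $w$ from $0$ (Laplace-type asymptotics) gives \eqref{iuted}, with the exponent $2\rho-1-\theta$ coming from dividing by the derivative of the phase.

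\textbf{The case $m\neq1$, parts (B)--(D).} I set $z=r^{1-2\rho}|w|$. Then
\[
z'=\pm\gamma^q r^{(1-2\rho)(1-m)+\theta}z^m(1+o(1)),
\]
a separable equation in which $z^{1-m}$ is essentially a primitive. The two possibilities are:
\begin{itemize}
\item[(i)] $z\to c\neq0$. This gives $w\sim c\,r^{2\rho-1}$, hence $u=\gamma+C_1r^{2\rho}+\dots$. This requires $\rho>0$ for $u\to\gamma$. The correction comes from integrating $\gamma^q r^{\theta+(1-2\rho)(1-m)}(2\rho|C_1|)^m r^{(2\rho-1)m}$ twice; the exponent bookkeeping produces $r^{2\rho+\beta}$ with coefficient $1/(\beta(2\rho+\beta))$. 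Convergence of the integral at $0$ requires $\beta>0$, which is Case (III) and gives \eqref{refina}.
\item[(ii)] $z^{1-m}$ behaves like the primitive itself. Solving for $z^{1-m}$ yields $|w|\sim(\gamma^q/|\zeta-2\rho|)^{1/(1-m)}r^{\zeta-1}$, with sign $\mathrm{sgn}(\zeta-2\rho)$ dictated by positivity of $z^{1-m}$. This needs $\zeta>0$, i.e.\ Case (II), and gives \eqref{uite1}.
\end{itemize}
This dichotomy shows that when Case (III) fails only \eqref{uite1} occurs, and when Case (II) fails only \eqref{refina} occurs.

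\textbf{Existence.} I construct solutions by a fixed point on $(0,R)$. I look for $u=\gamma+v$, with $v$ in a weighted space where $|v|\le Ar^\zeta$ (resp.\ $Ar^{2\rho}$). The integral operator is
\[
v\mapsto\int_0^r s^{2\rho-1}\Bigl[c+\int_0^s t^{1-2\rho+\theta}(\gamma+v)^q|v'|^m\,dt\Bigr]ds,
\]
with the inner integral taken from $0$ or from $R$ according to the sign of $\beta$. Contraction or Schauder's theorem for small $R$ gives existence. The free constant $c$ (or $C_1$) yields infinitely many solutions when the homogeneous mode $r^{2\rho}$ is subordinate, namely when $\beta<0$ in (C) and in (A). For (D), I prescribe $C_1$ directly.

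\textbf{Main obstacle.} The hardest step will be handling $|v'|^m$ with $m<1$, where it is non-Lipschitz near $v'=0$. I plan to first prove that $v'$ stays comparable to its leading power, so that the map can be restricted to a cone where $|v'|^m$ is smooth.
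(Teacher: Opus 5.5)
Your classification arguments are correct and are essentially the paper's. They rest on three ingredients: the monotone quantity $r^{1-2\rho}u'$; the separated primitive $\frac{d}{dr}\frac{z^{1-m}}{1-m}=\mathrm{sgn}(u')\,r^{\beta-1}u^q$ (the paper's \eqref{mida}) combined with L'H\^opital; and the split according to whether $L_0=\lim_{r\to0^+}r^{1-2\rho}u'(r)$ is a nonzero real (forcing $\rho>0$, $\beta>0$ and \eqref{refina}) or is $0$ or $-\infty$ (forcing $\zeta>0$ and \eqref{uite1}). In (A) you integrate the linear equation for $u'$ directly instead of using the paper's variables $t=r^{\theta+1}u^q$, $X$, $\widetilde X$. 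That is a valid and shorter route, provided you first bootstrap the super-exponential smallness of $u-\gamma$ so that $\int_0 s^\theta(u^q-\gamma^q)\,ds$ converges.

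Two small repairs are needed. First, the constant sign of $u'$ near $0$ should come from the monotonicity of $r^{1-2\rho}u'$ together with $r^*(u)=0$, not from uniqueness: for $m<1$ the right-hand side is not Lipschitz at $u'=0$. Second, Case (III) includes $m=1$, which your (B)--(D) treatment skips. There you should use $\log z$, whose derivative $r^{\beta-1}u^q$ is integrable since $\beta>0$, so $L_0\in\mathbb R\setminus\{0\}$.

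\textbf{The gap: existence when $m\ge1$.} The existence part fails in (A) and in the subcases of (C) with $m>1$.

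You take the free parameter to be the additive constant $c$ in $r^{1-2\rho}u'=c+\int\cdots$, i.e.\ the homogeneous mode $r^{2\rho}$, and claim it is subordinate when $\beta<0$. In (A), and in (C) with $m>1$, $\beta<0$, this is false: $L_0=0$ is forced there, and $r^{2\rho}$ dominates the profile. In (C) this is because $\zeta-2\rho=\beta/(1-m)>0$.

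Concretely, put $a=\zeta-2\rho$ and $h=r^{1-\zeta}|u'|$, so your integrand is $t^{\beta-1}u^q(t^{1-2\rho}|u'|)^m=t^{a-1}u^q|h|^m$.
\begin{itemize}
\item If $a>0$ and the inner integral runs from $R$, then $T(v)'(r)=r^{2\rho-1}\bigl(c-I(v)+o(1)\bigr)$ with $I(v)=\int_0^R(\cdots)\,dt$ finite. So $T$ does not map $\{|v|\le Ar^\zeta\}$ into itself unless $c=I(v)$.
\item If the inner integral runs from $0$ (or from $R$ when $a<0$), the linearization at the profile $h_0$, where $h_0^{1-m}=\gamma^q/|a|$, has weighted sup-norm exactly $m\gamma^q h_0^{m-1}/|a|=m$.
\end{itemize}
So your map contracts only for $m<1$. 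For $m>1$ it expands; from $0$ it also fixes $v\equiv0$ and has the kernel direction $r^{|\beta|}$. In (A) the weighted linear Volterra operator has norm exactly $1$. Moreover, for $m>1$ and $\beta>0$ (still in (C)) one has $a<0$, so your rule ``inner integral from $0$ when $\beta>0$'' gives a divergent integral.

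\textbf{The missing idea.} Place the free constant in the separated variable, normalized at $r=R$:
\begin{itemize}
\item $z^{1-m}(r)=c+(1-m)\,\mathrm{sgn}(u')\int_R^r s^{\beta-1}u^q\,ds$ if $\beta<0$;
\item $z^{1-m}(r)=(1-m)\,\mathrm{sgn}(u')\int_0^r s^{\beta-1}u^q\,ds$ if $\beta>0$ (no free constant);
\item $z(r)=c\exp\bigl(-\int_r^R s^{\beta-1}u^q\,ds\bigr)$ in (A).
\end{itemize}
Then the correct limit of $z$ at $0$ is automatic for every $c>0$. The map $u\mapsto\gamma+\mathrm{sgn}(u')\int_0^r s^{2\rho-1}z\,ds$ has Lipschitz constant $O(R^\zeta)$ for $c$ in a suitable $R$-dependent range (in (A), for $c$ small). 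Varying $c$ gives the infinite families. In (C) the free constant affects $u$ only at order $r^{\zeta+|\beta|}$, not $r^{2\rho}$; in (A) it is the constant $C$ of \eqref{iuted}. This is the analytic counterpart of the paper's two-dimensional stable manifold in Lemma~\ref{viz2} and of the family $X_c$ solving \eqref{nue} in Case (I).

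As written, your plan does go through for (D), by contraction near $L_0\neq0$. It also works for (C) with $m<1$: there it contracts with constant about $m$ on a cone $|h-h_0|\le\delta$, and for $\beta<0$ the mode $r^{2\rho}$ really is subordinate, since $2\rho>\zeta$.
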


%We note that in the framework of Theorem~\ref{profff1}, there are no other possible profiles of solutions satisfying \eqref{animat} than the ones pertaining to Cases (I)--(III). More precisely, 
%under assumptions common to both Case (II) and Case (III), 
%When the assumptions pertain to Case (II) but not Case (III) (respectively, Case (III) and not Case (II)), only the behaviour in \eqref{uite1} (respectively, \eqref{refina}) 
%occurs (see Lemma~\ref{maria}). 

\begin{theorem}[Constant Profile, $\lambda=\beta=0$] \label{equa1}
Let \eqref{cond1} hold, $\lambda=0$ and $\beta=\kappa \Theta+2\rho \,(m-1)=0$. 
Let $u$ be a positive radial solution of \eqref{eq1} in $B_R(0)\setminus \{0\}$ for $R>0$ satisfying \eqref{animat}. 
\begin{itemize}
\item[(A)] Let $m=1$ and $\Theta=0$. 
Then, $u'(r)\not=0$ for all $r\in (0,R)$ and 
$2\rho+\gamma^q {\rm sgn}\, (u'(r))\geq 0$. By letting $u(R^-)=\lim_{r\nearrow R} u(r)$, we have  for all $r\in (0,R)$ 
		\begin{equation} \label{1fip}
\int_{u(r)}^{u(R^-)} \frac{ds}{ 2\rho \left(s-\gamma\right)+ \frac{{\rm sgn}\, (u'(r))}{q+1}\left( s^{q+1}-\gamma^{q+1}\right)}=\log \left(\frac{R}{r}\right). 		
		\end{equation} 		
$\bullet$ If 
 If $2\rho+\gamma^q \,{\rm sgn}\, (u'(r))>0$, then there exists a constant $C_1>0$ such that as $r\to 0^+$
 			\begin{equation} \label{1sof}
			u(r)=\gamma+C_1 {\rm sgn}\, (u'(r)) \,r^{2\rho+\gamma^q \,{\rm sgn}\, (u'(r))}+
q\,\gamma^{q-1}\,(C_1)^2 \,
\frac{r^{2\left(2\rho+\gamma^q\, {\rm sgn}\,(u'(r)) \right)} }
{2\left(2\rho+\gamma^q\,{\rm sgn}\,(u'(r))   \right)} \,  (1+o(1)). 
			\end{equation} 
$\bullet$ If  $2\rho+\gamma^q \,{\rm sgn}\, (u'(r))=0$, then as $r\to 0^+$, 
			\begin{equation} \label{1sof1}
			u(r)=\gamma+\frac{2\, \gamma^{1-q}}{q}\frac{{\rm sgn}\, (u'(r))}{\log\, (1/r)}\left[
1-\frac{2\,(q-1)\, {\rm sgn}\, (u'(r))}{3\,q\,\gamma^q} \frac{\log \,(\log \,(1/r))}{\log \,(1/r)}(1+o(1)) \right]. 
			\end{equation}
			Furthermore, if $q=1$, then it holds 
			\begin{equation} \label{1sof2} 
		 u(r)=\gamma+\frac{2\, {\rm sgn}\, (u'(r))}{\log \left(\frac{R}{r}\right)+
	\frac{2\, {\rm sgn}\, (u'(r))}{u(R^-)-\gamma}}\quad \mbox{for all } r\in (0,R).
			\end{equation}

\item[(B)] Let $\Theta=\rho=0$. Then, necessarily $m\in [1,2)$, 
$u'>0$ on $(0,R)$ and 
\begin{equation} \label{1van0}
\int_{u(r)}^{u(R^-)}\frac{ds}{
\left(s^{q+1}-\gamma^{q+1}\right)^{\frac{1}{2-m}}}=\left( \frac{2-m}{q+1}\right)^{\frac{1}{2-m}}\log \left(\frac{R}{r}\right)\quad \mbox{for all } r\in (0,R).	
\end{equation}
If $m=1$, then there exists a constant $C_1>0$ such that \eqref{1sof} holds as $r\to 0^+$, namely, 
\begin{equation} \label{1sif}
			u(r)=\gamma+C_1 \,r^{\gamma^q}+
q\,\gamma^{q-1}\,(C_1)^2 \,
\frac{r^{2\gamma^q } }
{2\gamma^q } \,  (1+o(1)). 
			\end{equation} 
On the other hand, if $m\in (1,2)$, then as $r\to 0^+$, we have 
\begin{equation}
\label{moss}
u(r)=\gamma+ \frac{(m-1)^{\frac{2-m}{1-m}}}{2-m} \gamma^{\frac{q}{1-m}} \left(\log \,(1/r)\right)^{\frac{2-m}{1-m}}	(1+o(1)).
\end{equation}

\item[(C)] Let  
$\kappa \Theta=2\rho\left(1-m\right)\not=0$. Then, necessarily, $\rho>0$, ${\rm sgn}\, (\Theta)={\rm sgn}\, (1-m)$ and %as $r\to 0^+$, it holds
\begin{equation} 
\label{fivi1} 	
u(r)=\gamma-{\rm sgn}\,(1-m)\,
 \frac{\left( |1-m|\, \gamma^q\right)^{\frac{1}{1-m}} }{2\rho}\, r^{2\rho} \left(\log \frac{1}{r}\right)^{\frac{1}{1-m}}  (1+o(1))\quad \mbox{as } r\to 0^+.
\end{equation}
Moreover, for every $\gamma\in (0,\infty)$, there exists
		$R>0$ small such that \eqref{eq1} in $B_R(0)\setminus \{0\}$ 
has infinitely many positive radial solutions $u$ satisfying \eqref{fivi1}. 
\end{itemize}
\end{theorem}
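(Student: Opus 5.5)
The plan is to reduce everything to a first-order system in the variable $s=\log(1/r)$. For a radial solution, \eqref{eq1} with $\lambda=0$ reads
$$ (r^{1-2\rho}u')'=r^{1-2\rho+\theta}u^q|u'|^m. $$
Set $w=ru'$, so that $du/ds=-w$. Since $\theta+2-m=\kappa\Theta$, the equation becomes
$$ -\frac{dw}{ds}=2\rho\, w+e^{-\kappa\Theta s}u^q|w|^m . $$
Under $\beta=0$ it is natural to substitute $w=r^{2\rho}z$, because then $\kappa\Theta+2\rho m=2\rho$. This gives the autonomous-in-$z$ law
$$ \frac{dz}{ds}=-u^q|z|^m . $$
Throughout, the condition $r^*(u)=0$ in \eqref{animat} is used to exclude $w\equiv 0$ on a neighbourhood of $0$.

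\textbf{Case (A): $m=1$, $\Theta=0$.} Here $dw/ds=-(2\rho+\varepsilon u^q)w$, where $\varepsilon={\rm sgn}(w)$. This equation is linear in $w$ with a continuous coefficient, so $w$ cannot vanish without vanishing identically. That would contradict $r^*=0$, hence $u'\neq 0$ on $(0,R)$ and $\varepsilon={\rm sgn}(u'(r))$ is constant. Dividing by $du/ds=-w$ gives $dw/du=2\rho+\varepsilon u^q$, so
$$ w=F(u)+c,\qquad F(u)=2\rho(u-\gamma)+\frac{\varepsilon}{q+1}\bigl(u^{q+1}-\gamma^{q+1}\bigr). $$
As $s\to\infty$ we have $u\to\gamma$, so $w\to c$. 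Since $\int^\infty w\,ds$ converges, $c=0$. Integrating $du/ds=-F(u)$ between $r$ and $R$ gives \eqref{1fip}.

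The sign condition comes from ${\rm sgn}(u-\gamma)={\rm sgn}(w)$ near $0$: this forces $F(u)/(u-\gamma)>0$ near $\gamma$, hence $F'(\gamma)=2\rho+\varepsilon\gamma^q\ge 0$. The expansions are then one-dimensional ODE asymptotics for $du/ds=-F(u)$, using $F(u)=a(u-\gamma)+\tfrac{\varepsilon q\gamma^{q-1}}{2}(u-\gamma)^2+O((u-\gamma)^3)$.
\begin{itemize}
\item If $a>0$, linearize and iterate once to obtain \eqref{1sof}.
\item If $a=0$, then $\varepsilon=-{\rm sgn}\,\rho$ and the quadratic term governs. The equation $dv/ds=-cv^2(1+kv)$ gives $v\sim 1/(cs)$, and inverting the integral produces the $\log s/s^2$ correction in \eqref{1sof1}.
\item If $q=1$, $F$ is an exact quadratic whose linear part vanishes, and the Bernoulli integration gives \eqref{1sof2}.
\end{itemize}

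\textbf{Case (B): $\Theta=\rho=0$.} Here $dw/ds=-u^q|w|^m$, so $w$ is decreasing in $s$. Since $w\to 0$ (again by integrability of $w$), we get $w>0$, i.e. $u'>0$. If $m<1$, comparison with $dw/ds\le -(\gamma/2)^q w^m$ makes $w$ vanish at a finite $s$, contradicting $r^*=0$. Dividing by $du/ds=-w$ gives $dw/du=u^qw^{m-1}$. If $m\ge 2$, integrating this from $\gamma$ gives a divergent $\int_0 w^{1-m}dw$, which contradicts $w\to 0$ as $u\to\gamma$. Hence $m\in[1,2)$ and
$$ \frac{w^{2-m}}{2-m}=\frac{u^{q+1}-\gamma^{q+1}}{q+1}. $$
Substituting into $du/ds=-w$ and integrating gives \eqref{1van0}. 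For $m=1$ this is Case (A) with $\rho=0$, giving \eqref{1sif}. For $m\in(1,2)$, the relation $(u-\gamma)^{-1/(2-m)}d(u-\gamma)\sim -((q+1)\gamma^q/(q+1))^{1/(2-m)}\dots$ integrates to a power of $s$ with exponent $(2-m)/(1-m)$, yielding \eqref{moss}.

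\textbf{Case (C): $\kappa\Theta=2\rho(1-m)\neq 0$.} From $dz/ds=-u^q|z|^m$ with $u\to\gamma$:
\begin{itemize}
\item if $m>1$, then $z\sim\bigl((m-1)\gamma^q s\bigr)^{-1/(m-1)}>0$;
\item if $m<1$, then $z$ cannot tend to $0$ and indeed $z\sim-\bigl((1-m)\gamma^q s\bigr)^{1/(1-m)}$;
\item $m=1$ is excluded since $\Theta\ne0$ would then force $\rho=\ldots$, and $\beta=\kappa\Theta=0$ gives a contradiction.
\end{itemize}
From $u'=r^{2\rho-1}z$, where $z$ varies only logarithmically, convergence of $u$ at $0$ forces $\rho>0$. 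Then ${\rm sgn}\,\Theta={\rm sgn}(1-m)$ because $\kappa>0$. A Karamata-type integration of $u(r)-\gamma=\int_0^r t^{2\rho-1}z(t)\,dt\sim z(r)r^{2\rho}/(2\rho)$ gives \eqref{fivi1}, and the signs match in both cases.

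For existence, I would set up a fixed point on $(0,R)$ for the pair
$$ u=\gamma+\int_0^r t^{2\rho-1}z\,dt,\qquad z^{1-m}=c+(m-1)\int_s^\infty\dots $$
(in the form appropriate to the sign of $1-m$), using a weighted sup-norm $|z|\,s^{1/(m-1)}$. The free constant $c$, or equivalently the value of $z$ at an endpoint, gives infinitely many solutions. I expect this existence step, namely the contraction estimates near $s=\infty$ in Case (C), to be the main obstacle, together with the delicate second-order inversion behind \eqref{1sof1}.
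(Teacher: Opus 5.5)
Your treatment of (A), (B) and of the asymptotic half of (C) is correct. It is essentially the paper's argument rewritten in $s=\log(1/r)$:

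\begin{itemize}
\item the first integral $w=F(u)$ is \eqref{iomm1};
\item the relation $w^{2-m}/(2-m)=(u^{q+1}-\gamma^{q+1})/(q+1)$ is the paper's statement $\mathfrak T_m\equiv{\rm const}$;
\item your law $dz/ds=-u^q|z|^m$ for $z=r^{1-2\rho}u'$ is \eqref{blam}, which drives Lemma~\ref{nap}.
\end{itemize}

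Your proof that $u'\neq0$ on all of $(0,R)$ in (A) uses uniqueness for the Lipschitz right-hand side $-2\rho w-u^q|w|$, and it is cleaner than the paper's. One caution concerns the iteration for $a=2\rho+\gamma^q\,{\rm sgn}(u')>0$. There the $r^{2a}$ coefficient comes out multiplied by ${\rm sgn}(u'(r))$, as in the paper's limit \eqref{nim1} for $|v|$. The displayed \eqref{1sof} omits this factor, so it is correct as written only when $u'>0$.

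The genuine gap is the existence claim in (C), which you only sketch, and the sketch is mis-specified. The relation $z^{1-m}=c+(m-1)\int_s^\infty\cdots$ cannot hold: the integrand is $u^q\to\gamma^q>0$, so the tail integral diverges. This is consistent with $|z|^{1-m}\to\infty$ in both cases $m\gtrless1$.

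The fix is to anchor $z$ at $r=R$ instead of at $r=0$, working with $\rho>0$, which you have shown is necessary. Set $s_R=\log(1/R)$ and take bounded continuous $u$ on $(0,R)$ with $\sup|u-\gamma|\le\gamma/2$. On this set consider
$$\mathcal K(u)(r)=\gamma+\int_0^r t^{2\rho-1}z_u(t)\,dt,\qquad |z_u|^{1-m}(s)=c+|1-m|\int_{s_R}^{s}u^q\,d\sigma,\qquad {\rm sgn}\,z_u={\rm sgn}\,(m-1).$$

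\begin{itemize}
\item Both $|z_u(t)|$ and its Lipschitz constant in $u$ grow at most like a power of $1+\log(R/t)$.
\item On the other hand, $\int_0^R t^{2\rho-1}(1+\log(R/t))^k\,dt=C_kR^{2\rho}$.
\item Hence $\mathcal K$ is a self-map and a contraction in the plain sup norm for $R$ small, uniformly for $c$ in a compact subset of $(0,\infty)$.
\end{itemize}

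No weighted norm and no estimate ``near $s=\infty$'' is needed. The fixed point has $u'=r^{2\rho-1}z_u\neq0$, solves \eqref{faz} and satisfies \eqref{animat}. It therefore obeys \eqref{fivi1} by your own asymptotic analysis. Distinct $c$ give distinct values of $|z_u|^{1-m}$ at $r=R$, hence infinitely many solutions on the same ball.

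The paper reaches the same conclusion by a different route. It passes to the scalar equation \eqref{mig3} in the variables \eqref{mig1}, where $X=0$ is attracting ($X'/X\to-2\rho/|1-m|$). It then prescribes $X(T_0)=c$ small and reconstructs $r(t)$ and $u$, with the limit $\gamma$ built into the formula for $u$.
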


%%% Second Table
\subsection{Competitive profiles: the operator $\mathbb L_{\rho,\lambda}$ versus the absorption} \label{subsec2}
When $\Omega=\mathbb R^N$, we remark that \eqref{eq1} has an explicit positive radial solution of power type, given by $U_0$ in 
Table~\ref{tabel2}, under the assumption that $\Theta\not=0$ and $\ell>0$. In addition, for every $R>0$, we prove in Theorem~\ref{raf} that \eqref{eq1} in $B_R(0)\setminus \{0\}$ has infinitely many positive radial solutions satisfying
\begin{equation} \label{uoo} u(x)\sim U_0(|x|)\quad \mbox{as }|x|\to 0.
\end{equation}
More importantly, we distinguish these solutions from one another by determining the second term in their asymptotic behaviour near zero.
These findings are new, even in the case $m=0$. %Our findings in Theorem~\ref{raf} also apply for $m=0$, 
 
%asymptotically equivalent to $U_0$ near zero. 

%Moreover, under these assumptions we prove that 
%The numbers $\ell$ and $\Theta$ in \eqref{scale}, together with their sign, will be essential in the sequel.
%For example, if $|\Theta| \,\ell>0$, then a positive radial solution of \eqref{eq1} in $\mathbb R^N\setminus \{0\}$ is given by
%$U_0$ in Table~\ref{tabel2}. 

%\vspace{1cm}
When $\lambda\leq \rho^2$ and $\Theta>\Theta_\pm=-\rho\pm \sqrt{\rho^2-\lambda}$, we prove in Theorems~\ref{prof1}--\ref{mix1} that \eqref{eq1} has 
positive radial solutions, which near zero are modelled by special
solutions $\Phi^\pm_{\rho,\lambda}$ of $\mathbb L_{\rho,\lambda}[\Phi]=0$, see  Table~\ref{tabel2}. These two profiles reflect the dominance near zero of the operator $\mathbb L_{\rho,\lambda}$ over the absorption term. However, it is natural to ask for a refinement of the  asymptotics near zero of the positive radial solutions of \eqref{eq1} to make visible 
the influence of the absorption term. This is a delicate question, which we settle completely in Theorems~\ref{prof1}--\ref{mix1}. 

%s the dominant behaviour 
%the behaviour near zero of the positive
%solutions of \eqref{eq1} is linked with 
%We have $\ell\leq 0$ if and only $\lambda\leq \rho^2$ and $\Theta_-\leq \Theta\leq \Theta_+$, where
%$\Theta_\pm=-\rho\pm\sqrt{\rho^2-\lambda}$. %In particular, we see that $\Theta_\pm=-\rho$ when $\lambda=\rho^2$.

%{\bf 2. Competitive profiles: the operator $\mathbb L_{\rho,\lambda}$ versus the absorption.}

\begin{table}[H]
\centering
\caption{Asymptotic profiles of solutions near zero}
\label{tabel2}
\small
\begin{tabular}{|>{\raggedright\arraybackslash}p{7cm}|p{4cm}|p{2cm}|}
\hline
Definition and sharp range for the profile & Asymptotic profile & Theorems \\
\hline

\makecell[l]{
$U_0(|x|):=(|\Theta|^{-m}\ell)^{\frac{1}{\kappa}}\,|x|^{-\Theta}:=M_0 |x|^{-\Theta}$\\
where $\Theta \neq 0, \ell=\Theta^2+2\rho \Theta+\lambda>0$
}
&
\begin{tabular*}{4cm}{@{}l@{\extracolsep{\fill}}r@{}}
$\displaystyle \lim_{|x|\to 0}\frac{u(x)}{U_0(|x|)}=1$ %& \AddLabel{uoo}
\end{tabular*}
& Theorem \ref{raf} \\ \hline

\makecell[l]{
$\Phi_{\rho,\lambda}^+(|x|):=
|x|^{-\Theta_+}$, where $\Theta_+=-\rho+\sqrt{\rho^2-\lambda},$\\ 
$\lambda<\rho^2, \Theta>\Theta_+\neq0$}
&
\begin{tabular*}{4cm}{@{}l@{\extracolsep{\fill}}r@{}}
$\displaystyle \lim_{|x|\to 0}\frac{u(x)}{\Phi_{\rho,\lambda}^+(|x|)}=\gamma\in (0,\infty)$ %& \AddLabel{minaaa}
\end{tabular*}
& \makecell[l]{Theorem \ref{prof1}} \\ \hline

\makecell[l]{
$\Phi_{\rho,\lambda}^+(|x|):=|x|^{\rho}\log \frac{1}{|x|}$\\ where $\lambda=\rho^2, \Theta>-\rho$
}
&
\begin{tabular*}{4cm}{@{}l@{\extracolsep{\fill}}r@{}}
$\displaystyle \lim_{|x|\to 0}\frac{u(x)}{\Phi_{\rho,\lambda}^+(|x|)}=\gamma\in (0,\infty)$ %& \AddLabel{minaaa}
\end{tabular*}
& \makecell[l]{Theorem \ref{th81}} \\ \hline

\makecell[l]{
$\Phi_{\rho,\lambda}^-(|x|):=|x|^{-\Theta_-}$, where $\Theta_-=-\rho-\sqrt{\rho^2-\lambda}$,\\
$\lambda \leq \rho^2, \Theta>\Theta_-\neq0$
}
&
\begin{tabular*}{4cm}{@{}l@{\extracolsep{\fill}}r@{}}
$\displaystyle \lim_{|x|\to 0}\frac{u(x)}{\Phi_{\rho,\lambda}^-(|x|)}=\gamma\in(0,\infty)$ %& \AddLabel{mina}
\end{tabular*}
& Theorem \ref{mix1} \\ \hline

\end{tabular}
\end{table}

%%%%%

%We first refine the asymptotics results and gradient-dependent absorption term

Assuming $\ell>0$ and $\Theta\not=0$, we next state Theorem~\ref{raf} on the multiplicity of the positive radial solutions of \eqref{eq1} satisfying \eqref{uoo} and their precise two-term asymptotic expansion near zero. A key role in the latter is played by 
$\xi_0$, the unique {\em positive} solution of the quadratic equation in $\xi$ 
\begin{equation} \label{xiz} \xi^2+\left(\frac{\ell\,m}{\Theta}-2\left(\rho+\Theta\right) \right) \xi-\ell\kappa=0. 
	\end{equation}  
	
	\begin{theorem}[Profile $U_0$] \label{raf}
	Let \eqref{cond1} hold,  
	$\Theta\not=0$ and $\ell>0$. 
For every $R>0$, equation \eqref{eq1} in $B_R(0)\setminus \{0\}$ has infinitely many positive radial solutions satisfying \eqref{uoo}. 
Moreover, let $u$ be any positive radial solution of \eqref{eq1} in $B_R(0)\setminus \{0\}$ with $R>0$ such that \eqref{uoo} holds.
	
	If $u\not\equiv U_0$ on 
	any interval $(0,r_*)\subset (0,R)$ then,
	there exists $\mu_0\in \mathbb R\setminus \{0\}$ such that 
		\begin{equation} \label{lov1}   \lim_{r\to0^+} r^{-\xi_0} \left(\frac{ru'(r)}{u(r)}+\Theta\right)=\mu_0
		\quad \mbox{and}\quad
		\frac{u(r)}{U_0(r)}=1+\frac{\mu_0}{\xi_0}\,r^{\xi_0}
		(1+o(1))  \mbox{  as } r\to 0^+. 
	\end{equation}  
%	where $\xi_0$ is the {\em positive} root of the following quadratic equation (in $\xi$)
	
\end{theorem}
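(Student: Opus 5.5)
We pass to logarithmic variables so that the scaling invariance \eqref{scale} becomes autonomy. For a positive radial solution $u$ of \eqref{eq1} we set $s=\log r$ and $u(r)=e^{-\Theta s}v(s)$. Then
\[
r u'=e^{-\Theta s}(\dot v-\Theta v),\qquad
r^2u''=e^{-\Theta s}\bigl(\ddot v-(2\Theta+1)\dot v+\Theta(\Theta+1)v\bigr).
\]
By the definition of $\Theta$, the powers of $r$ on both sides of the equation match. The radial form of \eqref{eq1} therefore becomes the autonomous ODE
\begin{equation*}
\ddot v-2(\Theta+\rho)\dot v+\ell v=v^q\,|\dot v-\Theta v|^m,\qquad s\in(-\infty,\log R).
\end{equation*}
Its positive constant solution is $v\equiv M_0=(|\Theta|^{-m}\ell)^{1/\kappa}$, which is $U_0$. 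Condition \eqref{uoo} reads $v(s)\to M_0$ as $s\to-\infty$. We also have $\frac{ru'}{u}+\Theta=\dot v/v$.

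Near the equilibrium we linearise with $v=M_0(1+w)$. Since $\Theta\neq0$, the quantity $\dot v-\Theta v$ stays near $-\Theta M_0\neq0$, and $v$ stays near $M_0>0$. Hence the right-hand side is real-analytic in $(w,\dot w)$ there, even for $m<1$ or $q<1$. Using $M_0^{\kappa}|\Theta|^m=\ell$, the linear part is
\[
\ddot w+\Bigl(\frac{\ell m}{\Theta}-2(\Theta+\rho)\Bigr)\dot w-\ell\kappa\, w=0 .
\]
Its characteristic polynomial is exactly \eqref{xiz}. Because $\ell\kappa>0$, the roots are real with opposite signs, so $(M_0,0)$ is a hyperbolic saddle of the planar system for $(v,\dot v)$. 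Its unstable manifold, which governs the limit $s\to-\infty$, is one-dimensional and tangent to the eigendirection of the positive root $\xi_0$.

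\emph{Existence.} By the stable/unstable manifold theorem, there is a local unstable manifold consisting of two branches (one for each sign of $w$). Each branch gives a solution $v$ on some half-line $(-\infty,s_1]$ with $v-M_0\sim c\,e^{\xi_0 s}$ and $c\neq0$. The equation is autonomous, so $v(\cdot+\tau)$ is again a solution. For any prescribed $R$, we choose the shift so that the trajectory is still in the small neighbourhood at $s=\log R$; there $v>0$ and $\dot v-\Theta v\neq0$, so $u=r^{-\Theta}v(\log r)$ is a positive solution on $B_R(0)\setminus\{0\}$. This is precisely the scaling $T_j$. Different shifts give different constants $c\,e^{\xi_0\tau}$, hence infinitely many distinct solutions satisfying \eqref{uoo}.

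\emph{Asymptotics.} Let $u$ satisfy \eqref{uoo}. The key step, and the one I expect to be the main obstacle, is to upgrade $v\to M_0$ to $(v,\dot v)\to(M_0,0)$ as $s\to-\infty$.

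I would first show that $\dot v$ is bounded on $(-\infty,s_0]$. This follows from the ODE: write it as a first-order equation for $\dot v$, bound $\dot v$ at its local extrema using $\ddot v=0$ there, and treat separately a possible monotone tail.

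Then I would prove $\dot v\to0$. One route is a Landau/Barbalat interpolation: $v$ converges and $\ddot v$ is bounded, so $\dot v\to0$. Alternatively, if $\dot v$ has a definite sign near $-\infty$, use $\liminf|\dot v|=0$ together with the equation evaluated at extrema of $\dot v$. Care is needed here when $m\ge2$, because the nonlinearity grows superlinearly in $\dot v$.

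Once the trajectory tends to the hyperbolic saddle in backward time, it lies on the unstable manifold. Since $u\not\equiv U_0$ on any $(0,r_*)$, it is not the equilibrium. Hence $w(s)=c\,e^{\xi_0 s}(1+o(1))$ and $\dot w(s)=c\,\xi_0e^{\xi_0 s}(1+o(1))$ for some $c\neq0$. Setting $\mu_0=c\,\xi_0$, we get $r^{-\xi_0}\dot v/v\to\mu_0$ and $u/U_0=1+w=1+\frac{\mu_0}{\xi_0}r^{\xi_0}(1+o(1))$, which is \eqref{lov1}.
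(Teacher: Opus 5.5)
The parts of your argument around the key step are sound. With $s=\log r$ and $v=r^{\Theta}u$, the equation becomes $\ddot v-2(\Theta+\rho)\dot v+\ell v=v^q|\dot v-\Theta v|^m$. The linearization at $M_0$ has characteristic polynomial \eqref{xiz}, so $(M_0,0)$ is a hyperbolic saddle. Existence via the unstable manifold plus shifts (which is $T_j$) is correct. The final step is also correct: analytic one-dimensional dynamics on the unstable manifold give $w=c\,e^{\xi_0 s}(1+o(1))$ and $\mu_0=c\,\xi_0$. This is a cleaner conceptual route than the paper's, which builds an ODE for $X(t)$ with $t=-\log|\mathfrak B_2|$ and proves convergence of $t+\xi_0\log r$ by hand.

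\textbf{The gap.} The gap is exactly the step you flag: $\dot v\to0$ as $s\to-\infty$ is not proved, and neither sketch closes it.
\begin{itemize}
\item Bounding $\dot v$ at its extrema through $\ddot v=0$ fails for $m=1$. There the relation reads $2(\Theta+\rho)\dot v+v^q|\dot v-\Theta v|=\ell v$, and it admits $|\dot v|\to\infty$ as $v\downarrow M_0$ whenever $M_0^{q}=2|\Theta+\rho|$. For instance, take $m=1$, $\Theta=1$, $\rho=-2$, $\lambda=5$; then $\ell=M_0^q=2$ and the relation becomes $\dot v\,(v^q-2)=2v+v^{q+1}$. So Barbalat's lemma gets no input.
\item The $\liminf|\dot v|=0$ route fails as well. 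Limits of $\dot v$ along its extrema need only be roots of $g(y)=2(\Theta+\rho)y-\ell M_0+M_0^q|y-\Theta M_0|^m$. This vanishes at $y=0$ but in general also at some $y^*\neq0$: for $m>1$, $g$ is strictly convex and coercive with $g'(0)=2(\Theta+\rho)-m\ell/\Theta$, which is generically nonzero. So an oscillating $\dot v$ is not excluded.
\end{itemize}

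\textbf{The missing idea.} What you need is the critical-point classification of Lemma~\ref{wisp1}.
\begin{itemize}
\item At a zero of $\dot v$ the equation gives $\ddot v=|\Theta|^m v\,(v^\kappa-M_0^\kappa)$. So critical points of $v$ are strict maxima below $M_0$ and strict minima above $M_0$, and $v=M_0$ at a critical point forces $v\equiv M_0$ by uniqueness near the equilibrium.
\item Consecutive critical points would have to alternate, which is then impossible. Hence $\dot v\neq0$ near $-\infty$, and $z=\dot v/v$ has a fixed sign.
\item Next, $\dot z=2(\Theta+\rho)z-z^2-\ell+v^\kappa|z-\Theta|^m$, and at any critical point of $z$ one finds $\ddot z=\kappa\,v^\kappa z\,|z-\Theta|^m$. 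This has the fixed sign of $z$ provided $z\neq\Theta$, i.e.\ $u'\neq0$ near $0$. That is automatic at such points when $\lambda\neq0$ (it would force $\lambda=0$); for $\lambda=0$ it is Lemma~\ref{wisp0}.
\item So $z$ is eventually monotone and has a limit in $[-\infty,\infty]$. That limit must be $0$, because $\log v$ converges.
\end{itemize}
This yields $(v,\dot v)\to(M_0,0)$ for every $m>0$, with no growth issue for $m\ge2$. After that, the rest of your argument goes through.
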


%%%%

%%%%%%

The next two results apply when $\lambda\leq \rho^2$ and $\Theta>\Theta_+\not=0$, giving 
the existence and refined asymptotics near zero for the positive radial solutions of \eqref{eq1} with the property 
$$  \lim_{|x|\to 0}\frac{u(x)}{\Phi_{\rho,\lambda}^+(|x|)}=\gamma\in (0,\infty).
$$
As Table~\ref{tabel2} shows, the expression of $\Phi_{\rho,\lambda}^+$ is different when $\lambda<\rho^2$ compared with $\lambda=\rho^2$, leading to different conclusions and treatments. We refer to Theorem~\ref{prof1} for $\lambda<\rho^2$ and Theorem~\ref{th81} for $\lambda=\rho^2$. 
For $\lambda<\rho^2$, 
the next terms in the asymptotics of $u$ near zero involve $\chi$ and $\mathfrak a_2$, depending on the position of $\chi$ with respect to zero, where
we define 
\begin{equation} \label{chidd} 
\chi:=\kappa\left( \Theta-\Theta_+\right)-2\sqrt{\rho^2-\lambda} 
 \quad \mbox{and}\quad 
\mathfrak{a}_2:=\frac{\gamma^{q+m}\,|\Theta_+|^m}{
	\kappa\left(\Theta-\Theta_+\right)}. %\ \ \mbox{for given } \gamma\in (0,\infty).
\end{equation}

\begin{theorem}[Profile $\Phi_{\rho,\lambda}^+$, $\lambda<\rho^2$] \label{prof1} Let \eqref{cond1} hold, $\lambda< \rho^2$ and $\Theta>\Theta_+\not=0$.  

\begin{itemize}
\item[(A)] 
For every positive radial solution $u$ of \eqref{eq1} in $B_{R}(0)\setminus \{0\}$ with $R>0$ satisfying 
\begin{equation} \label{sofi1}
\lim_{|x|\to 0} |x|^{\Theta_+} u(x)=\gamma\in (0,\infty),
\end{equation}
we have the following:
\begin{itemize}
	\item[(I)] If $\chi>0$, then there exists a constant $\mathfrak a_1\in \mathbb R$ such that 
		\begin{equation} \label{miom}
	u(r)=\gamma\, r^{-\Theta_+}+\mathfrak{a}_1\,r^{-\Theta-}+(\mathfrak{a}_2/\chi)\,r^{\chi-\Theta_-}(1+o(1))\quad  \mbox{as } r\to 0^+. 
		\end{equation} 
	\item[(II)] If $\chi<0$, then 
	\begin{equation} \label{ros22}	
	u(r)=\gamma\,r^{-\Theta_+}+(\mathfrak{a}_2/\chi)\,r^{\chi-\Theta_-}(1+o(1))\  \mbox{ as } r\to 0^+.
		\end{equation}
		\item[(III)] If $\chi=0$, then 
	\begin{equation} \label{rosaa3}
	u(r)=\gamma\,r^{-\Theta_+}+\mathfrak{a}_2\,r^{-\Theta_-}\log r\,(1+o(1))\quad \mbox{as } r\to 0^+.
	\end{equation}
	\end{itemize}	
\item[(B)] For every $\gamma\in (0,\infty)$, there exists $R>0$ small such that \eqref{eq1} in $B_R(0)\setminus\{0\}$ has infinitely many positive radial solutions satisfying \eqref{sofi1}. Moreover, when $\chi>0$, then for every $\gamma\in (0,\infty)$ and each $\mathfrak{a}_1\in \mathbb R$, there exists 
$R>0$ such that \eqref{eq1} in $B_R(0)\setminus \{0\}$ has a positive radial solution $u$ satisfying \eqref{miom}. 	
\end{itemize}	
\end{theorem}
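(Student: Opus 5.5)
The plan is to factor $\mathbb L_{\rho,\lambda}$ through the special solution $\Phi^+_{\rho,\lambda}=r^{-\Theta_+}$, which reduces the radial equation to a first-order integrable form. Write $r=|x|$ and $\delta:=\Theta_+-\Theta_-=2\sqrt{\rho^2-\lambda}>0$. For radial $u$, \eqref{eq1} reads
$$u''+\frac{1-2\rho}{r}u'+\frac{\lambda}{r^2}u=r^\theta u^q|u'|^m .$$
Substitute $u(r)=r^{-\Theta_+}w(r)$. Since $\Theta_+^2+2\rho\Theta_++\lambda=0$ and $1-2\rho-2\Theta_+=1-\delta$, the zero-order terms cancel and we get
\begin{equation*}
\bigl(r^{1-\delta}w'(r)\bigr)'=r^{1-\delta+\Theta_++\theta}\,u^q|u'|^m=:F(r)\ge 0 .
\end{equation*}
In particular $r\mapsto r^{1-\delta}w'(r)$ is nondecreasing, so it has a limit $c\in[-\infty,\infty)$ as $r\to0^+$.

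\textbf{Step 1: a priori gradient asymptotics.} From \eqref{sofi1} we have $w\to\gamma$. I will use the monotonicity of $r^{1-\delta}w'$ together with $w\to\gamma$ to show $rw'(r)\to0$; this is a standard argument, integrating $w'$ over $[r/2,r]$. It follows that $ru'(r)=r^{-\Theta_+}(-\Theta_+w+rw')=-\Theta_+\gamma r^{-\Theta_+}(1+o(1))$. Here $\Theta_+\neq0$ is essential, since it makes $u'$ nonvanishing and of exact order. Using $\theta+2-m=\kappa\Theta$, substituting into $F$ gives
$$F(r)=A\,r^{\chi-1}(1+o(1)),\qquad A:=\gamma^{q+m}|\Theta_+|^m ,$$
because the exponent works out to $-1-\delta+\kappa(\Theta-\Theta_+)=\chi-1$. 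I expect this step to be the main obstacle: it needs rigorous control of $rw'$ before any expansion is known, especially when $\chi\le0$ and $F$ is not integrable at $0$.

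\textbf{Step 2: integrate twice by cases.} Note that $\chi+\delta=\kappa(\Theta-\Theta_+)>0$.
\begin{itemize}
\item[(I)] If $\chi>0$, then $F$ is integrable at $0$, so $c$ is finite and $r^{1-\delta}w'=c+\frac{A}{\chi}r^\chi(1+o(1))$. Dividing by $r^{1-\delta}$ and integrating from $0$ (the integrand is integrable since $\delta>0$) gives
$$w=\gamma+\frac{c}{\delta}r^\delta+\frac{A}{\chi(\chi+\delta)}r^{\chi+\delta}(1+o(1)).$$
Multiplying by $r^{-\Theta_+}$ yields \eqref{miom} with $\mathfrak a_1=c/\delta$, since $A/(\chi+\delta)=\mathfrak a_2$.
\item[(II)] If $\chi<0$, I integrate $F$ from $r$ to $R$. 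This gives $r^{1-\delta}w'=\frac{A}{\chi}r^{\chi}(1+o(1))$, where the constant is absorbed because $r^\chi\to\infty$. Integrating once more from $0$ gives \eqref{ros22}; the term $r^{\delta}$ is negligible against $r^{\chi+\delta}$.
\item[(III)] If $\chi=0$, the same computation gives $r^{1-\delta}w'=-A\log(1/r)(1+o(1))$, and integration then produces $\mathfrak a_2 r^{\delta}\log r\,(1+o(1))$, which is \eqref{rosaa3}.
\end{itemize}
In every case the $o(1)$ terms are passed through the integrals by a L'H\^opital-type argument.

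\textbf{Step 3: existence (part (B)).} Given $\gamma>0$, and in case $\chi>0$ also $\mathfrak a_1$, I will find $w$ as a fixed point of
$$T[w](r)=\gamma+\mathfrak a_1 r^\delta+\int_0^r s^{\delta-1}\int_{0}^{s}F_w(t)\,dt\,ds .$$
For $\chi\le 0$ the inner integral is instead taken over $[s,R]$ with the appropriate sign. The fixed point is sought in the set $\{w:|w-\gamma|\le\gamma/2,\ r|w'|\le\varepsilon\}$ on $(0,R)$, with the weighted $C^1$ norm. With this choice $u=r^{-\Theta_+}w$ satisfies $u>0$ and $|u'|\asymp r^{-\Theta_+-1}$, so $F_w=O(r^{\chi-1})$, and the map $w\mapsto u^q|u'|^m$ is Lipschitz on this set, again because $\Theta_+\neq0$. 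Using $\chi+\delta>0$, for $R$ small $T$ is a contraction (or Schauder applies) and maps the set into itself. Varying $\mathfrak a_1$, or the free constants (the value at $R$ in the $\chi\le0$ construction), gives infinitely many distinct solutions. Each satisfies \eqref{sofi1} by construction.
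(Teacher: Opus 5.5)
Your proposal is correct. Part (A) is essentially the paper's argument. Your $r^{1-\delta}w'$ is exactly the paper's $\mathfrak H(r)=r^{1+\Theta_-}u'+\Theta_+r^{\Theta_-}u$, and the paper likewise derives the three cases from $\mathfrak H'(r)\sim\gamma^{q+m}|\Theta_+|^m r^{\chi-1}$ via L'H\^opital.

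The step you flag as the main obstacle is actually short. If $c=-\infty$, monotonicity gives $|w(r)-w(r/2)|\ge \delta^{-1}(1-2^{-\delta})\,r^{\delta}\,|r^{1-\delta}w'(r)|$, which forces $rw'\to0$. The paper gets the same conclusion differently: it uses the second nondecreasing quantity $r^{\Theta_++1}u'+\Theta_-r^{\Theta_+}u=r^{1+\delta}(r^{\Theta_-}u)'$ and L'H\^opital to obtain $r^{\Theta_++1}u'\to-\gamma\Theta_+$.

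Part (B) is where you genuinely differ. The paper recasts the problem as three-dimensional autonomous systems, with an extra variable $X_3$ equal to a small power of $r$, and applies the stable manifold theorem. For the general family it uses a two-dimensional stable manifold. For prescribed $\mathfrak a_1$ when $\chi>0$ it uses a one-dimensional one, treating $\mathfrak a_1\neq0$ and $\mathfrak a_1=0$ by separate systems. This requires tuning auxiliary exponents so that the nonlinearity is $C^1$ with vanishing derivative at the origin.

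Your contraction argument works on the twice-integrated equation in the weighted $C^1$ space. It is more elementary and treats all signs of $\chi$ uniformly. The free parameter ($\mathfrak a_1$, or the value of $r^{1-\delta}w'$ at $R$ when $\chi\le0$) enters linearly, so distinct parameters visibly give distinct solutions. The only estimates you need are $\int_0^r s^{\delta-1}\int_0^s t^{\chi-1}\,dt\,ds\lesssim r^{\chi+\delta}$ and its $[s,R]$ and logarithmic analogues. All of these are small because $\chi+\delta=\kappa(\Theta-\Theta_+)>0$, and Lipschitz continuity of $(a,b)\mapsto a^q|b-\Theta_+a|^m$ follows from $\Theta_+\neq0$. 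To get infinitely many solutions on a single ball, keep the parameter in a bounded set so that $R$ is uniform.

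What the paper's route buys is consistency with the dynamical-systems framework it uses for the other profiles, where no exact factorization like $(r^{1-\delta}w')'=F$ is available.
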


The case $m=0$, $\rho=(2-N)/2$ and $\lambda<(N-2)^2/4$ in our Theorem~\ref{prof1} (corresponding to \eqref{miom}) was considered 
in Proposition 3.4 (a) of \cite{Cmem}, although the third asymptotic
term in the expansion of $u(r)$ as $r\to 0^+$ was not determined. 
We point out that even in such a particular framework, we could not find any previous works that correspond to $\chi\leq 0$ in Theorem~\ref{prof1}. 
		
%%%

\begin{theorem}[Profile $\Phi_{\rho,\lambda}^+$, $\lambda=\rho^2$] \label{th81}
	Let \eqref{cond1} hold, $\rho\in \mathbb R$, $\lambda=\rho^2$ and $\Theta>\Theta_\pm=-\rho$. 
	
\begin{itemize}
\item [(A)]	Let $u$ be any positive radial solution of \eqref{eq1} in $B_{R}(0)\setminus \{0\}$ for $R>0$ such that 
\begin{equation} \label{minaaa} 
\lim_{|x|\to 0}\frac{u(x)}{|x|^\rho \log\, (1/|x|)}=\gamma\in (0,\infty). \end{equation} 
Then, by defining $\vartheta=\kappa \left(\Theta+\rho\right)$, there exists $\mathfrak{c}_0\in \mathbb R$ such that as $r\to 0^+$, we have
\begin{equation*} 
u(r)=\left\{ \begin{aligned}
&\gamma \,r^{\rho}\log \,(1/r)+\mathfrak{c}_0\,r^\rho+\frac{\gamma^{q+m} |\rho|^m}{\vartheta^2} \,r^{\vartheta+\rho}\left|\log r\right|^{m+q}(1+o(1))&& \mbox{if } \rho\not=0,&	\\	
& \gamma\,\log \, (1/r)+\mathfrak{c}_0+
\frac{\gamma^{q+m}}{\vartheta^2} \,r^{\vartheta}\left| \log r\right|^q (1+o(1)) && \mbox{if } \rho=0.& 
\end{aligned} \right.
\end{equation*}
\item [(B)]For every $\gamma\in (0,\infty)$, there exists $R>0$ small such that \eqref{eq1} in $B_R(0)\setminus\{0\}$ has infinitely many positive radial solutions satisfying \eqref{minaaa}.
\end{itemize} 	
\end{theorem}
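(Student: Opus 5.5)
Throughout, write $r=e^{s}$ with $s\to-\infty$ and substitute $u(r)=r^{\rho}v(s)$. For radial functions, $r^2\,\mathbb L_{\rho,\lambda}(u)=u_{ss}-2\rho u_s+\lambda u$. When $\lambda=\rho^2$ this equals $(\partial_s-\rho)^2u=e^{\rho s}v_{ss}$. Since $u'(r)=r^{\rho-1}(\rho v+v_s)$, equation \eqref{eq1} becomes
\begin{equation*}
v_{ss}(s)=e^{\vartheta s}\,v^q\,|\rho v+v_s|^m,\qquad \vartheta=\theta+2-m+\rho\kappa=\kappa(\Theta+\rho)>0 .
\end{equation*}
The hypothesis $\Theta>-\rho$ is exactly what makes the weight $e^{\vartheta s}$ decay as $s\to-\infty$. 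Condition \eqref{minaaa} reads $v(s)\sim \gamma|s|=-\gamma s$.

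\textbf{Part (A).} Since $v\sim\gamma|s|$, I will first show that $v_s$ is bounded near $-\infty$. The function $v$ is convex, so $v_s$ is nondecreasing, and $v_s(s)\le (v(s)-v(s-1))$. Combined with $v\sim \gamma |s|$, this gives $\lim_{s\to-\infty}v_s=-\gamma$ (monotone limit plus the mean-value argument). Hence
\begin{equation*}
F(s):=e^{\vartheta s}v^q|\rho v+v_s|^m=O\big(e^{\vartheta s}|s|^{q+m}\big)
\end{equation*}
is integrable near $-\infty$, even after multiplying by $|s|$. Integrating $v_{ss}=F$ twice from $-\infty$ gives
\begin{equation*}
v(s)=-\gamma s+\mathfrak c_0+\int_{-\infty}^{s}(s-\tau)F(\tau)\,d\tau,
\end{equation*}
where the constant $\mathfrak c_0$ exists because the double integral converges.

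Next I identify the leading behaviour of $F$:
\begin{itemize}
\item[$\bullet$] If $\rho\neq0$, then $\rho v+v_s\sim -\rho\gamma s$, so $F(s)\sim\gamma^{q+m}|\rho|^m e^{\vartheta s}|s|^{q+m}$.
\item[$\bullet$] If $\rho=0$, then $|v_s|\to\gamma$, so $F(s)\sim\gamma^{q+m}e^{\vartheta s}|s|^{q}$.
\end{itemize}
An L'Hôpital-type lemma for integrals of $e^{\vartheta\tau}|\tau|^a$ then gives
\begin{equation*}
\int_{-\infty}^s(s-\tau)F\,d\tau\sim \vartheta^{-2}\times\big(\text{leading term of } F\big).
\end{equation*}
Translating back with $u=r^\rho v$ and $|s|=\log(1/r)$ yields both displayed expansions.

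\textbf{Part (B).} For existence I fix $\gamma>0$ and any $\mathfrak c_0\in\mathbb R$, and look for fixed points of
\begin{equation*}
\mathcal T v(s)=-\gamma s+\mathfrak c_0+\int_{-\infty}^s(s-\tau)\,e^{\vartheta\tau}v^q|\rho v+v_\tau|^m\,d\tau
\end{equation*}
on $(-\infty,s_0]$. The working space is
\begin{equation*}
X=\Big\{v\in C^1:\ |v+\gamma s-\mathfrak c_0|+|v_s+\gamma|\le 1\Big\},
\end{equation*}
with the $C^1$ sup-norm. For $s_0$ very negative, $v\ge \gamma|s|/2>0$ on $X$, and $\mathcal T$ maps $X$ into itself because the integral term and its derivative are $O(e^{\vartheta s_0}|s_0|^{q+m+1})$.

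The main obstacle is the nonlinearity $v^q|\rho v+v_s|^m$ when $0<m<1$ or $q<1$, since it is not Lipschitz where its arguments vanish. Here that danger is absent for $s\le s_0$: we have $v\ge\gamma|s|/2$, and $|\rho v+v_s|$ is bounded below by a multiple of $|s|$ if $\rho\ne0$, or by $\gamma/2$ if $\rho=0$. So the nonlinearity is locally Lipschitz on $X$, with constant growing at most polynomially in $|s|$. This polynomial growth is absorbed by $e^{\vartheta s}$, so $\mathcal T$ is a contraction once $s_0$ is chosen negative enough. If the Lipschitz bounds prove awkward, I would fall back on Schauder's theorem using the same a priori bounds.

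The fixed point gives a positive radial solution on $B_R(0)\setminus\{0\}$ with $R=e^{s_0}$ satisfying \eqref{minaaa}. Distinct values of $\mathfrak c_0$ give distinct solutions by Part (A), which yields infinitely many solutions.
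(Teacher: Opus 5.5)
Your Part (A) is the paper's argument written in the variable $s=\log r$. The paper's $\mathfrak H(r)=r\,(r^{-\rho}u)'$ and $\mathfrak T(r)=r^{-\rho}u+\gamma\log r$ are exactly your $v_s$ and $v+\gamma s$, and the monotonicity of $\mathfrak H$ is your convexity of $v$.

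Part (B) takes a genuinely different route. The paper builds the three-dimensional system \eqref{s22p} for $X_1=r^{-\eta_1}(\mathfrak H+\gamma)$, $X_2=r^\rho/u$, $X_3=r^{\eta_3}u$. Its origin is non-hyperbolic, with eigenvalues $\eta_1,0,-(\rho+\eta_3)$, so the paper has to invoke Theorem 7.1 of the appendix of \cite{CRV}. This yields a one-parameter family of solutions in which $\mathfrak c_0$ is not tracked. You instead use that for $\lambda=\rho^2$ the radial operator factors as $(\partial_s-\rho)^2$. The problem then becomes $v_{ss}=e^{\vartheta s}v^q|\rho v+v_s|^m$, an exponentially small perturbation of $v_{ss}=0$, and Banach's fixed point theorem on the twice-integrated equation is enough. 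Your route is more elementary and self-contained, and it proves more: every pair $(\gamma,\mathfrak c_0)\in(0,\infty)\times\mathbb R$ is realised, uniquely within the contraction ball. So the second coefficient in (A) is shown to be a free parameter. The paper's route, in exchange, fits the uniform dynamical-systems scheme of the article and shows how to handle a centre direction.

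Three small repairs are needed.
\begin{itemize}
\item For convex $v$ the inequality is $v_s(s)\ge v(s)-v(s-1)$, not $\le$. Unit-length secants only give $-\gamma+o(|s|)$, which is not enough. Conclude instead from the monotone limit $L=\lim_{s\to-\infty}v_s\in[-\infty,\infty)$ and L'H\^opital: $L=\lim v(s)/s=-\gamma$, as in \eqref{bun1}.
\item With radius $1$ in $X$ and $\gamma\le 1$, $v_s$ can vanish when $\rho=0$, and then $|v_s|^m$ is not Lipschitz for $m<1$. Take the radius $\delta=\min\{1,\gamma/2\}$; the integral terms are still $o(1)$ as $s_0\to-\infty$. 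Also, since $v$ itself is unbounded, measure distances by $\|v_1-v_2\|_\infty+\|v_1'-v_2'\|_\infty$.
\item The threshold $s_0$ depends on $\mathfrak c_0$. Restrict $\mathfrak c_0$ to a bounded interval so that a single ball $B_R(0)\setminus\{0\}$ carries infinitely many solutions.
\end{itemize}
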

	
The case $m=0$ and $\rho=(2-N)/2$ in Theorem~\ref{th81} is addressed in \cite[Proposition 3.4 (a)]{Cmem}, although the third asymptotic
term in the expansion of $u(r)$ as $r\to 0^+$ remained open there.

%%%

\vspace{0.2cm}
We conclude this subsection with Theorem~\ref{mix1}, which assumes that $\lambda\leq \rho^2$ and $\Theta>\Theta_-\not=0$ and establishes 
the existence %and the two-term asymptotic behaviour near zero 
of a positive radial solution of \eqref{eq1} satisfying 	
	\begin{equation} 
		\label{mina}
	\lim_{|x|\to 0} |x|^{\Theta_-} u(x)=\gamma\in (0,\infty). 
		\end{equation}
We also reveal the second term in the asymptotic expansion near zero of any such solution $u$.  
		
%%%%%
\begin{theorem}[Profile $\Phi_{\rho,\lambda}^-$]
	\label{mix1}
	Let \eqref{cond1} hold, $\lambda\leq \rho^2$ and $\Theta>\Theta_-\not=0$. 
	\begin{itemize}
\item[(I)] Let $u$ be any positive radial solution of \eqref{eq1} in $B_{R}(0)\setminus \{0\}$ for $R>0$ satisfying \eqref{mina}. 
	
We define  $\vartheta=\kappa\left(\Theta-\Theta_-\right)>0$. Then, as $r\to 0^+$,  
\begin{equation} \label{unoa}
 r^{\Theta_-} u(r)=\gamma+ C_\gamma \, r^{\vartheta} (1+o(1)),\quad
 \mbox{where } C_\gamma:=\frac{\gamma^{q+m}\,|\Theta_-|^m}
{\vartheta\left(\vartheta+2\sqrt{\rho^2-\lambda}\right)}>0.
 \end{equation}

\item[(II)]  
For every $\gamma\in (0,\infty)$, 
there exists $R>0$ such that \eqref{eq1} in $B_R(0)\setminus \{0\}$  has a positive radial solution $u$ satisfying \eqref{unoa}. 
\end{itemize}
\end{theorem}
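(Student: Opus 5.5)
For a radial $u=u(r)$, equation \eqref{eq1} reads
\[
u''+\frac{1-2\rho}{r}\,u'+\frac{\lambda}{r^2}\,u=r^\theta u^q|u'|^m .
\]
The homogeneous part $\mathbb L_{\rho,\lambda}$ has the fundamental solutions $r^{-\Theta_\pm}$. I would therefore set $u(r)=r^{-\Theta_-}w(r)$ and aim to show $w(r)=\gamma+C_\gamma r^\vartheta(1+o(1))$.

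\emph{Step 1 (equation for $w$).} Since $-\Theta_-$ is a root of the indicial equation, a direct computation gives
\[
(r^{1-2\rho-2\Theta_-}w')'=r^{1-2\rho-2\Theta_-}\,r^{\theta+m\Theta_- -(q+m)\Theta_-}\,w^q\,|{-\Theta_-}w+rw'|^m\, r^{-m}.
\]
The total power on the right is $r^{\vartheta-1-2\rho-2\Theta_-}$ with $\vartheta=\kappa(\Theta-\Theta_-)>0$. Note that $1-2\rho-2\Theta_-=1+2\sqrt{\rho^2-\lambda}=:1+2s$ with $s\ge0$.

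\emph{Step 2 (proof of (I)).} From \eqref{mina} we have $w\to\gamma$. I would first show that $rw'(r)\to0$. Integrate the equation for $w$ from $r_1$ to $r$ and let $r_1\to0$. The key input is that the right-hand side is $O(r^{\vartheta+2s})$, since $\Theta_-\ne0$ and $w$ is bounded. A continuity/boundedness argument then shows that $r^{1+2s}w'$ has a limit $L$ at $0$.
\begin{itemize}
\item If $s>0$ and $L\ne0$, then $w'\sim Lr^{-1-2s}$, which is not integrable at $0$ and contradicts $w\to\gamma$. Hence $L=0$.
\item If $s=0$, then $w'\sim L/r$ would give $w\sim L\log r$, again a contradiction. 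Hence $L=0$.
\end{itemize}
Consequently
\[
r^{1+2s}w'(r)=\int_0^r t^{\vartheta+2s}\,w^q\,|{-\Theta_-}w+tw'|^m\,dt,
\]
and the integrand is positive. Once $tw'\to0$ is established, the integrand tends to $\gamma^{q+m}|\Theta_-|^m$ times $t^{\vartheta+2s}$. This yields
\[
w'\sim \frac{\gamma^{q+m}|\Theta_-|^m}{\vartheta+2s+1}\,r^{\vartheta-1}.
\]
Integrating from $0$ gives $w-\gamma\sim C_\gamma r^\vartheta$ with
\[
C_\gamma=\frac{\gamma^{q+m}|\Theta_-|^m}{\vartheta(\vartheta+2s+1)}.
\]
\emph{Check of the constant.} This does not match \eqref{unoa}: the statement has $\vartheta+2s$ in the denominator, with no $+1$. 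The discrepancy must come from the $r^{-m}$ bookkeeping in Step 1, and I expect the correct exponent of the integrand to be $t^{\vartheta-1+2s}$. Redoing the count: $\theta+2-m=\kappa\Theta$, and the right-hand side $r^\theta u^q|u'|^m$ carries the factor $r^{\theta-(q+m)\Theta_- -m}$. Multiplying by $r^{1+2s}\cdot r^{\Theta_-}$ gives the power $r^{\kappa\Theta-2-\kappa\Theta_-+1+2s}=r^{\vartheta-1+2s}$. Hence $r^{1+2s}w'\sim \gamma^{q+m}|\Theta_-|^m r^{\vartheta+2s}/(\vartheta+2s)$, and integrating once more gives exactly the $C_\gamma$ of \eqref{unoa}. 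The preliminary claim $rw'\to0$ follows from the same integral identity using only the boundedness of $|{-\Theta_-}w+tw'|$. If that boundedness is not available a priori, I would obtain it by a bootstrap or Gronwall-type argument on $z=rw'$, using $m$-homogeneity.

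\emph{Step 3 (existence, (II)).} I would solve the fixed-point problem
\[
w(r)=\gamma+\int_0^r \tau^{-1-2s}\int_0^\tau t^{\vartheta-1+2s}\,w^q\,|{-\Theta_-}w+tw'|^m\,dt\,d\tau
\]
on $(0,R)$. Work in the space of $C^1$ functions with $|w-\gamma|\le\gamma/2$ and $|rw'|\le \gamma|\Theta_-|/2$. The map takes this set into itself for $R$ small, because the integrals are $O(R^\vartheta)$. It is also a contraction in the norm $\sup|w|+\sup|rw'|$, since the nonlinearity is Lipschitz away from zero; here $|\Theta_-|\gamma/2>0$ keeps the gradient factor bounded away from $0$, which handles $m<1$. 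Banach's theorem then gives a solution, and Step 2 supplies the asymptotics.

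\emph{Main obstacle.} I expect the hardest step to be the a priori control of $rw'$ in (I), i.e. ruling out wild oscillation of $u'$ when only \eqref{mina} is known.
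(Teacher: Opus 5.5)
Your reduction is the right one. With $w=r^{\Theta_-}u$ and $s=\sqrt{\rho^2-\lambda}$, the recomputed identity $(r^{1+2s}w')'=r^{\vartheta-1+2s}w^q|{-\Theta_-}w+rw'|^m$ is exactly the paper's observation that $\mathfrak D(r)=r^{1+\Theta_+}u'+\Theta_-r^{\Theta_+}u=r^{1+2s}w'$ is non-decreasing. Discard the first display of Step~1 and your first pass at $C_\gamma$; that exponent was wrong.

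Part~(I) has a genuine gap exactly where you flag it. Nothing in the proposal proves $rw'(r)\to0$, and the limit of the integrand, hence $C_\gamma$, rests entirely on it.

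\begin{itemize}
\item Your ``key input'' is circular. You say the right-hand side is bounded by a power of $r$ because $w$ is bounded, but the right-hand side contains $|{-\Theta_-}w+rw'|^m$.
\item The existence of $L=\lim_{r\to0^+}r^{1+2s}w'\in[-\infty,\infty)$ comes from the sign of the right-hand side (monotonicity), not from a bound on it.
\item A bootstrap or Gronwall argument on $z=rw'$ does not start at $r=0$. The inequality $(r^{2s}z)'\le Cr^{\vartheta-1+2s}(1+z)^m$ closes only once $z$ is known to be bounded on some $(0,r_1]$, which is the claim itself.
\item Monotonicity of $r^{2s}z$ alone gives only $0\le z\le Cr^{-2s}$.
\end{itemize}

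For $\lambda=\rho^2$ your $L=0$ already is the claim, so the gap concerns $\lambda<\rho^2$.

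The missing idea is a second monotone quantity, which your equation already contains. Since $(2sw+rw')'=r^{-2s}(r^{1+2s}w')'\ge0$, the function $\mathfrak H:=2sw+rw'$ is non-decreasing on $(0,R)$. This is the paper's $\mathfrak H(r)=r^{1+\Theta_-}u'+\Theta_+r^{\Theta_-}u$. Because $L=0$ forces $rw'\ge0$, you get $0\le rw'(r)\le\mathfrak H(r_0)-2sw(r)$ for $0<r<r_0$. So $rw'$ is bounded near $0$, and your integral identity then gives $rw'=O(r^{\vartheta})\to0$.

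The paper reaches the equivalent statement $r^{1+\Theta_-}u'\to-\gamma\Theta_-$ from the monotonicity of $\mathfrak H$ and $\mathfrak D$ via L'H\^opital's rule, for instance on $r^{2s}w/r^{2s}$, whose derivative quotient is $\mathfrak H/(2s)$. Monotonicity of $\phi=r^{1+2s}w'$ alone would also suffice: $w(r)-w(r/2)=\int_{r/2}^r\phi(t)\,t^{-1-2s}\,dt\ge c_s\,(r/2)\,w'(r/2)$ with $c_s>0$, and the left side tends to $0$. With this step supplied, the rest of your Step~2 coincides with the paper's computation and gives the stated $C_\gamma$.

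Step~3 is a genuinely different route. The paper builds the solution from the one-dimensional stable manifold of a three-dimensional system in the variables $r^{-\eta}(w-\gamma)$, $\mathfrak G-L_{\mathfrak G}$, $r^\eta$. A contraction on the integral equation is more elementary and also gives uniqueness within your class.

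It works after one correction. The bounds $|w-\gamma|\le\gamma/2$ and $|rw'|\le\gamma|\Theta_-|/2$ only give $|{-\Theta_-}w+rw'|\ge0$, so for $m<1$ the nonlinearity is not Lipschitz on your set. Require, say, $|rw'|\le\gamma|\Theta_-|/4$, which keeps the gradient factor above $\gamma|\Theta_-|/4$. The self-map and contraction estimates, both of order $R^\vartheta$, then go through.
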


The case $m=0$ and $\rho=(2-N)/2$ in our Theorem~\ref{mix1} is known from  \cite{Cmem}, see
Proposition~3.1 (b) when $\lambda<\rho^2$ and Proposition 3.4 (b) when $\lambda=\rho^2$.  
Our approach, however, is very different for both the existence and asymptotics given that changes of variable akin to those in
\cite{Cmem} don't work in our case of gradient-dependent nonlinearities. 		

%%%%

\subsection{Critical cases: competition between $\mathbb L_{\rho,\lambda}$ and the absorption}  \label{subsec3}

In Table \ref{tabel3}, we present the profiles that appear when considering the 
borderline cases to those in Table \ref{tabel2}, i.e., $\Theta=\Theta_-$ and $\lambda\leq \rho^2$. For each case, we establish the existence of positive radial solutions to \eqref{eq1},  with the prescribed behaviour at the origin, supplemented by refined asymptotics near zero (see Theorems \ref{nom}--\ref{zzz0}). 
Our results complement previous works on \eqref{eq1} known only for the cases $m=0$ and 
$\rho=(2-N)/2$ due to \cite{Cmem} and \cite{MF1}. 

%The profiles listed in Table \ref{tabel3} are only known for $m=0$ and  from , see Corollary 7.4 and Corollary 7.5. %However, their asymptotic expansions were not previously available. 

\begin{table}[H]
\centering
\caption{Critical cases: competition between $\mathbb L_{\rho,\lambda}$ and the absorption}
\label{tabel3}
\small
\begin{tabular}{|>{\raggedright\arraybackslash}p{7cm}|p{4cm}|p{2cm}|}
\hline
Definition and sharp range for the profile & Asymptotic profile & Theorems \\
\hline

\makecell[l]{
$V_0(|x|):=\left(\frac{2\sqrt{\rho^2-\lambda}}{\kappa\,|\Theta_-|^{m} } \right)^{\frac{1}{\kappa}}
|x|^{-\Theta_-}\left(\log \frac{1}{|x|}\right)^{-\frac{1}{\kappa}}$\\
where $\lambda<\rho^2, \Theta=\Theta_-\neq0$
}
&
\begin{tabular*}{4cm}{@{}l@{\extracolsep{\fill}}r@{}}
$\displaystyle \lim_{|x|\to 0}\frac{u(x)}{V_0(|x|)}=1$ %& \AddLabel{mino}
\end{tabular*}
& Theorem \ref{nom} \\ \hline

\makecell[l]{
$W_0(|x|):=\left( \frac{2(\kappa+2)}{\kappa^2\, |\Theta_-|^{m}} \right)^{\frac{1}{\kappa}}
 |x|^{-\Theta_-}\left(\log \frac{1}{|x|}\right)^{-\frac{2}{\kappa}}$\\
where $\lambda=\rho^2, \Theta=\Theta_\pm=-\rho\neq0$
}
&
\begin{tabular*}{4cm}{@{}l@{\extracolsep{\fill}}r@{}}
$\displaystyle \lim_{|x|\to 0}\frac{u(x)}{W_0(|x|)}=1$ %& \AddLabel{wzer}
\end{tabular*}
& Theorem \ref{noma} \\ \hline

\makecell[l]{
$Y_0(|x|):= (2|\rho| )^{\frac{1}{\kappa}}
\left( \frac{|m-1|}{\kappa} \right)^{\frac{1-m}{\kappa}}
\left( \log \frac{1}{|x|}\right)^{\frac{m-1}{\kappa}}$\\
where $\lambda=\Theta=0$ and $\rho(m-1)>0$
}
&
\begin{tabular*}{4cm}{@{}l@{\extracolsep{\fill}}r@{}}
$\displaystyle \lim_{|x|\to 0}\frac{u(x)}{Y_0(|x|)}=1$ %& \AddLabel{yoo}
\end{tabular*}
& Theorem \ref{ytam} \\ \hline

\makecell[l]{
$Z_0(|x|):=\left( \frac{(q+1)(2-m)^{1-m}}{\kappa^{2-m}}\right)^{\frac{1}{\kappa}}
\left( \log \frac{1}{|x|}\right)^{\frac{m-2}{\kappa}}$\\
where $\lambda=\Theta=\rho=0$ and $m\in (0,2)$
}
&
\begin{tabular*}{4cm}{@{}l@{\extracolsep{\fill}}r@{}}
$\displaystyle \lim_{|x|\to 0}\frac{u(x)}{Z_0(|x|)}=1$ %& \AddLabel{zoo}
\end{tabular*}
& Theorem \ref{zzz0} \\ \hline

\end{tabular}
\end{table}

\begin{theorem}[Profile $V_0$] \label{nom} 
Let \eqref{cond1} hold, $\rho \in \mathbb{R}$, $\lambda<\rho^2$ and $\Theta=\Theta_-\neq 0$. 
We define
\begin{equation} \label{vodeg}
\mathfrak{M}:=\frac{\kappa+1}{2\sqrt{\rho^2-\lambda}}+\frac{m}{\Theta_-}.
\end{equation}
For every $R>0$, \eqref{eq1} in $B_R(0)\setminus \{0\}$ has infinitely many positive radial solutions satisfying 
\begin{equation} \label{mino} \lim_{|x|\to 0}\frac{u(x)}{V_0(|x|)}=1.
\end{equation}
Let $u$ be any positive radial solution of \eqref{eq1} in $B_R(0)\setminus \{0\}$ for $R>0$ satisfying \eqref{mino}.

$\bullet$ If $\mathfrak M=0$ and $m=1$, then there exists a constant $C\in \mathbb R$ such that
	\begin{equation} \label{vezi} \frac{u(r)}{V_{0}(r)}= \left( 1+\frac{C}{\log\, (1/r)}\right)^{-\frac{1}{\kappa}} \quad \mbox{for every } r>0\ \mbox{small}. 
	\end{equation}

$\bullet$ If $\mathfrak M\neq0$, then
\begin{equation} \label{azi10}
	\frac{u(r)}{V_{0}(r)}= 1+\frac{\mathfrak M}{\kappa^2}\,\frac{\log \log\, (1/r)}{\log \, (1/r)}(1+o(1))\quad \mbox{as } r\to 0^+.
\end{equation}

$\bullet$ If $\mathfrak M=0$ and $ m\neq 1$, then there exists a constant $C\in \mathbb R$ such that as $r\rightarrow 0^+$, 
\begin{equation} \label{sac1}
	\frac{u(r)}{V_{0}(r)}= 1+ \frac{C}{\log \,(1/r)}+
	\left[ \frac{m\left(m-1\right)}{2\kappa^3\,(\Theta_-)^2}+\frac{\left(\kappa+1\right) C^2}{2}\right] \frac{1}{\log^2 \,(1/r)}\,(1+o(1)) .
	\end{equation}
\end{theorem}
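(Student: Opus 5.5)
We reduce the radial problem to an autonomous first-order system in logarithmic variables. For a radial solution write $t=\log(1/r)$ and $u(r)=r^{-\Theta_-}y(t)$. With $\Theta=\Theta_-$, the scaling \eqref{scale} shows that the right-hand side $r^\theta u^q|u'|^m$ carries exactly the power $r^{-\Theta_--2}$. The linear part $\mathbb L_{\rho,\lambda}(r^{-\Theta_-}y)$ reduces to $r^{-\Theta_--2}\bigl(y''+2\sqrt{\rho^2-\lambda}\,y'\bigr)$ up to the sign conventions of $t$. Here we use that $\Theta_-$ is a root of $t^2+2\rho t+\lambda$ and that $-2\Theta_--2\rho=2\sqrt{\rho^2-\lambda}$.

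This gives the autonomous equation
\[
y''-2\sqrt{\rho^2-\lambda}\,y'=y^q\,\bigl|\Theta_-y+y'\bigr|^m \quad\text{as } t\to\infty,
\]
with the appropriate sign convention. Since $\rho^2>\lambda$ and $y\to0$ slowly, the leading balance is $-2\sqrt{\rho^2-\lambda}\,y'\approx |\Theta_-|^m y^{q+m}=|\Theta_-|^m y^{\kappa+1}$. Integrating yields $y\sim\bigl(2\sqrt{\rho^2-\lambda}/(\kappa|\Theta_-|^m t)\bigr)^{1/\kappa}$, which is exactly $V_0$.

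\textbf{Refinement.} Set $y=V(t)\,(1+w)$ with $V=c_0t^{-1/\kappa}$, or more conveniently use $z=y^{-\kappa}$, so that the target is $z\approx\kappa|\Theta_-|^m t/(2\sqrt{\rho^2-\lambda})$. Expanding $|\Theta_-y+y'|^m=|\Theta_-|^my^m\bigl(1+\frac{m}{\Theta_-}\frac{y'}{y}+O((y'/y)^2)\bigr)$ and the term $y''/(2\sqrt{\rho^2-\lambda})$, both produce corrections of order $1/t$ relative to the main term. Precisely, $y'/y\approx-1/(\kappa t)$ and $y''/y'\approx-(\kappa+1)/(\kappa t)$. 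Collecting these gives an equation of the form
\[
z'=a\Bigl(1-\frac{\mathfrak M}{\kappa t}+O(t^{-2})\Bigr)
\]
with $a=\kappa|\Theta_-|^m/(2\sqrt{\rho^2-\lambda})$; the combination $\frac{\kappa+1}{2\sqrt{\rho^2-\lambda}}+\frac{m}{\Theta_-}$ is where $\mathfrak M$ arises.

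Integrating the $z$-equation gives the following.
\begin{itemize}
\item If $\mathfrak M\neq0$, then $z=at-\frac{a\mathfrak M}{\kappa}\log t+O(1)$, and converting back yields \eqref{azi10}.
\item If $\mathfrak M=0$, then $z=at+C'+O(\log t/t)$, and one further iteration gives the $1/t^2$ coefficient in \eqref{sac1}. The term $m(m-1)$ comes from the second-order Taylor term of $|\cdot|^m$.
\item If in addition $m=1$, there is no quadratic term. The equation then becomes exact (linear in $y'$ after the substitution), giving \eqref{vezi}.
\end{itemize}

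\textbf{Rigor.} The error terms must be controlled. We would first prove, for any solution satisfying \eqref{mino}, that $y'/y=O(1/t)$ and $y''/y'\to$ the expected limits. The plan is a L'Hôpital/monotonicity argument on the first-order form $(e^{-2\sqrt{\rho^2-\lambda}t}y')'=e^{-2\sqrt{\rho^2-\lambda}t}y^q|\Theta_-y+y'|^m$. Integrating from $t$ to $\infty$, which is justified because the boundary term vanishes given the growth bounds, expresses $y'$ as a weighted average of the right-hand side. This yields $y'=-\frac{|\Theta_-|^m}{2\sqrt{\rho^2-\lambda}}y^{\kappa+1}(1+o(1))$ and then, by bootstrapping, the $1/t$ corrections. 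This a priori derivative control is the main obstacle, especially the sign issues of $\Theta_-y+y'$ and the second-order expansion needed when $\mathfrak M=0$.

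\textbf{Existence.} For existence of infinitely many solutions for every $R$, we would work directly in $(0,R)$. We construct sub- and super-solutions of the form $V_0(r)(1\pm\epsilon/\log(1/r))^{-1/\kappa}$, and shooting or monotone iteration on $(0,R)$ with boundary values $u(R)=\eta$ for a continuum of $\eta$ yields distinct solutions. Alternatively, we solve the $z$-equation as a fixed point on $[T,\infty)$ with free constant $C'$ and use the scaling $T_j$ of \eqref{scale}, which preserves the profile, to move to an arbitrary $R$. Different $C'$ give distinct solutions.
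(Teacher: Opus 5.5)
Your reduction is correct. With $u=r^{-\Theta_-}y(t)$, $t=\log(1/r)$ and $b:=2\sqrt{\rho^2-\lambda}$, the equation becomes exactly $y''-by'=g$, where $g:=y^q|\Theta_-y+y'|^m$. Your $z=y^{-\kappa}$ is precisely the paper's independent variable $(r^{\Theta_-}u)^{-\kappa}$: one has $dz/dt=\kappa(X+\mathfrak p)$, with the paper's $X$ and $\mathfrak p=|\Theta_-|^m/b$. So your target expansion of $dz/dt$ is the paper's $\lim zX=-\mathfrak M\mathfrak p^2$ (resp.\ $\lim z^2X=m(m-1)\mathfrak p^3/(2\Theta_-^2)$) rewritten.

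The difference is the independent variable. The paper uses $z$ itself, which removes the translation invariance in $t$ and leaves the closed first-order equation \eqref{dust} for $X$, whose linear part $\frac{b}{\kappa\mathfrak p}X$ is repelling. It gets eventual monotonicity of $X$ from the sign of $X''$ at would-be critical points, then bounds $|X|\le C/z$ (resp.\ $C/z^2$), and finishes with L'H\^opital and an integrating factor.

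Your variation-of-constants plan in $t$ can be made to work, but you must supply two steps you left implicit.
(i) You need $y'=o(y)$ before you may expand $|\Theta_-y+y'|^m$. Since $e^{-bt}y'$ is nondecreasing and $y\to0$, you get $P:=-y'\ge0$ and $P'\le bP$. Hence $P(t)\le\frac{b}{1-e^{-b}}\,(y(t-1)-y(t))=o(y(t))$ by \eqref{mino}.
(ii) For the $1/t$ term you need $y''$ small relative to $y'$. This follows from $y''=-\int_t^\infty e^{-b(s-t)}g'(s)\,ds$, the bounds $|\partial_y g|+|\partial_{y'}g|\le Cy^\kappa$, and a sup argument giving $|y''|\le Cy^{2\kappa+1}$. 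One more integration by parts then gives $-y'/y=\mathfrak p y^\kappa-\mathfrak M\mathfrak p^2y^{2\kappa}+o(y^{2\kappa})$, which integrates to \eqref{azi10}.

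Your variable costs more when $\mathfrak M=0$, $m\neq1$. At order $y^{3\kappa}$ the contributions of $g'$ and $g''$ are individually nonzero and sum to $\frac{(2\kappa+1)\mathfrak p^3}{b}\mathfrak M$. So the $m(m-1)$ Taylor term is the only survivor, not the only source, and you also need control of $y'''$. In the paper's variable it is literally the only $z^{-2}$ term in \eqref{dust}.

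For $m=1$, $\mathfrak M=0$, state the rigidity explicitly. The function $w:=y'+\mathfrak p y^{\kappa+1}$ satisfies $w'=bw$ exactly, so $w\to0$ forces $w\equiv0$, which yields \eqref{vezi}. This is the paper's $X\equiv0$ argument.

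Your first existence route fails. The functions $V_0(1\pm\epsilon/\log(1/r))^{-1/\kappa}$ are $y_c=(at+c)^{-1/\kappa}$, and a direct computation gives $y_c''-by_c'-g(y_c,y_c')=\mathfrak p|\Theta_-|^m\mathfrak M\,y_c^{2\kappa+1}(1+o(1))$ independently of $c$. If $\mathfrak M=0$, the residual has the sign of $-m(m-1)$ for both, and it vanishes when $m=1$. So the two functions are never an ordered sub/super pair. Moreover, when $\mathfrak M\neq0$ no solution satisfying \eqref{mino} stays between them near $0$: its correction $\frac{\mathfrak M}{\kappa^2}\frac{\log\log(1/r)}{\log(1/r)}$ leaves any $O(1/\log(1/r))$ band.

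Use your second route, but carry it out in the phase plane as the paper does.
For $T_0$ large, \eqref{dust} has a solution with $X\to0$, obtained by integrating backwards from $z=\infty$; this is a contraction because the linear part is repelling.
Recover $r$ from $dr/dz=-r/(\kappa(X+\mathfrak p))$ with $r(T_0)=r_0$ arbitrary, and set $u=z^{-1/\kappa}r^{-\Theta_-}$.
Shifting $T_0$, which is equivalent to applying $T_j$, gives infinitely many solutions in every $B_R(0)\setminus\{0\}$.
If you work directly in $t$, your constant $C'$ is only the neutral translation mode, not something a contraction fixes.
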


\begin{theorem}[Profile $W_0$] \label{noma} 
Let \eqref{cond1} hold, $\rho \in \mathbb{R}\setminus\{0\}$, $\lambda=\rho^2$ and $\Theta=\Theta_-=-\rho$. For every $R>0$, equation \eqref{eq1} in $B_R(0)\setminus \{0\}$ has infinitely many positive radial solutions satisfying 
\begin{equation} \label{wzer}  
\lim_{|x|\to 0}\frac{u(x)}{W_0(|x|)}=1. \end{equation}
If $u$ is any positive radial solution of \eqref{eq1} in $B_R(0)\setminus \{0\}$ for $R>0$ such that \eqref{wzer} holds, then 
	\begin{equation} \label{samiii}
	\frac{u(r)}{W_0(r)}=1-\frac{4m\left(2+\kappa\right)}{\rho\kappa^2\left(3\kappa+4\right)}\,\frac{\log \log \, (1/r)}{\log \, (1/r) }(1+o(1)) \quad \mbox{as } r\to 0^+.
	\end{equation}
\end{theorem}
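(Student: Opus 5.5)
The plan is to reduce the radial equation to an autonomous second-order ODE in logarithmic variables and then run a perturbative Emden--Fowler analysis.

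\emph{Reduction.} Write $t=\log r$. For radial $u$ one has $r^2u''=u_{tt}-u_t$ and $ru'=u_t$, so with $\lambda=\rho^2$
\[
r^2\,\mathbb L_{\rho,\rho^2}(u)=u_{tt}-2\rho u_t+\rho^2u=(\partial_t-\rho)^2u .
\]
Set $u(r)=r^{\rho}y(s)$ with $s=\log(1/r)\to\infty$. Since $\theta+2-m=\kappa\Theta=-\kappa\rho$, all powers of $r$ cancel, and \eqref{eq1} becomes
\begin{equation*}
y''(s)=y^q\,|\rho y-y'|^m ,\qquad s>s_0:=\log(1/R).
\end{equation*}
In these variables \eqref{wzer} reads $y(s)\sim c_0 s^{-2/\kappa}$, with $c_0^\kappa=2(\kappa+2)/(\kappa^2|\rho|^m)$.

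\emph{Existence.} Any positive solution satisfies $y''>0$, so it is convex. Positive solutions tending to $0$ at infinity must therefore be decreasing, with $y'\to0$. I would obtain them by shooting. Fix $y(s_0)=a>0$ and consider the slope $b=y'(s_0)\le 0$. Let $A$ be the set of $b$ for which $y$ hits zero at a finite $s$, and $B$ the set of $b$ for which $y'$ becomes positive while $y$ is still positive. For $b$ very negative we have $b\in A$, and $b=0$ lies in $B$. Both sets are open by continuous dependence, and the nonlinearity is locally Lipschitz away from $\rho y=y'$, which holds along decreasing orbits since $\rho y - y'\neq 0$ needs a short check when $\rho<0$; if needed, use Peano and a limiting argument. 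Hence there exists $b\notin A\cup B$, which gives a positive decreasing solution on $(s_0,\infty)$. Such a solution has $y\to L\ge0$, and $L>0$ is impossible because it would force $y''\ge c>0$. Varying $a$ yields infinitely many solutions, and rescaling shifts $s_0$, so this works for every $R$.

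\emph{Leading rate.} Since $y\to0$, $y'\to0$ and $y$ is convex, I would first show $y'/y\to0$. Then $|\rho y-y'|^m=|\rho|^my^m(1+o(1))$, so $y''=|\rho|^m y^{\kappa+1}(1+o(1))$. Multiplying by $y'$ and integrating from $s$ to $\infty$ gives $(y')^2\sim \frac{2|\rho|^m}{\kappa+2}y^{\kappa+2}$. Integrating once more yields $y\sim c_0s^{-2/\kappa}$. This is the standard first-integral argument and proves that every decreasing solution satisfies \eqref{wzer}.

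\emph{Refined asymptotics \eqref{samiii}, the main obstacle.} Put $y=c_0s^{-2/\kappa}(1+w(\tau))$ with $\tau=\log s$, and expand
\[
|\rho y-y'|^m=|\rho|^my^m\Bigl(1-\frac{m\,y'}{\rho y}+O\bigl((y'/y)^2\bigr)\Bigr),\qquad \frac{y'}{y}=-\frac{2}{\kappa s}(1+o(1)).
\]
This produces a linear ODE for $w$ of the form $\ddot w+a\dot w-b\,w=\frac{K}{s}+(\text{quadratic terms})+o(1/s)$. The linearization comes from $y''=c\,y^{\kappa+1}$ around the self-similar solution. Its characteristic roots are real with opposite signs, $-1$ and $(\kappa+4)/\kappa$, using $3\kappa+4$ in the discriminant. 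The forcing $K e^{-\tau}$ coincides with the decaying homogeneous mode $e^{-\tau}$. This resonance is what creates the $\tau e^{-\tau}=\log s/s$ term, and its coefficient is obtained by variation of constants. The expected value is $-\frac{4m(2+\kappa)}{\rho\kappa^2(3\kappa+4)}$, where the factor $3\kappa+4$ arises as the difference of the two roots times $\kappa$. The hard part will be justifying the expansion rigorously. I would first prove $w\to0$ and $\dot w\to0$ by a Lyapunov or comparison argument, then show $w=O(\tau e^{-\tau})$ so that the quadratic terms are $o(e^{-\tau})$. Only after that can the resonant coefficient be extracted, with error $o(\tau e^{-\tau})$.
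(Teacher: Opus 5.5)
Your reduction $u=r^\rho y(s)$, $s=\log(1/r)$, to the autonomous equation $y''=y^q|\rho y-y'|^m$ is correct. So is the first-integral derivation of $y\sim c_0s^{-2/\kappa}$. Your formal resonance computation also checks out: with $\alpha=2/\kappa$ the forcing is $Ke^{-\tau}$ with $K=m\alpha^2(\alpha+1)/\rho$, and $C=-K/(2\alpha+3)$ is exactly the constant in \eqref{samiii}.

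Two things need fixing.
\begin{itemize}
\item[(i)] The positive root of $\mu^2-(2\alpha+1)\mu-2(\alpha+1)$ is $2(\kappa+2)/\kappa$, not $(\kappa+4)/\kappa$. Only with this value is the root gap $(3\kappa+4)/\kappa$, as you later use.
\item[(ii)] In the shooting step, the claim ``$b\in A$ for $b$ very negative'' is false in general. Take $m>2$ and $\rho>0$, and write $p=-y'$. The inequality $p\,dp/dy\ge y^qp^m$ gives $p\le C a^{-(q+1)/(m-2)}$ once $y\le a/2$, independently of $b$. Then $p\,dp/dy\ge\rho^m y^{q+m}$ forces $p$ to vanish before $y$ reaches $a/4$ when $a$ is large. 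So $A=\emptyset$ for large $a$, and the connectedness argument yields nothing.
\end{itemize}
For (ii), restrict to small $a$. For $|b|$ large, $p$ then stays above some $p_*\sim a^{-(q+1)/(m-2)}\gg\rho a$ until $y=0$, so $A\neq\emptyset$. You also do not need to vary $a$: translations in $s$ (the paper's $T_j$) already give infinitely many solutions for every $R$. The uniqueness issue you flagged for $\rho<0$ is harmless: the line $\rho y=y'$ is crossed transversally, since $(\rho y-y')'=\rho^2y>0$ there.

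With these repairs your route is viable, but it differs from the paper's. The paper's variables are precisely a reduction of order of your autonomous equation. Its $\mathfrak B_1=r^{-\rho}u$ is your $y$, $\mathfrak B_2=-y'/y$, $e^t=y^{-\kappa/2}$ and $X+\mathfrak Q=-y'\,y^{-1-\kappa/2}$; so $y$ itself is the independent variable. This quotients out the $s$-translation, which is exactly your resonant mode $e^{-\tau}$. What remains is the scalar ODE \eqref{dust2}. Its linearization $X'\approx 2(1+2/\kappa)X-(1+2/\kappa)\frac{m\mathfrak Q^2}{\rho}e^{-t}$ contains only your growing mode, with non-resonant forcing. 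Hence $e^tX(t)\to 2m|\rho|^m/(\rho(3\kappa+4))$ follows from an eventual-sign argument and L'H\^opital. The $\log\log$ term appears only when $X\sim c/\log(1/r)$ is integrated against $dr/r$. Existence likewise comes from the unique decaying solution of \eqref{dust2}, with no shooting and no continuous-dependence issues.

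Your second-order linearization gives a more transparent picture: a self-similar profile, two modes, and a resonance. Once completed, it would also deliver the next term $A/\log(1/r)$, with $A$ the free translation parameter. But the completion is the bulk of the work, and you only sketch it. Note that $w\to0$ is just \eqref{wzer} and $\dot w\to0$ follows from your first integral, so no Lyapunov argument is needed. What remains is excluding the growing mode and bootstrapping $w$ to $O(\tau e^{-\tau})$, and the paper's reduction makes this unnecessary.
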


\begin{theorem}[Profile $Y_{0}$] \label{ytam}
Let \eqref{cond1} hold, $\rho \in \mathbb{R}\setminus\{0\}$, $\lambda=\Theta= 0$ and $\rho\left(m-1\right)>0$. 
For every $R>0$, equation \eqref{eq1} in $B_R(0)\setminus \{0\}$ has infinitely many positive radial solutions satisfying 
\begin{equation} \label{yoo} \lim_{|x|\to 0}\frac{u(x)}{Y_0(|x|)}=1. \end{equation}
Let $u$ be any positive radial solution of \eqref{eq1} in $B_R(0)\setminus \{0\}$ for $R>0$ satisfying \eqref{yoo}. 

$\bullet$ If $q\not=0$, then as $r\to 0^+$, we have
\begin{equation} \label{more1}
\frac{u(r)}{Y_0(r)}= 1-\frac{q}{2\rho\,\kappa^2}\,\frac{ \log \log \,(1/r) } {\log \,(1/r)}(1+o(1)).			
\end{equation}

$\bullet$ If $q=0$, then necessarily $m>1$ and there exists a constant $C\in \mathbb R$ such that
\begin{equation} \label{cec01} u(r)=(2\rho)^{\frac{1}{m-1}} \log \,(1/r) +C\quad \mbox{for every } r>0\ \mbox{small}.  
\end{equation}
\end{theorem}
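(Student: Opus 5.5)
The plan is to pass to the logarithmic variable, where the equation becomes autonomous. For a radial solution, $\mathbb L_{\rho,\lambda}(u)=u''+(1-2\rho)u'/r$ when $\lambda=0$. Set $t=\log(1/r)$ and $v(t)=u(r)$, so that $ru'=-v_t$ and $r^2u''=v_{tt}+v_t$. Since $\Theta=0$ means $\theta+2-m=0$, equation \eqref{eq1} becomes
$$ v''+2\rho\, v'=v^q|v'|^m,\qquad t>t_0:=\log(1/R). $$
This is autonomous, so all the analysis is as $t\to\infty$.

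The leading balance is $2\rho v'\approx v^q|v'|^m$, so $|v'|^{m-1}\approx 2|\rho| v^{-q}$. The sign constraint fixes the direction of monotonicity. If $\rho>0$ and $m>1$, then $v'>0$. If $\rho<0$ and $m<1$, then $v'<0$. Integrating $v^{q/(m-1)}|v'|=(2|\rho|)^{1/(m-1)}$ gives $v^{\kappa/(m-1)}\sim \frac{\kappa}{|m-1|}(2|\rho|)^{1/(m-1)}t$, which is exactly $Y_0$. When $q=0$ the affine function $v=(2\rho)^{1/(m-1)}t+C$ solves the ODE exactly. If instead $m<1$ and $\rho<0$, this function tends to $-\infty$, so positivity forces $m>1$.

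For the asymptotics of an arbitrary solution satisfying \eqref{yoo}, I would set $w=v'$ and
$$ y(t)=\frac{v^q|w|^{m-1}}{2|\rho|}, $$
so that the equation reads $w'/w=2\rho(\mathrm{sgn}(w)\, y-1)$. First I show that $w$ has a fixed sign near $\infty$; this follows from the first-order equation $w'=v^q|w|^m-2\rho w$ together with $v\sim Y_0$. Next I show $y\to 1$. Differentiating, $y'/y=q\,w/v+(m-1)w'/w$, and since $w/v\to 0$ the point $y=1$ is stable in the direction of increasing $t$. Here the condition $\rho(m-1)>0$ is what makes the linearized coefficient $2\rho(m-1)$ positive, so $y$ relaxes exponentially fast to the slowly varying equilibrium.

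The refined step, which I expect to be the main obstacle, is showing $w'/w=-\frac{q}{\kappa t}(1+o(1))$. For this I set $z=t(y-1)$ and derive a first-order ODE for $z$ whose forcing comes from $q\,w/v\sim q(m-1)/(\kappa t)$. Comparison, or an L'Hôpital-type argument applied to $z$, should give $y-1=-\frac{q}{2\rho\kappa t}(1+o(1))$. Integrating $v^{q/(m-1)}|v'|=(2|\rho|)^{1/(m-1)}\bigl(1-\tfrac{q}{2\rho\kappa(m-1)t}(1+o(1))\bigr)$ then yields
$$ v^{\kappa/(m-1)}=At\Bigl(1-\frac{q}{2\rho\kappa(m-1)}\frac{\log t}{t}(1+o(1))\Bigr), $$
and raising to the power $(m-1)/\kappa$ gives \eqref{more1}. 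When $q=0$ the forcing term vanishes, so $y-1$ decays exponentially. In that case $y\equiv 1$ is the only solution compatible with the ODE $w'=w(|w|^{m-1}-2\rho)$ on a half-line. Any other solution either blows up in finite $t$ or has $w$ approaching the other equilibrium $w=0$ without $v\sim Y_0$. This yields \eqref{cec01}.

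For existence on $B_R(0)$ with arbitrary $R$, I prescribe data $(v(t_0),v'(t_0))$ with $v(t_0)>0$ and $y(t_0)$ in a small interval around $1$, chosen inside the invariant region from the stability analysis. I then show that the forward solution is global on $[t_0,\infty)$ and stays in that region; a barrier argument, with $y$ trapped between $1\pm\varepsilon$, keeps $v$ positive and prevents blow-up. By the previous paragraph, each such solution satisfies \eqref{yoo}. Varying $v(t_0)$ over an open set of admissible values gives infinitely many distinct solutions.
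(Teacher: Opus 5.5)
You get the formal computations right: the autonomous form $v''+2\rho v'=v^q|v'|^m$, the balance $y\approx 1$, the correction $y-1=-\frac{q}{2\rho\kappa t}(1+o(1))$, and its integration to \eqref{more1}. However, you have the stability of $y=1$ backwards, and this breaks the existence part.

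Since ${\rm sgn}\,w={\rm sgn}\,\rho$ in the relevant regime, $w'/w=2|\rho|\,{\rm sgn}(w)\,y-2\rho=2\rho(y-1)$; your formula $2\rho({\rm sgn}(w)y-1)$ is wrong when $\rho<0$. Hence $(y-1)'=2\rho(m-1)(y-1)+q\,w/v$ plus higher-order terms, with $q\,w/v=O(1/t)$. A \emph{positive} coefficient $2\rho(m-1)$ means deviations grow like $e^{2\rho(m-1)t}$. So the quasi-equilibrium is repelling as $t\to\infty$, and no band $|y-1|<\varepsilon$ is forward invariant.

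The case $q=0$ makes this explicit. There $w'=w(w^{m-1}-2\rho)$ with $m>1$, so only $w(t_0)=(2\rho)^{1/(m-1)}$ works:
\begin{itemize}
\item a larger slope blows up at finite $t$, so $u$ does not exist on all of $(0,R)$;
\item a smaller positive slope gives $w\to 0$ exponentially, so $u$ converges to a finite limit rather than behaving like $Y_0$.
\end{itemize}
Your own $q=0$ paragraph is inconsistent here: ``$y-1$ decays exponentially'' contradicts the correct conclusion that $y\equiv 1$ is forced.

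Thus the initial data leading to \eqref{yoo} form a curve, not an open set, and the barrier argument in your last paragraph produces no solution. The special trajectory must be singled out. The paper sets $t=u^{\kappa/(m-1)}$ and $X=\frac{\kappa}{1-m}\,t\,\frac{ru'}{u}-\mathfrak C$. This reduces everything to the scalar equation \eqref{dust1}, which has repelling linear part $\frac{2\rho(m-1)}{\mathfrak C}X$ and $O(1/t)$ forcing. It then takes the solution with $X(t)\to 0$ (obtainable, e.g., by integrating from $t=\infty$ or by shooting), recovers $r$ from $dr/dt=-r/(X+\mathfrak C)$, and sets $u=t^{(m-1)/\kappa}$. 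Infinitely many solutions come from the scaling $T_j$, which is a translation in your variable $\log(1/r)$.

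The same sign error invalidates your justification that $y\to 1$ for an arbitrary solution satisfying \eqref{yoo}: there is no relaxation to appeal to. The conclusion is true, but it must come from instability combined with the growth information in \eqref{yoo}. In the paper, for $q\neq 0$, any critical point of $X$ satisfies $X''=-\frac{q}{\kappa}\frac{X+\mathfrak C}{t^2}<0$. So $X$ is eventually monotone, its limit exists, and L'H\^opital together with $t/\log(1/r)\to\mathfrak C$ forces that limit to be $0$.

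Your step for $z=t(y-1)$ likewise concerns an unstable linear equation. It also needs an a priori bound $t|y-1|\le C$ to discard the quadratic terms, which your ``comparison'' does not supply. The paper gets $t|X|\le C$ from ${\rm sgn}\,X=-{\rm sgn}\,X'$ and \eqref{dust1}, then applies L'H\^opital to $Xe^{-ct}$ against $e^{-ct}/t$ with $c=2\rho(m-1)/\mathfrak C>0$.

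Finally, for $q=0$ the restriction $m>1$ follows immediately from $\kappa=m-1>0$; no positivity argument is needed.
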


\begin{theorem}[Profile $Z_0$] \label{zzz0} Let \eqref{cond1} hold, $\lambda=\Theta=\rho=0$ and $m \in (0,2)$. 
Let $u$ be any positive radial solution of \eqref{eq1} in $B_R(0)\setminus \{0\}$ for $R>0$
such that 
\begin{equation} \label{zoo} \lim_{|x|\to 0}\frac{u(x)}{Z_0(|x|)}=1. \end{equation}
Then, there exists a constant $\eta\geq 0$ such that 
			\begin{equation} \label{lama}
			u(r)=u_\eta(r)=\left[ \frac{(2-m)^{1-m}(q+1)}{\kappa^{2-m}}\right]^\frac{1}{\kappa}
			\left( \log \frac{R}{r}+\eta\right)^{-\frac{2-m}{\kappa}}
			\quad \mbox{for all } r\in (0,R).
			\end{equation} 
		Hence, for every $R>0$, equation \eqref{eq1} in $B_R(0)\setminus \{0\}$ has infinitely many positive radial solutions satisfying \eqref{zoo}. 
\end{theorem}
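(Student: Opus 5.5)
The plan is to reduce the radial equation to an autonomous ODE in the logarithmic variable, integrate it twice in closed form, and use \eqref{zoo} only to fix the first integration constant. With $\rho=\lambda=0$, the radial form of $\mathbb L_{0,0}$ is $u''+\frac{N-1}{r}u'+\frac{2-N}{r}u'=u''+\frac{1}{r}u'$. Moreover, $\Theta=0$ means $\theta=m-2$, so the radial equation reads
\begin{equation*}
u''+\frac{u'}{r}=r^{m-2}u^q|u'|^m,\qquad r\in(0,R).
\end{equation*}
Setting $s=\log r$ and $v(s)=u(e^s)$, we have $u'=v'/r$ and $u''+u'/r=v''/r^2$. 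The weight then cancels exactly, and $v$ solves the autonomous equation $v''=v^q|v'|^m$ on $(-\infty,\log R)$. Since $(m-2)/\kappa<0$, the hypothesis \eqref{zoo} gives $v(s)\to 0$ as $s\to-\infty$.

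\emph{Step 1 (sign of $v'$).} From $v''\ge 0$, $v$ is convex, so $v'$ is nondecreasing and has a limit $L\in[-\infty,\infty)$ as $s\to-\infty$. Because $v$ stays bounded near $-\infty$, we must have $L=0$, and monotonicity then gives $v'\ge 0$ everywhere. Suppose $v'(s_0)=0$ for some $s_0$. Then $v'\equiv 0$ on $(-\infty,s_0]$, so $v$ is constant there, and this constant must be $\lim_{s\to-\infty}v=0$, contradicting positivity. Hence $v'>0$ on $(-\infty,\log R)$, which in particular means $u'>0$.

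\emph{Step 2 (first integral).} On the set where $v'>0$, we multiply the equation by $v'^{\,1-m}$ to get
\begin{equation*}
\frac{d}{ds}\Big[\frac{(v')^{2-m}}{2-m}\Big]=v^q v'.
\end{equation*}
This uses $m<2$. Integrating gives $\frac{(v')^{2-m}}{2-m}=\frac{v^{q+1}}{q+1}+c$. Letting $s\to-\infty$, both $v'\to 0$ (Step 1) and $v\to 0$, so $c=0$. Consequently
\begin{equation*}
v'=\Big(\tfrac{2-m}{q+1}\Big)^{\frac{1}{2-m}}v^{p},\qquad p:=\frac{q+1}{2-m}>1,
\end{equation*}
where $p>1$ is exactly the condition $\kappa>0$. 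This is the point where \eqref{zoo} is essential: an ODE of this type also admits solutions with $c\neq 0$, and only the boundary behaviour at $-\infty$ excludes them.

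\emph{Step 3 (integration and identification of $\eta$).} Since $1-p=-\kappa/(2-m)$, we have
\begin{equation*}
\frac{d}{ds}v^{-\kappa/(2-m)}=-\frac{\kappa}{2-m}\Big(\tfrac{2-m}{q+1}\Big)^{\frac{1}{2-m}}.
\end{equation*}
Integrating from $s$ up to $\log R$ gives
\begin{equation*}
v(s)^{-\frac{\kappa}{2-m}}=\frac{\kappa}{2-m}\Big(\tfrac{2-m}{q+1}\Big)^{\frac{1}{2-m}}\big(\log R-s+\eta\big),
\end{equation*}
where $\eta\ge 0$ is determined by $v(\log R^-)^{-\kappa/(2-m)}$. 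This limit exists because $v$ is increasing. The case $\eta=0$ corresponds to $u(R^-)=+\infty$, and $\eta>0$ to a finite value $u(R^-)$. Raising to the power $-(2-m)/\kappa$ and simplifying the constant, $\big[\frac{\kappa}{2-m}\big]^{-\frac{2-m}{\kappa}}\big(\frac{q+1}{2-m}\big)^{\frac1\kappa}=\big[\frac{(2-m)^{1-m}(q+1)}{\kappa^{2-m}}\big]^{\frac1\kappa}$, we obtain exactly \eqref{lama}.

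\emph{Step 4 (existence).} Conversely, for each $\eta\ge 0$, direct substitution shows that $u_\eta$ solves \eqref{eq1} in $B_R(0)\setminus\{0\}$. Indeed, reversing Steps 3 and 2 shows it satisfies $v'=C v^p$ with the first integral $c=0$, and differentiating this gives back $v''=v^q(v')^m$. Moreover $u_\eta/Z_0\to 1$ as $r\to 0^+$, because $\log(R/r)+\eta\sim\log(1/r)$. Distinct values of $\eta$ give distinct solutions, which proves the infinitude claim. The only delicate point in the whole argument is Step 1, namely the strict positivity of $v'$, which the convexity argument handles.
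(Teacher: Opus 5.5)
Your proof is correct and follows the paper's argument. Your first integral $\frac{(v')^{2-m}}{2-m}-\frac{v^{q+1}}{q+1}$ in the variable $s=\log r$ is, up to the factor $2-m$, exactly the paper's conserved quantity $\mathfrak T_m(r)=r^{2-m}(u'(r))^{2-m}-\frac{2-m}{q+1}u^{q+1}(r)$, which the paper likewise shows vanishes identically via $ru'(r)\to 0$ and $u(r)\to 0$ before integrating explicitly (your Step~1 convexity argument just fills in the paper's brief appeal to the monotonicity of $ru'(r)$ and L'H\^opital's rule).
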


%\subsection{Comparison with the literature}

%\subsection{Applications of our main results}

\subsection{Discussion on our results} 
We discuss (1) the motivation for our study of radial solutions, (2) the novelty and strategy of our proofs and (3) applications of our methods and results.

%For a generalisation of the results in \cite{Cmem,MF1} to quasilinear elliptic equations with Hardy potential and an absorption term of the form $|x|^\theta u^q$, we refer to \cite{BiCh}. 

\vspace{0.2cm}
{\em 1. Motivation for the study of radial solutions.} Our focus on the positive and radially symmetric solutions of \eqref{eq1} is motivated by recent findings in \cite{MF1} and \cite{CCM}.
In \cite{MF1} the authors have fully classified all positive solutions for \eqref{eq1} in $\mathbb R^N\setminus \{0\}$ when $m=0$ and $\rho=(2-N)/2$, namely
\begin{equation} \label{mubb1} \Delta u+\frac{\lambda }{|x|^2} u=|x|^\theta u^q\quad \mbox{in }\mathbb R^N\setminus \{0\}, 
\end{equation} where $\lambda,\theta\in \mathbb R$, $N\geq 3$ and $q>1$. 
As shown in \cite{MF1}, \eqref{mubb1} has positive solutions if and only if  $$\lambda>\lambda^*=\frac{\theta+2}{q-1}\left(N-2-\frac{\theta+2}{q-1}\right).$$ 
Under this sharp condition, 	
	every positive solution of \eqref{mubb1} is {\em radially symmetric}.
For a recent generalisation of the classification results in \cites{Cmem,MF1} to quasilinear elliptic equations with Hardy potential and an absorption term of the form $|x|^\theta u^q$, we refer to \cite{BiCh}.	
	
	The above phenomenon of radial symmetry for all positive solutions of \eqref{mubb1} in $\mathbb R^N\setminus \{0\}$ applies to related but different problems recently considered in \cite{CCM} such as 
	\begin{equation} \label{unu1}\mathbb L_{\rho,\lambda,\tau}[u]:=  \Delta u-(N-2+2\rho) \frac{x\cdot \nabla u}{|x|^2} +\lambda \frac{u^\tau |\nabla u|^{1-\tau}}{|x|^{1+\tau}}=|x|^\theta u^q\quad \mbox{in }
\mathbb R^N\setminus \{0\}, 
\end{equation}
where $\rho,\lambda, \theta\in \mathbb R$ are arbitrary, $N\geq 2$, $q>1$ and $\tau\in [0,1)$. 
It is proved in \cite{CCM} that \eqref{unu1} has positive $C^1(\mathbb R^N\setminus \{0\})$ distributional solutions if and only if 
$$ \frac{\theta+2}{q-1}\left( \frac{\theta+2}{q-1} +2\rho\right) +\lambda |t|^{1-\tau}>0.$$ 
Under this condition, all such solutions are   
{\em radially symmetric};  the existence and 
exact asymptotic behaviour near zero and at infinity for these solutions are also provided. 

\vspace{0.2cm}
{\em 2. Novelty and strategy of the proofs.}
Our results establish the existence and refined asymptotics of the positive radial solutions of \eqref{eq1} in $B_{R}(0)\setminus \{0\}$ with
a gradient-dependent absorption $|x|^\theta u^q |\nabla u|^m$ for arbitrary $m>0$. Our approach here is totally different from that in previous works  on particular cases of \eqref{eq1}
when $\rho=(2-N)/2$ and either $m=0$ (\cites{Cmem,MF1,LD2017}) or 
$m\in (0,2)$ and $\lambda=\theta=0$ (\cite{CC2015}). 
The methods for proving the existence results in \cites{Cmem,MF1,CC2015} could be adapted to cover only the range $m\in (0,2)$ for problem \eqref{eq1} when subject to a Dirichlet boundary condition on $\partial \Omega$ and $\Omega$ a smooth bounded domain in $\mathbb R^N$. The restriction $m<2$ appears when obtaining  
gradient estimates and the solution as the limit of a sequence of positive solutions for approximating boundary value problems.

In proving our main results, we use a different strategy 
as we do not impose any 
boundary condition on $\partial B_R(0)$. 
%which allows us to often obtain multiplicity results for suitable small $R>0$.    
For each possible asymptotic profile near zero (see Tables~\ref{tabel1}--\ref{tabel3}), 
we first refine the behaviour near zero for {\em all} positive radial solutions of \eqref{eq1}. Then, essentially use this information to reduce the
existence question for \eqref{eq1} to that for a
first order ODE of the form 
$X'(t)=f(t,X(t))$ on an interval $(T_0,\infty)$ with
$\lim_{t\to \infty} X(t)=0$.  
The key innovation and difficulty is to apply the {\em refined asymptotics} knowledge at zero for the radial solutions $u$ of \eqref{eq1} to design $t$ and $X(t)$ in terms of $u(r)$, $r=|x|$ and $u'(r)$ so that $t\to \infty$ and $X(t)\to 0$ as $r\to 0^+$.  
Then, the existence (resp,  multiplicity) of the radial solutions of \eqref{eq1} with the desired behaviour near zero follows from the
existence (resp., multiplicity) of solutions $X(t)$ for 
\begin{equation} \label{cumva} X'(t)=f(t,X(t))\quad \mbox{with } t\geq T_0>0.\end{equation}

In our case, if using only the dominant behaviour in the profile to create a dynamical system with three variables often leads to non-hyperbolic critical points when the stable and center manifold theory cannot be directly applied in their standard form. We illustrate this with the proof of Theorem~\ref{th81}, which is based on 
Theorem 7.1 in the Appendix of \cite{CRV}. Otherwise, we succeed in giving a  
different adaptation of the dynamical systems method as explained above leading to one first order ODE, see \eqref{cumva}. (This could be written as a 
system of two autonomous first order  ODEs in which one is explicitly solved.) 

The study of the positive radial solutions of \eqref{eq1} in $B_R(0)\setminus \{0\}$ reduces to that of the ODE: 
\begin{equation} \label{ez} u''(r)+(1-2\rho)\frac{u'(r)}{r}+\frac{\lambda}{r^2}\,u(r)=r^\theta u^q(r)|u'(r)|^m\quad \mbox{for all } r\in (0,R).  
	\end{equation}
	
In designing $t$ and $X(t)$, we often use that any positive solution of \eqref{ez} satisfies 
\begin{equation} \label{muzia} r \frac{d}{dr} \left(\frac{ru'(r)}{u(r)}\right) =
2\rho \, \frac{ru'(r)}{u(r)}-\lambda -\left(\frac{ru'(r)}{u(r)}\right)^2 +(r^\Theta u(r))^\kappa \left|\frac{ru'(r)}{u(r)}\right|^m 
\end{equation}	
for every $r\in (0,R)$. 

\vspace{0.2cm}
{\em 3. Applications of our methods and results.} 
We point out that our methods for obtaining local existence of radial solutions with a prescribed behaviour near zero have wider applicability.
Below, we mention distinct classes of problems. 

(A) For \eqref{unu1}, one can use 
our approach here, in conjunction with 
the complete classification
of the local behaviour near zero obtained in \cite{CCM}, to establish local existence and refined asymptotics of the positive radial solutions. 
This line of investigation will be pursued elsewhere.  

(B) Our methods could be adapted to provide new insights on the singular solutions of (ND)-type discovered in  \cites{CR,CRV}  for 
a different class of problems of the form
\begin{equation} \label{flfr} -\Delta u=\frac{u^{2^\star(s)-1}}{|x|^s}-\mu u^q\quad \mbox{in } B_R(0)\setminus \{0\}.
\end{equation} Here, $B_R(0)\subset \mathbb R^N$ with $N\geq 3$, $s\in (0,2)$, $2^\star(s)=2(N-s)/(N-2)$, $\mu>0$ and $q>1$. 
By Theorem 1.3 in \cite{CRV}, if $2^\star(s)-1<q<2^*-1$, then there exists $R>0$ such that \eqref{flfr} has infinitely many positive radial solutions of (ND) type (for "Non Differential"), i.e., % solutions 
satisfying 
$$ \lim_{|x|\to 0}  |x|^{\frac{s}{q-(2^\star(s)-1)}}  u(x)=\mu^{-\frac{1}{q-(2^\star(s)-1)}}.
$$
The (ND) profile, which can only appear in the range $2^\star(s)-1<q<2^*-1$ (see \cite{CR}), is generated by the competition between the terms in the right-hand side of \eqref{flfr}.
Like in the case of our first two  profiles outlined in Table~\ref{tabel1}, the Laplacian has no visible effect on the leading term in the asymptotics of the (ND) solutions.  Our strategy could be used to refine the asymptotics near zero of the radial solutions of (ND) type and then conclude the existence of such solutions. 

\vspace{0.2cm}
(C) Our main results %provide precise knowledge of the radial solutions
help us decipher the radial solutions of two other classes of nonlinear problems. 
For $N\geq 2$ and $\Omega=B_R(0)$ (or $\Omega=\mathbb R^N\setminus \{0\}$), we consider the radial solutions $w$ of 
 \begin{equation} \label{rigo1}
\Delta w+|\nabla w|^2 +\mu \frac{x\cdot \nabla w}{|x|^2} +\frac{\lambda}{|x|^{2}} =|x|^\theta e^{\kappa w}|\nabla w|^m\quad \mbox{in } 
\Omega\setminus \{0\},
\end{equation}
where $\mu,\lambda,\theta\in \mathbb R$ and $\kappa,m>0$, resp., the positive radial solutions $v$ of 
\begin{equation} \label{rigo2}
	\Delta (v^\alpha) +\mu \frac{x\cdot \nabla (v^\alpha)}{|x|^2} 
	+\lambda \frac{v^{\alpha}}{|x|^{2}} =|x|^\theta |\nabla v|^{m}\quad \mbox{in }\Omega\setminus \{0\},
\end{equation}
where $\mu,\lambda,\theta\in \mathbb R$, while $\alpha\in (0,1]$ and $m>\alpha$. For both problems, we let $\rho=(2-N-\mu)/2$. 

From our main results (Theorems~\ref{greu1}--\ref{zzz0}) with $u=e^w$ and $q=\kappa-m+1$ (resp., 
$u=\alpha^{m\alpha/(\alpha-m)} v^\alpha$ with $q=m(1-\alpha)/\alpha$), we obtain the existence and precise asymptotic behaviour near zero and at infinity of the radial solutions of \eqref{rigo1} (resp., positive radial solutions of \eqref{rigo2}).

Problems of the form \eqref{rigo2} are the stationary versions of parabolic equations with fast diffusion ($\alpha\in (0,1)$), featuring gradient-dependent absorption 
($|x|^\theta |\nabla v|^m$). The fast diffusion equation, $u_t=\Delta (u^\alpha)$ with $\alpha\in (0,1)$, is an important model for singular nonlinear diffusion phenomena, including gas-kinetics, thin liquid film dynamics and diffusion in plasmas (see \cites{das,Vazq}).  
Degenerate and singular parabolic equations with absorption have been intensively studied by many authors. The competition between
the diffusion $\Delta (v^\alpha)$ and the absorption leads to different dynamics and  large time behaviour of non-negative solutions for various regimes 
 of the exponents (see, e.g.,  \cites{Iag1,BIL,IL} and their references).

\vspace{0.2cm}
\noindent {\bf Structure of the paper.} In what follows, we prove the main results in separate sections.
% is dedicated to proving the main results. 

\section{The constant asymptotic profile when $\Theta<0<\lambda$ (Proof of Theorem~\ref{greu1})}

Let \eqref{cond1} hold, $\rho\in \mathbb{R}$, $\Theta<0$ and $\lambda>0$. Unless otherwise stated, $u$ is any positive radial solution of \eqref{eq1} in $B_R\{0\}\setminus \{0\}$ for $R>0$ satisfying \eqref{consta}, that is 
\begin{equation}\label{repet}
\lim_{r\rightarrow 0^+}u(r)=\gamma\in (0,\infty). 
\end{equation}

\begin{lemma}\label{nou1}
For every  $r>0$ small, it holds 
\begin{equation}\label{fezero} 
u'(r) \, \mathfrak B'(r)>0,\ \mbox{where } \mathfrak B(r)=\frac{ru'(r)}{u(r)}.
\end{equation}
Furthermore, 
we have $\lim_{r\rightarrow 0^+} \mathfrak B(r)=0$.
\end{lemma}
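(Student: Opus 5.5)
The plan is to work with $\mathfrak B(r)=ru'(r)/u(r)$ and the first-order identity \eqref{muzia}. Writing $g(r):=(r^\Theta u(r))^\kappa$, it reads
$$ r\,\mathfrak B'(r)=h(r):=g(r)\,|\mathfrak B(r)|^m-\lambda-\mathfrak B(r)^2+2\rho\,\mathfrak B(r).$$
Since $\Theta<0$, $\kappa>0$ and $u\to\gamma>0$ by \eqref{repet}, we have $g(r)\to\infty$ as $r\to0^+$. The argument has four steps: fix the sign of $u'$, show $\liminf|\mathfrak B|=0$, get monotonicity of $\mathfrak B$ from a second-derivative test at its critical points, and finish with a barrier argument in the case $u'>0$.

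\emph{Step 1 (sign of $u'$).} At a zero $r_0$ of $u'$, \eqref{ez} gives $u''(r_0)=-\lambda u(r_0)/r_0^2<0$, because $\lambda>0$. So every critical point of $u$ is a strict local maximum. Two consecutive maxima would need a minimum between them, so $u$ has at most one critical point. Hence $u'$, and therefore $\mathfrak B$, has a constant strict sign on some $(0,r_1)$.

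\emph{Step 2 ($\liminf|\mathfrak B|=0$).} If $|\mathfrak B|\ge c>0$ on $(0,r_1)$, then integrating $u'/u=\mathfrak B/r$ gives $|\log u(r)|\to\infty$, contradicting \eqref{repet}. Hence there are $r_n\to0$ with $\mathfrak B(r_n)\to0$.

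\emph{Step 3 (critical points of $\mathfrak B$).} At a zero $r_0$ of $\mathfrak B'$, differentiating $r\mathfrak B'=h$ and using $\mathfrak B'(r_0)=0$ gives
$$ r_0\mathfrak B''(r_0)=g'(r_0)|\mathfrak B(r_0)|^m=\frac{\kappa\, g(r_0)}{r_0}\,\bigl(\Theta+\mathfrak B(r_0)\bigr)\,|\mathfrak B(r_0)|^m .$$
Here we use $g'/g=\kappa(\Theta+\mathfrak B)/r$. Since $\mathfrak B(r_0)\neq0$ by Step 1, every critical point of $\mathfrak B$ at which $\mathfrak B<|\Theta|$ is a strict local maximum. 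This holds in particular at every critical point when $\mathfrak B<0$.

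\emph{Step 4 (conclusion).}

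Case $u'<0$, so $\mathfrak B<0$. By Step 3 all critical points of $\mathfrak B$ are strict maxima, so there is at most one, and $\mathfrak B$ is strictly monotone near $0$ with a limit $L\in[-\infty,0]$. Step 2 forces $L=0$. A negative function increasing to $0$ as $r\downarrow0$ must satisfy $\mathfrak B'\le0$, and in fact $\mathfrak B'<0$, since zeros of $\mathfrak B'$ would be strict maxima, which a monotone function cannot have. Thus $u'\mathfrak B'>0$.

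Case $u'>0$, so $\mathfrak B>0$. This is the main obstacle, because $\mathfrak B$ could a priori be large, and then the sign in Step 3 is uncontrolled. Fix $\delta\in(0,|\Theta|/2)$. For $r$ small, whenever $\mathfrak B(r)\in[\delta,|\Theta|]$ we have $h(r)\ge g(r)\delta^m-\lambda-\Theta^2-2|\rho||\Theta|>0$, so $\mathfrak B'>0$ there. Take $r_n$ from Step 2 with $\mathfrak B(r_n)<\delta$ and $r_n$ small. If $\mathfrak B$ reached the value $\delta$ somewhere in $(0,r_n)$, then at the largest such point $s<r_n$ we would have $\mathfrak B(s)=\delta$, $\mathfrak B<\delta$ on $(s,r_n]$, and hence $\mathfrak B'(s)\le0$, contradicting $h(s)>0$. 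So $0<\mathfrak B<\delta$ on $(0,r_n)$. As $\delta$ is arbitrary, this gives $\mathfrak B\to0$.

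Now $0<\mathfrak B<|\Theta|$ near $0$, so by Step 3 every critical point of $\mathfrak B$ is a strict maximum, and there is at most one. Hence $\mathfrak B$ is strictly monotone near $0$. Since it is positive and tends to $0$, it must increase in $r$, with $\mathfrak B'>0$ strictly (again, a zero of $\mathfrak B'$ would be a strict maximum). Thus $u'\mathfrak B'>0$.

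In both cases $\lim_{r\to0^+}\mathfrak B(r)=0$ and $u'(r)\,\mathfrak B'(r)>0$ for all small $r>0$, which is the statement of Lemma~\ref{nou1}.
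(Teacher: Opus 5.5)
Your proof is correct. It departs from the paper's route in the case $u'>0$.

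The paper first shows that zeros of $\mathfrak B'$ cannot accumulate at $0$, and only afterwards reads off $\lim_{r\to0^+}\mathfrak B(r)=0$ from monotonicity and \eqref{repet}. For $\mathfrak B>0$ this means locating the values $\mathfrak B(r_n)$ at critical points through the relation $g(r_n)\,\mathfrak B(r_n)^m=\lambda-2\rho\,\mathfrak B(r_n)+\mathfrak B(r_n)^2$ (with your $g$). That relation forces a split into $m\ge2$, where the values tend to $0$ and all critical points are maxima, and $m<2$, where the values tend to $0$ or $\infty$. The latter includes a mixed case in which maxima with small values and minima with large values coexist. The paper dismisses that case in one line, although it needs an ordering argument on consecutive critical points.

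You reverse the order. You get $\liminf_{r\to0^+}|\mathfrak B(r)|=0$ directly from $u\to\gamma$ by integrating $u'/u=\mathfrak B/r$. Your barrier argument then traps $\mathfrak B$ in $(0,\delta)$: for small $r$, $r\mathfrak B'=h>0$ wherever $\mathfrak B=\delta$, so $\mathfrak B$ cannot climb back to $\delta$ as $r$ decreases. Consequently the second-derivative test is only ever needed in the regime $\mathfrak B<|\Theta|$, where every critical point is automatically a strict maximum. This makes your argument uniform in $m$ and removes the subsequence case analysis entirely.

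The paper's route, in exchange, follows the template reused throughout its later sections: exclude sequences of critical points by a fixed sign of the second derivative, then pass to limits via L'H\^opital's rule. Your Step~1 is also marginally stronger, giving at most one critical point of $u$ on all of $(0,R)$ rather than just near $0$.
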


\begin{proof}
Observe that for $r_0>0$ small, we have $$u'(r)\neq 0\quad \mbox{for every }r\in (0, r_0).$$ 
Suppose the contrary, i.e., $u'(r_n)=0$ for a sequence $\{r_n\}_{n\geq 1}$ of positive numbers decreasing to $0$ as $n\to \infty$. 
Then, $ u''(r_n)<0$ for every $n\geq 1$. Hence, we get a contradiction as $u''(r_n)$ would have a constant sign for all $n\geq 1$.  

Recall from \eqref{muzia} that for every $r\in (0,R)$, 
 $$ r \mathfrak B'(r)=-\lambda+2\rho \,\mathfrak B(r)-\mathfrak B^2(r)+r^{\kappa \Theta}u^{\kappa}(r)\,|\mathfrak B(r)|^m.$$
Suppose by contradiction that there exists a sequence $\{r_n\}_{n\geq 1}$ in $(0,r_0)$ decreasing to $0$ as $n\rightarrow \infty$ such that $\mathfrak B'(r_n)=0$ for all $n\geq 1$. 
Then, for all $n\geq 1$, we find that 
$$r_n^2\mathfrak B''(r_n)=\kappa \left [\Theta+\mathfrak B(r_n)\right ]r_n^{\kappa \Theta}u^\kappa(r_n)|\mathfrak B(r_n)|^m.$$

Since $u'$ and, hence, $\mathfrak B$ has a constant sign on $(0,r_0)$, we have two situations:

{\bf (I)} Let  $\mathfrak B(r)>0$ for every $r\in (0,r_0)$. Then, we distinguish two cases:

(a) Let $m\geq 2$. Then, from $\mathfrak B'(r_n)=0$ for all $n\geq 1$, we infer that $\{\mathfrak B(r_n)\}_{n\geq 1}$ is bounded and $\lim_{n\to \infty} \mathfrak B(r_n)=0$. Thus,
 $\mathfrak B''(r_n)<0$ for all $n\geq 1$ large, which is impossible. 

(b) Let $m<2$. In this case, if $\{\mathfrak B(r_{n_j})\}_{j\geq 1}$ has a limit in $\mathbb R\cup\{+\infty\}$ for some subsequence $\{r_{n_j}\}_{j\geq 1}$ of $\{r_n\}$, then
$\lim_{j\to \infty} \mathfrak B(r_{n_j})$ is either $0$ or $\infty$. Therefore, one of the following applies:

(1) $\lim_{n\to \infty} \mathfrak B(r_n)=0$. Here, $\mathfrak B''(r_n)<0$ for all $n\geq 1$ large, which is impossible. 

(2) $\lim_{n\to \infty} \mathfrak B(r_n)=\infty$. Then, $\mathfrak B''(r_n)>0$ for all $n\geq 1$ large, which is impossible.

(3) There exist subsequences $\{r_{n_j}\}_{j\geq 1}$ and $\{\widetilde r_{n_j}\}_{j\geq1 }$ of $\{r_n\}$ such that 
$$ \mathfrak B(r_{n_j})\to 0\quad \mbox{and}\quad  
\mathfrak B(\widetilde r_{n_j})\to \infty\ \mbox{as } j\to \infty.$$
Then, for $j\geq 1$ large enough, we have $\mathfrak B''(r_{n_j})<0<\mathfrak B''(\widetilde r_{n_j})$, implying that  $r_{n_j}$ are local maxima points of $\mathfrak B(r)$, while $\widetilde r_{n_j}$ are local minima points of $\mathfrak B(r)$. This is impossible. 

{\bf (II)} Let $\mathfrak B(r)<0$ for every $r\in (0,r_0)$. Then, $\mathfrak B''(r_n)<0$ for all $n\geq 1$, which means that all critical points of $\mathfrak B$ on $(0,r_0)$ are local maxima points.
This is impossible.  

\vspace{0.2cm}
Thus, in either Case (I) or Case (II), we have ${\rm sgn}\,(\mathfrak B'(r))={\rm sgn} \,(u'(r))\not=0$ for all $r>0$ small. 
Hence, $\lim_{r\to 0^+}\mathfrak B(r)=0$ in view of \eqref{repet}. This ends the proof of Lemma~\ref{nou1}. 
\end{proof}

Let $\mathcal C=\pm \lambda^{1/m}$, where ${\rm sgn}\,(\mathcal C)={\rm sgn} \, (u'(r))$ for $r>0$ small. 
For each $r>0$ small, we define 
\begin{equation}\label{ecot}
t=r^{-\sigma}(u(r))^{\frac{\kappa}{m}}\ \mbox{with } \sigma=-\frac{\kappa \Theta}{m}>0
\quad\text{and}\quad X(t)=t\frac{ru'(r)}{u(r)}-\mathcal C.
\end{equation}

\begin{lemma} \label{gara1}
Then, $t\rightarrow \infty$ and $X(t)\rightarrow 0$ as $r\rightarrow 0^+$. Moreover, there exists $T_0>0$ large such that for all 
$t\geq T_0$, we have $X(t)+\mathcal C\not=0$ and 
\begin{equation}\label{1xeq}
\frac{dX}{dt}=\frac{\left (\sigma-2\rho\right )\frac{X(t)+\mathcal C}{t}+\frac{1-q}{m}\left (\frac{X(t)+\mathcal C}{t}\right )^2+\lambda\left [1-\left |1+\frac{X(t)}{\mathcal C}\right |^m\right ]}{\sigma-\frac{\kappa}{m}\frac{X(t)+\mathcal C}{t}}.
\end{equation}
Moreover, if $u'(r)>0$ (resp., $u'(r)<0$) 
for all $r>0$ small, then $u$ satisfies 
$(P_+)$ (resp., $(P_-)$) given by
\begin{equation} \label{frio} \tag{$P_{\pm}$} 
u(r)=\gamma \left( 1\pm \frac{\lambda^{\frac{1}{m}}}{\sigma \gamma^{\frac{\kappa}{m}}} r^\sigma (1+o(1)) \right) \ \mbox{as } r\to 0^+.
\end{equation} 
\end{lemma}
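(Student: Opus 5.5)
We first note that $t\to\infty$ as $r\to0^+$ is immediate. Indeed, $u(r)\to\gamma>0$ by \eqref{repet} and $\sigma>0$, so $t=r^{-\sigma}u^{\kappa/m}\sim \gamma^{\kappa/m}r^{-\sigma}\to\infty$.

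Next we rewrite \eqref{muzia} in terms of $t$. Since $r^{\kappa\Theta}u^\kappa=(r^{-\sigma}u^{\kappa/m})^m=t^m$, identity \eqref{muzia} becomes
\[
r\mathfrak B'=-\lambda+2\rho\mathfrak B-\mathfrak B^2+t^m|\mathfrak B|^m=-\lambda+2\rho\mathfrak B-\mathfrak B^2+|t\mathfrak B|^m .
\]
We also have
\[
r\frac{dt}{dr}=t\Big(-\sigma+\frac{\kappa}{m}\mathfrak B\Big).
\]
Put $Y=t\mathfrak B=X+\mathcal C$, so that $\mathfrak B=Y/t$. Then
\[
r\frac{dY}{dr}=Y\Big(-\sigma+\frac{\kappa}{m}\frac{Y}{t}\Big)+t\Big(-\lambda+2\rho\frac{Y}{t}-\frac{Y^2}{t^2}+|Y|^m\Big).
\]
Dividing by $r\,dt/dr$ and collecting terms, the $Y^2/t^2$ coefficient is $\frac{\kappa}{m}-1=\frac{q-1}{m}$, so after multiplying numerator and denominator by $-1$ it becomes $\frac{1-q}{m}$. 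Writing $|Y|^m=\lambda|1+X/\mathcal C|^m$ (since $|\mathcal C|^m=\lambda$) gives exactly \eqref{1xeq}. This is valid wherever $\sigma-\frac{\kappa}{m}\mathfrak B\neq0$, which holds for small $r$ because $\mathfrak B\to0$ by Lemma~\ref{nou1}; hence $t(r)$ is strictly monotone there and $X$ is a genuine function of $t$ on some $[T_0,\infty)$.

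The main step is proving $X(t)\to0$, that is, $Y=t\mathfrak B\to\mathcal C$. By Lemma~\ref{nou1}, $\mathfrak B$ has the sign of $u'$ and is monotone with $\mathfrak B'u'>0$. Say $u'>0$, so $\mathfrak B>0$, $\mathfrak B'>0$ and $r\mathfrak B'>0$ (the case $u'<0$ is symmetric). Then
\[
|Y|^m>\lambda-2\rho\mathfrak B+\mathfrak B^2\to\lambda,
\]
so $\liminf Y\ge\lambda^{1/m}$; this gives one side. For the other side, suppose $\limsup_{t\to\infty}|Y|>\lambda^{1/m}$. We use the equation in the form
\[
\frac{dY}{dt}\Big(\sigma-\tfrac{\kappa}{m}\tfrac{Y}{t}\Big)=\lambda\big(1-|Y|^m/\lambda\big)+O(|Y|/t)+O(Y^2/t^2).
\]
While $|Y|$ stays bounded, $Y$ obeys $\sigma\,dY/dt\approx \lambda-|Y|^m+o(1)$. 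Near $\lambda^{1/m}$ the right-hand side is repelling in forward $t$. So if $|Y|\ge\lambda^{1/m}+\varepsilon$ at some large $t$, then $dY/dt\le -c<0$ in absolute-value terms, and $|Y|$ grows without bound as $t$ increases.

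It then remains to exclude $|Y|\to\infty$, equivalently $\mathfrak B\,t\to\infty$. In that regime, as long as $\mathfrak B$ is small, $dY/dt\approx -|Y|^m/\sigma$ drives $|Y|$ (for $Y>0$, $Y$ decreases) — so the direction must be checked carefully. I would handle this by a monotonicity/contradiction argument: from $r\mathfrak B'\ge (t\mathfrak B)^m/2$ with $t\mathfrak B\ge M$ large and $\mathfrak B\to0$, one obtains $r\mathfrak B'\ge M^m/2$. Integrating in $\log r$ then forces $\mathfrak B$ to become unbounded as $r$ decreases, contradicting $\mathfrak B\to0$. I expect this two-sided control (ruling out large $|Y|$, and ruling out oscillation with the $\limsup$ strictly above $\lambda^{1/m}$) to be the main obstacle. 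The key tool is that $r\mathfrak B'$ is bounded below by a positive multiple of $|Y|^m-\lambda$ when $|Y|$ is large, combined with $\mathfrak B\to0$. In the $u'<0$ case the same argument works with the sign reversed, using $r\mathfrak B'<0$.

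Finally, we derive $(P_\pm)$. From $Y\to\mathcal C$ we get
\[
\frac{ru'}{u}=\mathcal C\,t^{-1}(1+o(1))=\mathcal C\,\gamma^{-\kappa/m}r^{\sigma}(1+o(1)).
\]
Integrating $(\log u)'$ from $0$ to $r$ gives
\[
\log u(r)-\log\gamma=\frac{\mathcal C}{\sigma}\gamma^{-\kappa/m}r^\sigma(1+o(1)),
\]
and exponentiating yields \eqref{frio}, with the sign $\pm$ equal to the sign of $u'$.
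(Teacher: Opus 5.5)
Your derivation of \eqref{1xeq} from \eqref{muzia} is correct, and so is your final integration of $(\log u)'$ giving $(P_\pm)$. The gap is in the central step $Y=t\mathfrak B\to\mathcal C$, where the dynamics you invoke point the wrong way. Because $Y/t=\mathfrak B\to0$ by Lemma~\ref{nou1} whatever the size of $Y$, the equation is simply $dY/dt=(\lambda-|Y|^m+o(1))/(\sigma+o(1))$, so no boundedness proviso is needed. For $u'>0$ (so $Y>0$) the point $\lambda^{1/m}$ is \emph{attracting} in forward $t$. Wherever $Y\ge\lambda^{1/m}+\varepsilon$ and $t$ is large, $dY/dt\le -c<0$, so $Y$ decreases and cannot ``grow without bound''; your own remark that ``for $Y>0$, $Y$ decreases'' already signals this. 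Your remaining step, integrating $r\mathfrak B'\ge M^m/2$, therefore addresses a scenario the equation does not produce. In any case it only excludes $\liminf Y\ge M$, not an oscillating $\limsup$. Nor is the case $u'<0$ symmetric. There $r\mathfrak B'<0$ gives the \emph{upper} bound $\limsup|Y|\le\lambda^{1/m}$, and the point $-\lambda^{1/m}$ is \emph{repelling}. The inequality you still need is $\liminf|Y|\ge\lambda^{1/m}$, and your sketch never addresses it. As written, $X(t)\to0$ is not established in either case.

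The repair is a trapping argument with the correct directions. For $u'>0$: $Y\ge\lambda^{1/m}+\varepsilon$ cannot hold on all of $[T_1,\infty)$, since then $dY/dt\le -c$ throughout. Once $Y<\lambda^{1/m}+\varepsilon$, it can never cross that level upward, because $dY/dt<0$ there. Together with your bound $\liminf Y\ge\lambda^{1/m}$, this gives $Y\to\lambda^{1/m}$. For $u'<0$: suppose $-(\lambda^{1/m}-\varepsilon)\le Y(t_1)<0$ at some large $t_1$. Then $dY/dt\ge c>0$ on this strip, which $Y$ cannot leave downward. Hence $Y(t)\ge Y(t_1)+c(t-t_1)$ eventually becomes positive, contradicting $Y<0$.

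The paper argues differently. It shows $X'\neq0$ for large $t$ from the sign of $X''$ at critical points, treating separately $\sigma\neq2\rho$; $\sigma=2\rho$ with $q\neq1$; and $\sigma=2\rho$ with $q=1$, where $X'=0$ forces $X\equiv0$. So $L=\lim X$ exists. It then rules out $L=\pm\infty$ by L'H\^opital, since $X/t\to0$ while $X'\to-\infty$, and rules out $L\neq0$ using \eqref{1xeq}. Once corrected, your route avoids that case split.
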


\begin{proof}
By \eqref{repet}, \eqref{ecot} and $\Theta<0$, we have $t\rightarrow \infty$ as $r\rightarrow 0^+$.  
Using Lemma~\ref{nou1}, we get 
\begin{equation} \label{main2} \frac{X(t)+\mathcal C}{t}\rightarrow 0\ \mbox{as } r\rightarrow 0^+\ \ \mbox{and } X(t)+\mathcal C\neq 0\ \mbox{for every } t>0 \ \mbox{large}. \end{equation}
Hence, there exists $T_0>0$ large such that for every $t\geq T_0$, we have 
$$ |X(t)+\mathcal C|>0\quad \mbox{and} \quad \sigma-\frac{\kappa}{m}\frac{X(t)+\mathcal C}{t}>0.
$$
Then, by \eqref{ecot}, for every $t\geq T_0$, we find that 
\begin{equation}\label{real2}
\frac{dr}{dt}=-\frac{r}{t\left (\sigma-\frac{\kappa}{m}\frac{X(t)+\mathcal C}{t}\right )}.
\end{equation}
Then, direct computations yield that \eqref{1xeq} holds. 

We assume one of the following situations:

$(i_1)$ Let $\sigma\not=2\rho$;

$(i_2)$ Let $\sigma=2\rho$ and $q\not=1$.

($i_3$) Let $\sigma=2\rho$ and $q=1$.  

\vspace{0.2cm}
We first show that if ($i_1$) or $(i_2)$ holds, then $X'(t)\neq 0$ for every $t>0$ large.
Suppose by contradiction that for an increasing sequence $
\{t_n\}_{n\geq 1}$ in $[T_0, \infty)$ with $\lim_{n\rightarrow \infty}t_n=\infty$, we have $X'(t_n)=0$ for all $n\geq 1$. By differentiating \eqref{1xeq} with respect to $t$, we obtain that 
$$t_n^2X''(t_n)=-\frac{X(t_n)+\mathcal C}{\sigma-\frac{\kappa}{m}\frac{X(t_n)+\mathcal C}{t_n}}\left [\sigma-2\rho+\frac{2\left(1-q\right)}{m}\frac{X(t_n)+\mathcal C}{t_n}\right ].$$
In light of \eqref{main2}, we get 
\begin{equation}
\begin{aligned}
&\lim_{n\rightarrow \infty} \frac{t_n^2X''(t_n)}{X(t_n)+\mathcal C}=-\frac{\sigma-2\rho}{\sigma}\neq 0&\quad&\text{if } \sigma\neq 2\rho\\
&\lim_{n\rightarrow \infty} \frac{t_n^3X''(t_n)}{(X(t_n)+\mathcal C)^2}=-\frac{2\left(1-q\right)}{m \,\sigma}\neq 0&\quad&\text{if } \sigma=2\rho\text{ and }q\neq 1.
\end{aligned}
\end{equation}

In each of these cases, we get a contradiction as $X''(t_n)$ has a constant sign for every $n\geq 1$. Hence, 
$X'(t)\not=0$ for every large $t>0$ and there exists $\lim_{t\to \infty} X(t)$, which we show to be $0$.   

\vspace{0.2cm}
Assume now that $(i_3)$ holds, i.e., $\sigma=2\rho$ and $q=1$. Note that if there exists an increasing sequence 
$\{t_n\}_{n\geq 1}$ in $[T_0,\infty)$ with $\lim_{n\to \infty} t_n=\infty$ and $X(t_n)=0$ for all $n\geq 1$, then 
$X\equiv 0$ on $[t_1,\infty)$. Otherwise, 
$X(t)\neq 0$ for any $t>0$ large enough, which implies that $X'(t)\neq 0$. 

Thus, in any of the situations $(i_1)$--$(i_3)$, there exists $\lim_{t\to \infty} X(t)=L$. We show that $L=0$. 
Indeed, if $L=\pm \infty$, then from \eqref{main2}, L'H\^opital's and \eqref{1xeq}, we see that
$$ 0=\lim_{t\to \infty} \frac{X(t)}{t} =\lim_{t\to \infty} X'(t)=-\infty.
$$
This is impossible. Hence, $L\in \mathbb R$ and again using \eqref{1xeq}, we have $L=0$ to avoid a contradiction. 
Moreover, in view of \eqref{ecot}, we arrive at 
\begin{equation} \label{simbi} \frac{m}{\kappa}\frac{\frac{d}{dr}\left ( u^{\frac{\kappa}{m}}(r)\right)}{\frac{d}{dr} (r^\sigma/\sigma)}=r^{1-\sigma}(u(r))^{\frac{\kappa}{m}-1}u'(r)=
X(t)+\mathcal C\to \mathcal C\quad \mbox{as } r\to 0^+.\end{equation}
Hence, using \eqref{repet}, we conclude that 
$$ \lim_{r\to 0^+} \frac{u^{\frac{\kappa}{m} } -\gamma^{\frac{\kappa}{m}}}{r^\sigma}=\frac{\kappa\,\mathcal C}{m\,\sigma},  
$$
from which we infer that 
$u$ satisfies \eqref{frio}, corresponding to $\mathcal C=\pm \lambda^{1/m}$. 
\end{proof}

\begin{lemma} \label{easi}
For every $\gamma\in \mathbb R_+$, there exists $R>0$ such that \eqref{eq1} in $B_R(0)\setminus \{0\}$ has a positive radial solution satisfying $(P_-)$ and infinitely many positive radial solutions satisfying $(P_+)$.
\end{lemma}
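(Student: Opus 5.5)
We reverse the reduction of Lemma~\ref{gara1}: first produce solutions $X$ of \eqref{1xeq} on $[T_0,\infty)$ with $X(t)\to 0$, then rebuild $u$ from them. Write $\mathcal C=\pm\lambda^{1/m}$ and $X=\mathcal C Y$. Then \eqref{1xeq} takes the form
\begin{equation*}
Y'=\frac{\lambda}{\mathcal C\sigma}\bigl(1-|1+Y|^m\bigr)+\frac{\sigma-2\rho}{\sigma t}+O\Bigl(\frac{1+|Y|}{t^2}\Bigr)+O\Bigl(\frac{|Y|}{t}\Bigr).
\end{equation*}
Linearizing at $Y=0$ gives $Y'\approx -\frac{\lambda m}{\mathcal C\sigma}Y+g(t)$ with $g=O(1/t)$. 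The sign of the coefficient $a:=\lambda m/(\mathcal C\sigma)$ decides the multiplicity.

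\emph{Case $\mathcal C>0$.} Here $a>0$ and $Y=0$ is attracting as $t\to\infty$. Fix $T_0$ large. Every initial value $Y(T_0)=y_0$ with $|y_0|$ small gives a solution that stays in a small neighbourhood and tends to $0$. To see this, use $\delta_\pm(t)=\pm(\epsilon+K/t)$ as barriers: for $t\ge T_0$, $Y'<0$ on $Y=\epsilon+K/t$ and $Y'>0$ on $Y=-\epsilon-K/t$. Hence the strip is forward invariant, and a $\limsup$ argument (or Gronwall) gives $Y(t)=O(1/t)$. The one-parameter family $y_0$ yields infinitely many solutions.

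\emph{Case $\mathcal C<0$.} Here $a<0$ and $Y=0$ is repelling forward in time. We look for a solution that decays, using a fixed point of
\begin{equation*}
Y(t)=-\int_t^\infty e^{a(t-s)}\,\bigl[g(s)+N(s,Y(s))\bigr]\,ds
\end{equation*}
on the ball $\{\sup_{t\ge T_0} t|Y(t)|\le K\}$. Here $N$ is quadratic in $Y$ and carries $O(1/t)$ factors, so for $T_0$ large the map is a contraction. Alternatively, a Ważewski-type shooting argument gives existence. Either way we obtain one solution. Care is needed if $m<1$, since $|1+Y|^m$ is smooth near $Y=0$ anyway, so Lipschitz bounds hold on $|Y|\le 1/2$.

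\emph{Reconstruction of $u$.} Given $X$ on $[T_0,\infty)$, we need $r(t)$ and $u$. Set $s=\log r$. Then $t=r^{-\sigma}u^{\kappa/m}$ and $ru'/u=(X+\mathcal C)/t$, so
\begin{equation*}
\frac{d\log t}{ds}=-\sigma+\frac{\kappa}{m}\,\frac{X+\mathcal C}{t}.
\end{equation*}
This is exactly \eqref{real2}. We therefore define $r(t)$ by integrating \eqref{real2} from $t=T_0$ with $r(T_0)=r_0$; since $d\log r/dt\sim-1/(\sigma t)$, we get $r\to0$ as $t\to\infty$. We then recover $u=(t r^{\sigma})^{m/\kappa}$, which is positive. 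The computation behind \eqref{1xeq} is reversible (the denominator stays positive), so this $u$ solves \eqref{ez} on $(0,R)$ with $R=r(T_0)$. It is a positive radial solution, and $ru'/u=(X+\mathcal C)/t\to0$, which gives the limit of $u$ at $0$ from \eqref{simbi}.

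The main obstacle is prescribing $\gamma$. The reconstruction fixes $r_0$ and $u(r_0)=(T_0 r_0^\sigma)^{m/\kappa}$, but the limit $\gamma=\lim_{r\to0}u$ depends continuously on the data. To hit an arbitrary $\gamma$, I would use scaling: the scaling $T_j$ in \eqref{scale} preserves the equation and maps a solution with limit $\gamma'$ at $0$ to one with limit $j^{\Theta}\gamma'$. Since $\Theta\ne0$, any $\gamma\in\mathbb R_+$ is reached, at the cost of changing $R$. For the infinite family in $(P_+)$, the solutions obtained for different $y_0$ are distinct for each fixed $\gamma'$ after scaling, provided we check that infinitely many of them remain distinct. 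I would verify this by varying $y_0$ while adjusting $r_0$ so that the limit $\gamma'$ stays fixed: continuity and monotonicity in $r_0$ give a level set, and the different $X$-orbits then produce different solutions. Finally, Lemma~\ref{gara1} identifies the sign of $u'$ with $(P_\pm)$.
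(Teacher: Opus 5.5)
Your proposal is correct and follows the paper's own route: reduce to \eqref{1xeq}, take a one-parameter family of decaying solutions $X$ when $\mathcal C=\lambda^{1/m}$ and a single one when $\mathcal C=-\lambda^{1/m}$, rebuild $u$ via \eqref{billi}--\eqref{ba012}, and use $T_j$ to prescribe $\gamma$; your barrier and Lyapunov--Perron arguments make explicit the existence of $X$ that the paper only asserts (for $a<0$ the kernel must be $e^{a(s-t)}$, not $e^{a(t-s)}$, both for the integral to converge and to reproduce $Y'=-aY+g+N$). To get infinitely many $(P_+)$ solutions on one common ball, add the monotonicity bound the paper uses ($\gamma_c\in(0,u(r_0))$): since $u'>0$, the radius $r_0(y_0)$ you choose to make the limit equal $\gamma$ satisfies $\gamma<(T_0\,r_0(y_0)^{\sigma})^{m/\kappa}$, hence $r_0(y_0)>(\gamma^{\kappa/m}/T_0)^{1/\sigma}$ uniformly in $y_0$.
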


\begin{proof}   
We first consider the case $\mathcal C=-\lambda^{1/m}$ in \eqref{1xeq}. Since \eqref{eq1} is invariant under the scaling transformation $T_j[u]$ in \eqref{scale}, it is enough to prove the for some $\gamma\in \mathbb R_+$, there exists $R>0$ such that \eqref{eq1} in $B_{R}(0)\setminus \{0\}$ has a positive radial solution satisfying $(P_-)$. For $T_0>0$ large, we consider the ODE in \eqref{1xeq} for $t\geq T_0$. 
Then, there exists $\varepsilon>0$ small 
such that for every $T_0>1/\varepsilon$, 
the equation in \eqref{1xeq} for $t\geq T_0$ has a solution $X$ satisfying $\lim_{t\rightarrow \infty}X(t)=0$.

Fix $r_0>0$ arbitrary. Then, the differential equation for $r$ in \eqref{real2} for $t\in [T_0,\infty)$, subject to $r(T_0)=r_0$, 
has a unique solution given by
	\begin{equation} \label{billi} 
	r(t)=r_0 \exp\left( - \int_{T_0}^ t \frac{ds}{s\left (\sigma-\frac{\kappa}{m}\frac{X(s)+\mathcal C}{s}\right )}\right) \quad \mbox{for } t\geq T_0. 
	\end{equation}
	Since $[T_0,\infty)\ni t\longmapsto r(t)$ is decreasing, we can express $t$ as a function of $r$. 
	We define $u(r)$ by 
	\begin{equation} \label{ba012} u(r)=\left (t\,r^{\sigma}\right )^{\frac{m}{\kappa}}\quad \mbox{for every } r\in (0,r_0].  
	\end{equation} Then, using \eqref{real2}, jointly with \eqref{1xeq} and $\lim_{t\to \infty} X(t)=0$, we obtain that $u$ is a positive radial solution of \eqref{eq1} in $B_{r_0}(0)\setminus \{0\}$. Moreover, we regain \eqref{simbi}, which implies that there exists $\gamma\in (0,\infty) $ such that
	$\lim_{r\to 0^+} u(r)=\gamma$ and 
	$u$	satisfies $(P_-)$.

	\vspace{0.2cm}
	Second, we let $\mathcal C=\lambda^{1/m}$ in \eqref{1xeq}.  
	Here, there exists $\varepsilon>0$ small such that for every $T_0>1/\varepsilon$ and 
all $c\in (-\varepsilon,\varepsilon)$, the ODE in \eqref{1xeq} has a solution $X=X_c$ satisfying $X(T_0)=c$ and $\lim_{t\rightarrow \infty}X(t)=0$. (This statement can be obtained by using the stable manifold theorem for the first order autonomous differential system generated by $X_1(t)=X(t)$ and $X_2(t)=1/t$.)

Fix $T_0>1/\varepsilon$. Let $c\in (-\varepsilon,\varepsilon)$ and $X=X_c$ be as above. Fix $r_0>0$ arbitrary and define $r(t)$ and $u(r)$ as in \eqref{billi} and \eqref{ba012}, respectively. Then, $u$ is a positive radial solution of \eqref{eq1} in $B_{r_0}(0)\setminus \{0\}$ with $u'(r)>0$ for all $r\in (0,r_0]$.
Furthermore, there exists $\gamma=\gamma(X_c)\in (0,u(r_0))$ such that $\lim_{r\to 0^+} u(r)=\gamma$ and $u$ satisfies $(P_+)$. By varying $c\in (-\varepsilon,\varepsilon)$, we obtain infinitely many positive radial solutions $u=u_c$ of \eqref{eq1} in $B_{r_0}(0)\setminus \{0\}$ with 
$\lim_{r\to 0^+} u(r)=\gamma_c\in (0,u(r_0))$, where $u(r_0)=(T_0 r_0^\sigma)^{m/\kappa}$. 
Using the transformation $T_j[u_c]$ for $j>0$, we can easily conclude the assertion of Lemma~\ref{easi} corresponding to $(P_+)$ for arbitrary $\gamma\in (0,\infty)$.  
\end{proof}

%%%%
%\subsection{Refined asymptotics}
%{\bf Refined asymptotics.}
For $r>0$ small, we define 
\begin{equation}
\label{sfir}
s=\frac{u(r)}{r|u'(r)|},\quad 
Y(s)=s\left[ (r^\Theta u)^\kappa s^{-m}-\lambda\right] -\left(\sigma-2\rho\right)
{\rm sgn}\, (u'(r)).	
\end{equation}
By Lemmas~\ref{nou1} and \ref{gara1}, we have $\lim_{r\to 0^+}s= \infty$, as well as 
\begin{equation}
\label{sfor}
\lim_{r\to 0^+} sr^\sigma=\left( \frac{\gamma^\kappa}{\lambda}\right)^{\frac{1}{m}}\quad \mbox{and}\quad 
\lim_{s\to \infty} \frac{Y(s)}{s}=0. 		
\end{equation}
Moreover, $ds/dr<0$ for every $r>0$ small, which implies that there exists $s_0>0$ large so that 
\begin{equation} \label{milo1} \frac{ds}{dr}=-\frac{s \,{\rm sgn}\, (u'(r)) \left[Y(s)+\sigma \,{\rm sgn}\, (u'(r))-1/s\right]}{r} <0
\end{equation} for every $s\geq s_0$. 
A simple calculation reveals that
\begin{equation} \label{milos1} \frac{d}{dr} (sr^\sigma)=r^{\sigma-1} \left[1-sY(s)\right] {\rm sgn}\, (u'(r)). 
\end{equation}

We introduce the following constants:
\begin{equation} \label{dzep} \begin{aligned}
 &	d_0=\left(1-m\right) (\sigma-2\rho)+\sigma;\quad d_1=\sigma \left(\sigma-2\rho\right) +\lambda\left(1-q\right),\\
 & d_2=\left(2-m\right)d_1-2\lambda mq +\left(3\sigma-2\rho\right)\sigma,\\ 
 & \mathfrak b_1=\left( \frac{\lambda^{\frac{1}{m}}}{\sigma \gamma^{\frac{\kappa}{m}}} \right)^2  \frac{d_1}{2\lambda m},\quad \mathfrak b_2=\left( \frac{\lambda^{\frac{1}{m}}}{\sigma \gamma^{\frac{\kappa}{m}}} \right)^3\frac{d_1d_2+\lambda^2 mq \left(1-q\right)}{6\lambda^2m^2}.
 \end{aligned}
\end{equation}

\begin{lemma} \label{gara2} Let $s$ and $Y(s)$ be given by \eqref{sfir}. Then, for all $s\geq s_0$, 
\begin{equation}
\label{sfpo}	
\begin{aligned}
\left(Y(s)+\sigma\, {\rm sgn}\, (u'(r))-\frac{1}{s}\right) \frac{dY}{ds}=&-\lambda m Y(s)+\frac{d_1+d_0Y(s)\,{\rm sgn}\, (u'(r)) +\left(1-m\right) Y^2(s)}{s}\\
& -\frac{q\left[\left(\sigma-2\rho\right) {\rm sgn}\, (u'(r))+Y(s)\right]}{s^2}.
\end{aligned}
\end{equation}
Moreover, $Y$ satisfies 
\begin{equation} \label{safer} \lim_{s\to \infty} s Y(s)=\frac{d_1}{\lambda m}.\end{equation}

\end{lemma}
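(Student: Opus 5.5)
The plan has two parts. First I derive the ODE \eqref{sfpo} by a direct change of variables. Then I extract \eqref{safer} from it by a limit argument of the same type as in Lemma~\ref{gara1}.

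\textbf{Step 1: the ODE.} Write $\epsilon={\rm sgn}(u'(r))$, so that $s=\epsilon/\mathfrak B(r)$ with $\mathfrak B=ru'/u$, and set $Q(r)=(r^\Theta u)^\kappa$. By definition, $Y+\sigma\epsilon=s[Qs^{-m}-\lambda]+2\rho\epsilon$. Multiplying \eqref{muzia} by $s^2$ gives
\[
r\,\frac{ds}{dr}=-\epsilon s\bigl(Y+\sigma\epsilon-1/s\bigr),
\]
which is exactly \eqref{milo1}. Next I differentiate $Y=sQs^{-m}-\lambda s-(\sigma-2\rho)\epsilon$ in $r$, using
\[
r\,\frac{dQ}{dr}=\kappa Q\bigl(\Theta+\epsilon/s\bigr), \qquad \kappa\Theta=-m\sigma.
\]
Wherever $Qs^{-m}$ appears, I replace it by $\lambda+(Y+(\sigma-2\rho)\epsilon)/s$. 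Dividing $r\,dY/dr$ by $r\,ds/dr$ then gives $dY/ds$, and multiplying through by $(Y+\sigma\epsilon-1/s)$ should reproduce \eqref{sfpo}. The constants $d_0$, $d_1$ and the $q$-term should emerge after collecting powers of $1/s$, using $\epsilon^2=1$ and $\kappa-m=q-1$. This is routine bookkeeping and I would only check the coefficients.

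\textbf{Step 2: the limit \eqref{safer}.} Set $Z(s)=sY(s)$. By \eqref{sfor}, $Y\to0$, so the prefactor $Y+\sigma\epsilon-1/s$ tends to $\sigma\epsilon\neq0$. From \eqref{sfpo},
\[
Y'=\frac{-\lambda m Y+d_1/s+O(|Y|/s+1/s^2)}{\sigma\epsilon+o(1)},
\]
so $Y$ is governed by the linear equation $\sigma\epsilon\, Y'\approx -\lambda m Y+d_1/s$. The strategy is to show that $Z$ has a limit and to identify it.

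\emph{Case $\epsilon=+1$.} Here the homogeneous part is stable as $s\to\infty$, since $Y'\approx-(\lambda m/\sigma)Y$. Variation of constants gives
\[
Y(s)=e^{-\lambda m s/\sigma}\Bigl[C+\int^s e^{\lambda m\tau/\sigma}\,\frac{d_1/\tau+o(1/\tau)}{\sigma}\,d\tau\Bigr],
\]
and L'H\^opital's rule applied to $sY$ yields $sY\to d_1/(\lambda m)$. The $o(1/\tau)$ error requires first knowing that $Y=O(1/s)$. I would get this by a comparison or bootstrap: $Y\to0$ makes the $|Y|/s$ term negligible against $\lambda m Y$.

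\emph{Case $\epsilon=-1$.} Now the homogeneous solutions grow like $e^{\lambda m s/\sigma}$. But $Y\to0$ is already known from \eqref{sfor}, so I would write the solution as an integral from $\infty$:
\[
Y(s)=-e^{\lambda m s/\sigma}\int_s^\infty e^{-\lambda m\tau/\sigma}\,g(\tau)\,d\tau,
\]
where $g$ is the forcing. L'H\^opital's rule again gives $sY\to d_1/(\lambda m)$.

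Alternatively, I could mimic the proof of Lemma~\ref{gara1}. I would show that $Z'(s)\neq0$ for large $s$ by examining second derivatives at critical points, so that $Z$ has a limit $L\in[-\infty,\infty]$. I would then exclude $L=\pm\infty$ and identify $L=d_1/(\lambda m)$ by plugging into $sY'=Z'-Z/s$ together with \eqref{sfpo}, which gives $\lambda m L=d_1$.

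\textbf{Main obstacle.} The hard step is the a priori bound $Y=O(1/s)$, needed to control the error terms, especially in the unstable case $\epsilon=-1$. I would first try the integral representation from infinity, combined with $Y\to0$, and bootstrap the bound from there.
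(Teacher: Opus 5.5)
Your derivation of \eqref{sfpo} in Step 1 is fine; the coefficients $d_0$, $d_1$ and the $q$-term come out as stated. The gap is in Step 2. You twice invoke \eqref{sfor} for $Y\to0$, but \eqref{sfor} only gives $Y(s)/s\to0$. In the notation of Lemma~\ref{gara1}, $(r^\Theta u)^\kappa s^{-m}=|X(t)+\mathcal C|^m$ and $s=t/|X(t)+\mathcal C|$. So $Y\to0$ encodes the rate $X(t)=O(1/t)$ (with a specific constant), which Lemma~\ref{gara1} does not provide. Establishing it is exactly Step~1 of the paper's proof: it shows $Y'\neq0$ for large $s$ via the sign of $Y''$ at putative critical points, so $\lim Y$ exists, and \eqref{sfor} with \eqref{milo1} force this limit to be $0$. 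In Case (II) it uses instead that $Y'(s_1)=0$ forces $Y(s_1)=0$. Everything after depends on this fact. Without $Y\to0$, the prefactor $Y+\sigma\epsilon-1/s$ need not tend to $\sigma\epsilon$, and $(1-m)Y^2/s$ is not $O(|Y|/s)$. Hence the linearisation $\sigma\epsilon Y'\approx-\lambda mY+d_1/s$, on which both your variation-of-constants formula and your integral from $\infty$ rest, is unjustified.

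The missing step can be supplied from what you already have. By \eqref{milo1}, $\epsilon(Y+\sigma\epsilon-1/s)>0$. Because $Y/s\to0$, the right-hand side of \eqref{sfpo} equals $-\lambda mY(1+o(1))+O(1/s)$ regardless of the size of $Y$.
\begin{itemize}
\item For $\epsilon=1$, this drives $Y$ into $[-\delta,\delta]$ at a linear rate and keeps it there.
\item For $\epsilon=-1$, a late excursion to $Y\ge\delta$ makes $Y$ increase at a linear rate into the barrier $Y<\sigma+1/s$, which is impossible. An excursion to $Y\le-\delta$ gives $Y'\le-c<0$ from then on, contradicting $Y/s\to0$.
\end{itemize}
With $Y\to0$ in hand, the ``main obstacle'' you name is not really one. $Z=sY$ satisfies \eqref{limb1}, i.e.\ $(\sigma\epsilon+o(1))Z'=d_1-\lambda mZ+o(1)$. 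A scalar comparison then yields \eqref{safer} in all cases, including $d_1=0$: the equation is stable for $\epsilon=1$, and for $\epsilon=-1$ any late excursion of $|Z-d_1/(\lambda m)|$ beyond $\delta$ grows exponentially, contradicting $Y\to0$. This is simpler than the paper's Steps 2--3, which prove $Z'\neq0$ by second-derivative arguments and must treat separately both Case (II) and a degenerate sub-case where the leading coefficient of $Z''(s_n)$ vanishes. Your fallback of mimicking Lemma~\ref{gara1} would run into exactly that sub-case, which you do not address.
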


\begin{proof}
By direct calculation, $Y(s)$ satisfies \eqref{sfpo} for all $s\geq s_0$. 
To prove \eqref{safer}, we distinguish two cases. 

\vspace{0.2cm}
{\bf Case (I).} We first assume that $\mathfrak b_1^2+\mathfrak b_2^2\not=0$. This means that one of the following holds: 

(a) $d_1\not=0$;

(b) $d_1=0$ and $q\left(1-q\right)\not=0$. 

\vspace{0.2cm}
We split the proof of \eqref{safer} into three Steps. 

\vspace{0.2cm}
{\bf Step~1.} We show that $Y'(s)\not =0$ for every $s>0$ large and $\lim_{s\to \infty} Y(s)=0$. 
Indeed, we assume by contradiction that there exists an increasing sequence $\{s_n\}_{n\geq 1}$ in $(s_0,\infty)$, $\lim_{n\to \infty} s_n=\infty$ and $Y'(s_n)=0$ for all $n\geq 1$. By \eqref{sfpo} and 
$\lim_{s\to \infty} Y(s)/s=0$, we infer that 

\begin{equation} \label{sfou1} \begin{aligned}
 	& \lim_{n\to \infty} s_n Y(s_n)=\frac{d_1}{\lambda m}\not=0 && \mbox{if } d_1\not=0,&\\
 	& \lim_{n\to \infty} s_n^2 Y(s_n)=-\frac{q \left(\sigma-2\rho\right){\rm sgn}\, (u'(r))}{\lambda m}\not=0&& \mbox{if } d_1=0\ \mbox{and } q\left(1-q\right)\not=0.&
 	\end{aligned}
\end{equation}

Hence, in either of these situations, $Y(s_n)$ has the same sign for all $n\geq 1$. 
By differentiating \eqref{sfpo} with respect to $s$, we arrive at 
$$ \begin{aligned} \left(Y(s_n)+\sigma\, {\rm sgn}\, (u'(r))-\frac{1}{s_n}\right) Y''(s_n)&=-\frac{d_1+d_0Y(s_n)\,{\rm sgn}\, (u'(r)) +\left(1-m\right) Y^2(s_n)}{s_n^2} \\
  	&\ \ \  +\frac{2}{3} \frac{q\left[\left(\sigma-2\rho\right) {\rm sgn}\, (u'(r))+Y(s_n)\right]}{s_n^3} 
   \end{aligned}
$$
which, together with $Y'(s_n)=0$, implies that
  	$$ Y''(s_n)=-\frac{2\lambda m s_n Y(s_n) +d_1+d_0 Y(s_n) \,{\rm sgn}\, (u'(r)) +\left(1-m\right) Y^2(s_n)}{3 s_n^2 \left(Y(s_n)+\sigma\, {\rm sgn}\, (u'(r))-\frac{1}{s_n}\right)}.$$
Thus, in light of \eqref{sfou1}, we get
$$ \begin{aligned} 
&\lim_{n\to \infty} s_n^2 Y''(s_n)=-\frac{d_1 \,{\rm sgn}\, (u'(r))}{\sigma }\not=0 && \mbox{if } d_1\not=0,&\\
& \lim_{n\to \infty} s_n^3 Y''(s_n)=\frac{2 q\left(\sigma-2\rho\right)}{3\sigma} \not=0&& \mbox{if } d_1=0\ \mbox{and } q\left(1-q\right)\not=0.&
 \end{aligned}
$$ 
As ${\rm sgn}\, ( Y''(s_n))$ is the same for all critical points of $Y_n$, we reach a contradiction. Then, $Y'(s)\not=0$ for every $s>0$ large. This, jointly with \eqref{sfor} and  \eqref{milo1}, implies that 
$\lim_{s\to \infty} Y(s)=0$. 

\vspace{0.2cm}
{\bf Step~2.} We prove that $(sY(s))'\not=0$ for every $s>0$ large. 
%We use a similar argument to Step~1. 

For $s\geq s_0$, we let $Z(s)=s Y(s)$.  
In view of \eqref{sfpo}, we have
\begin{equation} \label{limb1} 
\begin{aligned}
\left(\frac{Z(s)}{s}+\sigma\, {\rm sgn}\, (u'(r))-\frac{1}{s}\right) Z'(s)=& d_1-\lambda m Z(s)
- \left[ q+1+\left(m-2\right) Z(s)\right] \frac{Z(s)}{s^2} \\
&- \left[q\left(\sigma-2\rho\right)-\left(d_0+\sigma\right) Z(s)\right]\frac{ {\rm sgn}\, (u'(r))}{s}.
\end{aligned}
\end{equation}

We assume by contradiction that $Z'(s_n)=0$ for all $n\geq 1$, where $\{s_n\}_{n\geq 1}$ is an increasing sequence in $(s_0,\infty)$ with $\lim_{n\to \infty} s_n=\infty$. It follows that 
$$ \lim_{n\to \infty}  Z(s_n)=\frac{d_1}{\lambda m}.$$ 
%and
%\begin{equation} \label{foul1} \begin{aligned}
% 	& \lim_{n\to \infty}  Z(s_n)=\frac{d_1}{\lambda m}\not=0 && \mbox{if } d_1\not=0,&\\
% 	& \lim_{n\to \infty} s_n Z (s_n)=-\frac{q \left(\sigma-2\rho\right){\rm sgn}\, (u'(r))}{\lambda m}\not=0&& \mbox{if } d_1=0\ \mbox{and } q\left(1-q\right)\not=0.&
% 	\end{aligned}
%\end{equation}

Moreover, we find that 
\begin{equation} \label{love}
\begin{aligned}
s_n^2\left(\frac{Z(s_n)}{s_n}+\sigma\, {\rm sgn}\, (u'(r))-\frac{1}{s_n}\right) Z''(s_n)=& 
2 \left[ 
q+1+\left(m-2\right) Z(s_n) \right] \frac{Z(s_n)}{s_n}\\
&+
\left[ q\left( \sigma-2\rho\right)-\left(d_0+\sigma\right) Z(s_n)\right] {\rm sgn}\, (u'(r)) .
\end{aligned}
\end{equation}

$\bullet$ If $ q\left( \sigma-2\rho\right)-\left(d_0+\sigma\right) \frac{d_1}{\lambda m}\not=0$, then 
using that $Z(s_n)/s_n\to 0$ as $n\to \infty$, we get
$$\sigma s_n^2 Z''(s_n) \sim q\left( \sigma-2\rho\right)-\left(d_0+\sigma\right) \frac{d_1}{\lambda m}\not=0\quad \mbox{as } n\to \infty, $$
which leads to a contradiction. 

$\bullet$ Let $ q\left( \sigma-2\rho\right)-\left(d_0+\sigma\right) \frac{d_1}{\lambda m}=0$. 
Then, necessarily $d_1\not=0$. (If $d_1=0$, then we would have $q=0$ or $q=1$, which cannot happen in Case (I).)
From $Z'(s_n)=0$ for all $n\geq 1$, we see that
\begin{equation} \label{love2}  \left[ 
q+1+\left(m-2\right) Z(s_n) \right] \frac{Z(s_n)}{s_n^2}
%&=d_1-\lambda m Z(s_n)- \left[q\left(\sigma-2\rho\right)-\left(d_0+\sigma\right) Z(s_n)\right]\frac{ {\rm sgn}\, (u'(r))}{s_n}\\
 =\left(d_1-\lambda m Z(s_n)\right) \left(1-\frac{\left(d_0+\sigma\right)}{\lambda m}\frac{ {\rm sgn}\, (u'(r))}{s_n} \right).
 \end{equation}

(i) Assume that $q+1+(m-2)\frac{d_1}{\lambda m}\not=0$. Consequently, from \eqref{love} and \eqref{love2}, we have
$$ \sigma  \,{\rm sgn}\, (u'(r))\, s_n^3 Z''(s_n)\to 2 \left[q+1+(m-2)\frac{d_1}{\lambda m}\right] \frac{d_1}{\lambda m}\not=0
\quad \mbox{as } n\to \infty,
$$
which leads to a contradiction.

(ii) Assume that $q+1+(m-2)\frac{d_1}{\lambda m}=0$. Then, since $\kappa=q+m-1>0$, we have
$$ d_1\not=\lambda m.$$
Now, \eqref{love2} implies that
$  d_1-\lambda m Z(s_n)=0$ for all $ n\geq 1$ large. Moreover, in view of \eqref{limb1}, we conclude that
$\lambda m Z(s)=d_1$ for every $s>0$ large. Using this fact into \eqref{milos1} and the definition of $s$ in \eqref{sfir}, by direct integration, we arrive at 
$$ u(r)=\gamma \left(1+\frac{\mathfrak c_1 {\rm sgn}\,(u'(r))}{\sigma} \left(\frac{\lambda}{\gamma^\kappa}\right)^{\frac{1}{m}} r^\sigma\right)^{\frac{1}{\mathfrak c_1}},\quad \mbox{where } \mathfrak c_1=1-\frac{d_1}{\lambda m}\not=0. 
$$
Since $u$ is a positive radial solution of \eqref{eq1}, we find that $\mathfrak c_1=1$, that is, $d_1=0$. This is again a contradiction.  
The proof of Step~2 is now complete. 
%$$ s_n^2  \left(d_1-\lambda m Z(s_n)\right) \to \left[q+1+(m-2)\frac{d_1}{\lambda m}\right] \frac{d_1}{\lambda m}\not=0\quad \mbox{as } n\to \infty.$$
%which used into \eqref{love} leads to 

\vspace{0.2cm}
{\bf Step 3.} {\em Proof of \eqref{safer} concluded.} 

By Step~2, we have $\lim_{s\to \infty} sY(s)=L_1\in \mathbb R$. We show that $L_1\not=\pm \infty$. 
Suppose the contrary. Then, $\lim_{s\to \infty} s |Y(s)|= \infty$. On the other hand, we have 
$$ \lim_{s\to \infty} \frac{\log |Y(s)|}{s}=\lim_{s\to \infty} \frac{Y'(s)}{Y(s)}=-\frac{\lambda m}{\sigma} {\rm sgn}\,(u'(r))\not=0. 
$$

If $u'(r)>0$ for all $r>0$ small, then we get a contradiction with $\lim_{s\to \infty} s |Y(s)|= \infty$. 

If $u'(r)<0$ for all $r>0$ small, then we get a contradiction with $\lim_{s\to \infty} Y(s)=0$.

Hence, we have $L_1\in \mathbb R$. We now show that $L_1=\frac{d_1}{\lambda m}$. Suppose by contradiction that this is not the case. Then, using 
\eqref{sfpo} and L'Hopital's rule, we find that
$$L_1=\lim_{s\to \infty} s Y(s)=
 \lim_{s\to \infty} -s^2 Y'(s)= \infty \cdot\,{\rm sgn}\left(\lambda m L_1-d_1\right) \,{\rm sgn}\,(u'(r)).
$$
This is a contradiction, proving that $L_1=\frac{d_1}{\lambda m}$. The proof of Step~3 is complete. 

\vspace{0.2cm}
{\bf Case (II).} Let $ \mathfrak b_1^2+\mathfrak b_2^2=0$, that is, $d_1=0$ and $q\left(1-q \right)=0$. So, one of the following holds:

(i) $q=0$ and $\sigma\left(\sigma-2\rho\right)+\lambda=0$;  

(ii) $q=1$ and $\sigma=2\rho$.

In either of these situations, from \eqref{sfpo}, we see that $Y'(s_1)=0$ for $s_1>0$ large leads to $Y(s_1)=0$. Hence, either $Y'(s)\not =0$ for every $s>0$ large or $Y(s)=0$ for every $s>0$ large. 

$\bullet$ If $Y(s)=0$ for every $s>0$ large, then \eqref{safer} is trivial. Moreover, using \eqref{milos1}, we find that
$$ u(r)=  \gamma \left( 1\pm \frac{\lambda^{\frac{1}{m}}} {\sigma \gamma^{\frac{\kappa}{m}}} r^\sigma\right) \quad \mbox{for every } r>0
\ \mbox{small}.$$

$\bullet $ Assume that $Y'(s)\not =0$ for every $s>0$ large. As for Case (I), we have $\lim_{s\to \infty} Y(s)=0$. 
Then, necessarily, $u'(r)>0$ for every small $r>0$, namely, $u$ satisfies $(P_+)$ in Lemma~\ref{gara1}. 
Indeed, we have ${\rm sgn}\,(Y'(s))=-{\rm sgn}\, (Y(s))$ for $s>0$ large. On the other hand, from \eqref{sfpo}, we get
$$  Y'(s)\, {\rm sgn}\, (u'(r)) \sim -\frac{\lambda m}{\sigma} Y(s)\quad \mbox{as } s\to \infty.  
$$ 
We show that $Z'(s)\not=0$ for every $s>0$ large, where $Z(s)=sY(s)$. Here, the proof is straightforward since \eqref{limb1} becomes
\begin{equation} \label{limb2} 
\left(\frac{Z(s)}{s}+\sigma-\frac{1}{s}\right) Z'(s)=Z(s) \left[ -\lambda m 
- \frac{q+1+\left(m-2\right) Z(s)}{s^2} 
 +\frac{d_0+\sigma}{s} \right]. 
\end{equation}
Since $Z(s)\not=0$ for all $s>0$ large and $\lim_{s\to \infty} Z(s)/s=0$, we get $Z'(s)\not=0$ for $s>0$ large. 
Hence, using the same reasoning as in Step~3 of Case (I), we arrive at 
$$\lim_{s\to \infty} sY(s)=0=\frac{d_1}{\lambda m}.$$ 
This concludes the proof of Lemma~\ref{gara2}. 
\end{proof}

Let $\mathfrak b_1$ and $\mathfrak b_2$ be as in \eqref{dzep}. 
By the proof of Lemma~\ref{gara2} in Case (II), we obtain the following. 

\begin{cor} If $\mathfrak b_1^2+\mathfrak b_2^2=0$ (that is, $d_1=0$ and $q\left(1-q\right)=0$) and $u$ satisfies $(P_-)$, then 
$$ u(r)= \gamma \left( 1- \frac{\lambda^{\frac{1}{m}}}{\sigma \gamma^{\frac{\kappa}{m}}} r^\sigma\right) \quad \mbox{for every } r>0\ \mbox{small}.$$

\end{cor}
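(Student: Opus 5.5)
The corollary says: if $d_1=0$, $q(1-q)=0$ and $u$ satisfies $(P_-)$, i.e.\ $u'(r)<0$ for all small $r>0$, then $u$ is exactly $\gamma\bigl(1-\lambda^{1/m}\sigma^{-1}\gamma^{-\kappa/m}r^\sigma\bigr)$ near zero. The plan is to rerun the dichotomy from Case (II) in the proof of Lemma~\ref{gara2} and rule out its second alternative when ${\rm sgn}\,(u'(r))=-1$.

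\textbf{Step 1 (the dichotomy).} Under $d_1=0$ and $q\in\{0,1\}$, the right-hand side of \eqref{sfpo} vanishes whenever $Y=0$:
\begin{itemize}
\item if $q=0$, the $s^{-2}$ term is absent;
\item if $q=1$, then $\sigma=2\rho$, so that term is $-Y/s^2$.
\end{itemize}
The left-hand factor $Y(s)+\sigma\,{\rm sgn}\,(u'(r))-1/s$ is nonzero for $s\ge s_0$ by \eqref{milo1}. Hence \eqref{sfpo} can be written as $Y'=Y\cdot g(s,Y)$ with $g$ locally Lipschitz. By uniqueness for this first-order ODE, either $Y\equiv 0$ for all large $s$, or $Y(s)\neq 0$ for all large $s$.

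\textbf{Step 2 (ruling out $Y\neq 0$).} Suppose $Y\neq0$ for all large $s$. Since $Y(s)/s\to0$ by \eqref{sfor}, dividing \eqref{sfpo} by $Y$ gives
$$\frac{Y'(s)}{Y(s)}\;\longrightarrow\;-\frac{\lambda m}{\sigma\,{\rm sgn}\,(u'(r))}=\frac{\lambda m}{\sigma}>0\quad\text{as } s\to\infty.$$
There is a subtlety: we need $Y\to0$, not just $Y/s\to0$, so that the factors $Y+\sigma\,{\rm sgn}\,(u'(r))$ and $1+O(Y)$ behave as claimed. Plan for this:
\begin{itemize}
\item Show first that $Y'\neq0$ for large $s$. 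At a critical point, \eqref{sfpo} gives $\lambda m Y=O(|Y|/s+Y^2/s)$, which forces $Y=0$ once $Y/s$ is small, contradicting $Y\neq0$.
\item So $Y$ is monotone and has a limit $L$. If $L\neq0$ (finite or infinite), plug into \eqref{sfpo}: the right side tends to $-\lambda m L\cdot(\text{nonzero})$ while $Y'$ must tend to $0$, which is a contradiction, as in Lemma~\ref{gara1}. Hence $Y\to0$.
\end{itemize}
Then $\log|Y|$ grows linearly with positive slope, so $|Y(s)|\to\infty$. This contradicts $Y\to0$. Therefore $Y\equiv0$ for all large $s$.

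\textbf{Step 3 (integration).} With $Y\equiv0$, \eqref{milos1} gives $\frac{d}{dr}(sr^\sigma)=-r^{\sigma-1}$. Integrating and using \eqref{sfor} at $r\to0$ yields
$$sr^\sigma=(\gamma^\kappa/\lambda)^{1/m}-r^\sigma/\sigma.$$
Return to $u$ through $s=u/(r|u'|)$ and $Y=0$, i.e.\ $(r^\Theta u)^\kappa s^{-m}=\lambda$ (the constant term drops because $\sigma=2\rho$ when $q=1$). This is the identity \eqref{simbi} with $X\equiv0$: it gives $\frac{d}{dr}u^{\kappa/m}$ as an exact power of $r$ that integrates to
$$u(r)=\gamma\Bigl(1-\frac{\lambda^{1/m}}{\sigma\gamma^{\kappa/m}}r^\sigma\Bigr).$$
This matches what the Case (II) computation already produced.

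\textbf{Main obstacle.} The only delicate point is Step~2's upgrade from $Y/s\to0$ to $Y\to0$ for the sign $-1$. I expect the monotonicity/limit argument above to handle it, reusing the reasoning of Lemma~\ref{gara1} and Step~3 of Lemma~\ref{gara2}.
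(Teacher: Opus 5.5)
Your Steps 1 and 2 are correct and follow the paper's own argument in Case (II) of the proof of Lemma~\ref{gara2}. There, \eqref{sfpo} forces either $Y\equiv 0$ or $Y'\neq 0$ for large $s$. In the second case $Y$ is monotone with $Y\to 0$, so ${\rm sgn}\,(Y')=-{\rm sgn}\,(Y)$, which clashes with $Y'\,{\rm sgn}\,(u'(r))\sim -\frac{\lambda m}{\sigma}Y$ unless $u'>0$. Your observation that $\log|Y|$ grows is the same contradiction. Your exclusion of $\lim Y=L\neq 0$ is loosely worded, but still works:
for $L=\pm\infty$ one gets $Y'\to-\lambda m$, contradicting $Y/s\to 0$; for $L=\sigma$ the factor $Y-\sigma-1/s$ tends to $0^-$, so $Y'\to+\infty$.

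The genuine gap is in the second half of Step 3 when $q=0$. There $d_1=\sigma(\sigma-2\rho)+\lambda=0$ gives $\sigma-2\rho=-\lambda/\sigma\neq 0$. So $Y=0$ means $(r^\Theta u)^\kappa s^{-m}=\lambda+\frac{\lambda}{\sigma s}$, not $\lambda$: the constant term does not drop. Hence $X\not\equiv 0$ in \eqref{simbi}, and $\frac{d}{dr}u^{\kappa/m}$ is not an exact power of $r$. If you did integrate \eqref{simbi} with $X\equiv 0$, you would get $u=\gamma\bigl(1-\frac{\kappa}{m}\frac{\lambda^{1/m}}{\sigma\gamma^{\kappa/m}}r^\sigma\bigr)^{m/\kappa}$. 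This differs from the claimed formula at order $r^{2\sigma}$, since $\kappa=m-1\neq m$. As written, only the case $q=1$ (where $\kappa=m$ and $\sigma=2\rho$) is proved.

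The repair is already in your first display, and it is what the paper means by ``using \eqref{milos1}''. Set $A=(\gamma^\kappa/\lambda)^{1/m}$ and note $s=-u/(ru')$. Then the identity $sr^\sigma=A-r^\sigma/\sigma$ reads $u'/u=\frac{d}{dr}\log\bigl(A-\frac{r^\sigma}{\sigma}\bigr)$. Integrating with $u(0^+)=\gamma$ gives $u=\frac{\gamma}{A}\bigl(A-\frac{r^\sigma}{\sigma}\bigr)$, which is the claimed formula for both $q=0$ and $q=1$. Drop the appeal to $(r^\Theta u)^\kappa s^{-m}=\lambda$ and \eqref{simbi} entirely.
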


\begin{lemma} \label{gara3} 
If $\mathfrak b_1^2+\mathfrak b_2^2\not=0$, then 
$u$ satisfies 
\begin{equation} \label{firumw} 
u(r)=\gamma \left( 1\pm \frac{\lambda^{\frac{1}{m}}}{\sigma \gamma^{\frac{\kappa}{m}}} r^\sigma 
+\mathfrak b_1 r^{2\sigma}\pm \mathfrak b_2 r^{3\sigma}
(1+o(1)) \right)\quad \mbox{as } r\to 0^+.
\end{equation}
\end{lemma}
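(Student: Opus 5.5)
We build on Lemma~\ref{gara2}: the quantities $s=u/(r|u'|)$ and $Y(s)$ from \eqref{sfir} satisfy $sY(s)\to d_1/(\lambda m)$. The plan is to bootstrap this into one more order of accuracy for $Z(s)=sY(s)$, and then integrate \eqref{milos1} twice: once to get $sr^\sigma$ in terms of $r$, and once more to get $u$.

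\textbf{Step 1: second-order term of $Z$.} From \eqref{limb1}, write $Z(s)=d_1/(\lambda m)+W(s)$. The right-hand side of \eqref{limb1} becomes
\[
-\lambda m W - \Big[q(\sigma-2\rho)-(d_0+\sigma)\tfrac{d_1}{\lambda m}\Big]\frac{\mathrm{sgn}(u')}{s}+o(1/s),
\]
while the left-hand side is $(\sigma\,\mathrm{sgn}(u')+o(1))\,Z'$. I will first show that $W'(s)\neq 0$ for large $s$; this uses the same critical-point sign argument as Steps~1--2 of Lemma~\ref{gara2}, with the degenerate subcases handled as there. Then L'H\^opital applied to $sW(s)$ yields
\[
sW(s)\to -\frac{q(\sigma-2\rho)-(d_0+\sigma)d_1/(\lambda m)}{\lambda m}\,\mathrm{sgn}(u').
\]
This gives $Y(s)=\frac{d_1}{\lambda m}s^{-1}+e_2\,\mathrm{sgn}(u')\,s^{-2}+o(s^{-2})$ with an explicit constant $e_2$.

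\textbf{Step 2: integrate for $sr^\sigma$ and for $u$.}
\begin{itemize}
\item[(i)] Insert the expansion of $Y$ into \eqref{milos1}. This gives
\[
\frac{d}{dr}(sr^\sigma)=r^{\sigma-1}\big[1-\tfrac{d_1}{\lambda m}-e_2\,\mathrm{sgn}(u')\,s^{-1}+o(s^{-1})\big]\mathrm{sgn}(u').
\]
\item[(ii)] Substitute $s^{-1}=r^{\sigma}\Lambda^{-1}(1+o(1))$, where $\Lambda=(\gamma^\kappa/\lambda)^{1/m}$ comes from \eqref{sfor}.
\item[(iii)] Integrate from $0$, using $\lim_{r\to0^+} sr^\sigma=\Lambda$. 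This gives $sr^\sigma=\Lambda+\alpha_1 r^\sigma+\alpha_2 r^{2\sigma}(1+o(1))$ with explicit $\alpha_1,\alpha_2$.
\item[(iv)] Use $u'/u=\mathrm{sgn}(u')\,(1/r)\,s^{-1}$, that is, $(\log u)'=\mathrm{sgn}(u')\,r^{\sigma-1}/(sr^\sigma)$. Expanding $1/(sr^\sigma)$ to the order $r^{2\sigma}$ and integrating from $0$ (using $u(0^+)=\gamma$) gives $\log(u/\gamma)$ to order $r^{3\sigma}$.
\item[(v)] Exponentiate. The coefficients should reduce to $\pm\lambda^{1/m}/(\sigma\gamma^{\kappa/m})$, $\mathfrak b_1$ and $\pm\mathfrak b_2$ after using the definitions of $d_0,d_1,d_2$. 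The factor $d_1/(2\lambda m)$ in $\mathfrak b_1$ matches $\alpha_1/\Lambda$ combined with the square of the first term.
\end{itemize}

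\textbf{Step 3: the degenerate case.} When $\mathfrak b_1=0$ but $\mathfrak b_2\neq0$, we have $d_1=0$ and the $r^{2\sigma}$ term vanishes. The $r^{3\sigma}$ coefficient then comes only from $e_2$, which is nonzero because $q(1-q)\neq0$ and $\sigma\ne 2\rho$ (from $d_1=0$, $q\ne 1$). This matches $\mathfrak b_2\propto \lambda^2mq(1-q)$. When $\mathfrak b_2=0$ but $\mathfrak b_1\neq0$, the statement still holds with $o$ absorbing the fourth term, since the $(1+o(1))$ multiplies $\mathfrak b_2 r^{3\sigma}$. If that were read literally as requiring a nonzero third coefficient, I would instead state the result as $O$/$o(r^{3\sigma})$.

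\textbf{Main obstacle.} The hard part is Step~1: proving eventual monotonicity of $W$ so that the limit of $sW$ exists, especially when the coefficient of $1/s$ vanishes. There I would expand \eqref{limb1} one order further, to $1/s^2$, and repeat the $Z''(s_n)$ sign argument of Step~2 of Lemma~\ref{gara2}. Any case where every expansion coefficient vanishes forces an explicit solution, as in Case~(ii) of that proof. The remaining algebra matching the coefficients to $\mathfrak b_2$ is routine but lengthy.
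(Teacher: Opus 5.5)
Your proposal is correct, but it takes a longer route than the paper.

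The paper needs nothing beyond Lemma~\ref{gara2}. You use $u'/u={\rm sgn}(u')/(rs)$; the paper instead reads $u'$ off the definition of $Y$ itself, via \eqref{saff2}:
\begin{equation*}
\bigl(u^{\frac{\kappa}{m}-1}r^{1-\sigma}|u'(r)|\bigr)^m=(r^\Theta u)^\kappa s^{-m}=\lambda+\frac{Y(s)+(\sigma-2\rho)\,{\rm sgn}\,(u'(r))}{s}.
\end{equation*}
Here $Y$ enters only multiplied by $1/s=O(r^\sigma)$. So two leading-order facts already determine $\frac{d}{dr}u^{\kappa/m}$ up to a relative error $o(r^{2\sigma})$ (this is \eqref{saff1}):
\begin{itemize}
\item $sY(s)\to d_1/(\lambda m)$;
\item $sr^\sigma=\Lambda+\frac{{\rm sgn}(u')}{\sigma}\bigl(1-\frac{d_1}{\lambda m}\bigr)r^\sigma(1+o(1))$, with $\Lambda=(\gamma^\kappa/\lambda)^{1/m}$, which is one integration of \eqref{milos1}.
\end{itemize}
A single further integration then gives \eqref{firumw}.

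Your identity $(\log u)'={\rm sgn}(u')\,r^{\sigma-1}/(sr^\sigma)$ discards that information. That is exactly why you need $sr^\sigma$ to order $r^{2\sigma}$, and hence the extra term $e_2$ in $sY$. In any bootstrap of this kind the paper's identity stays one order ahead of yours. What your route buys is the second term of $Y$ as a by-product. The coefficients necessarily agree: for $d_1=0$ your scheme gives ${\rm sgn}(u')\frac{q(1-q)}{6m}\bigl(\frac{\lambda^{1/m}}{\sigma\gamma^{\kappa/m}}\bigr)^3r^{3\sigma}$, which is $\pm\mathfrak b_2r^{3\sigma}$.

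Your Step~1 is also easier than you fear, though as written it is slightly off. Eventual monotonicity of $W$ does not by itself give existence of $\lim sW$; for that you would need $(sW)'\neq0$, as in Step~2 of Lemma~\ref{gara2}. In fact no sign argument is needed. Since $W\to0$, \eqref{limb1} gives
\begin{equation*}
W'(s)+\frac{\lambda m\,{\rm sgn}(u')}{\sigma}\,W(s)=-\frac{K}{\sigma s}+o\Bigl(\frac{1}{s}\Bigr),\qquad K:=q(\sigma-2\rho)-(d_0+\sigma)\frac{d_1}{\lambda m}.
\end{equation*}
Write $sW=We^{cs}/(e^{cs}/s)$ with $c=\lambda m\,{\rm sgn}(u')/\sigma$. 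L'H\^opital's rule (an $\infty/\infty$ form if $u'>0$, a $0/0$ form if $u'<0$) then gives $sW\to-{\rm sgn}(u')K/(\lambda m)$ directly. This covers $K=0$ as well, where $e_2=0$ is harmless. It is the same integrating-factor device the paper uses in Lemmas~\ref{stepint} and \ref{bx}.

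Your reading of $\mathfrak b_2r^{3\sigma}(1+o(1))$ as $\mathfrak b_2r^{3\sigma}+o(r^{3\sigma})$ is also what the paper's computation actually delivers.
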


\begin{proof} In light of \eqref{sfor}, \eqref{milos1} and \eqref{safer}, we obtain that
\begin{equation} \label{veci1} 
\begin{aligned}
& s\, r^\sigma= \left( \frac{\gamma^\kappa}{\lambda}\right)^{\frac{1}{m}}+\left(1-\frac{d_1}{\lambda m}\right)\frac{{\rm sgn}\,(u'(r))}{\sigma} \, r^\sigma  (1+o(1))\quad \mbox{as } r\to 0^+,\\
& \lim_{s\to \infty} r^{-\sigma} Y(s)= \lim_{s\to \infty} \frac{r^\sigma Y(s)}{r^{2\sigma}}=
 \left( \frac{\lambda}{\gamma^\kappa}\right)^{\frac{1}{m}}\frac{d_1}{\lambda m}. 
\end{aligned}
\end{equation}
For simplicity, we define 
\begin{equation} \label{frakn} \begin{aligned}
& \mathfrak N_1= \left( \frac{\lambda}{\gamma^\kappa}\right)^{\frac{1}{m}} \left[ \frac{d_1}{\lambda m} -\frac{\sigma-2\rho}{\sigma} \left(1-\frac{d_1}{\lambda m}\right)\right],\\
& \mathfrak N_2=\mathfrak N_1 +\frac{\left(1-m\right) \lambda^{\frac{1}{m}-1}}{2m \gamma^{\frac{\kappa}{m}}} (\sigma-2\rho)^2.
\end{aligned}
\end{equation}
Using \eqref{veci1} and the definition of $Y(s)$ in \eqref{sfir}, we arrive at 
\begin{equation} \label{saff1} \lim_{r\to 0^+} \frac{ \left( \frac{\gamma^\kappa}{\lambda}\right)^{\frac{1}{m}}\left[ (r^\Theta u(r))^\kappa s^{-m} -\lambda\right] -
\left(\sigma-2\rho\right) {\rm sgn}\, (u'(r)) \,r^\sigma } {r^{2\sigma} }=\mathfrak N_1.
\end{equation}
Recall that $\sigma m=-\kappa \Theta$ and $s=u(r)/[r |u'(r)|]$ so that 
\begin{equation} \label{saff2} (r^\Theta u(r))^\kappa s^{-m}=(u^{\frac{\kappa}{m}-1} r^{1-\sigma} |u'(r)|)^m.
\end{equation}
From \eqref{saff1} and \eqref{saff2}, we infer that as $r\to 0^+$
$$ \begin{aligned} {\rm sgn}\, (u'(r)) \,\frac{d}{dr} (u^{\frac{\kappa}{m}}(r))&= 
\frac{\kappa}{m} \lambda^{\frac{1}{m}} r^{\sigma-1} \left\{ 
1+\frac{\lambda^{\frac{1}{m}-1}}{\gamma^{\frac{\kappa}{m}}} \left[
\left(\sigma-2\rho\right) {\rm sgn}\, (u'(r))\, r^\sigma +\mathfrak N_1\, r^{2\sigma} (1+o(1))
\right]
\right\}^{\frac{1}{m}}\\
&=\frac{\kappa}{m} \lambda^{\frac{1}{m}} r^{\sigma-1} \left\{ 
1+\frac{\lambda^{\frac{1}{m}-1}}{m\gamma^{\frac{\kappa}{m}}} \left[ (\sigma-2\rho) \,{\rm sgn}\, (u'(r)) \,r^\sigma +
\mathfrak N_2\, r^{2\sigma} (1+o(1))\right]
\right\}.
\end{aligned} $$
Since $\lim_{r\to 0^+} u(r)=\gamma$, we obtain that as $r\to 0^+$
$$ \begin{aligned} u(r)&=\gamma \left\{ 
1+\frac{\kappa }{m}\frac{\lambda^{\frac{1}{m}}}{\sigma \gamma^{\frac{\kappa}{m}}}  {\rm sgn}\,(u'(r))\,r^\sigma
+\frac{\kappa}{m^2} \frac{\lambda^{\frac{2}{m}-1}}{\gamma^{\frac{2\kappa}{m}}} 
\left[ \frac{\sigma-2\rho}{2\sigma} r^{2\sigma} +\frac{\mathfrak N_2}{3\sigma}\, {\rm sgn}\, (u'(r))\, r^{3\sigma} (1+o(1))\right] 
\right\} ^{\frac{m}{\kappa}}\\
&=\gamma\left[1+\frac{\lambda^{\frac{1}{m}}}{\sigma \gamma^{\frac{\kappa}{m}}}  {\rm sgn}\,(u'(r))\,r^\sigma 
+ \mathfrak b_1 r^{2\sigma}+\mathfrak b_2 {\rm sgn}\, (u'(r)) r^{3\sigma} (1+o(1))
\right],
\end{aligned}$$
where $\mathfrak b_1$ and $\mathfrak b_2$ are given by
$$\begin{aligned} 
& \mathfrak b_1=\frac{\lambda^{\frac{2}{m}-1}}{2m\sigma^2 \gamma^{\frac{2\kappa}{m}}} \left[\sigma\left(\sigma-2\rho\right)+\lambda \left(1-q\right)\right],\\
& \mathfrak b_2=\frac{\lambda^{\frac{2}{m}-1}}{m\gamma^{\frac{2\kappa}{m}}} \frac{\mathfrak N_2}{3\sigma} +\frac{\lambda^{\frac{3}{m}}}{\sigma^3 \gamma^{\frac{3\kappa}{m}}} \frac{(1-q)}{6m^2 \lambda}\left[ 3\sigma (\sigma-2\rho) +\lambda \left(2-m-2q\right)\right].  
\end{aligned}$$
Since $\sigma(\sigma-2\rho)+\lambda(1-q)=d_1$, using the definition of $\mathfrak N_1$ and $\mathfrak N_2$ in \eqref{frakn}, we get that $\mathfrak b_1$ and $\mathfrak b_2$ above have the expressions in \eqref{dzep}. 
This concludes the proof of \eqref{firumw} and of Lemma~\ref{gara3}. 
\end{proof}

\begin{lemma} \label{gara4} Let $\mathfrak b_1^2+\mathfrak b_2^2=0$ and $u$ satisfy $(P_+)$. Assume that 
$d_1=0$ and $q=1$. Then, either  
\begin{equation} \label{mibba} u(r)=\gamma+ \frac{\lambda^{\frac{1}{m}}}{\sigma} r^\sigma\quad \mbox{for every } r>0\ \mbox{small} \end{equation} 
or there exists a constant $C\in \mathbb R\setminus \{0\}$ such that 
\begin{equation} \label{gara5}  u(r)=\gamma+ \frac{\lambda^{\frac{1}{m}}}{\sigma} r^\sigma +C r^{2\sigma+\frac{\lambda m}{\sigma}} \exp\left(-\frac{ m\gamma \lambda^{1-\frac{1}{m}}}{\sigma } r^{-\sigma}\right)(1+o(1))\quad \mbox{as } r\to 0^+.
\end{equation}
\end{lemma}

\begin{proof} In our framework, the differential equation \eqref{sfpo} satisfied by $Y(s)$ becomes
\begin{equation}
\label{sfpoo}	
\left(Y(s)+\sigma-\frac{1}{s}\right) \frac{dY}{ds}=Y(s) \left[ -\lambda m +\frac{\sigma  +\left(1-m\right) Y(s)}{s} -\frac{1}{s^2}\right].
\end{equation}
The proof of Lemma~\ref{gara2} in Case (II) shows that one of the following holds:

(a) $Y(s)=0$ for $s>0$ large (in which case, we have \eqref{mibba});

(b) $Y'(s)\not=0$ for every $s>0$ large. In this situation, $\lim_{s\to \infty} sY(s)=0$. 

In the rest of the proof we assume that (b) holds, which leads to ${\rm sgn}\, (Y(s))=-{\rm sgn}\, (Y'(s))$ for $s>0$ large.
 We show that there exists a constant $C_1\in \mathbb R\setminus \{0\}$ such that 
\begin{equation} \label{maza} \widetilde Y(s)=Y(s) s^{-1+\frac{\lambda m}{\sigma^2}} e^{\frac{\lambda ms}{\sigma}}\to C_1\ \mbox{as } s\to \infty. 
\end{equation}
By direct calculation, we obtain that 
$$ \widetilde Y'(s):=\frac{\widetilde Y(s)}{\left(Y(s)+\sigma-\frac{1}{s}\right) s^2} \left\{ -\frac{\lambda m}{\sigma^2} +ms^2 Y(s)\left[ \frac{\lambda}{\sigma}+\left(\frac{\lambda}{\sigma^2}-1\right)\frac{1}{s} \right]
\right\}.
$$
In view of \eqref{sfpoo} and $\lim_{s\to \infty} Y(s)=0$, we find that
$$ \lim_{s\to \infty} \frac{\log |Y(s)|}{s}=\lim_{s\to \infty} \frac{Y'(s)}{Y(s)}=-\frac{\lambda m}{\sigma}<0. 
$$
Hence, $\lim_{s\to \infty} s^2 Y(s)=0$ and 
$$ \lim_{s\to \infty} \frac{s^2 \widetilde Y'(s)}{\widetilde Y(s)}=-\frac{\lambda m}{\sigma^3}<0. $$
Fix a constant $a>\lambda m/\sigma^3$. Then, $\log |\widetilde Y(s)|+a/s$ is increasing for $s>0$ large. Since $\log |\widetilde Y(s)|$ is decreasing for $s>0$ large, we conclude the claim in \eqref{maza} for $C_1\in \mathbb R\setminus \{0\}$. 

Using \eqref{sfor} and \eqref{milos1},  jointly with \eqref{maza}, we infer that
$$ \lim_{r\to 0^+} \left(s-\frac{\gamma \,r^{-\sigma}}{\lambda^{1/m}}\right) =\frac{1}{\sigma} \quad  \mbox{and}\quad
\frac{Y(s)}{s}=C_2 r^{\frac{\lambda m}{\sigma}} e^{-\frac{\lambda m\gamma}{\sigma \lambda^{1/m} } r^{-\sigma}} (1+o(1))\ \  \mbox{as } 
r\to 0^+,$$
where $C_2=C_1 e^{1/\sigma} (\gamma/\lambda^{1/m})^{-\lambda m/\sigma^2}\in \mathbb R\setminus \{0\}$. 

By the definition of $Y(s)$ in \eqref{sfir} and $\sigma=2\rho$, we obtain that
$$ \begin{aligned} u'(r)&=\lambda^{\frac{1}{m}} r^{\sigma-1} \left(1+\frac{Y(s)}{\lambda s}\right)^{\frac{1}{m}} \\
&=
\lambda^{\frac{1}{m}} r^{\sigma-1} \left(1 +\frac{C_2}{\lambda m} r^{\frac{\lambda m}{\sigma}} e^{-\frac{\lambda m\gamma}{\sigma \lambda^{1/m} } r^{-\sigma}} (1+o(1))\right)\quad \mbox{as } r\to 0^+.
\end{aligned} $$
We now readily conclude \eqref{gara5}, where $C\in \mathbb R\setminus \{0\}$ is given by
$C=\lambda^{2/m-2} C_2/(m^2\gamma)$. 
\end{proof}

\begin{lemma} \label{gara6} Let $\mathfrak b_1^2+\mathfrak b_2^2=0$ and $u$ satisfy $(P_+)$. Assume that 
$d_1=0$ and $q=0$. Then, either 
\begin{equation} \label{mibb2} u(r)=\gamma+ \frac{\left(\lambda \gamma\right)^{\frac{1}{m}}}{\sigma} r^\sigma\quad \mbox{for every } r>0\ \mbox{small} \end{equation} 
or
there exists a constant $C\in \mathbb R\setminus\{0\}$ such that 
\begin{equation} \label{gara7}  u(r)=\gamma+ \frac{(\lambda \gamma)^{\frac{1}{m}}}{\sigma} r^\sigma +C r^{2\rho+\sigma} \exp\left(-\frac{m}{\sigma}\left( \lambda \gamma\right)^{\frac{m-1}{m}} r^{-\sigma}\right) (1+o(1))\quad \mbox{as } r\to 0^+.
\end{equation}
\end{lemma}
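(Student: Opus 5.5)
We mirror the proof of Lemma~\ref{gara4}. With $q=0$, $d_1=0$ means $\sigma(\sigma-2\rho)=-\lambda$, and we are in $(P_+)$, so ${\rm sgn}\,(u'(r))=1$. First we rewrite \eqref{sfpo} in this case. The constant term is $d_1/s=0$, and $d_0=(1-m)(\sigma-2\rho)+\sigma$. The $q$-term vanishes. Hence
\[
\left(Y+\sigma-\tfrac1s\right)Y'=Y\left[-\lambda m+\frac{d_0+(1-m)Y}{s}\right].
\]
By Case (II) of the proof of Lemma~\ref{gara2}, either $Y\equiv0$ for large $s$, or $Y'(s)\neq0$ for large $s$ with $sY(s)\to0$. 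In the first case, \eqref{milos1} gives $sr^\sigma$ constant. By \eqref{sfir}, $Y=0$ means $(r^\Theta u)^\kappa s^{-m}=\lambda$, and with $q=0$, $\kappa=m-1$ this reads $u'(r)=(\lambda u)^{1/m}r^{\sigma-1}\cdot u^{0}$. More directly, using $s r^\sigma=(\gamma^\kappa/\lambda)^{1/m}$ we get $u'=(\lambda\gamma)^{1/m}r^{\sigma-1}$ up to the identity $u/s = r u'$. Integrating this gives \eqref{mibb2}. (I would check that $u^{\kappa/m}$ combined with $q=0$ indeed yields the $\gamma^{1/m}$ factor.)

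In the nontrivial case, $Y'/Y\to-\lambda m/\sigma$, so $Y$ decays exponentially in $s$. We then seek the exact prefactor. Expanding, $Y'/Y=-\frac{\lambda m}{\sigma}+\frac{c}{s}+O(s^{-2})$, where
\[
c=\frac{d_0}{\sigma}-\frac{\lambda m}{\sigma^2}.
\]
This uses $(\sigma+Y-1/s)^{-1}=\sigma^{-1}(1+1/(\sigma s)+o(1/s))$, and $Y/s$ is negligible since $Y$ is exponentially small. Define $\widetilde Y(s)=Y(s)s^{-c}e^{\lambda m s/\sigma}$. Then $s^2\widetilde Y'/\widetilde Y$ is bounded. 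Exactly as in Lemma~\ref{gara4}, a monotonicity argument with $\log|\widetilde Y|+a/s$ shows that $\widetilde Y\to C_1\neq0$.

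Next we convert from $s$ to $r$. From \eqref{milos1} with $sY\to0$ we obtain $s=\frac{\gamma^{\kappa/m}}{\lambda^{1/m}}r^{-\sigma}+\frac1\sigma+o(1)$, where $\gamma^{\kappa/m}/\lambda^{1/m}$ equals $(\gamma^{m-1}/\lambda)^{1/m}$. Hence $e^{-\lambda m s/\sigma}=e^{-\lambda m/\sigma^2}\exp\bigl(-\frac m\sigma\lambda^{(m-1)/m}\gamma^{(m-1)/m}r^{-\sigma}\bigr)(1+o(1))$, which matches the exponent in \eqref{gara7}. Then $Y=C_2r^{-\sigma c}\exp(\cdots)(1+o(1))$. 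From \eqref{sfir},
\[
(r^\Theta u)^\kappa s^{-m}=\lambda+Y/s+(\sigma-2\rho)/s,
\]
and the term $(\sigma-2\rho)/s$ is not small relative to the exponential. This is where $q=0$ differs from $q=1$ and requires care. I would instead use \eqref{saff2}, which here gives $u^{\kappa/m-1}r^{1-\sigma}u'=(\cdots)^{1/m}$ with $\kappa/m-1=-1/m$, i.e. $u'^m=r^{m(\sigma-1)}u\,(\lambda+(\sigma-2\rho)/s+Y/s)$. Writing $u=ru's$ turns this into an exact relation. The polynomial part $\lambda+(\sigma-2\rho)/s$ should integrate exactly to the explicit solution $\gamma+(\lambda\gamma)^{1/m}r^\sigma/\sigma$; this is the d_1=0 identity, which I would verify. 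The perturbation $Y/s$ then contributes $u'-(\lambda\gamma)^{1/m}r^{\sigma-1}\approx \frac{(\lambda\gamma)^{1/m}}{m\lambda}r^{\sigma-1}\cdot\frac{Y}{s}$.

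Finally, integrating near $0$ with the Laplace-type asymptotic $\int_0^r t^{a}e^{-Kt^{-\sigma}}dt\sim\frac{r^{a+\sigma+1}}{K\sigma}e^{-Kr^{-\sigma}}$ produces a power $r^{\sigma-1+\sigma-\sigma c+\sigma+1}=r^{3\sigma-\sigma c}$. Using $d_0=(1-m)(\sigma-2\rho)+\sigma$ and $\lambda=\sigma(2\rho-\sigma)$, we should get $3\sigma-\sigma c=2\rho+\sigma$, as claimed. I expect the main obstacle to be the bookkeeping in this last step: confirming the exact cancellation of the polynomial part, so that no algebraic correction terms dominate the exponentially small remainder, and confirming the exponent $2\rho+\sigma$.
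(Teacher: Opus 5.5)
Your route differs from the paper's and can be made to work, but the step that carries the whole lemma is asserted rather than proved.

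\textbf{What checks out.} Your ODE for $Y$ is correct. Your exponent $c=d_0/\sigma-\lambda m/\sigma^2$ simplifies, via $\lambda=\sigma(2\rho-\sigma)$, to $c=2(\sigma-\rho)/\sigma$, which is exactly the paper's exponent. The bound $s^2\widetilde Y'/\widetilde Y=O(1)$ gives $\widetilde Y\to C_1\neq0$. The exponential factor and the power $3\sigma-\sigma c=2\rho+\sigma$ agree with \eqref{gara7}.

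\textbf{The gap.} Your correction
\begin{equation*}
u'(r)-(\lambda\gamma)^{\frac1m}r^{\sigma-1}=\frac{(\lambda\gamma)^{\frac1m}}{m\lambda}\,r^{\sigma-1}\,\frac{Y}{s}\,(1+o(1))
\end{equation*}
is true, but you only assert it. The reason you give (``the polynomial part integrates exactly to the explicit solution'') is not an argument. In $(u')^m=r^{m(\sigma-1)}u\,(\lambda+(\sigma-2\rho)/s+Y/s)$, both $u$ and $\lambda+(\sigma-2\rho)/s$ are evaluated on the perturbed solution. You must show their deviation is $o(Y/s)=o(r^\sigma|Y|)$.

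\textbf{How to close it.} Put $K=(\gamma^\kappa/\lambda)^{1/m}$, so that $\gamma/K=(\lambda\gamma)^{1/m}$ when $q=0$. Integrating \eqref{milos1} from $0$ gives $sr^\sigma=K+r^\sigma/\sigma+E(r)$ with $E(r)=-\int_0^r\tau^{\sigma-1}sY\,d\tau$. The Laplace estimate you use at the end gives $E=O(r^\sigma|Y|)$. Since $d_1=0$ and $q=0$ force $\sigma-2\rho=-\lambda/\sigma$, you get $\lambda+(\sigma-2\rho)/s=\lambda(K+E)/(sr^\sigma)$. Also $u'/u=r^{\sigma-1}/(sr^\sigma)$ gives $u=\frac{\gamma}{K}(K+\frac{r^\sigma}{\sigma})\bigl(1+O(\int_0^r\tau^{\sigma-1}|E|\,d\tau)\bigr)=\frac{\gamma}{K}(K+\frac{r^\sigma}{\sigma})(1+O(r^{3\sigma}|Y|))$. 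Hence
\begin{equation*}
u\Bigl(\lambda+\frac{\sigma-2\rho}{s}\Bigr)=\lambda\gamma\Bigl(1+\frac{E\,r^\sigma/\sigma}{K\,sr^\sigma}\Bigr)\bigl(1+O(r^{3\sigma}|Y|)\bigr)=\lambda\gamma\bigl(1+O(r^{2\sigma}|Y|)\bigr).
\end{equation*}
So $(u')^m=\lambda\gamma\,r^{m(\sigma-1)}\bigl(1+\frac{Y}{\lambda s}(1+o(1))\bigr)$, and your formula follows. Since $C_1\neq0$, the constant $C$ in \eqref{gara7} is nonzero.

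\textbf{Comparison with the paper.} The paper avoids this bookkeeping by switching to $\mathfrak t=r^{\theta+1}(u')^{m-1}$, for which $\mathcal Y(\mathfrak t)=Y(s)$, i.e. $\mathfrak t=\lambda s+(\sigma-2\rho)+Y$. Then $r\,d\mathfrak t/dr=\mathfrak t\,(-\sigma+(m-1)\mathcal Y)$, so $\mathfrak t r^\sigma$ is exactly constant on the explicit solution. Indeed $\mathfrak t r^\sigma=\lambda K+\lambda E+r^\sigma Y$, so the $1/\sigma$ shift and your $E$ are absorbed automatically. Since $u'=(r^{-\theta-1}\mathfrak t)^{1/(m-1)}$, the exponentially small correction passes straight to $u'$ with nothing to cancel. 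Your version keeps \eqref{sfpo} and the template of Lemma~\ref{gara4}, at the price of the estimate above.

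\textbf{The case $Y\equiv0$.} Your conclusion is right but the argument is wrong. \eqref{milos1} gives $sr^\sigma=K+r^\sigma/\sigma$, not a constant. Also, $Y=0$ means $(r^\Theta u)^\kappa s^{-m}=\lambda+(\sigma-2\rho)/s$ with $\sigma-2\rho\neq0$, not $=\lambda$. A constant $sr^\sigma$ would give $u=\gamma e^{r^\sigma/(\sigma K)}$. The correct derivation is $u'/u=1/(rs)=\sigma r^{\sigma-1}/(\sigma K+r^\sigma)$, so $u=\gamma(1+r^\sigma/(\sigma K))$, which is \eqref{mibb2}.
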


\begin{proof}
For all $r>0$ small, we define 
\begin{equation} \label{miza1} \mathfrak t=r^{\theta+1} (u'(r))^{m-1}\quad \mbox{and}\quad \mathcal Y(\mathfrak t)= \mathfrak t-\frac{\lambda u(r)}{r u'(r)} -(\sigma-2\rho).
\end{equation}	
Observe that $\mathcal Y(\mathfrak t)=Y(s)$, where $s$ and $Y(s)$ are defined in \eqref{sfir}. 
By Lemma~\ref{gara2}, we have 
\begin{equation} \label{micc0} \lim_{r\to 0^+} \mathfrak t r^{\sigma} =(\lambda \gamma)^{\frac{m-1}{m}} \quad \mbox{and}\quad  \lim_{\mathfrak t\to \infty} \mathcal Y(\mathfrak t)=0.\end{equation}

In what follows, we understand that $r>0$ is small and $\mathfrak t>0$ is large enough. 
By differentiating $\mathfrak t$ and 
 $\mathcal Y(\mathfrak t)$ with respect to $r$ and $\mathfrak t$, resp., we arrive at 
 \begin{equation}
 \label{miza2}
 \begin{aligned}
 	& \frac{d\mathfrak t}{dr}=\frac{\mathfrak t}{r} \left[-\sigma+\left(m-1\right)\mathcal Y(\mathfrak t)\right],\\
 	& \frac{d \mathcal Y}{d \mathfrak t}=\frac{\mathcal Y(\mathfrak t)}{-\sigma+\left(m-1\right)\mathcal Y(\mathfrak t)} \left[ m  -\frac{2(\sigma-\rho) +\mathcal Y(\mathfrak t)}{ \mathfrak t }
 	\right]. 
 \end{aligned}	
 \end{equation}

By the proof of Lemma~\ref{gara2} in Case (II), one of the following holds:

(a) $\mathcal Y(\mathfrak t)=0$ for $\mathfrak t>0$ large (in which case, we have \eqref{mibb2});

(b) $\mathcal Y'(\mathfrak t)\not=0$ for $\mathfrak t>0$ large. In this situation, $\lim_{\mathfrak t\to \infty} \mathfrak t \mathcal Y(\mathfrak t)=0$. 

In the rest of the proof we assume that (b) holds, which implies that 
$$ {\rm sgn}\, (\mathcal Y(\mathfrak t))=-{\rm sgn}\, (\mathcal Y'(\mathfrak t))\quad 
\mbox{for } t>0\ \mbox{large}.$$
 We show that there exists a constant $C_1\in \mathbb R\setminus \{0\}$ such that 
\begin{equation} \label{maza3} \widetilde{\mathcal Y}(\mathfrak t)=\mathcal Y(\mathfrak t) \mathfrak t^{-\frac{2\left(\sigma-\rho\right)}{\sigma}} e^{\frac{m\mathfrak t}{\sigma}}\to C_1\ \mbox{as } \mathfrak t\to \infty. 
\end{equation}

{\bf Proof of \eqref{maza3}.}
It is easy to see that $\widetilde{\mathcal Y}(\mathfrak t)$ satisfies the following ODE
$$ \begin{aligned} \frac{d \widetilde{\mathcal Y}}{d\mathfrak t} &=
\frac{\widetilde{\mathcal Y}(\mathfrak t) \mathcal Y(\mathfrak t)}{\sigma\left[-\sigma+\left(m-1\right)\mathcal Y(\mathfrak t)\right]} \left[ m\left(m-1\right)-
\frac{\sigma\left(2m-1\right)-2\rho\left(m-1\right)}{\mathfrak t}\right]
\\
&=\frac{\widetilde{\mathcal Y}^2(\mathfrak t) \mathfrak t^{\frac{2\left(\sigma-\rho\right)}{\sigma}} e^{-\frac{m\mathfrak t}{\sigma}}}{\sigma\left[-\sigma+\left(m-1\right)\mathcal Y(\mathfrak t)\right]} \left[ m\left(m-1\right)-
\frac{\sigma\left(2m-1\right)-2\rho\left(m-1\right)}{\mathfrak t}\right].
\end{aligned}
$$
This, jointly with $\lim_{\mathfrak t\to \infty} \mathfrak t \,\mathcal Y(\mathfrak t)=0$, gives that 
\begin{equation} \label{micc1} \lim_{\mathfrak t\to \infty}\frac{\mathfrak t\, \widetilde{\mathcal Y}'(\mathfrak t)}{\mathcal Y(\mathfrak t)}=0\quad \mbox{and}\quad 
-\frac{\widetilde{\mathcal Y}'(\mathfrak t)}{\widetilde{\mathcal Y}^2(\mathfrak t) }\sim \frac{m\left(m-1\right)}{\sigma^2} \mathfrak t^{\frac{2\left(\sigma-\rho\right)}{\sigma}} e^{-\frac{m\mathfrak t}{\sigma}}\quad \mbox{as } \mathfrak t\to \infty.
\end{equation}
Since $q=0$, we have $\kappa=m-1>0$ and, hence, $\widetilde{\mathcal Y}'(\mathfrak t)<0$ for $\mathfrak t>0$ large. Therefore, there exists $\lim_{\mathfrak t\to \infty} \widetilde{\mathcal Y}(\mathfrak t)=C_1\in [-\infty,\infty)$. We show that $C_1\in \mathbb R\setminus \{0\}$ by distinguishing two situations:

$\bullet$ Assume that $\mathcal Y(\mathfrak t)>0$ for $\mathfrak t>0$ large. Then, for $\mathfrak t>0$ large, we define
$$ f_a(\mathfrak t)=\frac{1}{\widetilde{\mathcal Y}(\mathfrak t) } +a \mathfrak t^{\frac{2\left(\sigma-\rho\right)}{\sigma}} e^{-\frac{m\mathfrak t}{\sigma}}.
$$ 
By taking $a>(m-1)/\sigma$, we see that $f_a(\mathfrak t)$ is decreasing for large $\mathfrak t>0$. Thus, $C_1\in (0,\infty)$.  

$\bullet$ Assume that $\mathcal Y(\mathfrak t)<0$ for $\mathfrak t>0$ large. By taking $a<(m-1)/\sigma$, we get that $f_a(\mathfrak t)$ is increasing for large $\mathfrak t>0$. Hence, we have $C_1\in [-\infty,0)$. We next rule out $C_1=-\infty$. Assume by contradiction that $C_1=-\infty$. Then, by L'H\^opital's rule and \eqref{micc1}, we find that
$$  \lim_{\mathfrak t\to \infty} \frac{1/\widetilde{\mathcal Y}(\mathfrak t)}{ \mathfrak t^{\frac{2\left(\sigma-\rho\right)}{\sigma}} e^{-\frac{m\mathfrak t}{\sigma}}}=-\frac{m-1}{\sigma}<0.
$$
This is a contradiction with the first limit in \eqref{micc1}. 

In either of the above situations, we get $C_1\in \mathbb R\setminus \{0\}$.
This completes the proof of \eqref{maza3}.  

Let $C_2= C_1 (\lambda \gamma)^{\frac{(m-1)(2\sigma-2\rho)}{m\sigma}} (m-1)/m\in \mathbb R\setminus \{0\}$. We next prove that 
\begin{equation}
\label{micc2} 
\mathfrak t=(\lambda \gamma)^{\frac{m-1}{m}} r^{-\sigma}+C_2 r^{2\rho-2\sigma} e^{-\frac{m}{\sigma} (\lambda \gamma)^{\frac{m-1}{m}} r^{-\sigma}} (1+o(1))\quad \mbox{as } r\to 0^+. 
\end{equation}
Indeed, from \eqref{micc0} and \eqref{miza2}, we infer that
$$ \frac{d (\mathfrak t \,r^\sigma)}{d\mathfrak t}=\frac{\left(m-1\right) r^\sigma \mathcal Y(\mathfrak t)}{-\sigma+\left(m-1\right)\mathcal Y(\mathfrak t)}\sim -\frac{\left(m-1\right)(\lambda \gamma)^{\frac{m-1}{m}} }{\sigma} C_1  \mathfrak t^{\frac{\sigma-2\rho}{\sigma}} e^{-\frac{m\mathfrak t}{\sigma}}\quad \mbox{as }\mathfrak t\to \infty. 
$$
It follows that 
$$ \begin{aligned}
 &\mathfrak t \,r^\sigma=(\lambda \gamma)^{\frac{m-1}{m}}+\frac{m-1}{m}(\lambda \gamma)^{\frac{m-1}{m}}C_1 \mathfrak t^{\frac{\sigma-2\rho}{\sigma}} e^{-\frac{m\mathfrak t}{\sigma}}(1+o(1))\quad \mbox{as }\mathfrak t\to \infty,\\
& e^{-\frac{m}{\sigma} \mathfrak t}=e^{-\frac{m}{\sigma} (\lambda \gamma)^{\frac{m-1}{m}} r^{-\sigma}} (1+o(1))\quad \mbox{as } r\to 0^+,
 \end{aligned}
$$ which lead to \eqref{micc2}. 

Using that $(\sigma+\theta+1)/(1-m)=\sigma-1$, from \eqref{micc2} and the definition of $\mathfrak t$ in \eqref{miza1}, we get 
$$ u'(r)= (\lambda \gamma)^{\frac{1}{m}} r^{\sigma-1} 
+\frac{C_2 (\lambda \gamma)^{\frac{2-m}{m}}}{m-1} r^{2\rho-1}e^{-\frac{m}{\sigma} (\lambda \gamma)^{\frac{m-1}{m}} r^{-\sigma}} (1+o(1))
 \quad \mbox{as } r\to 0^+.
$$
Hence,  we conclude the proof of \eqref{gara7}, where
$C=C_2(\lambda \gamma)^{(3-2m)/m} /[m(m-1)]\in \mathbb R\setminus \{0\}$.
\end{proof}

%%%%
\section{The asymptotic profile $E_0$ when $\Theta=0<\lambda$ (Proof of Theorem~\ref{tieo10})}

Throughout this section, we assume that \eqref{cond1} hold, $\Theta=0$, $\lambda>0$ and $\rho\in \mathbb{R}$. 
The proof of Theorem~\ref{tieo10} follows by combining Lemmas~\ref{step1}--\ref{step4}. Unless otherwise stated, $u$ is a positive radial solution of \eqref{eq1} in $B_R(0)\setminus \{0\}$ for $R>0$ such that 
 \begin{equation} \label{tuna1}
 u(r) \sim \left[ \lambda \left( \frac{\kappa}{m}\right)^m\right]^{\frac{1}{\kappa}} [\log \,(1/r)]^{\frac{m}{\kappa}}
 :={E_0(r)} \quad \mbox{as } r\to 0^+.
 \end{equation} 
Observe that $u'(r)<0$ for every $r\in (0,R)$ 
since $\lim_{r\to 0^+} u(r)=\infty$. Indeed, if $u'(r_1)=0$ for some $r_1\in (0,R)$, then $ r_1^2 u''(r_1)=-\lambda u(r_1) <0$, which means that $r=r_1$ is a 
local maximum point for $u$. This is impossible. It follows that $\lim_{r\nearrow R} u(r):=u(R)\in [0,\infty)$. 

\begin{lemma} \label{step1} If $\rho=0$ and $q=1$, then 
\begin{equation} \label{luxi3} u(r)=\lambda^{\frac{1}{m}}\,\log \,(R/r)+u(R)\quad \mbox{for all }r\in (0,R). \end{equation}  
\end{lemma}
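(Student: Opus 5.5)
With $\rho=0$, $q=1$ and $\Theta=0$ we have $\kappa=m$ and $\theta=m-2$. So \eqref{ez} reads
\[
u''+\frac{u'}{r}+\frac{\lambda}{r^2}u=r^{m-2}u|u'|^m .
\]
We already know that $u'<0$ on $(0,R)$. Set
\[
w(r):=\frac{ru'(r)}{u(r)}<0,
\]
which is the quantity $\mathfrak B$ of Lemma~\ref{nou1}. Then \eqref{muzia} becomes
\[
r w'=-\lambda-w^2+u^m|w|^m .
\]
The term $u^m|w|^m=(r|u'|)^m$ suggests changing to the variable $v(r):=-ru'(r)>0$. The equation multiplied by $r^2$ is $r(ru')'+\lambda u=r^m u(-u')^m$, that is,
\[
-r v'(r)+\lambda u=u\,v^m .
\]
Equivalently,
\[
r v'=u\,(\lambda-v^m).
\]
The plan is to show that $v\equiv\lambda^{1/m}$ on $(0,R)$. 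Integrating $-ru'=\lambda^{1/m}$ from $r$ to $R$ then gives $u(r)=\lambda^{1/m}\log(R/r)+u(R)$, which is \eqref{luxi3}. In particular $u(R)$ is finite, as already noted.

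To prove $v\equiv\lambda^{1/m}$, consider $g:=v^m-\lambda$. It satisfies the linear first order equation
\[
r g'=-m\,u\,v^{m-1}g .
\]
Hence either $g\equiv0$ on $(0,R)$, or $g$ never vanishes and
\[
|g(r)|=|g(r_1)|\exp\left(\int_r^{r_1} m\,u\,v^{m-1}\frac{dt}{t}\right)\quad\text{for } r<r_1 .
\]
Suppose $g\not\equiv0$. We use the asymptotics \eqref{tuna1}: $u\sim\lambda^{1/m}\log(1/r)$, since here $E_0=\lambda^{1/m}\log(1/r)$. I expect a little care here, because \eqref{tuna1} controls only $u$ and not $v$. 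We split into cases.

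If $g>0$ near $0$, then $v>\lambda^{1/m}$, so the integrand is at least $c\,\log(1/t)/t$. The integral therefore grows like $\log^2(1/r)$, and $v^m\ge c\exp(c'\log^2(1/r))$. Then $|u'|=v/r$ is far too large for $u$ to be only logarithmic, since integrating $u'$ would give $u\gg\log(1/r)$. This contradicts \eqref{tuna1}.

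If $g<0$ near $0$, then $0<v<\lambda^{1/m}$ and $v$ is increasing as $r$ decreases, because $rv'<0$. So $v\to v_0\in(0,\lambda^{1/m}]$ as $r\to0^+$. If $v_0<\lambda^{1/m}$, then $u=\int v\,dt/t$ gives $u\sim v_0\log(1/r)$, which contradicts $u\sim\lambda^{1/m}\log(1/r)$. If $v_0=\lambda^{1/m}$, then $v^{m-1}$ is bounded below near $0$, and the same exponential growth of $|g|$ as $r\to0^+$ contradicts $g\to0$.

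Thus $g\equiv0$, i.e. $v\equiv\lambda^{1/m}$, and \eqref{luxi3} follows. The main obstacle is the case $g>0$. I would first try to handle it by comparison: $v^m\ge\lambda+\delta$ forces $u\ge(\lambda+\delta)^{1/m}\log(1/r)-C$, and this alone already contradicts \eqref{tuna1}. So the growth estimate is not even needed, and the argument closes using only $u\sim\lambda^{1/m}\log(1/r)$.
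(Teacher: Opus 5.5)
Your argument is correct and is essentially the paper's proof. Your $v=-ru'(r)$ is the paper's $w'(s)$ with $s=\log(R/r)$, and your equation $rv'=u(\lambda-v^m)$ is its $w''=w\,[(w')^m-\lambda]$. Both arguments show that $|v-\lambda^{1/m}|$ can only grow as $r\to0^+$, which is incompatible with $u\sim\lambda^{1/m}\log(1/r)$ unless $v\equiv\lambda^{1/m}$.

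There is one sign slip, and it does not affect the conclusion. In the case $g<0$ you have $rv'=u(\lambda-v^m)>0$, so $v$ \emph{decreases} as $r\downarrow0$. This gives $v\le v(r_1)<\lambda^{1/m}$ near $0$, and $u(r)=u(r_1)+\int_r^{r_1}v(t)\,dt/t$ yields the contradiction immediately. The subcase $v_0=\lambda^{1/m}$ therefore cannot occur, while $v_0=0$ must be allowed but is equally contradictory.
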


\begin{proof} 
Indeed, we see that $u$ satisfies the following equation
$$  r\, \frac{d}{dr} \left( ru'(r)\right)=u(r) \left[ (r|u'(r)|)^m -\lambda\right]\quad \mbox{for every } r\in (0,R). 
$$
We define $w(s)=u(r)$ with $s=\log \,(R/r)$. Then, $w'(s)>0$ for all $s>0$. Moreover, 
\begin{equation} \label{weq}
w''(s)=w(s) \left[ (w'(s))^m-\lambda\right]\quad \mbox{for all } s>0.
\end{equation}
The first and second order derivatives of $w$ are to be understood with respect to $s$. 
Since $u$ satisfies \eqref{tuna1} and $\kappa=m$, it follows that 
\begin{equation} \label{oil} \lim_{s\to \infty} \frac{w(s)}{s}=\lambda^{\frac{1}{\kappa}}.  
\end{equation}
We now observe that $w'(s)=\lambda^{1/\kappa}$ for all $s>0$. Otherwise, there exists $s_1>0$ such that $w'(s_1)\not=\lambda^{1/\kappa}$. Suppose, for example, that 
$w'(s_1)>\lambda^{1/\kappa}$. Then, from \eqref{weq}, we find that $w''(s)>0$ for every $s\in [s_1,\infty)$. 
Hence, there exists $\lim_{s\to \infty} w'(s)>\lambda^{1/\kappa}$. This is a contradiction with \eqref{oil}. Similarly, we obtain a contradiction when $w'(s_1)<\lambda^{1/\kappa}$, which would lead to 
$w''(s)<0$ for every $s\in [s_1,\infty)$ and $\lim_{s\to \infty} w'(s)<\lambda^{1/\kappa}$. 

The above reasoning shows that $w'(s)=\lambda^{1/\kappa}$ for all $s>0$, which proves the claim in \eqref{luxi3}. 
\end{proof}

For every $r\in (0,R)$, we define $\Psi_0(r)$ and $\Psi_1(r)$ as follows
\begin{equation} \label{dipo} \Psi_0(r)=\frac{u(r)}{ru'(r)}\quad \mbox{and}\quad \Psi_1(r)=r\Psi_0'(r).  
\end{equation}

\begin{lemma} \label{step2}  We assume that either $\rho\not=0$ or $q\not=1$. Then, there exists $r_0\in (0,R)$ small such that
$\Psi_0'(r)>0$ and $\Psi_1'(r)\not=0$ for all $r\in (0,r_0)$. Moreover, we have 
$$ \lim_{r\to 0^+}\Psi_0(r)= -\infty \quad \mbox{and} \quad \lim_{r\to 0^+} \Psi_1(r)= \lim_{r\to 0^+} \frac{\Psi_0(r)}{\log r}=
\frac{\kappa}{m} .$$ 
\end{lemma}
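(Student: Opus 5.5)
Throughout, $\Theta=0$, $\lambda>0$ and $u'<0$ on $(0,R)$. Hence $\Psi_0=u/(ru')<0$ and $\mathfrak B=1/\Psi_0$. The plan is to turn \eqref{muzia} (with $\Theta=0$) into a first order equation for $\Psi_0$. Since $r\Psi_0'=-r\mathfrak B'/\mathfrak B^2$, this gives
\begin{equation*}
\Psi_1(r)=r\Psi_0'(r)=\lambda\Psi_0^2-2\rho\Psi_0+1-u^\kappa(r)\,|\Psi_0|^{2-m}.
\end{equation*}
The steps are: monotonicity of $\Psi_0$, then its limit, then monotonicity of $\Psi_1$, and finally the identification of the limits using \eqref{tuna1}.

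\textbf{Step 1: monotonicity of $\Psi_0$.} I would use the critical-point argument of Lemma~\ref{nou1}. Differentiating the identity for $\Psi_1$ and using $ru'=u/\Psi_0$, one gets at any $r_1$ with $\Psi_0'(r_1)=0$ (i.e.\ $\Psi_1(r_1)=0$)
\begin{equation*}
r_1\Psi_1'(r_1)=-\kappa\, u^{\kappa}(r_1)\,|\Psi_0(r_1)|^{2-m}/\Psi_0(r_1)>0 .
\end{equation*}
Thus $r_1^2\Psi_0''(r_1)>0$, so every critical point of $\Psi_0$ is a strict local minimum. Consequently $\Psi_0$ has at most one critical point, and $\Psi_0$ is strictly monotone on some $(0,r_0)$ with $\Psi_0'\neq 0$ there. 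Let $L=\lim_{r\to0^+}\Psi_0(r)\in[-\infty,0]$.
\begin{itemize}
\item If $L\in(-\infty,0)$, then $ru'/u\to 1/L$. This gives $u(r)=r^{1/L+o(1)}$, i.e.\ power-type blow-up, contradicting the logarithmic growth in \eqref{tuna1}.
\item If $L=0$, then $ru'/u\to-\infty$, so $u$ grows faster than every negative power of $r$, again a contradiction.
\end{itemize}
Hence $L=-\infty$. Since $\Psi_0$ is monotone and tends to $-\infty$ as $r\to0^+$, it is increasing, and therefore $\Psi_0'>0$ on $(0,r_0)$.

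\textbf{Step 2: $\Psi_1'\neq0$ near $0$.} I would run the same device again. Suppose $\Psi_1'(r_n)=0$ along $r_n\to0$. Differentiate the equation for $\Psi_1$ once more, and substitute $\Psi_1'(r_n)=0$ together with the formula for $\Psi_1$ to eliminate $u^\kappa$. The goal is to show that $r_n^2\Psi_1''(r_n)$, suitably normalised by a power of $|\Psi_0(r_n)|$, has a nonzero limit of fixed sign. This would force all critical points of $\Psi_1$ to be of the same type, which is impossible.

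This step is where the hypothesis ``$\rho\neq0$ or $q\neq1$'' must enter. When $\rho=0$ and $q=1$, Lemma~\ref{step1} shows $\Psi_1$ can be constant, so degeneracy is genuinely possible there. I would therefore split into the cases $\rho\neq0$ and $\rho=0,\ q\neq1$, exactly as in Lemma~\ref{gara1}, isolating the first nonvanishing coefficient in each case. I expect this bookkeeping to be the main obstacle. Once it is done, $\Psi_1$ is monotone and has a limit $\ell\in[0,\infty]$; it is nonnegative because $\Psi_1=r\Psi_0'>0$.

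\textbf{Step 3: identification of the limit.} By L'H\^opital's rule, $\lim_{r\to0^+}\Psi_0/\log r=\lim_{r\to0^+}\Psi_1=\ell$. Write
\begin{equation*}
\Psi_1=\lambda\Psi_0^2\Bigl[1+o(1)-\lambda^{-1}u^\kappa|\Psi_0|^{-m}\Bigr],
\end{equation*}
and insert $u^\kappa\sim\lambda(\kappa/m)^m(\log 1/r)^m$ and $|\Psi_0|\sim\ell\log(1/r)$.
\begin{itemize}
\item If $\ell=0$, the bracket tends to $-\infty$, so $\Psi_1\to-\infty$. This contradicts $\Psi_1>0$.
\item If $\ell=\infty$, the bracket tends to $1$, so $r\Psi_0'\ge c\Psi_0^2$ near $0$. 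Integrating this Riccati-type inequality in $\log r$ forces $|\Psi_0|$ to blow up at some positive $r$, a contradiction.
\item If $\ell\in(0,\infty)$, then since $\Psi_0^2\to\infty$ while $\Psi_1$ stays bounded, the bracket must tend to $0$. This gives $(\kappa/m)^m\ell^{-m}=1$, i.e.\ $\ell=\kappa/m$.
\end{itemize}
This proves both limit statements of the lemma.
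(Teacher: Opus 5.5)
Your Steps~1 and~3 are sound. Step~1 is the paper's own argument. Step~3 identifies $\ell$ through the equation and a trichotomy on $\ell$, whereas the paper simply notes that $\Psi_0/\log r$ is the derivative quotient of $\log\log(1/r)/\log u$, which tends to $\kappa/m$ by the asymptotics of $u$. Either route works, but only once $\lim_{r\to0^+}\Psi_1$ is known to exist.

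The gap is Step~2. It is the real content of the lemma and the only place where ``$\rho\neq0$ or $q\neq1$'' is used. You state the goal but do not carry out the computation, and your plan of expanding around a single limiting value misses an obstruction. Set $\mathfrak T_1:=2\rho[(1-m)\Psi_1+\kappa]$ and $\mathfrak T_2:=\kappa-(\kappa+m-2)\Psi_1+(m-2)\Psi_1^2$. Eliminating $u^\kappa$ gives
\begin{equation*}
r\Psi_0\Psi_1'=\Psi_0\mathfrak T_1-\mathfrak T_2+\lambda\Psi_0^2\,(m\Psi_1-\kappa).
\end{equation*}
So at a zero $r_n$ of $\Psi_1'$, the value $\Psi_1(r_n)$ is a root of a quadratic. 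Its positive roots lie close to $\kappa/m$ and, when $m>2$, close to $\lambda m\Psi_0^2(r_n)/(m-2)\to\infty$. Nothing a priori tells you which branch the critical points follow, so you must first exclude the unbounded one.

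Differentiating once more gives $r_n^2\Psi_0\Psi_1''/\Psi_1=\mathfrak T_1+2\lambda\Psi_0(m\Psi_1-\kappa)$ at $r_n$. Hence $r_n^2\Psi_1''/\Psi_1^2\to2\lambda m>0$ on the unbounded branch, so those critical points are strict local minima. That is impossible: between two of them $\Psi_1$ has a local maximum with an even larger value, which again lies on that branch and so must also be a strict local minimum.

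Only after this does $\Psi_1(r_n)\to\kappa/m$ follow. Using the identity, rewrite $r_n^2\Psi_0\Psi_1''/\Psi_1=-\mathfrak T_1+2\mathfrak T_2/\Psi_0$. This yields $r_n^2\Psi_1''\Psi_0\to-2\rho\kappa^2/m^2$ if $\rho\neq0$. If $\rho=0$ and $q\neq1$, then since $\mathfrak T_2\to2\kappa(1-q)/m^2$, it yields $r_n^2\Psi_1''\Psi_0^2\to4\kappa^2(1-q)/m^3$. Without these computations the existence of $\ell$, on which your Step~3 rests, is not established.
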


\begin{proof}
Assume by contradiction that for a sequence $\{r_n\}_{n\geq 1}$ in $(0,r_0)$ decreasing to $0$ as $n\to \infty$, we have $\Psi_0'(r_n)=0$ for each $n\geq 1$. 
For each $r\in (0,R)$, we get that 
\begin{equation} \label{opa} \Psi_1(r)=r\Psi_0'(r)=1-2\rho \Psi_0(r)+\Psi_0^2(r)\left[ \lambda- u^\kappa(r)\, |\Psi_0(r)|^{-m} \right] .
\end{equation}
By differentiating \eqref{opa} and letting $r=r_n$, we arrive at 
$$ r_n^2 \Psi_0''(r_n)= \kappa  \,u^\kappa(r_n)\, |\Psi_0(r_n)|^{1-m}>0\quad \mbox{for all } n\geq 1.
$$
This leads to a contradiction, proving that $\Psi_0'(r)$ has a constant sign for each $r>0$ small enough. Therefore, there exists $\lim_{r\to 0^+} \Psi_0(r)=-\infty$ in view of \eqref{tuna1} and 
L'H\^opital's rule. Then, there exists $r_0\in (0,R)$ such that $\Psi_0'(r)>0$ for all $r\in (0,r_0)$. 

We next show that $\Psi_1'(r)\not=0$ for every $r>0$ small. 
For every $r\in (0,r_0)$, we define
$$ \mathfrak T_1(r):=2\rho \left[ \left(1-m\right) \Psi_1(r)+\kappa\right],\quad \mathfrak T_2(r):=
\kappa- \left(\kappa+m-2\right) \Psi_1(r)+\left(m-2\right) \Psi_1^2(r).
$$
In view of \eqref{opa}, for every $r\in (0,r_0)$, we find that 
\begin{equation} \label{luci} 
r \Psi_0(r)\Psi_1'(r)=\Psi_0(r) \,\mathfrak T_1(r)-\mathfrak T_2(r)
 +\lambda \Psi_0^2(r) \left[m\Psi_1(r)-\kappa\right] .
\end{equation}
Suppose by contradiction that there exists a sequence 
$\{r_n\}_{n\geq 1}$ in $(0,r_0)$ decreasing to $0$ as $n\to \infty$ such that $\Psi_1'(r_n)=0$ for all $n\geq 1$. By differentiating \eqref{luci} and using that $\Psi_1'(r_n)=0$ for all $n\geq 1$, we obtain that 
\begin{equation} \label{luci2} 
\frac{r_n^2 \Psi_0(r_n) \Psi_1''(r_n)}{\Psi_1(r_n)}=  
 \mathfrak T_1(r_n)+2\lambda \Psi_0(r_n) \left[m\Psi_1(r_n) -\kappa\right].
\end{equation}
Note that $\{\Psi_1(r_n)\}_{n\geq 1}$ is a bounded sequence. Indeed, assume by contradiction that for a subsequence $\{r_{n_j}\}_{j\geq 1}$ of $\{r_n\}$, we have 
$\Psi_1(r_{n_j})\to \infty$ as $j\to \infty$. Then, since $\lim_{r\to 0^+} \Psi_0(r)=-\infty$, $\lambda>0$ and $\Psi_1'(r_{n_j})=0$ for all $j\geq 1$, we obtain from \eqref{luci} that $m>2$ and 
$$ \lim_{j\to \infty} \frac{\Psi_1(r_{n_j})}{\Psi_0^2(r_{n_j})}=\frac{\lambda m}{m-2}. 
$$
Using this fact into \eqref{luci2}, we arrive at 
$$ \lim_{j\to \infty} \frac{r^2_{n_j}\, \Psi_1''(r_{n_j})}{\Psi_1^2(r_{n_j})}=2m\lambda>0.
$$ Hence, there exists $j_*>0$ large such that $\Psi_1''(r_{n_j})>0$ for every $j\geq j_*$. Then, $r_{n_j}$ is a local minimum for $\Psi_1$ for every $j\geq j_*$, which is a contradiction. Hence, $\{\Psi_1(r_n)\}_{n\geq 1}$ is bounded. 

\vspace{0.2cm}
From \eqref{luci}, $\lim_{r\to 0^+} \Psi_0(r)=-\infty$ and $\Psi_1'(r_n)=0$ for all $n\geq 1$, we see that
$$ 
\lambda \Psi_0(r_n) \left[m\Psi_1(r_n)-\kappa\right]=-\mathfrak T_1(r_n)
+\frac{ \mathfrak T_2(r_n)}{\Psi_0(r_n)} \quad \mbox{and} \quad \lim_{n\to \infty} \Psi_1(r_n)= \frac{\kappa}{m}.
$$
Using these facts into \eqref{luci2}, we infer that 
$$  \frac{r_n^2 \Psi_0(r_n) \Psi_1''(r_n)}{\Psi_1(r_n)}=
-\mathfrak T_1(r_n) +\frac{ 2 \mathfrak T_2(r_n)}{\Psi_0(r_n)} 
\quad \mbox{for all } n\geq 1,
$$
which, jointly with $\lim_{n\to \infty} \mathfrak T_1(r_n)=2\rho \kappa/m$, leads to 
\begin{eqnarray}
&\displaystyle  \lim_{n\to \infty} r_n^2 \Psi_1''(r_n) \Psi_0(r_n)=-\frac{2\rho \kappa^2}{m^2}\quad & \mbox{if } \rho\not=0, \label{luci3}\\
& \displaystyle \lim_{n\to \infty} r_n^2 \Psi_1''(r_n) \Psi_0^2(r_n)=\frac{4\kappa^2 \left(1-q\right)}{m^3} \quad & \mbox{if } \rho=0\ \mbox{and } q\not=1.  \label{luci4}
\end{eqnarray}

So, when $\rho\not=0$, then for every $n\geq 1$ large, the sign of $\Psi_1''(r_n)$ is the same as that of $\rho$. On the other hand, when $\rho=0$ and $q\not=1$, then 
for every $n\geq 1$ large, we obtain that the sign of $\Psi_1''(r_n)$ is the same as that of $(1-q)$. In either of these cases, we reach a contradiction.  

Thus, $\Psi_1'(r)\not=0$ for every $r>0$ small. Using \eqref{tuna1} and L'H\^opital's rule, we conclude that 
\begin{equation} \label{luci5} \lim_{r\to 0^+} \Psi_1(r)= \lim_{r\to 0^+} \frac{\Psi_0(r)}{\log r}=\lim_{r\to 0} \frac{u(r)}{u'(r) \,r\log r}=\lim_{r\to 0^+} \frac{\log \log 
\,(1/r)}{\log u(r)}=\frac{\kappa}{m}.
\end{equation}
The proof of Lemma~\ref{step2} is now complete. 
\end{proof}

\begin{lemma}\label{stepint}
Assume that either $\rho\neq 0$ or $q\neq1$. Using \eqref{dipo}, 
for each $r\in (0,r_0)$, we define
 \begin{equation}\label{xor}
 t=\Psi_0^2(r)\ \ \mbox{and }\    \frac{\kappa}{m}-X(t):=\Psi_1(r)= -\frac{r}{2\sqrt{t}}\,\frac{dt}{dr}.
  \end{equation} 
  Then, $t\rightarrow \infty$ and $X(t)\rightarrow 0$ as $r\rightarrow 0^+$. Moreover, for $t>0$ large, $|X(t)|<\kappa/(2m)$ and
  \begin{equation} \label{xeq}
\frac{dX}{dt}=
\frac{\lambda\,m\,X(t)}{2\left(\frac{\kappa}{m}-X(t)\right)}+\left( 1+\frac{m\,X(t)}{\frac{\kappa}{m}-X(t)}\right)\frac{\rho}{\sqrt{t}}+
\left( 1+\frac{m\,X(t)}{2\left(\frac{\kappa}{m}-X(t)\right)}\right)\frac{\frac{1-q}{m}+X(t)}{t}.
\end{equation} 
In addition, we have
\begin{equation}\label{som}
 \begin{aligned}
& \lim_{t\to \infty} \sqrt{t} \,X(t)=-\frac{2\rho\kappa}{m^2\,\lambda}&& \mbox{if } \rho\not=0,&\\
& \lim_{t\to \infty} t\,X(t)= \frac{2\left(q-1\right)\kappa}{m^3\lambda}&& \mbox{if } \rho=0\ \mbox{and } q\not=1.
\end{aligned} 
\end{equation}
\end{lemma}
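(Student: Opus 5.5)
The first two claims follow from Lemma~\ref{step2}. Since $\Psi_0(r)\to-\infty$ as $r\to0^+$, we get $t=\Psi_0^2(r)\to\infty$. Since $\Psi_1(r)\to\kappa/m$, we get $X(t)\to 0$, so $|X(t)|<\kappa/(2m)$ for $t$ large. Because $\Psi_0<0$, we have $\sqrt t=-\Psi_0$. Hence $dt/dr=2\Psi_0\Psi_0'=-2\sqrt t\,\Psi_1/r$, which gives the identity in \eqref{xor}. Moreover $dt/dr<0$, because $\Psi_1>0$ near zero. So $r\mapsto t$ is a decreasing bijection near $0$, and $X$ is a well-defined $C^1$ function of $t$ on some $[T_0,\infty)$.

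To derive \eqref{xeq}, I would use the chain rule:
\[
\frac{dX}{dt}=-\frac{\Psi_1'(r)}{dt/dr}=\frac{r\Psi_1'(r)}{2\sqrt t\,\Psi_1(r)}.
\]
I then substitute \eqref{luci}, divided by $\Psi_0=-\sqrt t$:
\[
r\Psi_1'=\mathfrak T_1-\frac{\mathfrak T_2}{\Psi_0}+\lambda\Psi_0\,(m\Psi_1-\kappa).
\]
With $\Psi_1=\kappa/m-X$ one has $m\Psi_1-\kappa=-mX$, so the $\lambda$-term becomes $\lambda m X\sqrt t$. Dividing by $2\sqrt t\,\Psi_1$ produces the leading term $\lambda mX/(2(\kappa/m-X))$.

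For the $\rho$-term, $\mathfrak T_1/(2\sqrt t\,\Psi_1)=\rho\,[(1-m)\Psi_1+\kappa]/(\sqrt t\,\Psi_1)$. Writing $(1-m)\Psi_1+\kappa=\Psi_1+mX$ turns this into $\rho\,(1+mX/\Psi_1)/\sqrt t$.

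For the last term, $\mathfrak T_2/(2t\Psi_1)$ must be put in the stated form. Expanding $\mathfrak T_2$ in $X$ with $\Psi_1=\kappa/m-X$ should yield $\mathfrak T_2=(2\Psi_1+mX)\,((1-q)/m+X)$. This is routine algebra, and I would check it carefully because it is where sign slips occur.

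For \eqref{som}, the linearization of \eqref{xeq} is
\[
X'\approx \tfrac{\lambda m^2}{2\kappa}X+\tfrac{\rho}{\sqrt t}+\tfrac{1-q}{mt}.
\]
The linear coefficient is positive, so the equation is repelling. The bounded solution must therefore balance the forcing: $X\approx-2\kappa\rho/(\lambda m^2\sqrt t)$ if $\rho\neq0$, and $X\approx -2\kappa(1-q)/(\lambda m^3 t)$ if $\rho=0$. These are exactly the claimed limits.

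To make this rigorous, I would follow the pattern of Lemma~\ref{gara2}. First, I would show that $(\sqrt t X)'$ (respectively $(tX)'$) is nonzero for large $t$. To do this, I would suppose it vanishes along a sequence $t_n\to\infty$. The equation then forces $\sqrt{t_n}X(t_n)\to L:=-2\rho\kappa/(m^2\lambda)$. Differentiating once more, I would expect the second derivative at $t_n$ to have a fixed nonzero sign asymptotically, which contradicts the alternation of maxima and minima. Hence the limit $L_1$ exists in $[-\infty,\infty]$.

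If $L_1=\pm\infty$, then $X'\sim\frac{\lambda m^2}{2\kappa}X$ for large $t$. This forces exponential growth of $|X|$ and contradicts $X\to0$. If $L_1$ is finite but $L_1\neq L$, then $X'\sim c/\sqrt t$ with $c\neq0$. Integrating gives $X\sim 2c\sqrt t$, again contradicting $X\to0$. The case $\rho=0$ is identical with $1/t$ in place of $1/\sqrt t$.

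The main obstacle is the non-oscillation step. The second-derivative computation at critical points must produce a nondegenerate leading coefficient. If it degenerates for special parameter values, I would fall back on the explicit-solution argument used in Step~2 of Lemma~\ref{gara2}.
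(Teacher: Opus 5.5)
Your derivation of the first assertions and of \eqref{xeq} is correct and is the paper's computation; the factorization $\mathfrak T_2=(1-\Psi_1)\bigl(\kappa-(m-2)\Psi_1\bigr)=(2\Psi_1+mX)\bigl(\frac{1-q}{m}+X\bigr)$ checks out.

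\textbf{Where the gap is.} The gap is in \eqref{som}, at the non-oscillation step you flag yourself, and it is real. Let $t_n$ be a critical point of $Z=\sqrt t\,X$. There $X'(t_n)=-X(t_n)/(2t_n)$, and differentiating the equation for $Z$ obtained from \eqref{xeq} gives
$t_n^{3/2}Z''(t_n)\to \frac{2(m-1)\rho^2-\lambda(1-q)}{2\lambda m}$.
This vanishes on a whole hypersurface of admissible parameters, for instance $m=q=1$ with $\rho\neq0$, or $m=2$, $q=0$, $\lambda=2\rho^2$. For $\rho=0$, $q\neq1$, the corresponding limit of $t_n^2(tX)''(t_n)$ is $\frac{(1-q)[(m-2)(1-q)+4\kappa]}{\lambda m^3}$, which vanishes e.g.\ for $m=1$, $q=1/5$.

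The fallback to Step~2 of Lemma~\ref{gara2} does not transfer. There the degeneracy forced the exact identity $\lambda m\,sY(s)\equiv d_1$, which was then integrated explicitly; nothing of that kind happens here. You would have to push the expansion to higher order in each degenerate case (for $m=q=1$ the $t_n^{-2}$ term happens to be nonzero), and the proposal gives no argument that this always terminates.

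\textbf{The missing idea.} You never need non-oscillation of $\sqrt t\,X$, because monotonicity of $X$ itself is already available. Lemma~\ref{step2} gives $\Psi_1'(r)\neq0$ for small $r$, i.e.\ $X'(t)\neq0$ for large $t$, so $XX'<0$ since $X\to0$. In \eqref{xeq} the $\lambda$-term has the sign of $X$, with coefficient tending to $\lambda m^2/(2\kappa)>0$. Hence $XX'<0$ forces $\sqrt t\,|X|\le C$ (resp.\ $t|X|\le C$ if $\rho=0$), and so $\sqrt t\,X^2\to0$ (resp.\ $tX^2\to0$).

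The paper then multiplies \eqref{xeq} by $e^{-\lambda m^2t/(2\kappa)}$ (see \eqref{ecee}). It applies L'H\^opital's rule to $Xe^{-\lambda m^2t/(2\kappa)}$ divided by $e^{-\lambda m^2t/(2\kappa)}/\sqrt t$ (resp.\ $/t$). The quotient of derivatives converges to $-2\rho\kappa/(m^2\lambda)$ (resp.\ $2(q-1)\kappa/(m^3\lambda)$), which gives existence and value of the limit at once, without knowing beforehand that $\lim\sqrt t\,X$ exists.

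Alternatively, your repelling heuristic can be made rigorous by a trapping argument instead of a second-derivative test. Write $a_0=\lambda m^2/(2\kappa)$ and let $L$ be the claimed limit. Then $Z'=a_0(Z-L)+o(1)(1+|Z|)$. So if $Z\ge L+\varepsilon$ (or $Z\le L-\varepsilon$) at some sufficiently large time, $|Z-L|$ grows exponentially afterwards, contradicting $X\to0$.
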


\begin{proof}
From Lemma~\ref{step2}, we obtain that 
\begin{equation} \label{luci6} \frac{\sqrt{t}}{\log \, (1/r)}\to \frac{\kappa}{m} \quad  \mbox{and} \quad X(t)\to 0\ \ \mbox{as }r\to 0^+. 
\end{equation}
This implies that $t\rightarrow \infty$ as $r\rightarrow 0^+$.
For $t>0$ large, we find that  
\begin{eqnarray}
&\displaystyle \frac{dr}{dt}=-\frac{r}{2\sqrt{t}\left( \frac{\kappa}{m}-X(t)\right)}<0, \label{mi1}\\
& \displaystyle \frac{dX}{dt}=\frac{r\Psi_1'(r)}{2\sqrt{t} \left(\frac{\kappa}{m}-X(t)\right)}. \label{mi2} 
\end{eqnarray} 
A direct computation yields that $X$ satisfies \eqref{xeq}. Multiplying \eqref{xeq} by $e^{-\frac{\lambda m^2}{2\kappa}t}$, we get 
\begin{equation}\label{ecee}
\begin{aligned}
\frac{d}{dt}\left ( X(t)\,e^{-\frac{\lambda m^2}{2\kappa}t}\right )=& \left [\frac{\lambda\,m\,X^2(t)}{2\frac{\kappa}{m}\left(\frac{\kappa}{m}-X(t)\right)}+\left( 1+\frac{m\,X(t)}{\frac{\kappa}{m}-X(t)}\right)\frac{\rho}{\sqrt{t}}\right ]e^{-\frac{\lambda m^2}{2\kappa}t}\\ 
&+
\left( 1+\frac{m\,X(t)}{2\left(\frac{\kappa}{m}-X(t)\right)}\right)\frac{\frac{1-q}{m}+X(t)}{t}e^{-\frac{\lambda m^2}{2\kappa}t}.
\end{aligned}
\end{equation}

\vspace{0.2cm}
$\bullet$ First, we assume that $\rho\not=0$. 
Lemma~\ref{step2} and $\lim_{t\to \infty} X(t)=0$ imply that for $t>0$ large, $X(t) X'(t)< 0$ and ${\rm sgn}\, (\rho)={\rm sgn} \,(X'(t))=-{\rm sgn}\,(X(t))$. 
Hence, using \eqref{xeq}, we can find a constant $C>0$ such that 
$\sqrt{t} \,|X(t)|< C$ for every $t>0$ large. Thus, we get
\begin{equation}\label{xxt}
\lim_{t\rightarrow \infty}\sqrt{t}\,X^2(t)=0.
\end{equation}
Using L'H\^opital's rule, together with \eqref{ecee} and \eqref{xxt},  
we obtain that
$$ \lim_{t\rightarrow \infty} \sqrt{t}\,X(t)= 
\lim_{t\rightarrow \infty}\frac{\frac{d}{dt}\left ( X(t)\,e^{-\frac{\lambda m^2}{2\kappa}t}\right )}{\frac{d}{dt}\left (\frac{e^{-\frac{\lambda m^2}{2\kappa}t}}{\sqrt{t}}\right )}=-\frac{2\kappa\rho}{m^2\, \lambda}. $$

$\bullet$ Second, we assume that $\rho=0$ and $q\not=1$. 
Similarly, by Lemma~\ref{step2}, we have $X(t) X'(t)< 0$ for $t>0$ large. Moreover, there exists a constant $C>0$ such that  
$t \,|X(t)|< C$ for every $t>0$ large.
Hence,  
$\lim_{t\rightarrow \infty} t\,X^2(t)=0$, which together with \eqref{ecee} and L'H\^opital's rule, yields that 
$$
\lim_{t\rightarrow \infty} t\,X(t)= 
\lim_{t\rightarrow \infty}\frac{\frac{d}{dt}\left ( X(t)\,e^{-\frac{\lambda m^2}{2\kappa}t}\right )}{\frac{d}{dt}\left (\frac{e^{-\frac{\lambda m^2}{2\kappa}t}}{t}\right )}=\frac{2\kappa \left(q-1\right)}{m^3\, \lambda}.
$$
This completes the proof of \eqref{som} and of Lemma~\ref{stepint}. 
\end{proof}

\begin{lemma} \label{step3}  We assume that either $\rho\not=0$ or $q\not=1$. 

$\bullet$ If $\rho\not=0$, then 
\begin{equation} \label{simp3}
	\frac{u(r)}{E_0(r)}=1+\frac{2\rho\,m}{\lambda \kappa^2}\,\frac{\log \log \, (1/r)}{\log \, (1/r)} (1+o(1)) \quad \mbox{as } r\to 0^+. 
\end{equation}

$\bullet$ If $\rho=0$ and $q\not=1$, then there exists a constant $C\in \mathbb R$ such that as $r\to 0^+$
 \begin{equation} \label{luxi1} 
\frac{u(r)}{E_0(r)}=
1+\frac{C}{\log \,(1/r)} + \left( 
 \frac{m}{\lambda \kappa^3}-\frac{C^2}{2m} 
 \right) \frac{q-1}{\log^2\,(1/r) }(1+o(1)).
\end{equation}
\end{lemma}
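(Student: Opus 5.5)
The idea is to convert the information on $X(t)$ from Lemma~\ref{stepint} into an expansion of $\log u$, and then exponentiate. By \eqref{xor}, $\Psi_0(r)=u/(ru')=-\sqrt t$, since $\Psi_0<0$. Hence
$$\frac{d}{dr}\log u(r)=\frac{u'}{u}=-\frac{1}{r\sqrt{t}} .$$
By \eqref{mi1}, $dr/r=-dt/\bigl(2\sqrt t(\kappa/m-X(t))\bigr)$, which gives
$$\frac{d\log u}{dt}=\frac{1}{2t\left(\frac{\kappa}{m}-X(t)\right)}=\frac{m}{2\kappa t}\Bigl(1+\frac{m}{\kappa}X(t)+\frac{m^2}{\kappa^2}X^2(t)+O(|X|^3)\Bigr).$$
In parallel, $\log(1/r)$ is written as a function of $t$. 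Integrating \eqref{mi1} gives
$$\log(1/r)=\int^t \frac{d\tau}{2\sqrt\tau\,(\kappa/m-X(\tau))}.$$
The plan is to expand both $\log u$ and $\log\log(1/r)$ in terms of $t$, and then eliminate $t$.

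\textbf{Case $\rho\neq0$.} By \eqref{som}, $X(t)=-c/\sqrt t\,(1+o(1))$ with $c=2\rho\kappa/(m^2\lambda)$.

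For $\log u$, the integrand $\frac{m}{2\kappa t}\cdot\frac{m}{\kappa}X$ is $O(t^{-3/2})$ and hence integrable. Therefore $\log u=\frac{m}{2\kappa}\log t+K+o(1)$.

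For $r$, the correction in $\log(1/r)=\frac{m}{\kappa}\sqrt t+\frac{m^2}{2\kappa^2}\int^t X(\tau)\tau^{-1/2}d\tau+O(1)$ is $-\frac{m^2c}{2\kappa^2}\log t\,(1+o(1))$. This yields
$$\sqrt t=\frac{\kappa}{m}\log(1/r)+\frac{mc}{2\kappa}\log t\,(1+o(1)),\qquad \log t=2\log\log(1/r)+O(1).$$
Consequently
$$\frac{m}{2\kappa}\log t=\frac{m}{\kappa}\log\Bigl(\frac{\kappa}{m}\log\tfrac1r\Bigr)+\frac{m^2c}{\kappa^2}\,\frac{\log\log(1/r)}{\log(1/r)}(1+o(1)).$$
The constant $K$ is pinned down by the leading asymptotics \eqref{tuna1}: it must combine with $(\kappa/m)^{m/\kappa}$ to give the prefactor of $E_0$. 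Exponentiating then gives $u/E_0=1+\frac{m^2 c}{\kappa^2}\frac{\log\log(1/r)}{\log(1/r)}(1+o(1))$, and substituting $c$ produces the coefficient $2\rho m/(\lambda\kappa^2)$ in \eqref{simp3}. One subtlety must be controlled here: the $o(1)$ errors in $\log u$ must be $o(\log\log(1/r)/\log(1/r))$. This needs a rate for $\sqrt t X(t)+c$, and getting it is the main obstacle. I would try to obtain it from \eqref{xeq}, treating it as a linear equation $X'=\frac{\lambda m^2}{2\kappa}X+\rho/\sqrt t+O(X^2+|X|/\sqrt t+1/t)$. Since the coefficient of $X$ is positive, the variation-of-constants formula in \eqref{ecee} gives $X=-c/\sqrt t+O(1/t)$. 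The resulting $O(t^{-3/2})$ error in $d\log u/dt$ integrates to $O(t^{-1/2})=O(1/\log(1/r))$, which is acceptable.

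\textbf{Case $\rho=0$, $q\ne1$.} Now $X=b/t\,(1+o(1))$ with $b=2(q-1)\kappa/(m^3\lambda)$, and the same bootstrap on \eqref{ecee} should give $X=b/t+O(t^{-2})$. Then
$$\log u=\frac{m}{2\kappa}\log t+K-\frac{m^2b}{2\kappa^2t}+O(t^{-2}),$$
and
$$\log(1/r)=\frac{m}{\kappa}\sqrt t+C'-\frac{m^2b}{\kappa^2\sqrt t}+O(t^{-1}),$$
where $C'$ is a free integration constant; this is the source of the arbitrary $C$ in \eqref{luxi1}. The next step is to invert to get $\sqrt t$ as $\frac{\kappa}{m}(L-C')+\frac{mb}{\kappa}\cdot\frac{1}{L}+\dots$, where $L=\log(1/r)$. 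Substituting, expanding to order $L^{-2}$, and fixing $K$ by \eqref{tuna1} gives \eqref{luxi1} with $C=-C'$ suitably scaled. The $L^{-2}$ coefficient should reduce to $\bigl(\frac{m}{\lambda\kappa^3}-\frac{C^2}{2m}\bigr)(q-1)$, and checking this identity is a routine but careful computation.
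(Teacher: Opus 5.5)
Your argument is correct in substance and uses the same mechanism as the paper. The paper sets $\Psi_2(r)=\frac{m}{\kappa}\sqrt t-\log(1/r)$, which is minus your correction term in $\log(1/r)$, so its $L_1=\lim\Psi_2$ is your $-C'$. It uses $\Psi_2'(r)=\frac{m}{\kappa}X(t)/r$ together with, in effect, $\frac{d}{dr}\log\frac{u}{E_0}=\frac{\Psi_2(r)}{r\sqrt t\,\log(1/r)}$, and then applies L'H\^opital's rule in $r$. You instead integrate in $t$ and invert.

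The rate bootstrap you call the main obstacle is unnecessary.
\begin{itemize}
\item For $\rho\neq0$, $|X|=O(t^{-1/2})$ alone gives a tail $\int_t^\infty\frac{m^2}{2\kappa^2\tau}|X(\tau)|\,d\tau=O(t^{-1/2})=O(1/\log(1/r))$. This is already $o\bigl(\log\log(1/r)/\log(1/r)\bigr)$. The $o(1)$ in $\sqrt t\,X+c$ only puts an $o(\log t)$ error into $\log(1/r)$.
\item For $\rho=0$, $X=\frac bt(1+o(1))$ already gives an $o(\log^{-2}(1/r))$ remainder, which is all \eqref{luxi1} asserts.
\end{itemize}
The paper uses nothing beyond \eqref{som}. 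Your variation-of-constants step via \eqref{ecee} is valid but only buys an $O(\log^{-3}(1/r))$ remainder. Also, \eqref{opa} gives the exact identity $u^\kappa=t^{m/2}\bigl(\lambda+\frac{2\rho}{\sqrt t}+\bigl(X+\frac{1-q}{m}\bigr)\frac1t\bigr)$. This fixes $K=\frac1\kappa\log\lambda$ directly, without integrating $\log u$.

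You do, however, drop a factor $m/\kappa$ twice. Write $L=\log(1/r)$.

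For $\rho\neq0$, $\sqrt t=\frac{\kappa}{m}L\bigl(1+\frac{m^2c}{\kappa^2}\frac{\log L}{L}(1+o(1))\bigr)$. Hence $\frac{m}{2\kappa}\log t=\frac{m}{\kappa}\log\sqrt t$ carries the coefficient $\frac{m^3c}{\kappa^3}=\frac{2\rho m}{\lambda\kappa^2}$. Your displayed $\frac{m^2c}{\kappa^2}$ equals $\frac{2\rho}{\lambda\kappa}$, which contradicts the conclusion you then state.

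For $\rho=0$, the inversion is $\sqrt t=\frac{\kappa}{m}(L-C')+\frac{m^2b}{\kappa^2L}+O(L^{-2})$, not $\frac{mb}{\kappa L}$. With your coefficient the $L^{-2}$ term would come out wrong. With the correct one you get $\log\frac{u}{E_0}=\frac AL+\frac B{L^2}+O(L^{-3})$, where $A=-\frac{m}{\kappa}C'$ and $B=\frac{m^4b}{2\kappa^4}-\frac{\kappa A^2}{2m}$. Exponentiating, the $L^{-2}$ coefficient is $B+\frac{A^2}{2}=\frac{m^4b}{2\kappa^4}+\frac{(1-q)A^2}{2m}=(q-1)\bigl(\frac{m}{\lambda\kappa^3}-\frac{C^2}{2m}\bigr)$ with $C=A$. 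This is exactly \eqref{luxi1}.
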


\begin{proof}
For every $r\in (0,r_0)$, we define $\Psi_2(r)$ by 
$$ \Psi_2(r):= \frac{m}{\kappa}\sqrt{t}-\log\,(1/r).
$$
Using \eqref{mi1}, we obtain that 
\begin{equation} \label{daf1}
\Psi_2'(r)=\frac{m}{\kappa} \frac{X(t)}{r}. 	
\end{equation}

$\bullet$ First, we assume that $\rho\not=0$. 
Then, using \eqref{som} and \eqref{luci6}, we find that
$$ \frac{\frac{d}{dr}\left (\Psi_2(r)\right)}{\frac{d}{dr}[\log \log \, (1/r)]}
=-\frac{m}{\kappa} X(t) \log (1/r) \to\frac{2\rho}{\kappa\lambda} \quad \mbox{as } r\to 0^+.$$
This implies that $\lim_{r\to 0^+} \Psi_2(r)={\rm sgn}\, (\rho) \,\infty$ and by L'H\^opital's rule,
\begin{equation} \label{zeam} \lim_{r\rightarrow 0^+}\frac{\Psi_2(r)}{\log \log \, (1/r)}=\frac{2\rho}{\kappa\lambda}.
\end{equation}
Therefore, from \eqref{luci6} and \eqref{zeam}, we get
$$ \lim_{r\rightarrow 0^+} \frac{\frac{d}{dr} \left (\frac{u(r)}{E_0(r)}-1\right )}{ \frac{d}{dr}
\left[ \frac{\log \log \, (1/r)}{\log \, (1/r)}\right]}=\lim_{r\rightarrow 0^+} \frac{ \log\,(1/r)}{\sqrt{t}}\,
\frac{ \Psi_2(r)}{\log \log \, (1/r)}=\frac{2\rho m}{\lambda \kappa^2}, 
$$
which, by L'H\^opital's rule, proves the claim in \eqref{simp3}. 

$\bullet$ Second, we assume that $\rho=0$ and $q\not=1$. 
Then, using \eqref{som} and \eqref{luci6}, we obtain that
$$\lim_{r\rightarrow 0^+} \frac{\frac{d}{dr}\left (\Psi_2(r)\right )}{\frac{d}{dr}[\frac{1}{\log\,(1/r)}]}=\frac{m^3}{\kappa^3}\lim_{t\to \infty} t\,X(t)=\frac{2\left(q-1\right)}{\lambda \kappa^2}. $$
Thus, ${\rm sgn}\, (\Psi_2'(r))={\rm sgn} \,(q-1)$ and                            
there exists $ \lim_{r\to 0^+}\Psi_2(r)=L_1\in \mathbb R $. Using \eqref{luci6}, we get
$$\lim_{r\rightarrow 0^+}\frac{\frac{d}{dr}\left (\frac{u(r)}{E_0(r)}-1\right )}{\frac{d}{dr}
[\frac{1}{\log\,(1/r)}]}=\lim_{r\rightarrow 0^+} \frac{\log\,(1/r)}{\sqrt{t}}\,\Psi_2(r)=L_1 \frac{m}{\kappa}. 
$$
Using \eqref{tuna1}, \eqref{som}, \eqref{luci6},  and \eqref{daf1}, jointly with L'H\^opital's rule, we see that
$$ \lim_{r\to 0^+} \left( \Psi_2(r)-\frac{m L_1}{\kappa} \frac{E_0(r)}{u(r)} \frac{\sqrt{t}}{\log \,(1/r)} \right) \log\, (1/r) =
\lim_{r\to 0^+}  \frac{\frac{d}{dr} \left( \Psi_2(r)-\frac{m L_1}{\kappa} \frac{E_0(r)}{u(r)} \frac{\sqrt{t}}{\log \,(1/r)} \right)}{\frac{d}{dr}
\left[\frac{1}{\log\, (1/r)}\right]}=2L_2, $$
where $L_2=\frac{(q-1)}{\lambda\kappa^2} \left(1-\frac{\lambda \kappa}{2} L_1^2\right)$. 
Hence, it follows that
$$  \lim_{r\rightarrow 0^+} \left (\frac{u(r)}{E_0(r)}-1 -\frac{mL_1}{\kappa} \frac{1}{\log \,(1/r)}\right ) \log^2 (1/r)=
 \lim_{r\rightarrow 0^+}\frac{\frac{d}{dr}\left (\frac{u(r)}{E_0(r)}-1 -\frac{mL_1}{\kappa} \frac{1}{\log \,(1/r)}\right )}{\frac{d}{dr}
[\frac{1}{\log^2\,(1/r)}]}=\frac{m L_2}{\kappa}.
$$
This completes the proof of \eqref{luxi1}, where $C=mL_1/\kappa$. 
\end{proof}

\begin{lemma} \label{step4}  
Let $R>0$ be arbitrary. Then, equation \eqref{eq1} in $B_R(0)\setminus \{0\}$ has infinitely many positive radial solutions $u$ satisfying \eqref{tuna1}.
\end{lemma}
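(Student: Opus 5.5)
We construct the solutions by running the change of variables of Lemma~\ref{stepint} backwards: first produce solutions $X$ of \eqref{xeq} that tend to $0$ as $t\to\infty$, then rebuild $r$ and $u$ from $X$. The case $\rho=0$, $q=1$ is handled separately and explicitly. By Lemma~\ref{step1}, every function $u(r)=\lambda^{1/m}\log(R/r)+c$ with $c\ge 0$ solves \eqref{eq1} in $B_R(0)\setminus\{0\}$: here $ru'=-\lambda^{1/m}$ and $(r|u'|)^m=\lambda$, so \eqref{weq} holds. Each such $u$ satisfies \eqref{tuna1}, because $\kappa=m$ and $E_0(r)=\lambda^{1/m}\log(1/r)$. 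This gives infinitely many solutions for every $R>0$. From now on we assume $\rho\ne0$ or $q\ne1$.

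\textbf{Step 1: solving \eqref{xeq}.} Set $a=\lambda m^2/(2\kappa)>0$. Then \eqref{xeq} reads $X'=aX+F(t,X)$, where
\[
F(t,X)=\frac{\rho}{\sqrt t}+\frac{1-q}{m\,t}+O\Big(X^2+\frac{|X|}{\sqrt t}\Big).
\]
The linear part is repelling, so bounded solutions are exactly the fixed points of
\[
X(t)=-\int_t^\infty e^{-a(s-t)}F(s,X(s))\,ds .
\]
For $T_0$ large, this map is a contraction on the ball $\{\sup_{t\ge T_0}\sqrt t\,|X(t)|\le M\}$; \eqref{ecee} is exactly the weighted identity that makes this estimate work. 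The fixed point is a solution $X$ on $[T_0,\infty)$ with $|X|<\kappa/(2m)$ and $X(t)\to0$.

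\textbf{Step 2: reconstructing $u$.} Fix $R>0$ and choose any $T\ge T_0$. Define $r(t)$ for $t\ge T$ by solving \eqref{mi1} with $r(T)=R$, that is,
\[
r(t)=R\exp\Big(-\int_T^t\frac{ds}{2\sqrt s\,(\kappa/m-X(s))}\Big).
\]
This is decreasing, and $r\to0$ as $t\to\infty$ because the integrand is of order $1/\sqrt s$. We then set $\Psi_0(r)=-\sqrt{t}$ and $\Psi_1=r\Psi_0'=\kappa/m-X$. Guided by \eqref{opa}, we define $u$ through
\[
u^\kappa=|\Psi_0|^m\Big[\lambda-\frac{\Psi_1-1+2\rho\Psi_0}{\Psi_0^2}\Big].
\]
The bracket tends to $\lambda>0$, so $u>0$ for $t$ large; enlarging $T$ if necessary, this holds on all of $(0,R)$.

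The main obstacle is consistency. We must verify that this $u$ satisfies $u/(ru')=\Psi_0$, which is equivalent to $\frac{d}{dr}\log u^\kappa=\kappa/(r\Psi_0)$. Once that holds, \eqref{opa} is equivalent to \eqref{muzia}, and hence to \eqref{ez}. We plan to differentiate the formula for $\log u^\kappa$ using \eqref{mi1} and \eqref{xeq}. The derivation of \eqref{xeq} in Lemma~\ref{stepint} was obtained by differentiating \eqref{opa} along solutions, i.e.\ from \eqref{luci}, and that computation is reversible. So this check is an algebraic (if tedious) reversal of that calculation.

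\textbf{Step 3: asymptotics and multiplicity.} Since $X\to0$, we have $\Psi_0/\log r\to\kappa/m$, by L'H\^opital's rule as in \eqref{luci5}. Therefore
\[
u^\kappa\sim\lambda|\Psi_0|^m\sim\lambda\Big(\frac{\kappa}{m}\log\frac1r\Big)^m,
\]
which is \eqref{tuna1}. For infinitely many solutions, we vary the starting time $T\ge T_0$ while keeping $r(T)=R$ fixed. The resulting solutions $u_T$ on $B_R(0)\setminus\{0\}$ are distinct, since $\big(u_T/(ru_T')\big)^2\big|_{r=R}=T$. The scaling \eqref{scale} with $\Theta=0$ plays no role here.
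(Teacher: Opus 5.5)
Your proposal is correct and is essentially the paper's proof: the same decaying solution of \eqref{xeq}, the same $r(t)$ from \eqref{mi1}, and a formula for $u^\kappa$ algebraically identical to the paper's $u=t^{\frac{m}{2\kappa}}[\lambda+2\rho t^{-1/2}+(X-\frac{q-1}{m})t^{-1}]^{1/\kappa}$. Your contraction argument (which needs $M>2\kappa|\rho|/(\lambda m^2)$) and your consistency check fill in steps the paper only asserts; differentiating the defining identity \eqref{opa} and comparing with \eqref{luci} does force $rU'=\kappa U/\Psi_0$. The only correction concerns multiplicity: varying $T$ with $r(T)=R$ gives $u_T(r)=u_{T_0}(r/\mu_T)$ for some $\mu_T\ge 1$, so your family is exactly the scaling orbit $T_j[u](r)=u(jr)$ that the paper uses, contrary to your remark that the scaling plays no role.
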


\begin{proof} 
Let $R>0$ be arbitrary. In light of Lemma~\ref{step1}, we need to consider only $\rho\not=0$ or $q\not=1$ and prove the existence of infinitely many positive radial solutions of \eqref{eq1} 
in $B_R(0)\setminus 
\{0\}$ satisfying \eqref{tuna1}. 
Note that for $T_0>0$ large, the differential equation in \eqref{xeq} on $[T_0,\infty)$ admits a solution $X(t)$ satisfying $\lim_{t\to \infty} X(t)=0$. 
The differential equation in \eqref{mi1} for $t\geq T_0$, subject to $r(T_0)=R$, has a unique solution 
\begin{equation} \label{rubc} r(t)= R\,\exp\left( -\frac{1}{2} \int_{T_0}^t \frac{ds}{\sqrt{s} \left(\frac{\kappa}{m}-X(s)\right)}\right)\quad \mbox{for all } t\geq T_0. 
\end{equation}
From  \eqref{mi1} and $X(t)\to 0$ as $t\to \infty$, we have 
\begin{equation} \label{vim} \frac{\sqrt{t}}{\log\, (1/r)}\to \frac{\kappa}{m}\quad \mbox{as } r\to 0^+.\end{equation}
For every $r\in (0,R]$, we can express $t$ as a function of $r$ using \eqref{rubc} and define $u(r)$ as follows
$$ u(r)= t^{\frac{m}{2\kappa}} \left[ \lambda+\frac{2\rho}{\sqrt{t}} +\left( X(t)-\frac{q-1}{m}\right)\frac{1}{t}\right]^{\frac{1}{\kappa}}.
$$
Since $X(t)$ solves \eqref{xeq} for all $t\geq T_0$ and $\lim_{t\to \infty} X(t)=0$, we obtain  
that $u$ is a positive radial solution of \eqref{eq1} in $B_{R}(0)\setminus \{0\}$ satisfying \eqref{tuna1} by virtue of \eqref{vim}. 

For each $j\in (0,1)$, by applying the transformation $T_j[u](r)=u(jr)$ for $r\in (0,R/j)$, we get that $T_j [u]$ is another positive radial solution of \eqref{eq1} in $B_{R/j} (0)\setminus \{0\}$ satisfying \eqref{tuna1}. 
As $R>0$ is arbitrary, the proof of Lemma~\ref{step4} is complete.
\end{proof}

%%%%%

\section{The constant asymptotic profile for $\lambda=0$ and $\beta\not=0$ (Proof of Theorem~\ref{profff1})} \label{nupo}

In this section, besides \eqref{cond1}, we assume throughout that 
\begin{equation} \label{bita} \lambda=0\quad\mbox{and}\quad  \beta:=\kappa \Theta+2\rho \left(m-1\right)\not=0.
\end{equation}
Moreover, $u$ is a positive radial solution of  
\eqref{eq1} in $B_{R}(0)\setminus \{0\}$ for $R>0$, namely, 
\begin{equation} \label{faz}
u''(r)+(1-2\rho)\,\frac{u'(r)}{r}=r^\theta \, u^q(r)|u'(r)|^m\quad \mbox{for } r\in (0,R)
\end{equation} 
with the property that 
	\begin{equation}\label{z2}
\lim_{r\to 0^+} u(r)=\gamma\in (0,\infty)\quad \mbox{and}\quad r^*(u)=0.
\end{equation}
Multiplying \eqref{faz} by $r^{1-2\rho}$, for every $r\in (0,R)$, we obtain that 
\begin{equation} \label{faz1}
(r^{1-2\rho}\,u'(r))'=(r^{1-2\rho} |u'(r)|)^m r^{\beta-1} u^q(r)\geq 0.	
\end{equation}
Since $r\longmapsto r^{1-2\rho}\,u'(r)$ is non-decreasing on $(0,R)$, there exists 
\begin{equation}\label{coo} \lim_{r\to 0} r^{1-2\rho} u'(r):=L_0 
\in [-\infty,\infty).\end{equation} 
Then, for $r_0>0$ small, we have $u'(r)\not=0$ for every $r\in (0,r_0)$. This is clear if $L_0\not=0$, while if $L_0=0$, we use that $r^*=0$. 
The conclusions depend on ${\rm sgn}\, (\beta)$ and whether $m=1$ or $m\not=1$. 

If $\rho\leq 0$, then necessarily $L_0=0$ and $u'(r)>0$ for all $r\in (0,R)$. Indeed, if 
$L_0\not=0$, then $\lim_{r\to 0^+} 
ru'(r)\not=0$, which would contradict \eqref{z2}. Thus, $L_0\not=0$ implies that $\rho>0$. 

%%%%
\begin{lemma}[Case (I), Theorem~\ref{profff1}] \label{nelem}
Let $m=1$ and $\beta<0$, where $\beta$ is defined in \eqref{bita}.   
Then, $\kappa\Theta=\theta+1<0$ and $u'>0$ on $(0,R)$. Moreover, there exists a constant $C>0$ such that \eqref{refina} is satisfied, that is,
 \begin{equation}
 \label{roma1}
 u(r)=\gamma+ C r^{2\rho-1-\theta} \exp\left( \frac{\gamma^q}{\theta+1} r^{\theta+1}\right)  (1+o(1))\quad \mbox{as } r\to 0^+. 	
 \end{equation}
\end{lemma}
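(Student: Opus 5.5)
With $m=1$ we have $\kappa=q$, so $\kappa\Theta=\theta+2-m=\theta+1$, and $\beta=\kappa\Theta+2\rho(m-1)=\theta+1$. Hence $\beta<0$ gives $\theta+1<0$ immediately. The plan is to work with the first-order identity \eqref{faz1}. For $m=1$ it reads
$$(r^{1-2\rho}u'(r))'=r^{1-2\rho}|u'(r)|\,r^{\theta}u^q(r).$$
Set $w(r)=r^{1-2\rho}u'(r)$. On $(0,r_0)$, where $u'\neq0$, this becomes the linear equation $w'=\mathrm{sgn}(u')\,r^\theta u^q w$.

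\textbf{Sign of $u'$.} Suppose $u'<0$ on $(0,r_0)$. Then $w'=-r^\theta u^q w>0$. Integrating from $r$ to $r_0$ gives
$$w(r)=w(r_0)\exp\Big(\int_r^{r_0}s^\theta u^q(s)\,ds\Big).$$
Since $u\to\gamma>0$ and $\theta<-1$, the integral behaves like $\gamma^q r^{\theta+1}/|\theta+1|\to\infty$. So $|u'(r)|=r^{2\rho-1}|w(r)|$ grows like $\exp(c\,r^{\theta+1})$, which blows up super-polynomially. This is incompatible with $u$ having a finite limit at $0$, because $|u'|$ would not be integrable near $0$. Hence $u'>0$ on $(0,r_0)$.

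To get $u'>0$ on all of $(0,R)$, note that $w$ is non-decreasing on $(0,R)$ by \eqref{faz1} and $w>0$ near $0$, so $w>0$ everywhere, and therefore $u'>0$ on $(0,R)$.

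\textbf{Asymptotics.} With $u'>0$ we have
$$w(r)=w(r_0)\exp\Big(-\int_r^{r_0}s^\theta u^q\,ds\Big).$$
Write $u^q=\gamma^q+(u^q-\gamma^q)$. The main term integrates exactly to $\frac{\gamma^q}{\theta+1}(r_0^{\theta+1}-r^{\theta+1})$. The key point, and the step I expect to be the main obstacle, is to show that $\int_0^{r_0}s^\theta|u^q-\gamma^q|\,ds<\infty$, because only then is the correction factor $\exp(o(1))$ converging to a positive constant.

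To handle it, first use the leading-order bound: $u'\le C r^{2\rho-1}\exp\big(\frac{\gamma^q}{2(\theta+1)}r^{\theta+1}\big)$, obtained since $u^q\ge\gamma^q/2$ near $0$. Because $\theta+1<0$, the exponent $\frac{\gamma^q}{2(\theta+1)}r^{\theta+1}\to-\infty$, so this decays faster than any power. Therefore $u-\gamma=\int_0^r u'$ is $O(r^N)$ for every $N$, and $s^\theta(u^q-\gamma^q)$ is integrable. This yields
$$u'(r)=C_0\,r^{2\rho-1}\exp\Big(\frac{\gamma^q}{\theta+1}r^{\theta+1}\Big)(1+o(1)),\qquad C_0>0.$$

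\textbf{Integrating once more.} It remains to compute $u-\gamma=\int_0^r u'$. Note that
$$\frac{d}{dr}\Big[r^{2\rho-1-\theta}\exp\Big(\frac{\gamma^q}{\theta+1}r^{\theta+1}\Big)\Big]=\gamma^q r^{2\rho-1}\exp\Big(\frac{\gamma^q}{\theta+1}r^{\theta+1}\Big)(1+o(1)),$$
since the exponential's derivative dominates the power's. Applying L'H\^opital's rule then gives \eqref{roma1} with $C=C_0/\gamma^q>0$.
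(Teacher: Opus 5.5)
Your proof is correct, but it takes a different route from the paper's. The paper changes variables to $t=r^{\theta+1}u^q(r)$ and $X(t)=t\,ru'(r)/u(r)$, and derives the first-order ODE \eqref{nue} for $X$. It then shows that $\widetilde X(t)=t^{-1-2\rho/(\theta+1)}X(t)\,e^{-t/(\theta+1)}$ is monotone, and uses the auxiliary function $G$ to show it stays away from $0$. This yields $r^{1-2\rho}u'(r)\,u^{-1-2\rho q/(\theta+1)}\exp\bigl(-r^{\theta+1}u^q/(\theta+1)\bigr)\to M>0$, from which \eqref{roma1} is read off. The paper gets the sign of $u'$ from the monotone function $H(r)=\mathrm{sgn}(u')\log(r^{1-2\rho}|u'|)+C_*r^{\theta+1}$ and L'H\^opital, which is essentially your integration argument.

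You instead use that for $m=1$ the identity \eqref{faz1} is linear in $w=r^{1-2\rho}u'$, so $\log w$ is an explicit integral of $r^\theta u^q$. You then identify the constant by a bootstrap: the crude bound $u^q\ge\gamma^q/2$ forces super-polynomial decay of $u'$, hence of $u-\gamma$, which makes $s^\theta(u^q-\gamma^q)$ integrable.

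Your argument is shorter and more elementary. It also makes explicit a point the paper leaves implicit: $u^q(r)$ may be replaced by $\gamma^q$ in the exponential because $r^{\theta+1}(u^q-\gamma^q)\to 0$. The paper's route buys uniformity with the rest of the paper. The variables $(t,X)$ and the ODE \eqref{nue} are reused in the next lemma to construct infinitely many solutions, by prescribing $X(T_0)=c$ and inverting the change of variables. Your approach describes the asymptotics of a given solution but does not by itself provide an existence mechanism.
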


\begin{proof}
Our assumptions give that $m=1$ and $\beta=\kappa\Theta=\theta+1<0$. For every $r\in (0,R)$, we set
\begin{equation}\label{pox1}
t:=r^{\theta+1}u^q(r)\quad \text{and}\quad X(t):=t\frac{ru'(r)}{u(r)} . 
\end{equation}
Clearly, $t\to \infty$ as $r\to 0^+$. We show that $L_0=0$ (see \eqref{coo} for the definition of $L_0$) and $u'(r)>0$ for every $r\in (0,R)$. 
To this end, we fix $0<C_*<-\gamma^q/\beta$. Then, $H(r)$ is non-decreasing for every $r>0$ small, where we define 
\begin{equation}
H(r):={\rm sgn}\, (u'(r))\log \,(r^{1-2\rho}|u'(r)|)+C_* r^{\theta+1}. 
\end{equation}
Thus, necessarily, $\lim_{r\rightarrow 0^+}{\rm sgn}\, (u'(r))\log\, (r^{1-2\rho}|u'(r)|)=-\infty$. From \eqref{faz1} and L'H\^opital's rule,  
$$ \lim_{r\rightarrow 0^+}\frac{\log \,(r|u'(r)|)}{r^{\theta+1}}=
\lim_{r\rightarrow 0^+}\frac{\log \,(r^{1-2\rho}|u'(r)|)}{r^{\theta+1}}=\frac{\gamma^q \, {\rm sgn}\, (u'(r)) }{\theta+1}. $$
Hence, $\lim_{r\to 0^+} r u'(r)=0$, $L_0=0$ and $u'(r)>0$ for every $r\in (0,R)$. (Otherwise, we get $\lim_{r\to 0} (r |u'(r)|)=\infty$, which 
 is a contradiction with \eqref{z2}.) 

\vspace{0.2cm} 
Consequently, using also \eqref{z2} and \eqref{pox1}, we have  
\begin{equation}\label{pox2}
\frac{X(t)}{t}=\frac{ru'(r)}{u(r)}\rightarrow 0 \quad\text{as }r\rightarrow 0^+\quad\text{and}\quad X(t)>0\quad\text{for every }r\in (0,R).
\end{equation}
Let $T_0>0$ be large such that  $\theta+1+q\frac{X(t)}{t}<0$ for every 
$t\geq T_0$.   
By \eqref{pox1}, we find that 
\begin{equation}\label{pox5}
\frac{dr}{dt}=\frac{r}{t\left (\theta+1+q\frac{X(t)}{t}\right )}<0\quad \mbox{for every } t\geq T_0. 
\end{equation}
Moreover, by direct calculations, we obtain that 
\begin{equation}\label{nue}
\frac{dX}{dt}=\frac{X(t)+\left (\theta+1+2\rho\right )\frac{X(t)}{t} +(q-1)\left (\frac{X(t)}{t}\right )^2}{\theta+1+q\frac{X(t)}{t}}\quad  \mbox{for every } t\geq T_0.
\end{equation}
In light of \eqref{pox2} and $\theta+1<0$,  we observe that $ X'(t)/X(t) \to 1/(\theta+1)<0$ as $t\to \infty$. 
Hence $X'(t)< 0$ for every large $t>T_0$ and $\lim_{t\rightarrow \infty} X(t)=0$. For $t\geq T_0$, we now define 
\begin{equation} \label{gun1}   \widetilde{X}(t):=t^{-1-\frac{2\rho}{\theta+1}}\, X(t) \exp\left(-\frac{t}{\theta+1}\right) >0.
\end{equation}

We show that the behaviour in \eqref{roma1} holds. 
Using \eqref{nue}, we see that
\begin{equation}\label{pox3}
t\,\frac{d}{dt}\left(\widetilde{X}(t)\right)=-\frac{q+\left (2\rho q+\theta+1\right )\frac{1}{t} } 
{ \left(\theta+1\right) \left(\theta+1+q\frac{X(t)}{t}\right) } \widetilde X(t)\, X(t) \quad \mbox{for every } t\geq T_0 . 
\end{equation}
Using \eqref{pox2} and $q=\kappa>0$, we infer that $\widetilde{X}'(t)<0$ for every $t>0$ large, which implies that $\lim_{t\rightarrow \infty} \widetilde{X}(t)=M\in [0,\infty).$  We show that $M\neq 0$. We fix $D>q/|\theta+1|$ and define 
$$G(t):=\frac{1}{\widetilde{X}(t)}+D t^{\frac{2\rho}{\theta+1}} \exp\left(\frac{t}{\theta+1}\right) \quad \mbox{for all } t\geq T_0.$$ 
From \eqref{gun1} and \eqref{pox3}, we get that $G'(t)<0$ for every $t>T_0$ large. Hence, we have $M\in (0,\infty)$. This, jointly with 
 \eqref{pox1}, yields that
$$r^{1-2\rho}u'(r) \left(u(r)\right)^{-\frac{2\rho q}{\theta+1}-1} \exp\left( -\frac{r^{\theta+1}u^q(r)}{\theta+1}\right) =\widetilde{X}(t)\rightarrow M\quad\text{as }r\rightarrow 0^+.$$
Using \eqref{z2}, we conclude that there exists a constant $C>0$ such that
$$\lim_{r\rightarrow 0^+}\frac{u(r)-\gamma}{r^{2\rho-1-\theta} \exp\left({ \frac{r^{\theta+1}}{\theta+1}u^q(r)}\right)}=C,$$
which implies that \eqref{roma1} holds. This completes the proof of Lemma~\ref{nelem}. 
\end{proof}

%%%%

\begin{lemma}[Existence, Case (I) in Theorem~\ref{profff1}]
Let $m=1$ and $\beta<0$. Then, for every $\gamma\in (0,\infty)$, there exists $R>0$ such that equation \eqref{faz} has infinitely many positive solutions satisfying \eqref{z2}.
\end{lemma}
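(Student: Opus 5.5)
The plan is to reverse the change of variables from the proof of Lemma~\ref{nelem}, in the same way Lemma~\ref{easi} reversed \eqref{ecot}. Here $m=1$ and $\beta=\theta+1<0$, so $\kappa=q>0$. We regard \eqref{nue} as a first order ODE for $X$ on $[T_0,\infty)$ and \eqref{pox5} as an ODE for $r$.

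First we produce a family of solutions $X_c$ of \eqref{nue} with $X_c(t)>0$ and $X_c(t)\to 0$ as $t\to\infty$. Near $X=0$, \eqref{nue} reads $X'=X/(\theta+1)\,(1+O(1/t)+O(X/t))$, and $1/(\theta+1)<0$. So $X\equiv 0$ is uniformly attracting forward in time. Concretely, fix $T_0$ large and $c\in(0,\varepsilon)$, and solve \eqref{nue} with $X(T_0)=c$. The right-hand side equals $X\cdot g(t,X)$ with $g\le 1/(2(\theta+1))<0$ for $t\ge T_0$ and $0\le X\le\varepsilon$. Hence $0<X(t)\le c\,e^{(t-T_0)/(2(\theta+1))}$: the solution stays positive (since $X=0$ is invariant), exists globally, and decays exponentially. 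Alternatively, one can invoke the stable manifold argument with $X_2=1/t$ as in Lemma~\ref{easi}.

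Next we fix $r_0>0$ and define
\[
r(t)=r_0\exp\left(\int_{T_0}^t \frac{ds}{s\,(\theta+1+qX(s)/s)}\right),
\]
which is decreasing with $r(t)\to 0$ as $t\to\infty$, because the integrand is at most $1/(2(\theta+1)s)$. Inverting gives $t=t(r)$. Following \eqref{pox1}, we set $u(r)=\big(t\,r^{-(\theta+1)}\big)^{1/q}$, which amounts to $t=r^{\theta+1}u^q$. A direct computation, reversing the one that led to \eqref{pox5} and \eqref{nue}, should show that $ru'/u=X(t)/t$ and that $u$ solves \eqref{faz}. Here $u'>0$ since $X>0$, so $r^*(u)=0$.

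To get $\lim_{r\to0^+}u(r)=\gamma_c\in(0,u(r_0))$, note that $u$ is increasing and positive, so the limit exists and is at least $0$. To see it is positive, use $\frac{d}{dr}\log u=X/(tr)$. Changing to the variable $t$ via $dr/r=dt/(t(\theta+1+qX/t))$, we get $\int_0^{r_0}X/(tr)\,dr\le C\int_{T_0}^\infty X(t)\,t^{-2}\,dt<\infty$, and hence $\gamma_c>0$.

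Finally we address arbitrary $\gamma$ and infinitely many solutions. Distinct initial values $c$ give distinct solutions, because $ru'/u$ at $r_0$ equals $c/T_0$. To reach a prescribed $\gamma$, apply the scaling $T_j[u](x)=j^{\Theta}u(jx)$ of \eqref{scale}, which maps solutions with limit $\gamma_c$ at zero to solutions with limit $0$ unless $\Theta=0$. That is not the right tool here, since $\Theta\ne0$. Instead use the other scaling available when $m=1$: if $u$ solves \eqref{faz}, then $v=au$ solves the same equation with the weight $r^\theta$ replaced by $a^{-q}r^\theta$. Composing with the dilation $r\mapsto jr$, which multiplies the weight by $j^{\theta+1}$, and choosing $j^{\theta+1}=a^{q}$ gives a solution of \eqref{faz} with limit $a\gamma_c$. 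Taking $a=\gamma/\gamma_c$ and $R=r_0/j$ yields a solution with limit $\gamma$. Infinitely many such solutions for a single common $R$ need a separate argument, since $j$ depends on $c$. To handle this, I would use continuity of $c\mapsto\gamma_c$ on $(0,\varepsilon)$ together with $\gamma_c\to u(r_0)$ as $c\to0$, so that the $\gamma_c$ range over an interval. Then rescale each solution onto its own $R$, note that the set of $j=j(c)$ is bounded (as $c$ varies in a compact subinterval), and take $R$ to be the minimum of $r_0/j(c)$ over that subinterval. I expect this uniform-$R$ bookkeeping, together with verifying the reverse computation, to be the main obstacle.
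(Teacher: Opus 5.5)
Your proof is correct and is essentially the paper's. Like the paper, you solve \eqref{nue} with $X(T_0)=c\in(0,\varepsilon)$, recover $r(t)$ from \eqref{pox5}, set $u_c=(t\,r^{-\theta-1})^{1/q}$, and then rescale to reach a prescribed $\gamma$.

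One statement in your proposal is false, though it does no damage. $T_j[u_c](r)=j^{\Theta}u_c(jr)$ has limit $j^{\Theta}\gamma_c\in(0,\infty)$ at zero, not $0$. Your replacement $a\,u(jr)$ with $a^{q}=j^{\theta+1}$ is exactly $T_j$, because $\Theta=(\theta+1)/q$ when $m=1$. The paper simply takes $j=(\gamma/\gamma_c)^{1/\Theta}$.

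The uniform-$R$ step is also easier than you expect, and needs no continuity of $c\mapsto\gamma_c$. For every $c$ we have $\gamma_c<u_c(r_0)=(T_0r_0^{-\theta-1})^{1/q}$, a bound independent of $c$. Since $\Theta<0$, this gives $r_0/j(c)>r_0\bigl(\gamma/u_c(r_0)\bigr)^{1/|\Theta|}$ for all $c$.

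Distinctness after rescaling, which your $ru'/u$-at-$r_0$ argument does not cover, is also straightforward. The quantities $t=r^{\theta+1}u^q$ and $X=t\,ru'/u$ are unchanged under $T_j$ as functions of $t$. So different values of $c$ give different trajectories $X_c$, hence different solutions on the common ball.
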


\begin{proof}
For $T_0>0$ large, we consider the ODE in \eqref{nue}. Then, there exists $\varepsilon>0$ small such that for every $T_0>1/\varepsilon$ and all $c\in (0, \varepsilon)$, 
the equation in \eqref{nue} has a positive solution $X=X_c$ on $[T_0,\infty)$ satisfying $X(T_0)=c$ and $\lim_{t\rightarrow \infty}X(t)=0$.

Fix $T_0>1/\varepsilon$. Let $c\in (0, \varepsilon)$ and $X=X_c$. Fix $r_0>0$ arbitrary. Then, the differential equation for $r$ in \eqref{pox5}, subject to $r(T_0)=r_0$, 
has a unique solution given by
	\begin{equation*} 
	r(t)=r_0 \exp\left( \int_{T_0}^ t \frac{ds}{s\left (\theta+1+q\frac{X(s)}{s}\right )}\right) \quad \mbox{for } t\geq T_0. 
	\end{equation*}
	Since $[T_0,\infty)\ni t\longmapsto r(t)$ is decreasing, we can express $t$ as a function of $r$. 
	We define $u_c(r)$ by 
	\begin{equation} \label{ba12} 
	u_c(r)=\left (t\,r^{-\theta-1}\right )^{\frac{1}{q}}\quad \mbox{for every } r\in (0,r_0].  
	\end{equation}
Then, using \eqref{pox5}, \eqref{nue} and $\lim_{t\to \infty} X(t)=0$, we obtain that $u_c$ is a positive increasing solution of \eqref{faz} in $(0,r_0)$
with $\lim_{r\rightarrow 0^+}u_c(r)=\gamma_c\in (0,u_c(r_0))$ and $u_c(r_0)=\left (T_0\,r_0^{-\theta-1}\right )^{\frac{1}{q}}$. 

For arbitrary $\gamma\in (0,\infty)$, 
by varying $c\in(0,\varepsilon)$ and using the scaling transformation $T_j[u_c]$ in \eqref{scale} with $j=(\gamma/\gamma_c)^{1/\Theta}$, we 
obtain $R>0$ and infinitely many positive increasing solutions $u$ of \eqref{faz} in $(0,R)$ satisfying  \eqref{z2}.
\end{proof}

%%%%

\begin{lemma} \label{maria} We assume  that $m\not=1$ or $\beta>0$.  Let $L_0$ be as in \eqref{coo}. %The following hold: 
\begin{itemize}
\item[(i)] If $L_0=0$ (possible only for ${\rm sgn}\, (\Theta)={\rm sgn}\, (1-m)={\rm sgn}\, (\beta)$) or $L_0=-\infty$ (possible only for ${\rm sgn}\, (\Theta)=
{\rm sgn}\, (1-m)=-{\rm sgn}\, (\beta)$ and $\rho>0$), then \eqref{uite1} holds. 

\item[(ii)] If $L_0\in \mathbb R\setminus \{0\}$ (possible only for $\beta>0$ and $\rho>0$), then \eqref{refina} holds. 
\end{itemize}
\end{lemma}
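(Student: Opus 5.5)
The plan is to work with the first integral form \eqref{faz1}. Set $w(r):=r^{1-2\rho}u'(r)$, so that
\[
w'(r)=|w(r)|^m r^{\beta-1}u^q(r),
\]
with $u(r)\to\gamma$ and $w(r)\to L_0$ as $r\to 0^+$. Since $u'\neq0$ on $(0,r_0)$, $w$ has a fixed sign there. The three regimes $L_0=0$, $L_0=-\infty$ and $L_0\in\mathbb R\setminus\{0\}$ will be treated separately. In each one the sign constraints come from integrability near $0$, and the asymptotics come from integrating the separable ODE for $w$ after replacing $u^q(r)$ by $\gamma^q(1+o(1))$.

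\textbf{Case (ii), $L_0\in\mathbb R\setminus\{0\}$.} We have $u'(r)\sim L_0 r^{2\rho-1}$, and $u\to\gamma$ forces $\rho>0$. Then $w'(r)\sim |L_0|^m\gamma^q r^{\beta-1}$. Because $w$ has a finite limit, $\int_0 r^{\beta-1}\,dr<\infty$, which forces $\beta>0$. Integrating once gives
\[
w(r)=L_0+\frac{|L_0|^m\gamma^q}{\beta}\,r^\beta(1+o(1)).
\]
Multiplying by $r^{2\rho-1}$ and integrating again from $0$ (legitimate since $2\rho>0$ and $2\rho+\beta>0$), we get \eqref{refina} with $C_1=L_0/(2\rho)$, since then $|L_0|=2\rho|C_1|$.

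\textbf{Case (i), $L_0\in\{0,-\infty\}$ and $m\neq1$.} Separate variables:
\[
\frac{d}{dr}\,\frac{|w|^{1-m}}{1-m}={\rm sgn}(w)\,r^{\beta-1}u^q.
\]
If $L_0=0$, then $u'>0$ for small $r$ and $|w|^{1-m}/(1-m)\to0$, which requires $m<1$. Integrating then gives $w^{1-m}\sim (1-m)\gamma^q r^\beta/\beta$, so necessarily $\beta>0$. If instead $m>1$ and $L_0=0$, the left-hand side tends to $-\infty$ while the right-hand side $r^{\beta-1}u^q$ is positive and $w^{1-m}\to+\infty$; this is consistent only if $\beta<0$, and again $|w|^{1-m}\sim (1-m)\gamma^q r^\beta/\beta$ with both sides positive. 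If $L_0=-\infty$, then $|w|^{1-m}\to 0$ needs $m<1$ and $\beta<0$, or the symmetric sign analysis, and $L_0\neq0$ forces $\rho>0$ as shown before the lemma.

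In every sub-case we reach
\[
|w(r)|\sim\left(\frac{|1-m|\gamma^q}{|\beta|}\right)^{\frac1{1-m}}r^{\frac{\beta}{1-m}}.
\]
Since $\frac{\beta}{1-m}=\zeta-2\rho$, this gives $u'(r)\sim\pm c\,r^{\zeta-1}$ with $|\beta|/|1-m|=|\zeta-2\rho|$. Integrating from $0$ requires $\zeta>0$, which is the sign condition ${\rm sgn}(\Theta)={\rm sgn}(1-m)$, and yields \eqref{uite1}. The sign of $u'$ equals ${\rm sgn}(\zeta-2\rho)$: when $L_0=0$ the sign of $w$ is fixed by $w$ approaching $0$ monotonically, and $L_0=-\infty$ gives $u'<0$.

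\textbf{Case (i), $L_0\in\{0,-\infty\}$ and $m=1$.} The hypothesis gives $\beta>0$, so $\log|w|$ has integrable derivative near $0$. Hence $w$ has a finite nonzero limit, contradicting $L_0\in\{0,-\infty\}$; this sub-case is empty.

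The main obstacle is the careful sign bookkeeping that pins down which combinations of ${\rm sgn}\,\Theta$, ${\rm sgn}(1-m)$, ${\rm sgn}\,\beta$ and ${\rm sgn}\,\rho$ are compatible with each value of $L_0$. I would organise this by testing each combination against two requirements: monotonicity of $w$, and convergence of $\int_0 u'\,dr$.
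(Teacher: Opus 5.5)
Your proposal is correct and follows the paper's proof essentially step by step. Both use the first integral \eqref{faz1}, separation of variables for $m\neq 1$ with $u^q\to\gamma^q$, and integrability/L'H\^opital arguments to fix the admissible signs of $\beta$ and to obtain $r|u'(r)|\sim(\gamma^q/|\zeta-2\rho|)^{\frac{1}{1-m}}r^\zeta$; then $\zeta>0$ follows from $u\to\gamma$, and $C_1=L_0/(2\rho)$ in case (ii).

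One slip in your sign bookkeeping needs fixing. When $L_0=-\infty$ you have $|w|\to\infty$, so $|w|^{1-m}\to0$ happens for $m>1$ (and forces $\beta>0$), while for $m<1$ one has $|w|^{1-m}\to\infty$ (and this forces $\beta<0$). Both sub-cases are computed exactly like your $L_0=0$ sub-cases, so the conclusion is unaffected.
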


\begin{rem}
Note that Case (II) (resp, Case (III)) in Theorem~\ref{profff1}	corresponds to Lemma~\ref{maria} (i) (resp., (ii)). 
\end{rem}

\begin{proof}
If $m\not=1$, then 
from \eqref{faz1}, we find that 
\begin{equation} \label{mida} \frac{ {\rm sgn} \, (u'(r) ) }{1-m} \frac{d}{dr} \left[ \left( r^{1-2\rho} |u'(r)| \right)^{1-m}\right] \sim 
\frac{\gamma^q}{\beta} (r^\beta)'
\quad  \mbox{as } r\to 0^+.  
\end{equation}

(A) Suppose that $\beta<0$ and $m\not=1$. Since $r^{\beta}\to \infty$ as $r\to 0$, we get $L_0=- \infty$ (resp., $0$) if $m<1$ (resp., $m>1$) 
and
$$ {\rm sgn}\,(u'(r))={\rm sgn}\, (m-1)={\rm sgn}\, \left(\frac{\beta}{1-m}\right)={\rm sgn}\, (\zeta-2\rho)\quad \mbox{for all } r\in (0,r_0).$$ 

(B) Suppose that $\beta>0$. 
Letting $0<C<\gamma^q/\beta$ (resp., $C>\gamma^q/\beta$), we get that $H(r)$ is increasing (resp., decreasing) for every $r>0$ small, where we define 
$$ H(r)= \left\{ \begin{aligned} 
&  \frac{ {\rm sgn} \, (u'(r) ) }{1-m}  \left( r^{1-2\rho} |u'(r)| \right)^{1-m}-C r^\beta && \mbox{if } m\not=1,&\\
&  {\rm sgn} \, (u'(r) ) \log \left(r^{1-2\rho} |u'(r)|\right ) -C r^\beta && \mbox{if } m=1. &
\end{aligned} \right. $$
Thus, necessarily, $\lim_{r\to 0^+} H(r)\in \mathbb R$. Hence, if $m<1$ (resp., $m=1$), then $L_0\in \mathbb R$ (resp., $L_0\in \mathbb R\setminus \{0\}$). Moreover, if $m> 1$, then $L_0\in [-\infty,\infty)\setminus \{0\}$. 

\vspace{0.2cm}
Whether Case (A) or Case (B) holds, we distinguish the following situations: 

\vspace{0.2cm}
$\bullet$ Let $L_0\in \mathbb R\setminus \{0\}$. This can only happen when $\rho>0$ and $\beta>0$ (see Case (A)).  
Then, by L'H\^opital's rule, we arrive at
	\begin{equation} \label{faz3}		
	\lim_{r\to 0^+} \frac{r^{1-2\rho}u'(r)-L_0}{r^\beta}
	=\frac{\gamma^q |L_0|^m}{\beta}.	
	\end{equation}	
	Hence, we readily conclude  \eqref{refina} with $C_1=L_0/(2\rho)$. 

\vspace{0.2cm}
$\bullet$ Let $L_0=0$ (possible only for ${\rm sgn}\,(\beta)={\rm sgn}\, (1-m)$) or $L_0=-\infty$ (possible only for ${\rm sgn}\,(\beta)=-{\rm sgn}\, (1-m)$ and $\rho>0$).  
From \eqref{mida} and L'H\^opital's rule, we obtain that 
\begin{equation} \label{sifo}  r |u'(r)|\sim \left( \frac{\gamma^q}{|\zeta-2\rho|} \right)^{\frac{1}{1-m}}
r^\zeta\quad \mbox{as }r\to 0^+. 
\end{equation}
In view of \eqref{z2}, we have $\zeta>0$, which implies that ${\rm sgn}\, (\Theta)={\rm sgn}\,(1-m)$. Hence, from \eqref{sifo}, we conclude  
\eqref{uite1}. This ends the proof of Lemma~\ref{maria}. 
\end{proof}

\begin{lemma}[Existence, Case (II) in Theorem~\ref{profff1}] \label{viz2}
Let $\beta\neq 0$, $m\not=1$, $\Theta\not=0$ and ${\rm sgn}\, (\Theta)={\rm sgn}\, (1-m)$. 
If $\beta>0$ (resp., $\beta<0$), 
then for every $\gamma\in (0,\infty)$, there exists $R>0$ small such that 
\eqref{faz} has a
positive solution (resp., infinitely many positive solutions) satisfying \eqref{uite1}, i.e.,
 \begin{equation}
 \label{uite10}
 u(r)=\gamma+  \frac{{\rm sgn}\,(\zeta-2\rho)}{\zeta} \left( \frac{\gamma^q}{|\zeta-2\rho|} \right)^\frac{1}{1-m}
r^{\zeta} \,(1+o(1)),\ \mbox{where } \zeta:=\frac{\kappa \,\Theta}{1-m}>0. 
 \end{equation} 
\end{lemma}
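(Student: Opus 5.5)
We follow the paper's existence strategy: reduce to a first order ODE $X'(t)=f(t,X(t))$ with $X(t)\to 0$ as $t\to\infty$, then rebuild $u$ and use the scaling $T_j$ in \eqref{scale} to fix $\gamma$. In Case (II) the proof of Lemma~\ref{maria} shows that the profile \eqref{uite1} corresponds to $L_0=0$ (if ${\rm sgn}(\beta)={\rm sgn}(1-m)$) or $L_0=-\infty$ (if ${\rm sgn}(\beta)=-{\rm sgn}(1-m)$). In both cases $r|u'(r)|\sim c\,r^{\zeta}$ by \eqref{sifo}. This suggests the variables
$$ t:=r^{-\zeta}\quad\mbox{(or } t=r^{-\zeta}u(r)^{a}\mbox{ for a suitable }a\mbox{, as in \eqref{pox1})},\qquad X(t):=t\,\frac{r u'(r)}{u(r)}-\mathcal C, $$
where $\mathcal C=\gamma^{-1}{\rm sgn}(\zeta-2\rho)\,(\gamma^q/|\zeta-2\rho|)^{1/(1-m)}$ is read off from \eqref{uite1}. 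A cleaner choice, which I would try first, mimics Lemma~\ref{gara1}. Set $t=r^{-\zeta}u^{b}$ with $b$ chosen so that, from \eqref{muzia} with $\lambda=0$, the equation for $X$ is closed. The identity $(r^\Theta u)^\kappa|\mathfrak B|^m=t^{\cdot}\,|X+\mathcal C|^m t^{-m}\cdots$ fixes $b=\kappa/(1-m)$ after using $\kappa\Theta=\zeta(1-m)$.

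\textbf{Step 1.} Compute $dr/dt=-r/[t(\zeta-b\frac{X+\mathcal C}{t})]$ and derive
$$\frac{dX}{dt}=\frac{(\zeta-2\rho)\frac{X+\mathcal C}{t}+\frac{b-1}{\cdot}\big(\frac{X+\mathcal C}{t}\big)^2+\big[|X+\mathcal C|^m|\mathcal C|^{-m}\cdots-\,1\big]\,\cdot}{\zeta-b\frac{X+\mathcal C}{t}}.$$
The exact form is routine. The relevant structural fact is that the linearisation at $X=0$, $1/t=0$ is $X'\approx \frac{(1-m)(\zeta-2\rho)}{\zeta}\cdot\frac{X}{t}$ plus $O(1/t^2)$ forcing. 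This holds because the Hardy term is absent: the balance is between $r^{1-2\rho}u'$ and the absorption. Consequently the decay is governed by a power of $t$, not by an exponential.

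\textbf{Step 2.} Determine the sign of $\mu:=(1-m)(\zeta-2\rho)/\zeta$. Since $\beta=(1-m)(\zeta-2\rho)$ and $\zeta>0$, we have ${\rm sgn}\,\mu={\rm sgn}\,\beta$.
\begin{itemize}
\item If $\beta<0$, the origin attracts as $t\to\infty$. Every solution starting with $|X(T_0)|$ small then tends to $0$, which gives infinitely many solutions.
\item If $\beta>0$, the origin repels. We need the unique solution (a center/stable-manifold type object) converging to $0$. I would obtain it by a shooting/continuity argument. Solutions with $X(T_0)=\pm\varepsilon$ escape in opposite directions, and connectedness yields a $c$ whose trajectory stays in $|X|<\varepsilon$ and hence tends to $0$. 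Alternatively, treat the system in $(X,1/t)$ as in Lemma~\ref{easi} with the time change $\tau=\log t$.
\end{itemize}

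\textbf{Step 3.} Reconstruct $r(t)$ by integrating the ODE for $r$ (as in \eqref{billi}), invert it to get $t(r)$, and set $u=(t r^{\zeta})^{1/b}$. Verify that $u$ solves \eqref{faz}. Then $X\to 0$ gives $r u'/u\sim \mathcal C r^{\zeta}$, so $u\to\gamma_c\in(0,\infty)$. Non-constancy is automatic since $u'\ne0$, so $r^*(u)=0$. Lemma~\ref{maria} then yields \eqref{uite10}. Finally apply $T_j$ with $j=(\gamma/\gamma_c)^{1/\Theta}$, using $\Theta\neq0$. For $\beta<0$, distinct initial data give distinct limits and profiles, so we obtain infinitely many solutions.

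The main obstacle is Step 2 for $\beta>0$, where the fixed point is repelling in a non-autonomous setting and the existence of one decaying solution requires a careful Wa\.zewski-type shooting argument. A secondary obstacle is choosing $b$ so that the $X$-equation closes without leftover factors of $u$.
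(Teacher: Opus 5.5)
Once one computational slip is fixed, your argument works, but it is not the paper's route. The paper does not reduce to a scalar ODE here. It sets $t=\log(r_0/r)$ and
$$X_1=r^{-\eta}(u-\gamma),\quad X_2=r^{1-\zeta}u'-{\rm sgn}(\zeta-2\rho)(\gamma^q/|\zeta-2\rho|)^{1/(1-m)},\quad X_3=r^\eta,$$
with an auxiliary $\eta\in(0,\zeta/2)$. This gives a $3\times 3$ system with linear part ${\rm diag}[\eta,\beta,-\eta]$. The paper takes one solution from the one-dimensional stable manifold when $\beta>0$, and infinitely many from the two-dimensional one when $\beta<0$. Since $\gamma$ and $r_0$ are prescribed inside the system, no rescaling is needed.

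You instead quotient out the scaling \eqref{scale}. Take the scale-invariant variable $t=r^{-\zeta}u^b=(r^\Theta u)^{-\kappa/(1-m)}$, set $X=t\,ru'/u-\mathcal C$, $\tau=\log t$ and $s=1/t$. Then \eqref{muzia} with $\lambda=0$ becomes the planar autonomous system
\begin{equation*}
\frac{dX}{d\tau}=\frac{(\zeta-2\rho)Y-|Y|^m+(1-b)\,s\,Y^2}{\zeta-b\,s\,Y},\qquad \frac{ds}{d\tau}=-s,\qquad Y=X+\mathcal C .
\end{equation*}
Its Jacobian at the origin is triangular, with eigenvalues $\beta/\zeta$ and $-1$.

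What your route buys:
\begin{itemize}
\item It shows transparently why ${\rm sgn}\,\beta$ decides between one and infinitely many solutions.
\item For $\beta<0$ you only need that a hyperbolic sink attracts nearby orbits.
\item For $\beta>0$ the stable manifold of a planar saddle is a graph over $s$, so no Wa\.zewski shooting is needed.
\end{itemize}
The price is that $\gamma$ appears only a posteriori. You must show $u\to\gamma_c\in(0,\infty)$, rescale by $T_j$, and bound $\gamma_c$ uniformly so that the infinite family lives on a common $(0,R)$.

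\textbf{Points to fix.}
\begin{itemize}
\item \emph{Closure exponent.} The absorption term equals $t^{-1}u^{\kappa+(1-m)b}|Y|^m$. Closure therefore needs $b=\kappa/(m-1)$, not $\kappa/(1-m)$; with your value a factor $u^{2\kappa}$ survives. Correspondingly, $\mathcal C$ is forced by $(\zeta-2\rho)\mathcal C=|\mathcal C|^m$, so $\mathcal C={\rm sgn}(\zeta-2\rho)\,|\zeta-2\rho|^{-1/(1-m)}$, independent of $\gamma$. The $\gamma$-dependent constant you wrote belongs to $t=r^{-\zeta}$, for which the equation does not close.
\item \emph{Limit and asymptotics of $u$.} From $X\to 0$ you get $ru'/u\sim\mathcal C/t$, not $\mathcal C r^\zeta$. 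The finite positive limit of $u$ follows from $d\log u/dt=-Y/[t(\zeta t-bY)]=O(t^{-2})$. Moreover, $|\log\gamma_c-\log u(r_0)|\le C/T_0$ uniformly in the initial datum, which yields the common $R$. Then
$$ru'=r^{\zeta}u^{1-b}Y\sim{\rm sgn}(\zeta-2\rho)\,(\gamma_c^q/|\zeta-2\rho|)^{1/(1-m)}\,r^\zeta,$$
which gives \eqref{uite1} directly, without Lemma~\ref{maria}.
\item \emph{Distinctness for $\beta<0$.} The rescaled solutions are distinct not because their limits differ, but because $T_j$ leaves the $(t,X)$-trajectory invariant. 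Hence distinct trajectories give distinct solutions with the same $\gamma$.
\end{itemize}
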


\begin{proof} Suppose that $R>0$ and $u$ is a positive solution of \eqref{faz} satisfying \eqref{uite10}. 
Let
 $$ \vartheta_1=\zeta,\quad  L_{\mathfrak G}= {\rm sgn}\,(\zeta-2\rho) 
\left( \frac{\gamma^q}{|\zeta-2\rho|} \right)^\frac{1}{1-m}\quad\text{and }  \mathfrak G(r)=r^{1-\zeta}u'(r)\quad\text{for all }r\in (0,R).$$ 
We fix $\eta\in (0, \vartheta_1/2)$.  
Let $r_0\in (0,R)$ be as small as needed and define $ t=\log \,(r_0/r)$, as well as 
\begin{equation} \label{trgg}
X_1(t)=r^{-\eta}\left( u(r)-\gamma \right),\quad X_2(t)=\mathfrak{G}(r) -L_{\mathfrak G}\quad \mbox{and}\quad X_3(t)=r^\eta.
\end{equation}

By Lemma~\ref{maria}, we have $\mathbf X(t)=(X_1(t),X_2(t), X_3(t))\to (0,0,0)$ as $r\to 0^+$. Hence, for every $\varepsilon>0$, 
we can take $r_0=r_0(\varepsilon)>0$ small such that 
$|X_j(t)|<\varepsilon$ for every $r\in (0,r_0)$ and $j=1,2,3$.  
We fix $\varepsilon>0$ small so that $g(\xi)=|\xi_2+L_\mathfrak G|^m$  is well-defined and smooth on the 
open ball $B_\varepsilon(\mathbf 0)$ in $\mathbb R^3$, where 
$\mathbf 0=(0,0,0)$. 
%For every $\xi=(\xi_1,\xi_2,\xi_3)\in B_\varepsilon(\mathbf 0)$, we define 
%\begin{equation} \label{ceva1} g(\xi)= |\Theta_-|^m (\xi_1\xi_3+\gamma)^m 
% \left ( 1- \frac{(\xi_2+L_{\mathfrak G})|\xi_3|^{\frac{\vartheta}{\eta}}}{ \left(\xi_1\xi_3+\gamma\right) \Theta_-} \right)^m  
%\end{equation}
%
%Let $a_2>0$ be defined by
%\begin{equation} \label{aare} a_2= \vartheta+ 2\sqrt{\rho^2-\lambda}.
%\end{equation}
Hence, for every $t\geq 0$, we arrive at 
\begin{equation*}
\left\{\begin{aligned}
& X_1'(t)=\eta \,X_1(t) -\left(X_2(t)+L_{\mathfrak G}\right) \left(X_3(t)\right)^{\frac{\vartheta_1}{\eta} -1},\\
& X_2'(t)=\beta \left(X_2(t)+L_{\mathfrak G}\right)
-\left( X_1(t) X_3(t)+\gamma \right)^q  g(\mathbf X(t)),\\
& X_3'(t)=-\eta \,X_3(t).
\end{aligned}
\right.
\end{equation*}

We find that 
$\mathbf{X}(t)$ solves for $t\geq 0$ 
the following system
\begin{equation}\label{shss}
\mathbf{X}'(t)=A\mathbf{X}+\mathbf{F}(\mathbf{X}),
\end{equation}
where $A$ is a $3\times 3$ diagonal matrix given by $A={\rm diag}\, [\eta,\beta,-\eta]$ and $\mathbf F$ is a $C^1$-function on the open ball $B_\varepsilon(\mathbf 0)\subset \mathbb R^3$ satisfying 
$\mathbf F(\mathbf 0)=\mathbf 0$ and $D\mathbf F(\mathbf 0)=0$ since $0<\eta<\vartheta_1/2$. Thus, $\mathbf 0$ is a saddle critical point for \eqref{shss} since $A$ has two positive eigenvalues and one negative eigenvalue when $\beta>0$ (resp., two negative eigenvalues and one positive eigenvalue when $\beta<0$). In both situations, we can apply the Stable Manifold Theorem to \eqref{shss} so that the behaviour of \eqref{shss}   
near $\mathbf{X}=\mathbf 0$ is approximated by the behaviour of its linearization $\mathbf{X}'=A\mathbf X$ at $\mathbf X=\mathbf 0$. 
Let $\phi_t$ be the flow of the system \eqref{shss} in relation to which we obtain the existence of a one-dimensional (local) stable manifold $S$ when $\beta>0$, and a two-dimensional stable  manifold $S$ when $\beta<0$.

So, when $\beta>0$ there exist $\varepsilon_0>0$ small and two differentiable functions $w_1,w_2:(-\varepsilon_0,\varepsilon_0)\rightarrow (-\varepsilon_0,\varepsilon_0)$ such that the stable manifold $S$ is 
defined by  
$X_1=w_1(X_3)$ and $X_2=w_2(X_3)$ 
such that for all $t\geq 0$, we have $\phi_t(S)\subseteq S$ and for all $\mathbf{x}_0 \in S$, $\lim_{t\rightarrow \infty} \phi_t(\mathbf{x}_0)=0$. 
Thus,  
for every $x_{03}\in (-\varepsilon_0, \varepsilon_0)$, the system \eqref{shss} subject to the initial condition
\begin{equation}\label{mia22}
\mathbf{X}(0)=(w_1(x_{03}), w_2(x_{03}),x_{03})
\end{equation}
has a solution $\mathbf{X}$ on $[0,\infty)$ satisfying 
\begin{equation} \label{imcc} \lim_{t\to \infty} \mathbf X(t)=\mathbf 0.  
\end{equation}

On the other hand, when $\beta<0$, there exist $\varepsilon_0>0$ and $w:(-\varepsilon_0,\varepsilon_0)\times (-\varepsilon_0,\varepsilon_0)\rightarrow (-\varepsilon_0,\varepsilon_0)$ such that for all $(x_{02},x_{03})\in (-\varepsilon_0,\varepsilon_0)\times (-\varepsilon_0,\varepsilon_0)$, the system \eqref{shss} subject to the initial condition
\begin{equation}\label{mia234}
\mathbf{X}(0)=(w(x_{02},x_{03}), x_{02},x_{03})
\end{equation}
has a solution $\mathbf{X}$ on $[0,\infty)$ satisfying \eqref{imcc}. 

In particular, in both cases, we find that 
$X_3(t)=x_{03}\,e^{-\eta t}$ for all $t\geq 0$. 

Let $0<r_0<\varepsilon_0^{1/\eta}$ be arbitrary and take $x_{03}=r_0^\eta\in (0,\varepsilon_0)$. In addition, for  $\beta<0$, we consider $x_{02}\in (-\varepsilon_0,\varepsilon_0)$ be arbitrary. 
We construct a positive solution $u$ of \eqref{faz} in $(0,r_0)$ satisfying \eqref{uite10}. 	
Let $\mathbf{X}$ be the above solution of \eqref{shss}, subject to \eqref{mia22} (resp., \eqref{mia234}), when $\beta>0$ (resp., when $\beta<0$). 
By setting $r=r_0\,e^{-t}$ for $t\geq 0$, we get $X_3(t)=r^\eta$. We define 
\begin{equation} \label{mibb}
u(r)= \left(X_1(t)+\gamma\right)X_3(t)\quad \mbox{for every } r\in (0,r_0].
\end{equation}
In view of \eqref{shss}, by differentiating \eqref{mibb} with respect to $t$, we regain the expression of $X_2$ in \eqref{trgg}. By differentiating such an expression with respect to $t$ and using the differential equation for $X_2$ in \eqref{shss}, we obtain that $u$ is a positive solution of \eqref{faz} in $(0,r_0)$ satisfying \eqref{uite10} since \eqref{imcc} holds. In the case $\beta<0$, we get infinitely many positive solutions of \eqref{faz} in $(0,r_0)$ by varying $x_{02}\in (-\varepsilon_0,\varepsilon_0)$. 
This concludes the proof of the lemma.  
\end{proof}

%Suppose that $R>0$ and $u$ is a positive solution of \eqref{faz} satisfying \eqref{uite10}. The proof follows the same steps as in Lemma \ref{viz1} with the following modifications 
%$$ \mathfrak G(r)=r^{1-\zeta}u'(r),\quad  \vartheta_1=\zeta,\quad  L_{\mathfrak G}= {\rm sgn}\,(\zeta-2\rho) 
%\left( \frac{\gamma^q}{|\zeta-2\rho|} \right)^\frac{1}{1-m}.$$ 
%We fix $\eta\in (0,\zeta/2)$.  
%We let $r_0\in (0,R)$ small as needed and define $t=\log\, (r_0/r)$, as well as 
%$$ X_1(t)=r^{-\eta} (u(r)-\gamma),\quad X_2(t)=r^{1-\zeta}u'(r)-L_{\mathfrak G},\quad X_3(t)=r^\eta.
%$$
%Here, $g(\xi)$ in \eqref{ceva1}  and $a_2$ in \eqref{aare} are given by 
%$g(\xi)=\left |\xi_2+L_{\mathfrak G}\right |^m$ and $a_2=\beta$, respectively. 
%Hence, 
%$\mathbf X(t)$ solves \eqref{shs}, where $A={\rm diag}\, [\eta,\beta,-\eta]$. 
%Then, the arguments from Lemma~\ref{viz1} carry over to yield the existence of $R>0$ small such that \eqref{faz} has  
%a positive solution when $\beta>0$ (respectively, infinitely many positive solutions when $\beta<0$) 
%satisfying \eqref{uite10}. When $\beta<0$ the local stable manifold $S$ is two-dimensional, which for $R>0$ small yields the existence of infinitely many 
%positive solutions of \eqref{faz} satisfying \eqref{uite10}.

%%%%

%		
	\begin{lemma}[Existence, Case (III) in Theorem~\ref{profff1}] \label{probc}
		Let  $\beta>0$ and $\rho>0$. Then, 
		for every $\gamma\in (0,\infty)$ and $C_1\in \mathbb R\setminus \{0\}$, there exists $R>0$ such that \eqref{faz} 
	has a positive solution $u$  satisfying \eqref{refina}, i.e.,
	\begin{equation} \label{saii}
	u(r)=\gamma+ C_1\,r^{2\rho}+\frac{\gamma^q (2\rho\,|C_1|)^m}{\beta \left(2\rho+\beta\right)}r^{2\rho+\beta}(1+o(1))\quad \mbox{as } r\to 0^+.
	\end{equation} 
	\end{lemma}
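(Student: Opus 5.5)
We treat Lemma~\ref{probc} as the existence counterpart of Lemma~\ref{maria}(ii): there, any solution with $L_0=\lim_{r\to0^+}r^{1-2\rho}u'(r)\in\mathbb R\setminus\{0\}$ satisfies \eqref{refina} with $C_1=L_0/(2\rho)$. It therefore suffices to construct, for given $\gamma>0$ and $C_1\neq0$, a solution of \eqref{faz} on some $(0,R)$ with $\lim_{r\to0^+}u(r)=\gamma$ and $\lim_{r\to0^+}r^{1-2\rho}u'(r)=L_0:=2\rho C_1$. Then $u'\neq0$ near $0$, so $r^*(u)=0$ and \eqref{z2} holds. Since $\rho>0$, we have $r^{2\rho}\to0$, so this $u$ converges to $\gamma$.

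\textbf{Step 1: integral equation.} Set $v(r)=r^{1-2\rho}u'(r)$. By \eqref{faz1}, $v'(r)=|v(r)|^m r^{\beta-1}u^q(r)$. We look for a pair $(u,v)$ on $[0,R]$ satisfying
\[
v(r)=L_0+\int_0^r |v(s)|^m s^{\beta-1}u^q(s)\,ds,\qquad u(r)=\gamma+\int_0^r s^{2\rho-1}v(s)\,ds .
\]
Both integrands are integrable at $0$ because $\beta>0$ and $\rho>0$. Conversely, any continuous solution with $v$ close to $L_0$ and $u$ close to $\gamma$ yields a $C^2$ positive solution of \eqref{faz} on $(0,R)$.

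\textbf{Step 2: fixed point.} Work in the closed set
\[
K=\{(u,v)\in C([0,R])^2:\ |u-\gamma|\le \gamma/2,\ |v-L_0|\le |L_0|/2\}.
\]
On $K$ the quantities $|v|^m$ and $u^q$ are bounded above and below by positive constants, and they are Lipschitz in $(u,v)$ because $v$ stays away from $0$ and $u$ stays away from $0$. This is the step that needs care when $m<1$ or $q<1$: the restriction to $K$ removes the possible non-Lipschitz behaviour at $0$. With this, the map $\mathcal T$ defined by the right-hand sides sends $K$ into itself, and it is a contraction in the sup norm once $R$ is small, since its Lipschitz constant is $O(R^{\beta}+R^{2\rho})$. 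Banach's fixed point theorem then gives the solution. Alternatively, one can mimic the stable-manifold argument of Lemma~\ref{viz2} with the variables $r^{-\eta}(u-\gamma)$, $r^{1-2\rho}u'-L_0$ and $r^\eta$, but the contraction argument is more direct.

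\textbf{Step 3: asymptotics.} From the fixed point, $v(r)-L_0\sim \gamma^q|L_0|^m r^\beta/\beta$. This is exactly \eqref{faz3}. Integrating $s^{2\rho-1}v(s)$ gives
\[
u(r)=\gamma+C_1r^{2\rho}+\frac{\gamma^q(2\rho|C_1|)^m}{\beta(2\rho+\beta)}r^{2\rho+\beta}(1+o(1)),
\]
which is \eqref{saii}.

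The only genuine obstacle is the Lipschitz control for non-integer $m$ or $q$. This is handled by working in $K$, where $v$ is bounded away from zero and $u$ is bounded away from zero. No scaling via $T_j$ is needed, because $\gamma$ and $C_1$ are prescribed directly in the construction.
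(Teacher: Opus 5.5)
Your proof is correct, but it takes a different route from the paper's.

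The paper does not use an integral equation. It sets $X_1=r^{-2\rho}(u-\gamma)-C_1$, $X_2=r^{-\beta}(r^{1-2\rho}u'-2\rho C_1)-C_2$ with $C_2=\gamma^q(2\rho|C_1|)^m/\beta$, and $X_3=r^\eta$ for an auxiliary $\eta\in(0,\min\{\beta,2\rho\})$. In the variable $t=\log(R/r)$ it rewrites \eqref{faz} as an autonomous system $\mathbf X'=A\mathbf X+\mathbf F(\mathbf X)$ with $A={\rm diag}\,[2\rho,\beta,-\eta]$. It then takes a trajectory on the one-dimensional stable manifold of this saddle point. Undoing the substitution gives $u$, and \eqref{saii} is encoded in $\mathbf X(t)\to\mathbf 0$.

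You instead observe that in Case (III) the substitution $v=r^{1-2\rho}u'$ turns the singular ODE into a Volterra system. Its weights $r^{\beta-1}$ and $r^{2\rho-1}$ are integrable at $0$, and the initial data $(\gamma,2\rho C_1)$ lie away from the points where $u^q$ and $|v|^m$ lose Lipschitz continuity. So the Banach fixed point theorem applies directly.

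What your version buys:
\begin{itemize}
\item It is more elementary: no stable manifold theorem, and no auxiliary exponent $\eta$ chosen so that $\mathbf F$ is $C^1$ with $D\mathbf F(\mathbf 0)=0$.
\item It gives local uniqueness of the solution with prescribed $(\gamma,C_1)$ within $K$.
\item The expansion \eqref{saii} follows from two integrations, without needing Lemma~\ref{maria}.
\end{itemize}

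What the paper's version buys is uniformity. The same change-of-variables and stable-manifold template is used in Lemmas~\ref{viz2}, \ref{viz1}, \ref{dimi2} and \ref{exo2}. That includes degenerate situations where your contraction is unavailable:
\begin{itemize}
\item Case (II) with $\beta<0$: then $r^{\beta-1}$ is not integrable at $0$.
\item Case (II) with $L_0=0$ and $m<1$: then $v\equiv 0$ also solves your integral system, $|v|^m$ is not Lipschitz at $0$, and the non-constant branch cannot be obtained by contraction about the initial value.
\end{itemize}
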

	
	\begin{proof} Let $\gamma\in (0,\infty)$ and $C_1\in \mathbb R\setminus \{0\}$ be arbitrary. 
		Suppose that $u$ is a positive solution of \eqref{faz} satisfying \eqref{saii}. 
	Fix $\eta\in (0,\min\,\{\beta,2\rho\})$. Let $\varepsilon>0$ be small such that 
		$$ 
		(\varepsilon+|C_1|)\,\varepsilon^{\frac{2\rho}{\eta}}<\gamma/2 
		\quad \mbox{and}\quad 
		(\varepsilon+C_2)\,\varepsilon^{\frac{\beta}{\eta}}<\rho\,|C_1|,\quad \mbox{where } C_2:=\frac{\gamma^q\left(2\rho \,|C_1|\right)^m}{\beta}>0. 
		$$
		If needed, we diminish $R>0$ to ensure that $R<\varepsilon^{1/\eta}$, as well as 
		\begin{equation} \label{gup} 
		\left| r^{-2\rho}(u(r)-\gamma)-C_1\right|<\varepsilon\quad \mbox{and}\quad \left| r^{-\beta}\left (r^{1-2\rho}u'(r)-2\rho\, C_1\right ) -C_2\right|<\varepsilon		\end{equation}
		 for every $r\in (0,R]$. 
	We set $r=R\,e^{-t}$ and for $t\geq 0$, we  define $(X_1(t), X_2(t), X_3(t))$ as follows
	\begin{equation}\label{s91}
	X_1(t)=r^{-2\rho}(u(r)-\gamma)-C_1, \quad X_2(t)=r^{-\beta}\left (r^{1-2\rho}u'(r)-2\rho\, C_1\right ) -C_2\quad \text{and}\quad X_3(t)=r^\eta.
	\end{equation} 
	Observe that $X_3(t)>0$ for all $t\geq 0$ and 
	\begin{equation} \label{faz4}
	(X_1(t), X_2(t), X_3(t))\rightarrow (0,0,0)\quad \mbox{as } t\rightarrow \infty.
	\end{equation}
	
	Let $g:\mathbb R^3\to \mathbb R$ be a $C^1$-function on $\mathbb R^3$ satisfying
	\begin{equation} \label{gif}  g(t_1,t_2,t_3)=\left[ \left(t_1+C_1\right)|t_3|^{2\rho/\eta}+\gamma\right]^q \left( \frac{\left(t_2+C_2\right)|t_3|^{\beta/\eta}}{C_1}
	+2\rho \right)^m |C_1|^m
	\end{equation} for all $|t_j|<\varepsilon$ with $j\in \{1,2,3\}$. Our choice of $\eta>0$ gives $\beta/\eta>1$ and $2\rho/\eta>1$.

	Note that $ g(\mathbf 0)=\gamma^q (2\rho\,|C_1|)^m=C_2\beta$ and \eqref{gup} gives that $|X_j(t)|<\varepsilon$ for $j\in \{1,2,3\}$ and all $t\geq 0$. 
	Hence, $\mathbf{X}(t)=(X_1(t), X_2(t), X_3(t))$ satisfies for $t\geq 0$ the nonlinear differential system 
	\begin{equation}\label{h2}
	\left\{ \begin{aligned} 
	& X_1'(t)=2\rho\, X_1(t)-(X_2(t)+C_2)\,|X_3(t)|^{\beta/\eta}
	:=H_1(\mathbf X), \\
	& X_2'(t)=\beta\left(X_2(t)+C_2\right)-
	g(\mathbf X)
	:=H_2(\mathbf X),\\
	& X_3'(t)=-\eta X_3(t)
	:=H_3(\mathbf X).
	\end{aligned}
	\right.	
	\end{equation}

	We see that $\mathbf 0=(0,0,0)$ is a hyperbolic critical point for \eqref{h2}, which  
	we rewrite as
	\begin{equation}\label{b2}
	\mathbf{X}'(t)=A\mathbf{X}+\mathbf{F}(\mathbf{X}),
	\end{equation}
	where $A=D\mathbf{H}(\mathbf 0)={\rm diag}\,[2\rho,\beta,-\eta]$ and 
	$\mathbf{F}(\mathbf{X})=\mathbf{H}(\mathbf{X})-A\mathbf{X}$, $\mathbf{F}\in C^1(\mathbb R^3)$ with $\mathbf{F}(\mathbf 0)=\mathbf 0$ and $D\mathbf{F}(\mathbf 0)=0$. 
The eigenvalues $\mu_j$ ($j=1,2,3$) of the matrix $A$ are all non-zero and, more precisely, 
	\begin{equation*}
	\mu_1=2\rho>0,\quad \mu_2=\beta>0, \quad \text{and } \quad \mu_3=-\eta<0.
	\end{equation*}

The critical point $\mathbf 0$ for \eqref{b2} is a saddle point and we can apply the Stable Manifold theory.
Thus, the behavior of the nonlinear system \eqref{b2} near $\mathbf{X}=\mathbf 0$ is approximated by the behavior of its linearization $\mathbf{X'}=A\mathbf{X}$ at $\mathbf{X}=\mathbf 0$. 
Let $\phi_t$ be the flow of the nonlinear system \eqref{b2}. The Stable Manifold Theorem applied to  \eqref{b2} gives the existence of a (local) stable manifold $S$ that is one-dimensional.
More precisely, there exist $\varepsilon_0>0$ small and two differentiable functions $w_1,w_2:(-\varepsilon_0,\varepsilon_0)\rightarrow (-\varepsilon_0,\varepsilon_0)$ such that the stable manifold $S$ is 
defined by  
$X_1=w_1(X_3)$ and $X_2=w_2(X_3)$ 
such that for all $t\geq 0$, we have $\phi_t(S)\subseteq S$ and for all $\mathbf{x}_0 \in S$, $\lim_{t\rightarrow \infty} \phi_t(\mathbf{x}_0)=0$. 
Therefore,  
	for every $x_{03}\in (-\varepsilon_0, \varepsilon_0)$, the system \eqref{b2} subject to the initial condition
	\begin{equation}\label{m2}
	\mathbf{X}(0)=(w_1(x_{03}), w_2(x_{03}),x_{03})
	\end{equation}
	has a solution $\mathbf{X}$ on $[0,\infty)$ satisfying \eqref{faz4}. In particular, we have 
	$X_3(t)=x_{03}\,e^{-\eta t}$ for all $t\geq 0$. As $\varepsilon_0>0$ is small, using \eqref{gif}, for all $t\geq 0$, we get 
	\begin{equation} \label{yop} g(\mathbf{X}(t))=
	\left[ \left(X_1(t)+C_1\right)|X_3(t)|^{2\rho/\eta}+\gamma\right]^q \left|\left(X_2(t)+C_2\right)|X_3(t)|^{\beta/\eta}+2\rho \,C_1\right|^m.
	\end{equation} 
		
	Let $0<R<\varepsilon_0^{1/\eta}$ be arbitrary. 
	We now construct a positive solution $u$ of \eqref{faz} satisfying \eqref{saii}. 	
	To this end, let $\mathbf{X}_{R}$ be the above solution $\mathbf{X}$ of \eqref{h2}, subject to \eqref{m2}, corresponding to $x_{03}=R^\eta\in (0,\varepsilon_0)$. 
	By setting $r=R e^{-t}$ for $t\geq 0$, we get $X_3(t)=r^\eta$. We define
	\begin{equation} \label{mia}
	u(r)= \left(X_1(t)+C_1\right)(X_3(t))^{2\rho/\eta}
	+\gamma=
	r^{2\rho}\left [X_1(\log\,(R/r))+C_1\right]+\gamma 
	\end{equation}
for every $ r\in (0, R]$. 
	Using the differential equation for $X_1$ in \eqref{h2} and differentiating \eqref{mia} with respect to $t$, we arrive at the expression of $X_2$ in \eqref{s91}. By differentiating $X_2$ with respect to $t$ and using the differential equation for $X_2$ in \eqref{h2}, jointly with \eqref{yop}, we conclude that $u$ is a positive solution of \eqref{faz} satisfying \eqref{saii} since $\lim_{t\to  \infty}\mathbf{X}_{R}(t)=\mathbf 0$. %This ends the proof.   
\end{proof}

%%%%%

%%%%%%%%%
\section{The constant asymptotic profile for $\lambda=\beta=0$ (Proof of Theorem \ref{equa1})}

Let \eqref{cond1} hold, $\lambda=0$ and $\beta=\kappa \Theta+2\rho\left(m-1\right)=0$. Unless otherwise stated, we always assume that $u$ is a 
positive solution of \eqref{faz} for $R>0$ such that
\begin{equation}\label{co1}
\lim_{r\rightarrow 0^+}u(r)=\gamma\in (0,\infty)\quad \mbox{and}\quad r^*(u)=0.
\end{equation}

\begin{lemma} \label{lok}
Let $m=1$ and $\Theta=0$. Then, $u'(r)\not=0$ for all $r\in (0,R)$ and, moreover,  
$2\rho+\gamma^q {\rm sgn}\, (u'(r))\geq 0$. By letting $u(R^-)=\lim_{r\nearrow R} u(r)$, we have  for all $r\in (0,R)$ % , we have
\begin{equation} \label{fip}
\int_{u(r)}^{u(R^-)} \frac{ds}{ 2\rho \left(s-\gamma\right)+ \frac{{\rm sgn}\, (u'(r))}{q+1}\left( s^{q+1}-\gamma^{q+1}\right)}=\log \left(\frac{R}{r}\right). 
\end{equation} 		
$\bullet$ If $2\rho+\gamma^q \,{\rm sgn}\, (u'(r))>0$, then there exists a constant $C_1>0$ such that as $r\to 0^+$
\begin{equation} \label{sof}
u(r)=\gamma+C_1 {\rm sgn}\, (u'(r)) \,r^{2\rho+\gamma^q \,{\rm sgn}\, (u'(r))}+
q\,\gamma^{q-1}\,(C_1)^2 \,
\frac{r^{2\left(2\rho+\gamma^q\, {\rm sgn}\,(u'(r)) \right)} }
{2\left(2\rho+\gamma^q\,{\rm sgn}\,(u'(r))   \right)} \,  (1+o(1)). 
\end{equation} 
$\bullet$ If $2\rho+\gamma^q \,{\rm sgn}\, (u'(r))=0$, then as $r\to 0^+$, 
\begin{equation} \label{sof1}
u(r)=\gamma+\frac{2\, \gamma^{1-q}}{q}\frac{{\rm sgn}\, (u'(r))}{\log\, (1/r)}\left[
1-\frac{2\,(q-1)\, {\rm sgn}\, (u'(r))}{3\,q\,\gamma^q} \frac{\log \,(\log \,(1/r))}{\log \,(1/r)}(1+o(1)) \right]. 
\end{equation}
Furthermore, if also $q=1$, then it holds 
\begin{equation} \label{sof2} u(r)=\gamma+\frac{2\, {\rm sgn}\, (u'(r))}{\log \left(\frac{R}{r}\right)+
	\frac{2\, {\rm sgn}\, (u'(r))}{u(R^-)-\gamma}}\quad \mbox{for all } r\in (0,R).
\end{equation}
\end{lemma}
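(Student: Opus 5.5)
The plan is to reduce the second-order ODE to an autonomous first-order equation in $\log r$, which can then be integrated exactly. Under $m=1$ and $\Theta=0$ we have $\kappa\Theta=\theta+1=0$, so $\theta=-1$, and $\beta=0$ holds automatically. Multiplying \eqref{faz} by $r$ gives
\[
\bigl(ru'(r)\bigr)'=2\rho\, u'(r)+u^q(r)\,|u'(r)| \quad \mbox{for } r\in(0,R).
\]
In the variable $\tau=\log r$, with $v=ru'$, this is the planar autonomous system $u_\tau=v$, $v_\tau=2\rho v+u^q|v|$. Its right-hand side is locally Lipschitz for $u>0$, and every point $(u_0,0)$ is an equilibrium.

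\textbf{Step 1: $u'$ never vanishes.} Suppose $u'(r_0)=0$ for some $r_0\in(0,R)$. Then $(u(r_0),0)$ is an equilibrium of the system above, and uniqueness gives $u\equiv u(r_0)$ on $(0,R)$. Letting $r\to0^+$ forces $u\equiv\gamma$, which contradicts $r^*(u)=0$. Hence $s:={\rm sgn}\,(u'(r))$ is constant on $(0,R)$. On $(0,R)$ the displayed identity becomes
\[
(ru')'=\frac{d}{dr}\Bigl[2\rho u+\frac{s\,u^{q+1}}{q+1}\Bigr].
\]
Integrating, $ru'(r)-F(u(r))$ is a constant $c$, where
\[
F(w)=2\rho(w-\gamma)+\frac{s}{q+1}\bigl(w^{q+1}-\gamma^{q+1}\bigr).
\]
Since $u\to\gamma$, we get $ru'\to c$. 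If $c\neq0$, then $u\sim c\log r$ would be unbounded, so $c=0$ and $ru'=F(u)$. Separating variables and integrating from $r$ to $R$ gives \eqref{fip}.

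\textbf{Step 2: sign condition.} Set $a:=F'(\gamma)=2\rho+s\gamma^q$. Near $\gamma$ we have $F(w)\approx a(w-\gamma)$, so $\frac{d}{d\tau}(u-\gamma)\approx a(u-\gamma)$. If $a<0$, then $|u-\gamma|$ would grow as $\tau\to-\infty$, contradicting $u\to\gamma$. Hence $a\ge0$.

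\textbf{Step 3: asymptotics when $a>0$.} Put $w=u-\gamma$ and expand
\[
\frac{1}{F(\gamma+w)}=\frac{1}{aw}-\frac{F''(\gamma)}{2a^2}+O(w), \qquad F''(\gamma)=s\,q\,\gamma^{q-1}.
\]
Inserting this into \eqref{fip} gives
\[
\log\frac1r=-\frac1a\log|u-\gamma|+K+\frac{F''(\gamma)}{2a^2}(u-\gamma)+o(u-\gamma)
\]
for some constant $K$. Exponentiating yields
\[
|u-\gamma|=C_1r^a\Bigl(1+\frac{F''(\gamma)}{2a}(u-\gamma)(1+o(1))\Bigr).
\]
Substituting the leading term $u-\gamma\sim sC_1r^a$ back in produces the correction $q\gamma^{q-1}C_1^2r^{2a}/(2a)$, which is \eqref{sof}.

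\textbf{Step 4: the critical case $a=0$.} Here $2\rho=-s\gamma^q$, and $F(\gamma+w)=\tfrac{F''}{2}w^2+\tfrac{F'''}{6}w^3+O(w^4)$ with $F''=sq\gamma^{q-1}\neq0$, since $q=\kappa>0$. This gives
\[
\frac1F=\frac{2}{F''w^2}-\frac{2F'''}{3(F'')^2}\,\frac1w+O(1).
\]
Integrating, $\log(1/r)=\frac{2}{F''(u-\gamma)}+\frac{2F'''}{3(F'')^2}\log|u-\gamma|+O(1)$. Inverting to first order gives $u-\gamma\sim 2/(F''\log(1/r))$. Feeding that back into the logarithmic term produces the $\log\log(1/r)/\log(1/r)$ correction. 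The coefficient of that correction, with $F'''(\gamma)=s\,q(q-1)\gamma^{q-2}$, should reproduce \eqref{sof1}.

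When $q=1$ and $a=0$, $F(\gamma+w)=\tfrac{s}{2}w^2$ exactly. Then \eqref{fip} integrates in closed form, $\frac{2s}{u-\gamma}-\frac{2s}{u(R^-)-\gamma}=\log(R/r)$, which is \eqref{sof2}.

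The main obstacle I expect is Step 4: inverting the implicit relation with the logarithmic term carefully enough to pin down the constant in \eqref{sof1}, and checking signs against $s$. Steps 1 and 3 are routine once the exact first integral $ru'=F(u)$ is established.
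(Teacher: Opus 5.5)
Your argument is sound and reaches \eqref{fip} through the same first integral $ru'=F(u)$ as the paper, but the surrounding steps are organised differently.

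The paper first obtains $u'\neq0$ only near $0$, from the monotonicity of $r^{1-2\rho}u'$ in \eqref{faz1} together with $r^*(u)=0$. It then proves $2\rho+\gamma^q{\rm sgn}\,(u')\ge0$ and $ru'\to0$ from the sign of $(ru')'$, before integrating. Finally it extends the sign of $u'$ to all of $(0,R)$ separately in the cases $L_0=0$ and $L_0=-\infty$; the second case needs a comparison argument that uses the sign condition.

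Your uniqueness argument for the autonomous system in $\log r$ (every $(u_0,0)$ is an equilibrium) gives $u'\neq0$ on the whole interval at once. After that, the constant of integration and the sign of $a$ follow cheaply from boundedness and from linearization at $\gamma$.

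For the asymptotics, apart from \eqref{sof2}, the paper does not use the quadrature. It works with $-v'=F(\gamma+v)$, $t=\log(1/r)$, and extracts $C_1$ and the corrections through monotone auxiliary quantities ($e^{at}|v|$, $\mathcal E$, $\mathcal N$) and L'H\^opital's rule, which is the ODE technique used throughout the paper. Your Laurent expansion of $1/F$ inside \eqref{fip} is shorter and more transparent. In Step~4 your coefficient $2F'''/(3(F'')^2)=2(q-1)\,{\rm sgn}\,(u')/(3q\gamma^q)$ does reproduce \eqref{sof1} for $q\neq1$; $q=1$ is the exact case \eqref{sof2}.

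The one thing to fix is the last sentence of Step~3. What your computation actually gives is $|u-\gamma|=C_1r^a+\frac{q\gamma^{q-1}C_1^2}{2a}r^{2a}(1+o(1))$, and hence
\begin{equation*}
u(r)=\gamma+{\rm sgn}\,(u'(r))\Bigl[C_1r^{a}+\frac{q\,\gamma^{q-1}C_1^2}{2a}\,r^{2a}(1+o(1))\Bigr].
\end{equation*}
This agrees with \eqref{sof} only when $u'>0$. When $u'<0$, which can occur with $a>0$ if $2\rho>\gamma^q$, the $r^{2a}$ term has the opposite sign to the printed one.

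Here your calculation is right and \eqref{sof} is misstated. The paper's \eqref{nim1} is a statement about $|v|$, and the remark that it is ``equivalent to \eqref{sof}'' drops the factor ${\rm sgn}\,(u')$. A concrete check: take $\rho=1$, $q=\gamma=1$, $\theta=-1$. Then $u(r)=(2-3r)/(2-r)=1-r-\tfrac12r^2+O(r^3)$ solves \eqref{faz} on $(0,2/3)$ with $u'<0$ and $a=C_1=1$, whereas \eqref{sof} would predict $+\tfrac12r^2$.

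So state your Step~3 conclusion in the corrected form above, rather than asserting that it is \eqref{sof}.
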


\begin{proof}
Let $r_0\in (0,R]$ be such that $u'(r)\not=0$ for every $r\in (0,r_0)$. 
Since $m=1$ and $\beta=\Theta=0$, from \eqref{faz1},  we have
\begin{equation} \label{erb}
\frac{d}{dr} (ru'(r))=u'(r) \left[2\rho +u^q(r)\,{\rm sgn}\, (u'(r))\right]
\quad \mbox{for every } r\in (0,r_0).
\end{equation} 
Using that $\lim_{r\to 0^+} u(r)=\gamma$, we have 
$$ 2\rho+\gamma^q \,{\rm sgn}\, (u'(r)) \geq 0$$ and 
$\lim_{r\to 0^+} ru'(r)=0$. 
Indeed, if $2\rho+\gamma^q \,{\rm sgn}\, (u'(r)) < 0$, then ${\rm sgn}\, ((ru'(r))')=-{\rm sgn}\,(u'(r))$ for $r\in (0,r_0)$, 
which means that $\lim_{r\to 0^+} ru'(r)=0$ and we obtain a contradiction. 

From \eqref{erb}, it follows that 
$$ {\rm sgn}\, (u'(r)) \frac{d}{dr} (\log \,(r^{1-2\rho} |u'(r)|))=u^q(r) \frac{d}{dr} (\log r)\quad \mbox{for all } r\in (0,r_0).  
$$
Then, we infer that $ \lim_{r\to 0^+} {\rm sgn }\, (u'(r)) \log\, (r^{1-2\rho} |u'(r)|)=-\infty$. 
We have two possibilities:

(1) Let $L_0= \lim_{r\to 0^+} r^{1-2\rho} u'(r)=0$ and $u'(r)>0$ for every $r\in (0,R)$;

(2) Let $L_0=-\infty$ and $u'(r)<0$ for every $r\in (0,r_0)$. We  show that $u'(r)<0$ for all $r\in (0,R)$. 
Indeed, let $ r_1=\sup \{ r \in (0,R):\ u'(t)<0\ \mbox{ for all } t\in (0, r) \}$. Then,  
from \eqref{erb}, we have $ (ru'(r))'<u'(r) (2\rho-\gamma^q )\leq 0$ for every $r\in (0,r_1)$. Hence, 
$ru'(r)$ is decreasing on $(0,r_1)$.  Hence, $r_1=R$ and $u'<0$ on $(0,R)$. 

Suppose either Case (1) or Case (2). By integrating \eqref{erb}, we arrive at
\begin{equation} \label{iomm1} ru'(r)=2\rho\left(u(r)-\gamma\right)+\frac{{\rm sgn}\, (u'(r))}{q+1}
\left( u^{q+1}(r)-\gamma^{q+1}\right)\quad \mbox{for all } r\in (0,R). 
\end{equation}
This proves the claim in \eqref{fip}. 

With the change of variable $v(t)=u(r)-\gamma$
with $t=\log \,(1/r)$ for $r\in (0,\min\{1,R\})$,   
we have 
\begin{equation} \label{fom} -\frac{dv}{dt}=2\rho\, v(t)+\frac{\gamma^{q+1} \,{\rm sgn}\, (u'(r))}{q+1}\left[ 
\left(1+ \frac{v(t)}{\gamma}
\right)^{q+1} -1
\right]\quad \mbox{for every } t>0\ \mbox{large}.
\end{equation}
Remark that ${\rm sgn}\, v(t)={\rm sgn}\, (u'(r))$. 

$\bullet$ When $2\rho +\gamma^q \,{\rm sgn}\,(u'(r)) >0$, we show that there exists a positive constant $C_1$ such that 
 	\begin{equation} \label{nim1}
 	 \lim_{t\to \infty} 
 	 e^{\left[2\rho+\gamma^q\, {\rm sgn}\, (u'(r)) \right] t}\left(
 	 e^{\left[2\rho+\gamma^q\, {\rm sgn}\, (u'(r))\right]\,t}\,|v(t)|-C_1\right)=\frac{q\,\gamma^{q-1}(C_1)^2}{2\left[2\rho+\gamma^q\, {\rm sgn}\, (u'(r))\right]}. 
 \end{equation} 
From \eqref{fom}, we have  
\begin{equation} \label{emu} v'(t)+\left[2\rho+\gamma^q \,{\rm sgn}\,(u'(r))\right] v(t)=-
\frac{q\,\gamma^{q-1}}{2} {\rm sgn}\, (u'(r))\,v^2(t) \left(1+o(1)\right)\quad \mbox{as } t\to \infty. 
\end{equation} Hence, $t\longmapsto e^{\left[ 2\rho+\gamma^q  {\rm sgn}\, (u'(r)) \right] \,t} |v(t)|$ is decreasing on $(T,\infty)$ for $T>0$ large and there exists
a non-negative constant $C_1$ such that 
\begin{equation} \label{nill1} \lim_{t\to \infty} 
 e^{\left[ 2\rho+\gamma^q  {\rm sgn}\, (u'(r)) \right] \,t} |v(t)|
=C_1. \end{equation}
We show that $C_1>0$. Indeed, fix a constant $c>\frac{q\,\gamma^{q-1}  }
{ 2 \left[ 2\rho+\gamma^q  \, {\rm sgn}\, (u'(r)) \right]}$.
Then, \eqref{emu} yields that
$$\mathcal E(t)=\frac{1}{e^{(2\rho+\gamma^q\, {\rm sgn}\, (u'(r)))\,t}\,|v(t)|}+ c\,e^{-(2\rho+\gamma^q \, {\rm sgn}\, (u'(r))) t}$$ 
 	is decreasing on $(T,\infty)$ for $T>0$ large and $\lim_{t\to \infty} \mathcal E(t)=1/C_1$. So, we have $C_1>0$. 
 	
 	Returning to \eqref{emu}, we see that $v'(t)/v(t)\to -(2\rho+\gamma^q \,{\rm sgn}\, (u'(r)))$ as $t\to \infty$ and by L'H\^opital's rule, we find that there exists 
 	\begin{equation} \label{nill2} \lim_{t\to \infty} \frac{e^{\left[2\rho+\gamma^q \,{\rm sgn}\, (u'(r)) \right] t}\,|v(t)|-C_1}{v(t)}
 	 	=\frac{q\,\gamma^{q-1}C_1 {\rm sgn}\, (u'(r))}{2\left[2\rho+\gamma^q\, {\rm sgn}\, (u'(r))\right]}.
 	\end{equation}
 	From \eqref{nill1} and \eqref{nill2},  
 	we conclude the proof of \eqref{nim1}, which is equivalent to \eqref{sof}.  

\vspace{0.2cm}
$\bullet$ When $2\rho+\gamma^q \,{\rm sgn}\,(u'(r))=0$, we distinguish two situations: 

(i) If $q=1$, then from \eqref{fip} we readily obtain \eqref{sof2}.

(ii) If $q\not=1$, then from \eqref{fom}, we get
\begin{equation} 
\label{nona1} -v'(t)= \frac{q\,\gamma^{q-1}}{2}\,v^2(t)\, {\rm sgn}\, (u'(r)) \left[1 +\frac{q-1}{3\gamma}\,v(t) \left(1+o(1)\right) 
\right]
\quad \mbox{as } t\to \infty.  
\end{equation} This implies that $t\,|v(t)|\to 2\gamma^{1-q}/q$ as $t\to \infty$. 
Fro $t>0$ large, we define
$$ \mathcal N(t)=\frac{1}{|v(t)|}-\frac{q\,\gamma^{q-1}}{2}\,t. $$
Using \eqref{nona1}, we see that 
$$\lim_{t\to \infty} t\,\mathcal N'(t)=\frac{q\left(q-1\right)\gamma^{q-2}}{6}\, \lim_{t\to \infty} t\,v(t)=
\frac{q-1}{3\gamma} \,{\rm sgn}\, (u'(r))\not=0.$$ 
This gives that $\mathcal N(t)\to +\infty\cdot  \,{\rm sgn}\,( \left(q-1\right) u'(r) )$. 
Then, by L'H\^opital's rule, we infer that 
$$ \lim_{t\to \infty}\frac{\mathcal N(t)}{\log t}=
\lim_{t\to \infty} t\,\mathcal N'(t) =\frac{q-1}{3\gamma} \,{\rm sgn}\, (u'(r)).
 $$
 Hence, using the definition of $\mathcal N$, we arrive at \eqref{sof1} since $v(t)=u(r)-\gamma$
 and $t=\log \,(1/r)$.
\end{proof}

%%%%%%%%

\begin{lemma} \label{manu}
Assume that $\Theta=\rho=0$. Then, necessarily 
$m\in [1, 2)$, $u'>0$ on $(0,R)$ and 
\begin{equation} \label{van0}
\int_{u(r)}^{u(R)}\frac{ds}{\left(s^{q+1}-\gamma^{q+1}\right)^{\frac{1}{2-m}}}=\left( \frac{2-m}{q+1}\right)^{\frac{1}{2-m}}\log \left(\frac{R}{r}\right)
\quad \mbox{for all } r\in (0,R). 
\end{equation}
If $m=1$, then there exists a constant $C_1>0$ such that \eqref{1sof} holds, namely, 
\begin{equation} \label{1sifa}
u(r)=\gamma+C_1 \,r^{\gamma^q}+
q\,\gamma^{q-1}\,(C_1)^2 \,
\frac{r^{2\gamma^q } }
{2\gamma^q } \,  (1+o(1)) \quad \mbox{as } r\to 0^+.. 
\end{equation} 
Furthermore, if $m\in (1,2)$, then we have 
\begin{equation}
\label{mosaa}
u(r)=\gamma+ \frac{(m-1)^{\frac{2-m}{1-m}}}{2-m} \gamma^{\frac{q}{1-m}} \left(\log \,(1/r)\right)^{\frac{2-m}{1-m}} (1+o(1))\quad \mbox{as } r\to 0^+.
\end{equation}

\end{lemma}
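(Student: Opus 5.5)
The plan is to pass to the logarithmic variable, which turns \eqref{faz} into an autonomous second-order ODE with a first integral.

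\textbf{Reduction.} Since $\lambda=\Theta=\rho=0$, we also have $\beta=0$ and $\theta+2-m=0$, so \eqref{faz1} becomes $(ru'(r))'=r^{-1}(r|u'(r)|)^m u^q(r)$. Set $w(t)=u(r)$ with $t=\log(R/r)$, so that $ru'(r)=-w'(t)$. Then
\[
w''(t)=|w'(t)|^m\,w^q(t)\ge 0\quad\text{for } t>0,\qquad \lim_{t\to\infty}w(t)=\gamma .
\]
Hence $w'$ is non-decreasing. A convex function with a finite limit at $\infty$ must have $w'\le 0$ and $w'(t)\nearrow 0$. If $w'(t_0)=0$ for some $t_0$, then $w'\equiv 0$ on $[t_0,\infty)$, so $u\equiv\gamma$ near $0$, contradicting $r^*(u)=0$. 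Therefore $w'<0$ everywhere, which gives $u'>0$ on $(0,R)$ and $w>\gamma$.

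\textbf{First integral and the restriction on $m$.} Multiplying the ODE by $(-w')^{1-m}$ gives the identity
\[
\frac{d}{dt}(-w')^{2-m}=\frac{2-m}{q+1}\,\frac{d}{dt}\,w^{q+1}\quad (m\neq 2),\qquad \frac{d}{dt}\log(-w')=-\frac{1}{q+1}\frac{d}{dt}w^{q+1}\quad (m=2).
\]
The right-hand sides integrate to bounded quantities on $[T,\infty)$ because $w\to\gamma$.
\begin{itemize}
\item If $m>2$, then $(-w')^{2-m}\to\infty$ as $t\to\infty$, which contradicts boundedness.
\item If $m=2$, then $\log(-w')\to-\infty$, which again contradicts boundedness.
\end{itemize}
Hence $m<2$. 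Integrating on $(t,\infty)$ and using $(-w')^{2-m}\to 0$ gives
\[
(-w')^{2-m}=\frac{2-m}{q+1}\left(w^{q+1}-\gamma^{q+1}\right).
\]
This ODE is separable, and integrating $ds/(s^{q+1}-\gamma^{q+1})^{1/(2-m)}$ between $u(r)$ and $u(R)$ yields \eqref{van0}.

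It remains to exclude $m\in(0,1)$. Near $s=\gamma$ the integrand behaves like $\left[(q+1)\gamma^q(s-\gamma)\right]^{-1/(2-m)}$. When $m<1$ the exponent $1/(2-m)$ is less than $1$, so the integral converges at $s=\gamma$. Then \eqref{van0} forces $u(r)=\gamma$ at some finite $t$, contradicting $w>\gamma$. Hence $m\in[1,2)$.

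\textbf{Asymptotics.}
\begin{itemize}
\item For $m=1$, we are within Lemma~\ref{lok} with $\rho=0$ and $u'>0$, so $2\rho+\gamma^q>0$. Formula \eqref{sof} then gives \eqref{1sifa} directly.
\item For $m\in(1,2)$, set $v=w-\gamma>0$ and $p=1/(2-m)>1$. The first integral gives $-v'=\left[(2-m)\gamma^q v\right]^{p}(1+o(1))$. Hence
\[
\frac{d}{dt}v^{1-p}=(p-1)\left[(2-m)\gamma^q\right]^p(1+o(1)),
\]
and L'H\^opital's rule gives $v^{1-p}\sim (p-1)\left[(2-m)\gamma^q\right]^p t$. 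Using $p-1=(m-1)/(2-m)$ and $1/(1-p)=(2-m)/(1-m)$, this simplifies to the constant in \eqref{mosaa}.
\end{itemize}

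The only delicate points are excluding $m\ge2$ and $m<1$; both are handled by the first integral as above. I expect the rest to be routine.
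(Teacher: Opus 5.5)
Your proof is correct and follows the paper's argument essentially step for step. Your first integral in $t=\log(R/r)$ is the paper's conserved quantity $\mathfrak T_m$ rewritten via $ru'(r)=-w'(t)$, and the remaining steps match the paper: ruling out $m\ge 2$ because $ru'\to 0$, ruling out $m<1$ because $(s^{q+1}-\gamma^{q+1})^{-1/(2-m)}$ is integrable at $s=\gamma$, invoking Lemma~\ref{lok} for $m=1$, and integrating $-v'\sim[(2-m)\gamma^q v]^{1/(2-m)}$ for $m\in(1,2)$. The only slip is a harmless sign in the $m=2$ identity, which should read $\frac{d}{dt}\log(-w')=+\frac{1}{q+1}\frac{d}{dt}w^{q+1}$; your boundedness argument does not depend on that sign.
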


\begin{proof}
From \eqref{faz1} with $\rho=0$, we obtain that 
\begin{equation} \label{0sbn} 
\left(ru'(r) \right)'=r^{\theta+1} u^q(r)|u'(r)|^m\geq 0\quad \mbox{for all }r\in (0,R).\end{equation}
Hence, using \eqref{co1}, we get that $r\rightarrow ru'(r)$ is non-decreasing on $(0,R)$, $\lim_{r\to 0^+} ru'(r)=0$ and $u'(r)>0$ for every $r\in (0,R)$.  
For every $r\in (0,R)$, we define
$\mathfrak T_m(r)$ by 
\begin{equation} \label{tamd}
\mathfrak{T}_m(r)=
\left\{ \begin{aligned} 
& \frac{(r u'(r))^{2-m}}{2-m}-\frac{u^{q+1}}{q+1}\,
 && \mbox{if } m\not=2, &\\
& \log\, (ru'(r))-\frac{u^{q+1}}{q+1} && \mbox{if } m=2.&
\end{aligned} \right.
\end{equation}
Since $\mathfrak T_m'(r)=0$ for all $r\in (0,R)$,  
there exists a constant $C\in \mathbb R$ such that $\mathfrak T_m(r)=C$ for all $ r\in (0,R)$. This necessarily 
implies that $m<2$ and 
\begin{equation} \label{van2} 
\frac{u'(r)}{\left(u^{q+1}-\gamma^{q+1}\right)^\frac{1}{2-m}}=\left(\frac{2-m}{q+1}\right)^\frac{1}{2-m}\frac{1}{r} \quad \mbox{for all } r\in (0,R). 
\end{equation} 
We also show that $m\geq 1$. 
For every $\tau>\gamma$, we define 
$$ \mathcal P_\gamma(\tau)= \int_\tau^\infty \frac{ds}{(s^{q+1}-\gamma^{q+1})^{\frac{1}{2-m}}}. 
$$ Since $(q+1)/(2-m)>1$, we see that $\mathcal P(\tau)<\infty$ for every $\tau>\gamma$. Note that $\lim_{\tau\searrow \gamma} \mathcal P_\gamma(\tau)=\infty$ when $1\leq m<2$, in which case $\mathcal P_\gamma:(\gamma,\infty)\to (0,\infty)$ is a decreasing and bijective function. 

By integrating \eqref{van2}, we get
\begin{equation} \label{van3} \mathcal P_\gamma(u(r))-\mathcal P_\gamma(u(R))= 
\left(\frac{2-m}{q+1}\right)^\frac{1}{2-m}\log \left(\frac{R}{r}\right)\quad \mbox{for every } 
r\in (0,R).
\end{equation}

If $m\in (0,1)$, then $\lim_{\tau\searrow \gamma} \mathcal P_\gamma(\tau)<\infty$ and letting $r\searrow 0$ in \eqref{van3}, we 
get a contradiction since the left-hand side is finite, while the right-hand side is infinite. 
Consequently, we have $m\in [1,2)$.  

From \eqref{van3}, we conclude the proof of \eqref{van0}. 

$\bullet$ If $m=1$, then the conclusion of \eqref{1sifa} follows from Lemma~\ref{lok} with $\rho=0$. 

$\bullet$ If $m\in (1,2)$, then as in the proof of Lemma~\ref{lok}, we define 
$v(t)=u(r)-\gamma$
with $t=\log \,(1/r)$ for $r\in (0,\min\{1,R\})$. In light of \eqref{van2}, we find that
$$ -v'(t) [v(t)]^{-\frac{1}{2-m}} =\left[ \left(2-m\right) \gamma^q\right]^{\frac{1}{2-m}} \left[
1+\frac{q}{2\gamma (2-m)}\, v(t) (1+o(1))\right]\quad \mbox{as } t\to \infty. 
$$
In particular, using that $\lim_{t\to \infty} v(t)=0$, we get 
$$  \lim_{t\to \infty} t^{\frac{2-m}{m-1}} \, v(t)  =\left( \frac{m-1}{2-m}\right)^{\frac{2-m}{1-m}} \left[ \left(2-m\right) \gamma^q\right]^{\frac{1}{1-m}} , $$
which proves the assertion of \eqref{mosaa}. This completes the proof of Lemma~\ref{manu}. 
\end{proof}

%%%%

\begin{lemma} \label{nap}
	Let $\kappa \Theta=2\rho \left(1-m\right)\not=0$. Then, necessarily $\rho>0$ and ${\rm sgn}\, (\Theta)={\rm sgn}\, (1-m)=-{\rm sgn}\,(u'(r))$ for all $r>0$ small. Moreover, we have
	\begin{equation} \label{fiv}
		\lim_{r\to 0^+} \frac{u(r)-\gamma}{r^{2\rho}\left(\log \frac{1}{r}\right)^{\frac{1}{1-m}}}=
		\lim_{r\to 0^+}\frac{u'(r)}{2\rho\, r^{2\rho-1}\left(\log \frac{1}{r}\right)^{\frac{1}{1-m}}}=-\frac{{\rm sgn}\, (1-m)}{2\rho}\left[ \left|1-m\right|\gamma^q\right]^
		{\frac{1}{1-m}}.
		\end{equation}
\end{lemma}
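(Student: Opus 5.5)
The plan is to work with the quantity already exploited in Section~\ref{nupo}, namely $w(r):=r^{1-2\rho}u'(r)$. Since $\lambda=\beta=0$, identity \eqref{faz1} reduces to
$$ w'(r)=\frac{|w(r)|^m u^q(r)}{r}\quad \mbox{for } r\in(0,R).$$
Hence $w$ is non-decreasing, the limit $L_0$ in \eqref{coo} exists, and, by $r^*(u)=0$, $u'$ (thus $w$) has a constant sign on some $(0,r_0)$. Note that $m\neq 1$ and $\rho\neq 0$, because $\kappa\Theta=2\rho(1-m)\neq0$. Passing to $t=\log(1/r)$ and using $u\to\gamma$, I will get
$$ \frac{{\rm sgn}\,(w)}{1-m}\,\frac{d}{dt}\Big(|w|^{1-m}\Big)=-u^q\to-\gamma^q\quad \mbox{as } t\to\infty .$$
By L'H\^opital's rule, or simply by integrating, this gives
$$ \frac{{\rm sgn}\,(w)}{1-m}\,|w|^{1-m}=-\gamma^q\,t\,(1+o(1)).$$
In particular, $|w|^{1-m}$ tends to $+\infty$. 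This automatically rules out both $L_0\in\mathbb R\setminus\{0\}$ and, when $m<1$, $L_0=0$, since these would force $|w|^{1-m}$ to have a finite limit. Matching signs, the left side must be negative, so ${\rm sgn}\,(w)={\rm sgn}\,(u')=-{\rm sgn}\,(1-m)$, and
$$ |w(r)|\sim\big(|1-m|\,\gamma^q\log(1/r)\big)^{\frac{1}{1-m}}\quad \mbox{as } r\to0^+ .$$
The two cases are: for $m<1$, $|w|$ blows up logarithmically; for $m>1$, $|w|$ decays to $0$ logarithmically.

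Next I will show $\rho>0$. We have $u'(r)=r^{2\rho-1}w(r)$ with $w$ of constant sign and $|w|$ a power of $\log(1/r)$. Since $u(r)\to\gamma$ finite, $u'$ must be integrable at $0$. If $\rho<0$, then $r^{2\rho-1}(\log(1/r))^{1/(1-m)}$ is not integrable near $0$ for either sign of $1-m$, which would make $u$ unbounded. Hence $\rho>0$. Then $\kappa\Theta=2\rho(1-m)$ together with $\kappa>0$ gives ${\rm sgn}\,(\Theta)={\rm sgn}\,(1-m)=-{\rm sgn}\,(u')$, as claimed.

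Finally, for \eqref{fiv}, the second limit follows directly from $u'=r^{2\rho-1}w$ and the asymptotics of $w$ above, including the sign factor $-{\rm sgn}\,(1-m)$. For the first limit, I will apply L'H\^opital's rule to
$$\frac{u(r)-\gamma}{r^{2\rho}(\log(1/r))^{1/(1-m)}}.$$
Both numerator and denominator tend to $0$ since $\rho>0$. Moreover,
$$\frac{d}{dr}\Big[r^{2\rho}(\log(1/r))^{\frac{1}{1-m}}\Big]=2\rho\, r^{2\rho-1}(\log(1/r))^{\frac{1}{1-m}}(1+o(1)).$$
So the quotient of derivatives is exactly the second quantity in \eqref{fiv}, and the two limits coincide.

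The main obstacle is the sign and case bookkeeping in the first step. One has to check carefully that every other possibility is excluded: the alternatives for $L_0$ (finite nonzero, $0$, or $-\infty$), in both regimes $m<1$ and $m>1$, must be ruled out using only the divergence of $\int u^q\,dt$. This is where the hypothesis $\beta=0$ enters, and I expect it to be the delicate point.
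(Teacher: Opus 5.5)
Your proposal is correct and is essentially the paper's proof. Both rest on the identity \eqref{blam} for $\bigl(r^{1-2\rho}|u'|\bigr)^{1-m}$ and its comparison with $\log(1/r)$ (you integrate directly, while the paper uses the monotone auxiliary functions $H_1$ and $\mathfrak F$ built with $C\in(0,\gamma^q)$), then on non-integrability of $u'$ at $0$ to exclude $\rho<0$, and on L'H\^opital's rule for \eqref{fiv}. The only difference is organizational: you run one computation valid for either sign of $\rho$ before excluding $\rho<0$, whereas the paper first disposes of $\rho<0$ separately (where $u'>0$ is known), and the ``delicate point'' you anticipate at the end is already settled by your integration step, so no extra case analysis on $L_0$ is needed.
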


\begin{proof} Assume by contradiction that $\rho<0$.  
Then, $L_0=\lim_{r\to 0^+} r^{1-2\rho} u'(r)=0$ and $u'(r)>0$ for all $r\in (0,R)$ (see the discussion in Section~\ref{nupo} before Lemma~\ref{nelem}). 
Fix $C\in (0,\gamma^q)$. For every $r>0$ small, we define 
$$ H_1(r)= \frac{(r^{1-2\rho} u'(r))^{1-m}}{1-m} -C\log r.
$$
In light of \eqref{faz1}, we obtain that $H_1'(r)>0$ for every $r>0$ small, which implies that $m>1$ and
$$ \lim_{r\to 0^+} \frac{(r^{1-2\rho} u'(r))^{1-m}}{\log \,(1/r)} =\left(m-1\right) \gamma^q. 
$$
Consequently, $u'(r)\sim [\left(m-1\right) \gamma^q]^{\frac{1}{1-m}} r^{2\rho-1} [\log\, (1/r)]^{\frac{1}{1-m}}$ as $r\to 0^+$. 
This is a contradiction with our assumption in \eqref{co1} that $\lim_{r\to 0^+} u(r)=\gamma\in (0,\infty)$.  

The above argument shows that $\rho>0$, $m\not=1$ and ${\rm sgn}\, (\Theta)={\rm sgn}\, (1-m)=-{\rm sgn}\, (u'(r))$. 

For small $r_0\in (0,R)$, we have $u'(r)\not=0$ for all $r\in (0,r_0)$ (see the explanation after \eqref{coo}).  
Then, for every $r\in (0,r_0)$, we find that
\begin{equation} \label{blam}
\frac{ {\rm sgn}\,(u'(r))}{1-m}\,\frac{d}{dr}\left[ \left( r^{1-2\rho}\,|u'(r)|\right)^{1-m}\right] =\frac{u^q(r)}{r},
\end{equation}

We fix $C\in (0,\gamma^q)$. Using \eqref{blam}, it follows that the function $$  \mathfrak{F}(r):=\frac{{\rm sgn}\,(u'(r))}{1-m}\, \left( r^{1-2\rho}\,|u'(r)|\right)^{1-m}-
C \,\log r
	$$ is increasing for every $r>0$ small enough. Therefore, $ \lim_{r\to 0^+} \left( r^{1-2\rho}\,|u'(r)|\right)^{1-m}=\infty$ and 
	$$ {\rm sgn}\,(\Theta)={\rm sgn}\,(1-m)=-{\rm sgn}\, (u'(r))\quad \mbox{for every } r\in (0,r_0).  
	$$
From \eqref{blam} and L'H\^opital's rule, we obtain that there exists 
$$ \lim_{r\to 0^+}  \frac{(r^{1-2\rho}\,|u'(r)|)^{1-m}}{\log \,(1/r)}=
|1-m|\,\gamma^q.
$$ This proves the claim of \eqref{fiv}. The proof of Lemma~\ref{nap} is complete. 
\end{proof}

\begin{lemma}  \label{qa2}
	Assume that 
$\kappa \Theta=2\rho \left(1-m\right)\not=0$. Then, for every $\gamma\in (0,\infty)$, there exists
		$R>0$ small such that \eqref{faz} has 
		infinitely many positive solutions $u$ satisfying \eqref{fiv}. 
\end{lemma}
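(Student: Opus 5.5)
We follow the strategy of Lemmas~\ref{viz2} and \ref{probc}: reduce \eqref{faz} to a first-order system near a hyperbolic critical point and apply the Stable Manifold Theorem. By Lemma~\ref{nap}, any solution has $\rho>0$, ${\rm sgn}\,(\Theta)={\rm sgn}\,(1-m)=-{\rm sgn}\,(u'(r))$, and $\mathcal G(r):=(r^{1-2\rho}|u'(r)|)^{1-m}/\log(1/r)\to |1-m|\gamma^q$ as $r\to 0^+$. So we work under these sign conditions. The logarithmic factor makes a pure power rescaling non-hyperbolic, so we use $t=\log(1/r)$, $r\in(0,r_0)$, as the independent variable.

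We first write $w(t)=u(r)-\gamma$ and $P(t)={\rm sgn}\,(1-m)\,(r^{1-2\rho}|u'(r)|)^{1-m}$. By \eqref{blam}, $dP/dt=-|1-m|(w+\gamma)^q$. This is an autonomous-in-$(w,P)$ equation, apart from the coupling through $u'$. Since $u'=-{\rm sgn}\,(1-m)\,r^{2\rho-1}|P|^{1/(1-m)}$, we also get
\[
\frac{dw}{dt}=-r\,u'(r)={\rm sgn}\,(1-m)\,e^{-2\rho t}|P|^{\frac{1}{1-m}} .
\]
Next we rescale so that every variable tends to zero. Set
\[
X_1(t)=\frac{P(t)}{t}-{\rm sgn}\,(1-m)|1-m|\gamma^q,\qquad X_2(t)=e^{\eta t}w(t),\qquad X_3(t)=\frac1t,\qquad X_4(t)=e^{-\eta t},
\]
with $0<\eta<2\rho$ fixed. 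Then $X_3'=-X_3^2$ is non-hyperbolic, and this is the step we expect to be the main obstacle.

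To handle it, we exploit the triangular structure, as in the proof of Lemma~\ref{easi} where the non-autonomous variable $1/t$ was absorbed. The equation for $w$ only involves $e^{-2\rho t}$ times a bounded quantity. Hence $w$ can be recovered by quadrature from $P$:
\[
w(t)=-{\rm sgn}\,(1-m)\int_t^\infty e^{-2\rho s}|P(s)|^{\frac{1}{1-m}}\,ds,
\]
which automatically enforces $w\to0$. The problem thus reduces to solving the coupled integral system
\[
P(t)=P(T_0)-|1-m|\int_{T_0}^t\bigl(\gamma+w(s)\bigr)^q\,ds
\]
together with the formula for $w$ above, for $t\ge T_0$. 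Because $w$ is exponentially small, a contraction argument should work on the space of pairs $(P,w)$ with $|P(t)-P(T_0)+|1-m|\gamma^q(t-T_0)|\le 1$ and $|w(t)|\le e^{-\eta t}$, for $T_0$ large. We need $P(T_0)$ of the correct sign, with $|P(T_0)|$ large enough that $|P|$ grows linearly and stays away from zero; the map $P\mapsto|P|^{1/(1-m)}$ is then Lipschitz there, with at most polynomial growth in $t$, beaten by $e^{-2\rho t}$. Each free choice of $P(T_0)$ in an open interval yields a distinct solution, which gives infinitely many.

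Finally we set $r_0=e^{-T_0}$ and $u(r)=\gamma+w(\log(1/r))$. Differentiating recovers \eqref{blam}, hence \eqref{faz} on $(0,r_0)$, with $u>0$ for $T_0$ large and $\lim_{r\to0^+}u(r)=\gamma$. The estimate $P(t)/t\to{\rm sgn}\,(1-m)|1-m|\gamma^q$ follows from the integral equation, since $w\to0$. Then L'H\^opital's rule, as in Lemma~\ref{nap}, yields \eqref{fiv} and in particular $r^*(u)=0$. Choosing $R=r_0$ finishes the proof. Solutions with different $P(T_0)$ have different $r^{1-2\rho}u'$ at $r_0$, so they are distinct.
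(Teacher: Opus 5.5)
Your argument is correct after one sign fix, and it takes a genuinely different route from the paper's.

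\textbf{The sign fix.} By \eqref{blam} and ${\rm sgn}\,(u'(r))=-{\rm sgn}\,(1-m)$, the positive quantity $Q:=(r^{1-2\rho}|u'(r)|)^{1-m}=|P|$ satisfies $dQ/dt=|1-m|\,u^q$ for $t=\log(1/r)$. Hence $dP/dt=(1-m)(\gamma+w)^q$. Your formula $dP/dt=-|1-m|(\gamma+w)^q$ is right only for $m>1$. As written, for $m<1$ your $P$ starts positive, decreases linearly and crosses zero. Moreover $P(t)/t\to-|1-m|\gamma^q$, which contradicts both your normalisation of $X_1$ and your final limit claim.

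The cleanest repair is to work with $Q$ throughout:
\[
Q(t)=Q(T_0)+|1-m|\int_{T_0}^t(\gamma+w)^q\,ds,\qquad
w(t)=-{\rm sgn}\,(1-m)\int_t^\infty e^{-2\rho s}Q(s)^{\frac{1}{1-m}}\,ds .
\]
Pose this on the set $\bigl|Q(t)-Q(T_0)-|1-m|\gamma^q(t-T_0)\bigr|\le 1$, $e^{\eta t}|w(t)|\le 1$. Let $Q(T_0)$ range over a fixed compact subinterval of $(1,\infty)$, so that a single $T_0$ works for all of them. Then $Q\ge Q(T_0)-1>0$. The sup-increment of $Q$ is $O(e^{-\eta T_0})$ times the weighted increment of $w$. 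The weighted increment of $w$ is $O(T_0^k e^{-(2\rho-\eta)T_0})$ times the sup-increment of $Q$, for some fixed $k\ge 0$ coming from the Lipschitz constant of $Q\mapsto Q^{1/(1-m)}$. So the map is a contraction for $T_0$ large, exactly as you sketch. The reconstruction of $u$, the verification of \eqref{blam}, and \eqref{fiv} then go through as you describe.

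\textbf{Comparison with the paper.} The paper exploits the scaling structure instead. With $t=(r^\Theta u)^{-\kappa}|ru'/u|^{1-m}$ and $X=t\,ru'/u$, equation \eqref{faz} becomes the scalar equation \eqref{mig3}. Its zero solution is exponentially attracting, because $X'/X\to-2\rho/|1-m|<0$. So every small $X(T_0)=c\neq 0$ of the right sign gives a decaying $X_c$. Then $r$ and $u$ are recovered by two quadratures: $\gamma$ enters as the constant in $\widetilde u_c$, and the radius $R_c$ is fixed by consistency at $t=T_0$.

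That route needs no estimates beyond the stability of $X=0$. However, its solutions live on $c$-dependent intervals $(0,R_c)$, so putting infinitely many on one ball needs an extra remark on how $R_c$ varies with $c$.

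Your splitting along the divergence form \eqref{blam} in $t=\log(1/r)$ costs an explicit contraction estimate. In return, all your solutions live on the same interval $(0,e^{-T_0})$ and have exactly the prescribed limit $\gamma$. They are parametrised by the free datum $(r_0^{1-2\rho}|u'(r_0)|)^{1-m}$, so distinctness is immediate.

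The four-variable system and the appeal to Lemma~\ref{easi} play no role and can be dropped. Like the paper, you rely on $\rho>0$, which Lemma~\ref{nap} shows is necessary.
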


\begin{proof}
For every $r>0$ small such that $u'(r)\not=0$, we define
\begin{equation}\label{mig1}
t:=\left (r^\Theta u(r)\right )^{-\kappa}\left | \frac{ru'(r)}{u(r)}\right |^{1-m}\quad\text{and}\quad X(t):=t\frac{ru'(r)}{u(r)}.
\end{equation}
(If $m>1$, then $u'(r)>0$ for every $r\in (0,R)$.) 
Since \eqref{fiv} holds, we see that 
$$ \lim_{r\to 0^+} \frac{t }{\log \,(1/r)} = |1-m|\quad \mbox{and}\quad
\lim_{r\to 0^+} \frac{X(t)}{r^{2\rho} \left[\log \,(1/r)\right]^{\frac{2-m}{1-m}}} =\gamma^{\kappa} (m-1) |1-m|^{\frac{1}{1-m}}.$$  
Hence, $t\to \infty$ and $X(t)\rightarrow 0$ as $r\rightarrow 0^+$. Then, there exists $T_0>0$ such that $|1-m|+ q\, X(t) >0$ for every $t\geq T_0$. By virtue of \eqref{mig1}, we find that 
\begin{equation}\label{mig2}
\frac{dr}{dt}=-\frac{r}{|1-m|+q\,X(t)}\quad \mbox{for all } t\geq T_0.
\end{equation}
By differentiating $X(t)$ in \eqref{mig1} with respect to $t$, we get 
\begin{equation}\label{mig3}
\frac{dX}{dt}=-\frac{X(t)}{|1-m|+q\,X(t)}\left [2\rho-\frac{X(t) +{\rm sgn}\, (1-m)}{t} \right ] +\frac{X(t)}{t} \quad\text{for all }t\geq T_0.
\end{equation}
Since $\lim_{t\to \infty} X'(t)/X(t) =  -2\rho /|1-m|<0$, we see that for $t>0$ large,   
$$ {\rm sgn} \,(X(t))=-{\rm sgn} \, (X'(t)) ={\rm sgn}\, (u'(r))=-{\rm sgn}\, (1-m).$$
We point out that \eqref{mig1} and \eqref{mig2} imply that 
\begin{equation} \label{miga} \frac{d}{dt} (\log \,(u(r)))=-\frac{r u'(r)}{u(r) \left[ |1-m| +q\, X(t)\right]} =-\frac{X(t)}{t \left[ |1-m| +q\, X(t)\right]}\quad \mbox{for all } t\geq T_0.  \end{equation}
Thus, by writing $u(r)=\widetilde u(t)$ and integrating \eqref{miga} with respect to $t$ over $[T_0,\infty)$, we arrive at 
$$ u(r)=\widetilde u(t)= \gamma\exp \left (\int_{t}^\infty \frac{X(s)}{s\left (|1-m|+q\,X(s)\right )}\,ds\right )\quad \mbox{for all } t\geq T_0.
$$

Let $\gamma\in (0,\infty)$ be fixed. For suitably small $R>0$, we next construct infinitely many positive solutions of \eqref{faz} satisfying \eqref{fiv}. 
To this end, we consider the ODE in \eqref{mig3} for $t\geq T_0$ and $T_0>0$ large. Note that there exists $\varepsilon>0$ small such that for every $T_0>1/\varepsilon$ and all $c\in (-\varepsilon, \varepsilon)\setminus \{0\}$, the ODE in \eqref{mig3} has a strictly monotone solution 
$X=X_c$ satisfying \begin{equation} \label{sens} X_c(T_0)=c\quad\mbox{and}\quad  \lim_{t\rightarrow \infty}X_c(t)=0. \end{equation}
We assume that $\varepsilon>0$ is small such that $q\,\varepsilon<|1-m|$. (We can construct explicit solutions in the case $q=0$, see Remark~\ref{mipp}.)

As $m\not=1$, we distinguish two cases. If $m>1$, then we fix $c\in (0,\varepsilon)$, whereas if $m<1$, then we let $c\in (-\varepsilon,0)$. 
Let $X_c$ be the above solution of \eqref{mig3} satisfying \eqref{sens}.  
We define
$$\begin{aligned} 
& \widetilde{u}_c(t)=\gamma\exp \left (\int_{t}^\infty \frac{X_c(s)}{s\left (|1-m|+q\,X_c(s)\right )}\,ds\right )\quad \mbox{for all } t\geq T_0,\\
&  R_c=\left [T_0^{2-m}|c|^{m-1}(\widetilde{u}_c(T_0))^\kappa \right ]^{-\frac{1}{\kappa\Theta}}.
\end{aligned} $$
The differential equation in \eqref{mig2}, subject to $r(T_0)=R_c$, has a unique positive solution 
$$r(t)=R_c \exp\left (-\int_{T_0}^t\frac{ds}{|1-m|+q\,X_c(s)}\right )\quad\text{for all }t\geq T_0.$$ 
Since $t\longmapsto r(t)$ is invertible on $[T_0,\infty)$, we can define $$ u_c(r)=\widetilde{u}_c(t)\quad \mbox{for every  } r\in (0, R_c].$$ 
Using \eqref{mig2}, \eqref{mig3} and \eqref{sens}, we obtain that $u_c$ is a positive solution of \eqref{faz} in $(0, R_c]$ satisfying \eqref{fiv}. 
Let $c_0\in (-\varepsilon,\varepsilon)\setminus \{0\}$ be such that ${\rm sgn}\,(c_0)={\rm sgn}\, (c)={\rm sgn}\,(m-1)$. We vary $c_0$ such that $|c_0|\in (|c|,\varepsilon)$ increases. Then, $R_{c_0}$ increases and 
the solutions $X_{c_0}$ (and, hence, $u_{c_0}$) are distinct, leading to infinitely many positive solutions $u=u_{c_0}$ of \eqref{faz} in $(0,R_{c})$ satisfying \eqref{fiv}. 
\end{proof}

\begin{rem} \label{mipp} Let $R\in (0,\infty)$. If $q=0$ in the framework of Lemma~\ref{qa2}, then for arbitrary $\gamma\in (0,\infty)$ 
and every constant $C\geq 0$, we obtain a
positive solution $u=u_C$ of \eqref{faz} in $(0,R)$ satisfying \eqref{fiv}, where $u_C$ is defined by
\begin{equation} \label{myo} u(r)=u_C(r)=\gamma+\int_0^r s^{2\rho-1} \left[C+\left(m-1\right) \log\, (R/s) \right]^{\frac{1}{1-m}}\,ds\quad \mbox{for every } r\in (0,R). 
\end{equation} 
Indeed, $\Theta=-2\rho$ and $\kappa=m-1>0$. We have $u'(r)>0$ for all  $r\in (0,R)$ and
\begin{equation} \label{myo2} \left( r^{1+\Theta} u'(r) \right)^{1-m}=t=C+\left(m-1\right)\log \,(R/r), \end{equation} 
where $C\geq 0$ is a constant. Then, \eqref{myo} follows readily from \eqref{myo2}.
\end{rem}

%%%%%%
\section{The asymptotic profile $U_0$ when $\Theta\not=0$ and $\ell>0$ (Proof of Theorem~\ref{raf})} \label{uzpro}

Throughout this section, we assume that
\eqref{cond1} holds,   
$\Theta\not=0$ and $\ell>0$.
The proof of Theorem~\ref{raf} follows from Lemmas~\ref{wisp4} and \ref{wisp5}. 
Unless otherwise stated, $u$ is a positive radial solution of \eqref{eq1} in $B_R(0)\setminus \{0\}$ for $R>0$ satisfying \eqref{uoo}, namely, 
\begin{equation}\label{uoo1}
 u(r)\sim M_0 \,r^{-\Theta}:= U_0(r)\quad \mbox{as }r\to 0^+, \ \mbox{where } M_0:=\left(|\Theta|^{-m}\ell \right)^{\frac{1}{\kappa}}. 
\end{equation}
Assume that $u\not\equiv U_0$ on
any interval $(0,r_*)$ with $r_*\in (0,R)$.

\begin{lemma} \label{wisp0}
For every $r>0$ small, we have $\Theta \, u'(r)<0$.	
\end{lemma}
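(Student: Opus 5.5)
We will first show that $u'$ has no zeros accumulating at $0$, and then read off the sign of $u'$ from the behaviour of $u$ in \eqref{uoo1}. Throughout we use the radial form \eqref{ez} of \eqref{eq1}, namely
$$u''(r)+(1-2\rho)\frac{u'(r)}{r}+\frac{\lambda}{r^2}u(r)=r^\theta u^q(r)|u'(r)|^m .$$

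\emph{Step 1 ($u'$ has a constant sign near $0$, case $\lambda\neq 0$).} Suppose $u'(r_n)=0$ along a sequence $r_n\searrow 0$. At each such point \eqref{ez} gives $r_n^2u''(r_n)=-\lambda u(r_n)$, which is nonzero and has the sign of $-\lambda$ for every $n$. So every zero of $u'$ near $0$ is nondegenerate, and $u'$ changes sign at each of them in the same direction. For $\lambda>0$ each crossing of $u'$ is from positive to negative, and two consecutive zeros of $u'$ would then have to be separated by a zero crossing in the opposite direction. That is impossible, and the case $\lambda<0$ is symmetric. This is the same argument used at the start of Lemma~\ref{nou1}, which we reproduce here. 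Zeros of $u'$ cannot fill an interval either, since then $u$ would be constant there and $\lambda u\equiv 0$, contradicting $u>0$.

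\emph{Step 2 (the case $\lambda=0$).} Here the second-derivative test is useless, so we argue as in \eqref{faz1}. Multiplying \eqref{ez} by $r^{1-2\rho}$ gives
$$(r^{1-2\rho}u'(r))'=r^{1-2\rho+\theta}u^q(r)|u'(r)|^m\geq 0 .$$
Hence $r\mapsto r^{1-2\rho}u'(r)$ is non-decreasing and changes sign at most once. If it vanished on some interval $(0,r_*)$, then $u$ would be constant there, contradicting $u\sim M_0r^{-\Theta}$ with $\Theta\neq 0$. Consequently $u'(r)\neq 0$ for all $r$ in some $(0,r_0)$.

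\emph{Step 3 (identifying the sign).} On $(0,r_0)$ the function $u$ is strictly monotone, and by \eqref{uoo1} $u(r)\sim M_0r^{-\Theta}$.
\begin{itemize}
\item If $\Theta>0$, then $u(r)\to\infty$ as $r\to 0^+$. A monotone function that blows up at $0^+$ must be decreasing, so $u'<0$.
\item If $\Theta<0$, then $u(r)\to 0$ as $r\to 0^+$ while $u>0$. A monotone positive function tending to $0$ at $0^+$ must be increasing, so $u'>0$.
\end{itemize}
In both cases $\Theta\,u'(r)<0$ for every $r\in(0,r_0)$, which proves the lemma.

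\emph{Main obstacle.} The only delicate point is Step 1, which needs a careful justification that zeros of $u'$ sharing a common nondegenerate sign cannot accumulate at $0$. Degenerate zeros are excluded there because $u''\neq 0$ at every critical point. The case $\lambda=0$ is handled separately in Step 2 through monotonicity rather than the second-derivative test.
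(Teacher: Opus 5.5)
Your proof is correct. For $\lambda\neq 0$ it is the paper's argument: critical points of $u$ cannot accumulate at $0$ because $r^2u''=-\lambda u$ has a fixed sign at each of them, and the sign of $u'$ is then read off from $u\sim M_0r^{-\Theta}$. You simply write out the consecutive-zeros step in more detail.

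For $\lambda=0$ you take a different and somewhat more economical route. The paper uses $\ell>0$ to get $\Theta(\Theta+2\rho)>0$, so that $\Theta+2\rho$ has the sign of $\Theta$. It then pins down $L_0=\lim_{r\to 0^+}r^{1-2\rho}u'(r)$ as $-\infty$ when $\Theta>0$ and as $0$ when $\Theta<0$. It does this by writing $r^{1-2\rho}u'=r^{-(\Theta+2\rho)}\,r^{1+\Theta}u'$ and comparing with the profile, which is the parenthetical remark about $r^{1+\Theta}u'(r)\to -M_0\Theta$.

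You use only that $r^{1-2\rho}u'$ is monotone. Its zero set is therefore an interval, and that interval cannot reach down to $0$, since $u$ is not locally constant when $\Theta\neq 0$. Your Step~3 then handles both cases uniformly.

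Your version needs nothing about $u'$ beyond its sign. It also does not invoke $\ell>0$ except through the existence of the profile $U_0$. The paper's version additionally identifies the value of $L_0$, which is more than the lemma requires.
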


\begin{proof}
We distinguish two situations. First, let $\lambda\not=0$.  From  \eqref{uoo1}, it is enough to show that $u'(r)\not =0$ for every $r>0$ small. 
Suppose the contrary, i.e., $u'(r_n)=0$ for a sequence $\{r_n\}_{n\geq 1}$ of positive numbers decreasing to $0$ as $n\to \infty$. 
Then, $\lambda u''(r_n)<0$ for every $n\geq 1$. Hence, we get a contradiction as $u''(r_n)$ would have a constant sign for all $n\geq 1$.  

Second, if $\lambda=0$, then the assumption $\ell>0$ implies that $\Theta\left(\Theta+2\rho\right)>0$. We see that 
\begin{equation} \label{azi1}  (r^{1-2\rho} u'(r))'=r^{\theta+1-2\rho} u^q(r) |u'(r)|^m\quad \mbox{for every } r\in (0,R).
\end{equation}
Hence, $r^{1-2\rho} u'(r)$ is non-decreasing on $(0,R)$ and $\lim_{r\to 0^+} r^{1-2\rho} u'(r)=L_0\in [-\infty,\infty)$. 

If $\Theta>0$ (resp., $\Theta<0$), then $\Theta>-2\rho$ (resp., $\Theta<-2\rho$), which implies that $L_0=-\infty$ (resp., $L_0=0$). 
(Otherwise, we would get that $\lim_{r\to 0^+} r^{1+\Theta} u'(r)\not=-M_0 \Theta$.) Hence, since $\Theta\not=0$,   
it follows that $\Theta \,u'(r)<0$ for every $r>0$ small.	
\end{proof}

For every $r\in (0,R)$, we define $\mathfrak B_1(r)$ and 
 $\mathfrak B_2(r)$ as follows
 \begin{equation} \label{jim11} \mathfrak B_1(r)=r^{\Theta}\,u(r)\quad \mbox{and}\quad
\mathfrak B_2(r)=\frac{ru'(r)}{u(r)}+\Theta=\frac{r\mathfrak B_1'(r)}{\mathfrak B_1(r)}. \end{equation}

\begin{lemma} \label{wisp1}
There exists $r_0>0$ small such that the functions $\mathfrak{B}_1'(r) $, $\mathfrak B_2(r)$ and $\mathfrak B_2'(r) $ don't vanish on $(0,r_0)$, having the same sign on that interval. Moreover, $\lim_{r\to 0^+} \mathfrak B_2(r)=0$.
\end{lemma}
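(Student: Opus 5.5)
Write $\mathfrak B=ru'/u$, so that $\mathfrak B_2=\mathfrak B+\Theta$. The plan follows the proof of Lemma~\ref{nou1}. First I derive the ODE for $\mathfrak B_2$ from \eqref{muzia}. Since $(r^\Theta u)^\kappa=\mathfrak B_1^\kappa$, we get
\[ r\mathfrak B_2'(r)=-\ell+2(\rho+\Theta)\mathfrak B_2-\mathfrak B_2^2+\mathfrak B_1^\kappa(r)\,|\mathfrak B_2-\Theta|^m , \]
where I used $-\lambda+2\rho\mathfrak B-\mathfrak B^2=-\ell+2(\rho+\Theta)\mathfrak B_2-\mathfrak B_2^2$. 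By \eqref{uoo1}, $\mathfrak B_1\to M_0$, and $M_0^\kappa|\Theta|^m=\ell$. So the constant terms cancel at the limiting state, $\mathfrak B_2=0$, $\mathfrak B_1=M_0$.

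Next I show that $\mathfrak B_2$ has constant sign near $0$ and tends to $0$. By Lemma~\ref{wisp0}, $\mathfrak B$ has constant sign, with $\operatorname{sgn}\mathfrak B=-\operatorname{sgn}\Theta$. Suppose $\mathfrak B_2'(r_n)=0$ along a sequence $r_n\searrow0$. Differentiating the ODE at $r_n$ gives
\[ r_n^2\mathfrak B_2''(r_n)=\kappa\,\mathfrak B_1^\kappa(r_n)\,\mathfrak B_2(r_n)\,|\mathfrak B_2(r_n)-\Theta|^m . \]
Thus, at critical points, $\mathfrak B_2''$ has the sign of $\mathfrak B_2$: positive values give local minima and negative values give local maxima.
\begin{itemize}
\item If infinitely many critical points lie in $\{\mathfrak B_2>0\}$ or in $\{\mathfrak B_2<0\}$, then the critical values on either side must interlace with opposite extremum types. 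This is impossible, as in Lemma~\ref{nou1}: between two consecutive strict local minima there must be a local maximum, and it would also lie where $\mathfrak B_2>0$ unless $\mathfrak B_2$ crosses $0$.
\item A critical point with $\mathfrak B_2(r_n)=0$ forces the solution to coincide locally with the equilibrium. Here I argue via the first order system for $(\mathfrak B_1,\mathfrak B_2)$: if $\mathfrak B_2(r_n)=0$ and $\mathfrak B_2'(r_n)=0$, then $\mathfrak B_1(r_n)=M_0$, and uniqueness forces $u\equiv U_0$ near $r_n$, hence on an interval $(0,r_*)$. This is excluded by assumption.
\end{itemize}
Combining these cases, $\mathfrak B_2$ is monotone near $0$. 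Its limit $L$ exists in $[-\infty,\infty]$. Since $r\mathfrak B_1'/\mathfrak B_1=\mathfrak B_2$ and $\mathfrak B_1\to M_0\in(0,\infty)$, a nonzero $L$ would force $\log\mathfrak B_1$ to diverge, because it behaves like $L\log r$. Hence $L=0$.

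Because $\mathfrak B_2$ is monotone with limit $0$, it has constant sign, and that sign equals the sign of $\mathfrak B_2'$. Finally, $\mathfrak B_1'=\mathfrak B_1\mathfrak B_2/r$ has the sign of $\mathfrak B_2$. It remains to make sure that $\mathfrak B_2$ and $\mathfrak B_2'$ do not vanish, rather than merely being monotone. If $\mathfrak B_2(r_1)=0$ at some small $r_1$, monotonicity together with limit $0$ gives $\mathfrak B_2\equiv0$ on $(0,r_1)$. Then $\mathfrak B_1$ is constant, equal to $M_0$, and $u\equiv U_0$ there, which is excluded. Likewise, if $\mathfrak B_2'(r_1)=0$ with $\mathfrak B_2(r_1)\ne0$, then $r_1$ is a strict extremum by the second-derivative formula, contradicting monotonicity.

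The main obstacle I expect is the step ruling out an accumulation of critical points where $\mathfrak B_2$ changes sign. The $\ell$-cancellation is what makes the second-derivative formula clean, and the equilibrium/uniqueness argument handles the degenerate points. I would treat this step carefully via a phase-plane argument for the autonomous system in $(\log\mathfrak B_1,\mathfrak B_2)$ in the variable $\log r$.
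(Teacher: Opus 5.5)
Your route differs from the paper's. The paper first proves $\mathfrak B_1'\neq 0$ near $0$. It uses $r_n^2\mathfrak B_1''(r_n)=|\Theta|^m\mathfrak B_1(r_n)\bigl(\mathfrak B_1^\kappa(r_n)-M_0^\kappa\bigr)$ at critical points of $\mathfrak B_1$: these are local maxima below $M_0$, local minima above $M_0$, and $\mathfrak B_1(r_n)=M_0$ forces $u\equiv U_0$. This fixes the sign of $\mathfrak B_2=r\mathfrak B_1'/\mathfrak B_1$ on $(0,r_0)$ before $\mathfrak B_2''$ is ever used. Your formula $r_n^2\mathfrak B_2''(r_n)=\kappa\,\mathfrak B_1^\kappa\,\mathfrak B_2\,|\mathfrak B_2-\Theta|^m$ then has one sign at every critical point, and $\mathfrak B_2'\neq 0$ follows at once.

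You work with $\mathfrak B_2$ alone, which is legitimate. As written, however, the proposal has a gap exactly at the step you flag: sign changes of $\mathfrak B_2$ accumulating at $0$ are not excluded. The ``interlacing'' remark in your first bullet does not exclude them, since it explicitly leaves open the case where $\mathfrak B_2$ crosses $0$.

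The gap closes without any phase-plane analysis, because the sign pattern you derived forbids excursions.
\begin{itemize}
\item Suppose $(a,b)$ with $0<a<b<r_0$ is a component of $\{\mathfrak B_2>0\}$. Then $\mathfrak B_2(a)=\mathfrak B_2(b)=0$, and $\max_{[a,b]}\mathfrak B_2$ is attained at some $c\in(a,b)$ with $\mathfrak B_2'(c)=0<\mathfrak B_2(c)$. Your formula makes $c$ a strict local minimum, which is absurd.
\item Likewise, a bounded component of $\{\mathfrak B_2<0\}$ would contain an interior minimum that must be a strict local maximum.
\item Hence between two zeros $a<b$ of $\mathfrak B_2$ one has $\mathfrak B_2\equiv 0$. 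Then $\mathfrak B_2=\mathfrak B_2'=0$ on $[a,b]$, and your second bullet gives $u\equiv U_0$, which is excluded.
\end{itemize}
So $\mathfrak B_2$ has at most one zero, and therefore a constant sign on some $(0,r_0)$. All its critical points there are strict extrema of one type, so there is at most one. The rest of your argument is correct: the limit via $d\log\mathfrak B_1/d\log r=\mathfrak B_2$ together with $\mathfrak B_1\to M_0$, and the matching of the signs of $\mathfrak B_1'$, $\mathfrak B_2$ and $\mathfrak B_2'$.

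The paper's ``This is not possible'' for $\mathfrak B_1$ is precisely this excursion argument, run around the level $M_0$. So the two routes cost the same; yours simply runs it once, for $\mathfrak B_2$ around $0$.
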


\begin{proof}
Suppose by contradiction that there exists a sequence $\{r_n\}_{n\geq 1}$ in $(0,R)$ decreasing to $0$ as $n\to \infty$ such that $\mathfrak B_1'(r_n)=0$ for all $n\geq 1$. We see that
\begin{equation} \label{azi2} r_n^2  \mathfrak B_1''(r_n)= |\Theta|^m \mathfrak{B}_1(r_n) 
\left(  \mathfrak B_1^\kappa (r_n)-M_0^\kappa\right)
\quad \mbox{for all } n\geq 1.\end{equation}
Assuming that for some $n\geq 1$, we have $\mathfrak B_1(r_n)<M_0$ (resp., $\mathfrak B_1(r_n)>M_0$), we obtain that $\mathfrak B_1''(r_n)<0$
(resp., $\mathfrak B_1''(r_n)>0$), that is, $r_n$ is a local maximum point for $\mathfrak B_1$ (resp., a local minimum point for $\mathfrak B_1$). 
This is not possible. Hence, $\mathfrak B_1(r_n)$ would need to be $M_0$ for any $n\geq 1$ large and, moreover, $\mathfrak B_1\equiv M_0$ on some interval $(0,r_*)$. This is a contradiction with our assumption that $u\not\equiv U_0$ in 
any interval $(0,r_*)\subset (0,R)$. 

Hence, there exists 
$r_0\in (0,R)$ such that we have: 

(i) Either $\mathfrak B_1'(r)>0$ for every $r\in (0,r_0)$ so that $\mathfrak B_1>M_0$ and $\mathfrak B_2>0$ on $(0,r_0)$. 

(ii) Or $\mathfrak B_1'(r)<0$ for every $r\in (0,r_0)$, which implies that 
$\mathfrak B_1<M_0$ and $\mathfrak B_2<0$  on $(0,r_0)$. 

In the situation of (i) (resp., (ii)), we prove \eqref{lov} for a {\em positive} constant $\mu_0$ (resp., for a {\em negative} constant $\mu_0$). 

Similarly, we show that $\mathfrak B_2'(r)\not =0$ for $r>0$ small enough. For all $r\in (0,R)$, we have
\begin{equation} \label{oras} 
r \mathfrak B_2'(r)
=-\mathfrak B_2(r) \left( \mathfrak B_2(r)-2\rho-2\Theta\right) 
+\mathfrak B_1^\kappa(r) |\Theta-\mathfrak B_2(r)|^m-\ell. 
\end{equation}
Suppose by contradiction that there exists a sequence $\{r_n\}_{n\geq 1}$ in $(0,R)$ decreasing to $0$ as $n\to \infty$ such that $\mathfrak B_2'(r_n)=0$ for all $n\geq 1$. Then, for all $n\geq 1$, we find that
$$ r_n^2 \mathfrak B_2''(r_n)=\kappa r_n^{\theta+2} u^{q-1}(r_n) 
\,|u'(r_n)|^m \,\mathfrak B_2(r_n)=\kappa  \,\mathfrak B_2(r_n) \,\mathfrak B_1^\kappa(r_n) |\Theta-\mathfrak B_2(r_n)|^m.
$$
Using Lemma~\ref{wisp0} and that  $\mathfrak B_1'(r)\not=0$ on  $(0,r_0)$, we infer that 
$\mathfrak B_2''(r_n)$ has the same sign as 
$\mathfrak B_2(r_n)$ and $\mathfrak B_1'(r_n)$ on $(0,r_0)$. This is a contradiction. Hence, $\mathfrak B_2'(r)$ does not vanish for any $r>0$ small. By diminishing $r_0>0$ if necessary, we can assume that $\mathfrak B_2'(r)\not=0$ for every $r\in (0,r_0)$. 
So, there exists  $\lim_{r\to 0^+}\mathfrak B_2(r)=0$ in view of 
\eqref{uoo1} and L'H\^opital's rule.
\end{proof}

\begin{lemma} \label{wisp2}
For every $r\in (0,r_0)$, we define 
\begin{equation} \label{vin}
t=-\log  \mathfrak |\mathfrak B_2(r)| \quad \mbox{and}\quad 
X(t)+\xi_0 =-r\,\frac{dt}{dr}=\frac{r\mathfrak B_2'(r)}{\mathfrak B_2(r)}.
\end{equation} 
Then, $\frac{dX}{dt}\not=0$ for $t>0$ large enough, $t\to \infty$ as $r\to 0^+$ and $\lim_{t\to \infty} X(t)=0$.
\end{lemma}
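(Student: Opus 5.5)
This lemma follows the template already used for Lemmas~\ref{gara1} and \ref{stepint}: derive an autonomous-type ODE for $X$ in the variable $t$, show $X'$ cannot vanish along a sequence $t_n\to\infty$ (second-derivative sign argument), and then identify the limit. By Lemma~\ref{wisp1}, $\mathfrak B_2$ and $\mathfrak B_2'$ do not vanish on $(0,r_0)$, have the same sign, and $\mathfrak B_2(r)\to 0$ as $r\to 0^+$. So $t=-\log|\mathfrak B_2(r)|$ is well defined, $t\to\infty$ as $r\to 0^+$, and
\[
\frac{dt}{dr}=-\frac{\mathfrak B_2'(r)}{\mathfrak B_2(r)}<0 .
\]
Hence $r\mapsto t$ is invertible near $0$, and $X(t)+\xi_0=r\mathfrak B_2'/\mathfrak B_2>0$.

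The first step is to write the dynamics in $(\mathfrak B_1,\mathfrak B_2)$. By \eqref{jim11}, $r\mathfrak B_1'=\mathfrak B_1\mathfrak B_2$, and \eqref{oras} gives $r\mathfrak B_2'$. Set $P:=\mathfrak B_1^\kappa|\Theta-\mathfrak B_2|^m-\ell$, which tends to $0$ since $\mathfrak B_1^\kappa\to M_0^\kappa=\ell|\Theta|^{-m}$. Its expansion is
\[
P=\kappa\ell\,\frac{\mathfrak B_1-M_0}{M_0}-\frac{m\ell}{\Theta}\,\mathfrak B_2+\text{h.o.t.}
\]
Thus \eqref{oras} reads
\[
r\mathfrak B_2'=\Big(2(\rho+\Theta)-\frac{\ell m}{\Theta}\Big)\mathfrak B_2+\kappa\ell\,\frac{\mathfrak B_1-M_0}{M_0}+\text{h.o.t.}
\]
Differentiating once more and using $r(\mathfrak B_1-M_0)'\approx M_0\mathfrak B_2$ yields the linearised second-order equation
\[
(r\partial_r)^2\mathfrak B_2-\Big(2(\rho+\Theta)-\frac{\ell m}{\Theta}\Big)r\partial_r\mathfrak B_2-\kappa\ell\,\mathfrak B_2\approx 0 .
\]
Its characteristic equation in the exponent of $r$ is exactly \eqref{xiz}. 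Its roots are $\xi_0>0$ and a negative root $\xi_1=-\ell\kappa/\xi_0$.

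In the variable $t$, with $Y:=X+\xi_0=r\mathfrak B_2'/\mathfrak B_2$, one has $dt/dr=-Y/r$. Computing $r\,dY/dr$ from \eqref{oras} differentiated gives
\[
-Y\frac{dY}{dt}=-Y^2+\Big(2(\rho+\Theta)-\frac{\ell m}{\Theta}\Big)Y+\kappa\ell+E(t),
\]
where $E$ collects terms carrying a factor $\mathfrak B_2=\pm e^{-t}$ or $\mathfrak B_1-M_0$. I would express the latter through $\mathfrak B_2$ and $Y$ via \eqref{oras}: $\kappa\ell(\mathfrak B_1-M_0)/M_0\approx \mathfrak B_2\,(Y-c)$ with $c=2(\rho+\Theta)-\ell m/\Theta$. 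This makes every error term $O(e^{-t})$ uniformly for bounded $Y$. The right-hand side is then $-(Y-\xi_0)(Y-\xi_1)+O(e^{-t})$, that is, a first-order ODE
\[
X'=f(t,X),\qquad f(t,X)=\frac{X\,(X+\xi_0-\xi_1)}{X+\xi_0}+O(e^{-t}).
\]

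For $X'\ne0$ at large $t$: suppose $X'(t_n)=0$ with $t_n\to\infty$. Differentiating the equation at $t_n$, the $X'$ terms vanish and only $\partial_t$ of the error survives. This gives $X''(t_n)$ equal to the derivative of the exact $\mathfrak B_2$-dependent terms, whose sign I expect to be fixed, namely $\mathrm{sgn}(\mathfrak B_2)$ times a constant, by the same mechanism as in Lemma~\ref{wisp1}, where $\kappa\mathfrak B_2\mathfrak B_1^\kappa|\Theta-\mathfrak B_2|^m$ appeared. Constant sign of $X''$ at all critical points is a contradiction. Then $X$ is eventually monotone, so $L=\lim X$ exists in $[-\infty,\infty]$. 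Since $X+\xi_0>0$, $L\ge-\xi_0$. If $L$ is infinite or a nonzero finite value, then $X'\to$ a nonzero limit ($L(L+\xi_0-\xi_1)/(L+\xi_0)$, or $\pm\infty$), which contradicts convergence. The value $L=-\xi_0$ must also be excluded. There $Y\to0$ means $\mathfrak B_2$ decays slower than any power. But $\mathfrak B_1-M_0=\int_0^r \mathfrak B_1\mathfrak B_2\,ds/s$ and \eqref{oras} then force $r\mathfrak B_2'\approx-\kappa\ell(M_0-\mathfrak B_1)/M_0$ of fixed sign. One can check this makes $Y\to\xi_0$ or $\xi_1<0$, incompatible with $Y>0$ and $Y\to0$. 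Hence $L=0$.

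The main obstacle is the precise bookkeeping of the error term $E$: expressing $\mathfrak B_1-M_0$ in terms of $\mathfrak B_2$ and $X$ so the ODE genuinely closes in $(t,X)$, and verifying the sign of $X''(t_n)$. If the sign argument is unclean, I would fall back on the auxiliary-monotone-function trick used in Lemma~\ref{gara4}, showing $|X|$ plus a small multiple of $e^{-t}$ is eventually monotone.
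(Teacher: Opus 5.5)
Your outline matches the paper's, and your leading-order equation $X'=X(X+\xi_0-\xi_1)/(X+\xi_0)+O(e^{-t})$ is indeed the top-order part of \eqref{fib} (note $\alpha_0/\Theta=\xi_0-\xi_1$ by \eqref{sie}). The gap is in the step that carries the lemma, $X'(t)\neq 0$ for large $t$: you only conjecture it, and with an unquantified $O(e^{-t})$ remainder it cannot be proved. At a critical point $t_n$ you get $X''(t_n)=\partial_t f(t_n,X(t_n))$. Here $X(t_n)$ is itself of the size of the remainder and is pinned down by $f(t_n,X(t_n))=0$; this is where $\alpha_0\neq 0$ is needed. The sign is then decided by the expansion of $f(t,0)$ in powers of $e^{-t}$, and its leading coefficient $\alpha_1$ from \eqref{nota1} may vanish. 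In that case $\alpha_2$ decides, with $e^{2t_n}X''(t_n)\to-2\alpha_2/(\Theta\xi_0)$, which carries no factor ${\rm sgn}(\mathfrak B_2)$, contrary to your guess. If $\alpha_1=\alpha_2=0$, it is the term $\kappa e^{-2t}{\rm sgn}(\mathfrak B_2)$, with $\kappa>0$, that keeps $f(\cdot,0)\not\equiv 0$. This non-degeneracy is essential: if $f(\cdot,0)$ vanished identically, $X\equiv 0$ would be a solution and no sign argument could work. The mechanism of Lemma~\ref{wisp1}, a single identity with a definite sign at critical points, does not supply it. Your fallback via Lemma~\ref{gara4} would at best give existence of $\lim X$, not $X'\neq 0$, which is part of the statement and is used in Lemma~\ref{wisp4}.

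The missing idea is that the equation closes \emph{exactly}, so you never need to approximate $\mathfrak B_1-M_0$. By \eqref{oras}, $\mathfrak B_1^\kappa|\Theta-\mathfrak B_2|^m=\ell+\mathfrak B_2\,(X+\xi_0+\mathfrak B_2-2\rho-2\Theta)$ with $\mathfrak B_2={\rm sgn}(\mathfrak B_2)\,e^{-t}$. Differentiating this identity with $r\mathfrak B_1'=\mathfrak B_1\mathfrak B_2$ gives \eqref{fib}, which is polynomial in $X$ and $e^{-t}$, and the three cases above are read off from it.

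Your identification of the limit also has two soft spots. For $L=+\infty$, the fact that $X'\to+\infty$ contradicts nothing by itself, and the case $L=-\xi_0$ is only sketched. The paper avoids both by applying L'H\^opital to $(\mathfrak B_1^\kappa|\Theta-\mathfrak B_2|^m-\ell)/\mathfrak B_2=X+\xi_0+\mathfrak B_2-2\rho-2\Theta$. The ratio of derivatives is $\mathfrak B_1^\kappa|\Theta-\mathfrak B_2|^m(\kappa/(X+\xi_0)+m/(\mathfrak B_2-\Theta))$. It stays bounded if $\mathfrak L=+\infty$ and blows up if $\mathfrak L=-\xi_0$, which excludes both cases. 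Otherwise $\mathfrak L+\xi_0>0$ must solve \eqref{xiz1}, hence equals $\xi_0$, so $\mathfrak L=0$.
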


\begin{proof} 
From Lemma~\ref{wisp1}, we have that $X(t)+\xi_0>0$ for every $t>0$ large  and $t\to \infty$ as $r\to 0^+$. 
We show that if $\lim_{t\to \infty} X(t)=\mathfrak L$, then $\mathfrak L=0$. 
Indeed, \eqref{oras} and \eqref{vin} imply that 
\begin{equation} \label{bat1} \lim_{r\to 0^+} \frac{\mathfrak B_1^\kappa(r) |\Theta-\mathfrak B_2(r)|^m-\ell}{\mathfrak B_2(r)}=\mathfrak L+\xi_0-2\rho-2\Theta.
\end{equation}
On the other hand, we obtain that 
$$ \frac{\frac{d}{dr} \left( \mathfrak B_1^\kappa(r) |\Theta-\mathfrak B_2(r)|^m\right)}{\mathfrak B_2'(r)} =
\mathfrak B_1^\kappa(r) |\Theta-\mathfrak B_2(r)|^m
 \left( \frac{\kappa\mathfrak B_2(r)}{r\mathfrak B_2'(r)}+\frac{m}{\mathfrak B_2(r)-\Theta}\right)\to \ell\left( \frac{\kappa}{\mathfrak L+\xi_0} -\frac{m}{\Theta}\right)
$$ as $r\to 0^+$. Then, by L'H\^opital's rule and \eqref{bat1}, it follows that 
\begin{equation} \label{bat2}   
\mathfrak L+\xi_0-2\rho-2\Theta
=\ell\left( \frac{\kappa}{\mathfrak L+\xi_0} -\frac{m}{\Theta}\right).
\end{equation}
Since $\mathfrak L+\xi_0\geq 0$ and $\xi_0$ is the positive root of \eqref{xiz1}, from \eqref{bat2}, we get that $\mathfrak L=0$. 

\vspace{0.2cm} 
We write $X'(t)$ for the derivative of $X$ with respect to $t$. 
To conclude the proof of Lemma~\ref{wisp2}, we need to establish that $X'(t)\not=0$ for every $t>0$ large. 
For convenience, we define
\begin{equation} \label{nota1}
\begin{aligned}
&\Xi_0=\xi_0-2\rho-2\Theta,\quad \alpha_0=m\ell+\Theta\left(\xi_0+\Xi_0\right) , \\
& \alpha_1=\Theta \xi_0\left(2+\frac{m\ell}{\Theta^2}-\left( \frac{\kappa}{\xi_0}-\frac{m}{\Theta}\right) \Xi_0\right),\quad 
 \beta_1=\left(m-1\right)\left(\xi_0+\Xi_0\right) +\Theta\left(2-\kappa\right),\\
& \alpha_2:=\left(m-2\right) \xi_0+\kappa \left(\Xi_0-\Theta\right). 
\end{aligned}\end{equation} 

By differentiating $X$ in \eqref{vin} with respect to $t$, we arrive at 
\begin{equation} \label{fib}
\begin{aligned}
e^t \left( X(t)+\xi_0\right) \left(\Theta-e^{-t} {\rm sgn}\,(\mathfrak B_2)\right) X'(t)=&
 e^t X(t)   \left[ \alpha_0 +\Theta X(t)\right]\\
 &+ 
 \left[  \alpha_1+ \beta_1 X(t) +(m-1) X^2(t)  \right] {\rm sgn}\, (\mathfrak B_2)  \\
 &+e^{-t} \[\alpha_2 +(m-2+\kappa) X(t) \right] +\kappa e^{-2t} {\rm sgn}\,(\mathfrak B_2)
 \end{aligned}
\end{equation}
for every $t>0$ large. It is important to remark that $\alpha_0$ in \eqref{nota1} is different from zero. Indeed, since $\xi=\xi_0$ is the positive root of \eqref{xiz1}, we have 
\begin{equation} \label{sie}  \frac{\ell m}{\Theta}+\Xi_0=\frac{\ell\kappa}{\xi_0}>0 , \end{equation}
which implies that $\alpha_0/\Theta>0$ and thus $\alpha_0\not=0$. 

Assume by contradiction that there exists an increasing sequence $\{ t_n\}_{n\geq 1}$ of positive numbers such that $t_n\to \infty$ as $n\to \infty$ and $X'(t_n)=0$ for all $n\geq 1$. 
By differentiating \eqref{fib} with respect to $t$ and taking $t=t_n$, we obtain that  
$$ \begin{aligned} e^{t_n} \left(X(t_n)+\xi_0\right) \left(\Theta -e^{-t_n} {\rm sgn}\, (\mathfrak B_2) \right) X''(t_n)=
& e^{t_n} X(t_n) [\alpha_0+\Theta X(t_n)] \\
&-e^{-t_n} [\alpha_2 +(m-2+\kappa) X(t_n)] -2\kappa e^{-2t_n} {\rm sgn}\, (\mathfrak B_2).
\end{aligned} $$
Since $X'(t_n)=0$ for all $n\geq 1$, we infer the following according to three different cases:
\begin{enumerate}
\item If $\alpha_1\not=0$, then $\lim_{n\to \infty} e^{t_n}X(t_n) = -\alpha_1 \,{\rm sgn}\, (\mathfrak B_2)/\alpha_0$ and 
$$\lim_{n\to \infty} e^{t_n}X''(t_n)=-\frac{\alpha_1\, {\rm sgn}\, (\mathfrak B_2)}{\Theta\,\xi_0}\not=0 .$$ 
\item If $\alpha_1=0$ and $\alpha_2\not=0$, then $\lim_{n\to \infty}e^{2t_n} X(t_n)=-\alpha_2/\alpha_0$ and $$\lim_{n\to \infty}e^{2t_n} X''(t_n)= -\frac{2\alpha_2}{\Theta\,\xi_0}\not=0.$$ 
\item If $\alpha_1=\alpha_2=0$, then $\lim_{n\to \infty} e^{3t_n} X(t_n)= -\kappa \,  {\rm sgn}\, (\mathfrak B_2)  /\alpha_0$ and 
$$  \lim_{n\to \infty}e^{3t_n} X''(t_n)= -\frac{3\kappa \, {\rm sgn}\, (\mathfrak B_2) }{\Theta\,\xi_0}\not=0.
$$
\end{enumerate}
In each of these cases, we see that for every $n\geq 1$ sufficiently large, $X''(t_n)$ has a constant sign, which is the same irrespective of the sequence $\{t_n\}_{n\geq 1}$ satisfying $\lim_{n\to \infty} t_n=\infty$ and $X'(t_n)=0$ for all $n\geq 1$. This is a contradiction, showing that $X'(t)\not=0$ for all $t>0$ large enough. In particular, there exists $\lim_{t\to \infty} X(t)$, which is zero from the earlier argument.  
\end{proof}

\begin{lemma}  \label{wisp4}
Let $\xi_0$ be the {\em positive} root of the quadratic equation in $\xi$ given in \eqref{xiz}, that is 
\begin{equation} \label{xiz1} 
\xi^2+\left(\frac{\ell\,m}{\Theta}-2\left(\rho+\Theta\right) \right) \xi-\ell\kappa=0. 
	\end{equation} 
Then, the following limits exist
\begin{eqnarray} 
& \displaystyle \lim_{r\to0^+} r^{-\xi_0} \mathfrak B_2(r)=\mu_0\in \mathbb R\setminus \{0\},
\label{soad1} \\ 
& \displaystyle  \frac{u(r)}{U_0(r)}=1+\frac{\mu_0}{\xi_0}\,r^{\xi_0}
		(1+o(1)) \mbox{  as } r\to 0^+. \label{lov}
\end{eqnarray}
\end{lemma}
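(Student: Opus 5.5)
Set $t=-\log|\mathfrak B_2(r)|$ and $X(t)+\xi_0=r\mathfrak B_2'(r)/\mathfrak B_2(r)$ as in Lemma~\ref{wisp2}. That lemma gives $X(t)\to 0$ and $X'(t)\neq 0$ for $t$ large. Since
\[
\frac{d}{dr}\log\bigl(r^{-\xi_0}|\mathfrak B_2(r)|\bigr)=\frac{X(t)}{r},
\]
the limit \eqref{soad1} with $\mu_0\neq 0$ is equivalent to $\int_0^{r_0} |X(t(r))|\,\frac{dr}{r}<\infty$. Because $dt/dr=-(X+\xi_0)/r$, this becomes $\int^\infty |X(t)|\,dt<\infty$. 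Monotonicity of $r^{-\xi_0}|\mathfrak B_2|$ near zero then follows from the constant sign of $X$: $X$ is monotone and tends to $0$, so it has eventually constant sign. Hence the limit $\mu_0$ exists in $[0,\infty]$ times ${\rm sgn}(\mathfrak B_2)$, and the whole task is to show that it is finite and nonzero.

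\textbf{Decay of $X$.} The key step is to show that $X(t)=O(e^{-t})$. I would use the ODE \eqref{fib}. Dividing by $e^t$, the leading part is
\[
(X+\xi_0)\,\Theta\, X'=X\,(\alpha_0+\Theta X)+O(e^{-t}),
\]
with $\alpha_0\neq 0$ and $\alpha_0/\Theta>0$ by \eqref{sie}. Linearizing, $X'\approx(\alpha_0/(\Theta\xi_0))X+O(e^{-t})$, where $\alpha_0/(\Theta\xi_0)>0$. This is an \emph{unstable} linear part, so the only way to have $X\to 0$ is for $X$ to be slaved to the forcing. I would make this precise by writing
\[
\bigl(Xe^{-ct}\bigr)'=e^{-ct}\bigl(X'-cX\bigr)=e^{-ct}\bigl(O(X^2)+O(e^{-t})\bigr),\qquad c=\frac{\alpha_0}{\Theta\xi_0}.
\]
Integrating from $t$ to $\infty$ (allowed since $Xe^{-ct}\to 0$) gives
\[
X(t)=-\int_t^\infty e^{c(t-s)}\bigl(O(X^2)+O(e^{-s})\bigr)\,ds .
\]
A bootstrap, in the same style as the L'H\^opital arguments of Lemma~\ref{stepint}, then yields $|X(t)|\le C e^{-t}$; in fact $e^tX(t)\to -\alpha_1{\rm sgn}(\mathfrak B_2)/\alpha_0$, mirroring the cases in Lemma~\ref{wisp2}. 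This gives integrability of $X$, hence the existence of $\lim r^{-\xi_0}|\mathfrak B_2(r)|\in(0,\infty)$, which proves \eqref{soad1}. The sign of $\mu_0$ matches the sign of $\mathfrak B_2$, i.e.\ cases (i) and (ii) in Lemma~\ref{wisp1}.

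\textbf{The expansion \eqref{lov}.} From \eqref{jim11}, $(\log\mathfrak B_1)'=\mathfrak B_2/r=\mu_0r^{\xi_0-1}(1+o(1))$. Since $\mathfrak B_1\to M_0$ by \eqref{uoo1}, integrating from $0$ gives
\[
\log\bigl(\mathfrak B_1/M_0\bigr)=\frac{\mu_0}{\xi_0}r^{\xi_0}(1+o(1)),
\]
and exponentiating yields \eqref{lov}.

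\textbf{Main obstacle.} I expect the main difficulty to be the decay estimate for $X$: handling the unstable direction, and controlling the quadratic terms $X^2$ and the mixed terms $e^{-t}X$ in \eqref{fib}. I would first try a comparison function of the form $e^tX(t)\pm a$, monotone for large $t$, as in the proof of \eqref{maza}.
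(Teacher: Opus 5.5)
Your proposal is correct, and its skeleton is the paper's. Your quantity $\log(r^{-\xi_0}|\mathfrak B_2(r)|)$ is exactly $-Y(t)$ for the paper's $Y(t)=t+\xi_0\log r$. Its derivative $Y'(t)=X(t)/(X(t)+\xi_0)$ reduces everything to the bound $X(t)=O(e^{-t})$. Your derivation of \eqref{lov} by integrating $(\log\mathfrak B_1)'=\mathfrak B_2/r$ is equivalent to the paper's L'H\^opital step.

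The difference is in how that bound is obtained. The paper uses the sign information already established in Lemma~\ref{wisp2}. When $X>0>X'$, the left-hand side of \eqref{fib} divided by $\Theta$ is negative, while the dominant right-hand term $e^tX(\alpha_0+\Theta X)/\Theta$ is positive. So \eqref{fib} forces $e^tX(t)\le C_1$ at once. Then $Y(t)+Ce^{-t}$ is monotone and $Y$ has a finite limit; the case $X<0<X'$ is symmetric.

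Your variation-of-constants argument around the unstable rate $c=\alpha_0/(\Theta\xi_0)>0$ needs only $X\to 0$, not the monotonicity of $X$, and it does close. With $M(t)=\sup_{s\ge t}|X(s)|$, your integral formula gives $M(t)\le (K/c)M(t)^2+Ke^{-t}/(c+1)$. Hence $M(t)\le 2Ke^{-t}/(c+1)$ as soon as $(K/c)M(t)\le 1/2$. Your route is a little longer but more robust; the paper's is a one-line consequence of facts it already has.

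One side remark is wrong, though you never use it: $e^tX(t)$ does not tend to $-\alpha_1{\rm sgn}(\mathfrak B_2)/\alpha_0$. That value arises in Lemma~\ref{wisp2} only along critical points $X'(t_n)=0$, where the $X'$ term vanishes. Feeding $X=O(e^{-t})$ back into your own formula gives $X'-cX=\frac{\alpha_1{\rm sgn}(\mathfrak B_2)}{\Theta\xi_0}e^{-t}+O(e^{-2t})$. Hence $e^tX(t)\to-\alpha_1{\rm sgn}(\mathfrak B_2)/(\alpha_0+\Theta\xi_0)$. Only the $O(e^{-t})$ bound matters for \eqref{soad1}, so the proof stands.
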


\begin{proof} 
We set $Y(t)=t+\xi_0 \log r$ for $t>0$ large enough. We prove that 
there exists 
\begin{equation} \label{ret2}
\lim_{t\to \infty} Y(t)=y_0\in \mathbb R.
\end{equation}
Then, \eqref{soad1} holds with $\mu_0=e^{-y_0} {\rm sgn}\, (\mathfrak B_2)$.

\vspace{0.2cm}
{\em Proof of \eqref{ret2}.}
From \eqref{vin}, we find that
$$Y'(t)=1+\frac{\xi_0}{r}\,\frac{dr}{dt}=\frac{X(t)}{X(t)+\xi_0}.
$$
From Lemma~\ref{wisp2}, we have $\lim_{t\to \infty} X(t)=0$ and ${\rm sgn} (X(t)) \,{\rm sgn}\, (X'(t)) <0$ for large $t>0$. 
Since $\xi_0>0$, it follows that 
for $t>0$ large, we have
\begin{equation} \label{soad2} \mbox{either (a) } Y'(t)>0 \ (\mbox{when } X(t)>0) \quad \mbox{or (b) } Y'(t)<0 \ (\mbox{when } X(t)<0).\end{equation} 

In either case, there exists $\lim_{t\to \infty} Y(t)=y_0$ in $(-\infty,\infty]$ for (a) and in $[-\infty,\infty)$ for (b). In both these situations, we show that $y_0\in \mathbb R$. 
Assume first that (a) occurs, that is, $X'(t)<0$ and $X(t)>0$ for every $t>0$ large. Then, using \eqref{fib} and $\lim_{t\to \infty} X(t)=0$, jointly with $\alpha_0/\Theta>0$, we obtain that $\alpha_1 \,{\rm sgn} (\mathfrak B_2)/\Theta<0$ and there exists a constant $C_1>0$
such that $e^t X(t)<C_1$ for all $t>0$ large. Then, by choosing a constant $C>0 $ large (for example, $C>C_1/\xi_0$), the function $t\longmapsto Y(t)+ C e^{-t}$ is decreasing for every $t>0$ large. Consequently,
$y_0=\lim_{t\to \infty} Y(t)\not=\infty$ and, hence, 
 $y_0\in \mathbb R$ in case (a) of \eqref{soad2}. 
 
 A similar reasoning applies for (b) in \eqref{soad2} when $X'(t)>0$ and $X(t)<0$ for every $t>0$ large. Here, $\alpha_1 \,{\rm sgn} (\mathfrak B_2)/\Theta>0$ and for a constant $C_1>0$ large, we have $0<-e^t X(t)<C_1$ for all $t>0$ large. Then, by letting $C> 2C_1/\xi_0$, the function $t\longmapsto Y(t)- C e^{-t}$ is increasing for every $t>0$ large. Thus, we have $y_0\not=-\infty$ so that $y_0\in \mathbb R$ also in case (b). 
 This completes the proof of \eqref{ret2}, which validates \eqref{soad1}.
 
 Note that \eqref{soad1} and L'H\^opital's rule imply the the limit in \eqref{lov} since  
$$ \lim_{r\to 0^+} \frac{\mathfrak B_1(r)-M_0}{r^{\xi_0}}=\lim_{r\to 0^+} \frac{ \mathfrak B_1(r)\, \mathfrak B_2(r)}{ \xi_0 \,r^{\xi_0}}= \frac{M_0\,\mu_0}{\xi_0} .
$$
 This ends the proof of Lemma~\ref{wisp4}. 
\end{proof}

\begin{lemma}
\label{wisp5}	
Let $R>0$ be arbitrary. Then, equation \eqref{eq1} in $B_R(0)\setminus \{0\}$ has infinitely many positive radial solutions $u$ satisfying \eqref{uoo1}. 
\end{lemma}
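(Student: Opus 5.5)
The plan is to exploit the scaling invariance \eqref{scale} by passing to logarithmic variables. In these variables the radial equation \eqref{ez} becomes an \emph{autonomous} planar system in the functions $\mathfrak B_1,\mathfrak B_2$ of \eqref{jim11}, and $U_0$ corresponds to a hyperbolic saddle of that system. Solutions satisfying \eqref{uoo1} are then read off from the stable manifold of the saddle and made to live on $B_R(0)\setminus\{0\}$ by rescaling.

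\textbf{Step 1: the autonomous system.} Set $\tau=\log(1/r)$ and regard $\mathfrak B_1,\mathfrak B_2$ as functions of $\tau$. From $r\mathfrak B_1'(r)=\mathfrak B_1\mathfrak B_2$ and \eqref{oras} we get
\begin{equation*}
\frac{d\mathfrak B_1}{d\tau}=-\mathfrak B_1\mathfrak B_2,\qquad
\frac{d\mathfrak B_2}{d\tau}=\mathfrak B_2\left(\mathfrak B_2-2\rho-2\Theta\right)-\mathfrak B_1^\kappa\,|\Theta-\mathfrak B_2|^m+\ell .
\end{equation*}
The right-hand side is $C^1$ near the point $(M_0,0)$, because $M_0>0$ and $\Theta\neq 0$. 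The point $(M_0,0)$ is an equilibrium, since $M_0^\kappa|\Theta|^m=\ell$.

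\textbf{Step 2: linearization at $(M_0,0)$.} Differentiating the second equation gives $\partial_{\mathfrak B_2}=-(2\rho+2\Theta)+m\ell/\Theta$ and $\partial_{\mathfrak B_1}=-\kappa\ell/M_0$. The Jacobian is therefore
\begin{equation*}
\begin{pmatrix} 0 & -M_0\\ -\kappa\ell/M_0 & \frac{m\ell}{\Theta}-2\rho-2\Theta\end{pmatrix},
\end{equation*}
whose eigenvalues $\mu$ satisfy $\mu^2-\left(\frac{m\ell}{\Theta}-2\rho-2\Theta\right)\mu-\kappa\ell=0$. Substituting $\mu=-\xi$ turns this into exactly \eqref{xiz1}. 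Because the product of the roots is $-\kappa\ell<0$, there is one negative eigenvalue $-\xi_0$ and one positive eigenvalue. Hence $(M_0,0)$ is a hyperbolic saddle, in agreement with the decay rate $r^{\xi_0}=e^{-\xi_0\tau}$ found in Lemma~\ref{wisp4}. The Stable Manifold Theorem then provides a one-dimensional local stable manifold consisting of the equilibrium and two branches, along which $\mathfrak B_1\gtrless M_0$.

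\textbf{Step 3: from orbits back to solutions.} Take an initial point $(\mathfrak B_1^0,\mathfrak B_2^0)\neq(M_0,0)$ on the stable manifold. Its forward orbit exists for $\tau\geq\tau_0$ and converges to $(M_0,0)$. Put $r_1=e^{-\tau_0}$ and define $u(r)=r^{-\Theta}\mathfrak B_1(\log(1/r))$ for $r\in(0,r_1)$. Then $u>0$, and the first equation gives $ru'/u=\mathfrak B_2-\Theta$, so $\mathfrak B_2$ is recovered. The second equation is then equivalent to \eqref{ez}, provided $u'\neq0$. This holds because $\mathfrak B_2$ stays near $0$, so $ru'/u$ stays near $-\Theta\neq 0$. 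Consequently $u$ is a positive radial solution on $B_{r_1}(0)\setminus\{0\}$, and \eqref{uoo1} holds because $\mathfrak B_1\to M_0$.

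\textbf{Step 4: solutions on the prescribed ball.} Given $R>0$, choose $j\in(0,r_1/R]$. By \eqref{scale}, $T_j[u](r)=j^\Theta u(jr)$ solves \eqref{eq1} on $B_{r_1/j}(0)\setminus\{0\}\supseteq B_R(0)\setminus\{0\}$. Moreover $r^\Theta T_j[u](r)=\mathfrak B_1(\log(1/r)-\log j)$ still tends to $M_0$, so \eqref{uoo1} is preserved. Each $j$ corresponds to a time shift of a non-constant orbit, so distinct $j\in(0,r_1/R]$ give distinct functions; this yields infinitely many solutions (and $U_0$ itself is one more).

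The step I expect to need the most care is the justification in Step 3. One must check that the orbit does not leave the neighbourhood of $(M_0,0)$ on which the vector field is smooth and $\Theta-\mathfrak B_2$ keeps the sign of $\Theta$. This is guaranteed by choosing the initial point within the local stable manifold, which stays inside the small neighbourhood for all $\tau\geq\tau_0$. The same smallness keeps $|\Theta-\mathfrak B_2|^m$ equal to $|u'|^m r^m/u^m$ with the correct sign convention.
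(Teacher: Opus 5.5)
Your proof is correct, but it reaches the result by a different and more direct route than the paper.

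\textbf{The paper's route.} The paper does not use the autonomous system in $(\mathfrak B_1,\mathfrak B_2)$ with time $\tau=\log(1/r)$. It reuses the variables of Lemma~\ref{wisp2}, namely $t=-\log|\mathfrak B_2(r)|$ and $X(t)=r\mathfrak B_2'(r)/\mathfrak B_2(r)-\xi_0$. It then:
\begin{itemize}
\item asserts that the scalar non-autonomous equation \eqref{fib} has, for $T_0$ large and either choice of ${\rm sgn}\,(\mathfrak B_2)$, a solution with $X(t)\to 0$;
\item recovers $r(t)$ from $dr/dt=-r/(X+\xi_0)$ with $r(T_0)=r_0$ arbitrary, and sets $\mathfrak B_2=\pm e^{-t}$;
\item defines $u$ by the explicit formula \eqref{ba15}, which is \eqref{oras} solved for $\mathfrak B_1^\kappa$;
\item applies $T_j$ to get further solutions.
\end{itemize}
In effect the paper parametrizes the stable branch by $\mathfrak B_2$ itself. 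This lets the same variables serve both for the refined asymptotics of Lemmas~\ref{wisp2}--\ref{wisp4} and for existence. The cost is that the existence of the decaying solution of \eqref{fib} is only stated. Justifying it is itself a stable-manifold argument for the system in $(X,e^{-t})$, where $\alpha_0/\Theta>0$ makes the $X$-direction unstable.

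\textbf{What your route buys.}
\begin{itemize}
\item That step becomes a textbook application of the Stable Manifold Theorem to a $C^1$ planar system at a hyperbolic saddle.
\item It explains why \eqref{xiz1} appears: it is the characteristic equation, with stable eigenvalue $-\xi_0$.
\item It reconstructs $u=r^{-\Theta}\mathfrak B_1$ without any explicit formula.
\item It argues the distinctness of the rescaled solutions explicitly, which the paper leaves implicit.
\end{itemize}

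\textbf{Two minor remarks.} First, your system is autonomous and $T_j$ is just the shift $\tau\mapsto\tau-\log j$. So you could take $\tau_0=\log(1/R)$ directly and get infinitely many solutions on $B_R(0)\setminus\{0\}$ by varying the initial point on the stable manifold; Step~4 is then optional. Second, the proviso $u'\neq 0$ in Step~3 is not needed, because \eqref{muzia} and \eqref{ez} are equivalent for every positive $C^2$ function $u$.
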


\begin{proof} We remark that for $T_0>0$ large enough, the differential equation in \eqref{fib} on $[T_0,\infty)$ admits a solution $X(t)$ satisfying $\lim_{t\to \infty} X(t)=0$, 
	where ${\rm sgn} \, ({\mathfrak B_2})$ is understood as $\pm$. 
	
	For $r_0>0$ fixed arbitrary, we consider the differential equation for $r$ in \eqref{vin} for $t\in [T_0,\infty)$, subject to $r(T_0)=r_0$, namely,  
		\begin{equation} \label{compis} 
 \left\{ 	\begin{aligned} 
 & \frac{dr}{dt}=-\frac{r}{X(t)+\xi_0}\quad \mbox{for } t\geq T_0, \\
 & r(T_0)=r_0. 
\end{aligned}
\right.
	\end{equation}
This problem has a unique solution given by
	\begin{equation} \label{bilii} r(t)=r_0 \exp\left( -\int_{T_0}^ t \frac{ds}{X(s)+\xi_0}\right) \quad \mbox{for } t\geq T_0. 
	\end{equation}
	Since $r=r(t)$ is invertible, from \eqref{bilii}, we obtain $t=t(r)$. 	For every $r\in (0, r_0]$, we define
	\begin{equation} \label{ba15}
	\mathfrak B_2(r)=\pm e^{-t}\quad \mbox{and} \quad
        u(r)=r^{-\Theta}\left [\frac{\left [X(t)+\xi_0-2\rho-2\Theta\right ] {\rm sgn}\, (\mathfrak B_2) \,e^{-t}+e^{-2t}+ \ell }{|\Theta-e^{-t}{\rm sgn}\, (\mathfrak B_2) |^m}\right ]^{\frac{1}{\kappa}}.
	\end{equation}
% u(r)=\widetilde{u}(t)=\exp \left (-\int_t^\infty \frac{e^{-s}-\Theta}{X(s)+\xi_0}\,ds \right )\quad \mbox{for every } t\geq T_0.  
Then, $u$ is a positive radial solution of \eqref{eq1} in $B_{r_0}(0)\setminus \{0\}$ satisfying \eqref{uoo1}. For each $j\in(0,1)$, by applying the transformation in \eqref{scale}, that is,  $T_j[u](r)=j^{-\Theta}u(jr)$ for all $r\in (0, r_0/j)$, we get that
$T_j(u)$ is another positive radial solution of \eqref{eq1} in $B_{r_0/j}(0)\setminus\{0\}$ satisfying \eqref{uoo1}. 
%We have $T_{j_1}[u](r_0)<T_{j_2}[u](r_0)<u(r_0)$ for every $0<j_1<j_2<1$ since $r\longmapsto r^{-\Theta} u(r) $ is increasing on $(0,r_0]$. 
This ends the proof of Lemma~\ref{wisp5}. 	
\end{proof}

%%%%

%%%
\section{The asymptotic profile $\Phi_{\rho,\lambda}^+$ for $\lambda<\rho^2$, $\Theta>\Theta_+\not=0$ (Proof of Theorem \ref{prof1})}

Throughout this section, we let \eqref{cond1} hold, $\lambda< \rho^2$ and $\Theta>\Theta_+\not=0$.  Unless otherwise stated, $u$ is any positive radial solution of \eqref{eq1} in $B_R(0)\setminus \{0\}$ with $R>0$ such that \eqref{minaaa} holds, that is
\begin{equation}\label{zin}
\lim_{r\rightarrow 0^+}r^{\Theta_+}u(r)=\gamma \in (0,\infty).
\end{equation} 
For every $r\in (0,R)$, we define 
\begin{equation}\label{reti}
\mathfrak H(r):=r^{1+\Theta_-}u'(r)+\Theta_+ r^{\Theta_-} u(r). 
\end{equation}
Then, $\mathfrak H(r)$ is non-decreasing on $(0,R)$ since 
$$\mathfrak H'(r)=r^{\Theta_-+1+\theta }u^q(r)|u'(r)|^m\geq 0\quad \text{for all } r\in (0,R).$$
Hence, we have  $\lim_{r\to 0^+} \mathfrak H(r)=\mathfrak{a}_0\in [-\infty,\infty)$. 
By L'H\^opital's rule and \eqref{zin}, we obtain that 
\begin{equation} \label{frak22}
\lim_{r\to 0^+} r^{\Theta_-}\left[u(r)-\gamma \,r^{-\Theta_+}\right]=\frac{1}{2\sqrt{\rho^2-\lambda}}\lim_{r\to 0^+}\mathfrak H(r)=\frac{\mathfrak{a}_0}{2\sqrt{\rho^2-\lambda}}:=
\mathfrak a_1\in [-\infty,\infty).
\end{equation}

We distinguish three cases: 
\begin{equation} \label{chid}
\mbox{(I)}\ \chi>0,\quad 
\mbox{(II)} \ \chi<0\quad \mbox{and}\quad
\mbox{(III)}\  \chi=0,\quad \mbox{where }\chi:=\kappa\left( \Theta-\Theta_+\right)-2\sqrt{\rho^2-\lambda} .
\end{equation}

The proof of Theorem~\ref{prof1} follows from Lemmas \ref{dimi1}, \ref{dimi2} and \ref{exo2}. 

\begin{lemma}\label{dimi1} 
Let $\mathfrak a_1$ and $\chi$ be as in \eqref{frak22} and \eqref{chid}, respectively. For $\gamma\in (0,\infty)$, we define 
 $$\mathfrak a_2:=\frac{\gamma^{q+m}\left |\Theta_+\right |^m}{\kappa\left (\Theta-\Theta_+\right )}>0.$$

\begin{itemize}
	\item[(I)] If $\chi>0$, then $\mathfrak a_1\in \mathbb R$ and 
		\begin{equation} \label{refi}
	u(r)=\gamma\, r^{-\Theta_+}+\mathfrak{a}_1\,r^{-\Theta-}+(\mathfrak{a}_2/\chi)\,r^{\chi-\Theta_-}(1+o(1))\ \mbox{as } r\to 0^+. 
		\end{equation} 
	\item[(II)] If $\chi<0$, then 
	$\mathfrak{a}_1=-\infty$ and 
	\begin{equation} \label{ros2}	
	u(r)=\gamma\,r^{-\Theta_+}+(\mathfrak{a}_2/\chi)\,r^{\chi-\Theta_-}(1+o(1))\  \mbox{ as } r\to 0^+. 
		\end{equation}
		\item[(III)] If $\chi=0$, then $\mathfrak{a}_1=-\infty$ and, moreover, we have
	\begin{equation} \label{ros3}
	u(r)=\gamma\,r^{-\Theta_+}+\mathfrak{a}_2\,r^{-\Theta_-}\log r\,(1+o(1))\quad \mbox{as } r\to 0^+.
	\end{equation}
	\end{itemize}	
\end{lemma}

\begin{proof}
The function 
$$ \mathfrak B(r)= r^{\Theta_++1} u'(r)+\Theta_-r^{\Theta_+} u(r)$$ is non-decreasing for $r\in (0,R)$ since $\mathfrak B'(r)=r^{\Theta_++1+\theta} u^q(r)|u'(r)|^m\geq 0$. Hence, there exists $\lim_{r\to 0^+} \mathfrak B(r)\in [-\infty,\infty)$. Since $\Theta_+\not=0$, by L'H\^opital's rule and \eqref{zin}, we infer that 
\begin{equation} \label{pumm1}\lim_{r\to 0^+} r^{\Theta_++1}\,u'(r)=-\gamma\,\Theta_+.
\end{equation} 
Using \eqref{zin} and \eqref{pumm1}, we obtain that 
\begin{equation} \label{ros1}   \mathfrak H'(r)=r^{\Theta_-+1+\theta} u^q(r)|u'(r)|^m=
 \gamma^{q+m}\,|\Theta_+|^m  r^{\chi-1}(1+o(1))\quad \mbox{as } r\to 0^+.
\end{equation} 
Remark that if $\chi\leq 0$, then \eqref{ros1} yields that $\lim_{r\to 0^+} \mathfrak H(r)=\mathfrak{a}_0=-\infty$ and, hence, $\mathfrak a_1=-\infty$. 
We next consider separately each of the cases in \eqref{chid}.  

\vspace{0.2cm}
{\bf Case (I):} Let $\chi>0$.  
Since $\Theta_+\not=0$, we have 
$\lim_{r\to 0^+} \mathfrak H(r)=\mathfrak a_0\in \mathbb R$ and $\mathfrak{a}_1=\frac{\mathfrak a_0}{2\sqrt{\rho^2-\lambda}}\in \mathbb R$. 
From \eqref{ros1}, jointly with $\Theta_+-\Theta_-=2\sqrt{\rho^2-\lambda}$ and L'H\^opital's rule, we get
\begin{equation}\label{woam}
\lim_{r\rightarrow 0^+}  \frac{r^{\Theta_+} u(r)-\gamma -\mathfrak a_1 r^{\Theta_+-\Theta_-}}{r^{\kappa \left(\Theta-\Theta_+\right)}}
=\lim_{r\rightarrow 0^+}  \frac{\mathfrak H(r)-\mathfrak{a}_0}{\kappa \left( \Theta-\Theta_+\right) r^\chi}=\frac{\mathfrak a_2}{\chi},
\end{equation}
which proves the behaviour  of $u$ in \eqref{refi}.   

{\bf Case (II):} Let $\chi<0$. 
Similar to \eqref{woam}, we derive that 
\begin{equation} \label{rosi2}
\lim_{r\to 0^+} \frac{r^{\Theta_+}u(r)-\gamma}{r^{\kappa(\Theta-\Theta_+)}}=
 \lim_{r\to 0^+} \frac{\mathfrak H(r)}{\kappa\left(\Theta-\Theta_+\right) r^\chi}
=\frac{1}{\kappa \left(\Theta-\Theta_+\right)} \lim_{r\to 0^+} \frac{\mathfrak H'(r)}{\chi\, r^{\chi-1}}=\frac{\mathfrak a_2}{\chi}, \end{equation} 
which proves \eqref{ros2}. 

{\bf Case (III):} Let $\chi=0$, that is, $\kappa(\Theta-\Theta_+)=\Theta_+-\Theta_-$. 
Using \eqref{ros1}, we derive that 
\begin{equation*} 
 \lim_{r\to 0^+} \frac{r^{\Theta_+}u(r)-\gamma}{r^{\kappa(\Theta-\Theta_+)}\,\log r}=
\frac{1}{\kappa \left(\Theta-\Theta_+\right)}
\lim_{r\to 0^+}\frac{\mathfrak H(r)}{\log r}=
\frac{1}{\kappa \left(\Theta-\Theta_+\right)} \lim_{r\to 0^+} r \mathfrak H'(r)=\mathfrak a_2.
\end{equation*}
This proves the assertion in \eqref{ros3}, completing the proof of Lemma~\ref{dimi1}.  
\end{proof}

\begin{lemma}\label{dimi2}
For every $\gamma\in (0,\infty)$, there exists $R>0$ small such that \eqref{eq1} in $B_R(0)\setminus\{0\}$ has infinitely many positive radial solutions satisfying \eqref{zin}.
\end{lemma}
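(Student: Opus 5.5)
Following the existence lemmas in Section~\ref{nupo} (Lemmas~\ref{viz2} and \ref{probc}), we recast \eqref{ez} near zero as a three-dimensional autonomous system with a hyperbolic critical point at the origin and apply the Stable Manifold Theorem. We work in $t=\log(r_0/r)$, so $r=r_0e^{-t}$. Fix $\eta>0$ small, specifically $0<\eta<\min\{\kappa(\Theta-\Theta_+),\,2\sqrt{\rho^2-\lambda}\}$. Given a prospective solution satisfying \eqref{zin}, we set
\[
X_1(t)=r^{\Theta_+}u(r)-\gamma,\qquad X_2(t)=r^{\Theta_++1}u'(r)+\gamma\Theta_+,\qquad X_3(t)=r^{\eta}.
\]
By \eqref{zin} and \eqref{pumm1}, $\mathbf X(t)\to\mathbf 0$ as $t\to\infty$. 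The substitutions $u=r^{-\Theta_+}(X_1+\gamma)$ and $u'=r^{-\Theta_+-1}(X_2-\gamma\Theta_+)$, together with \eqref{ez}, give
\begin{equation*}
\left\{\begin{aligned}
&X_1'=-\Theta_+X_1-X_2,\\
&X_2'=\lambda X_1+(\Theta_+-2\rho)X_2-(X_1+\gamma)^q\,|X_2-\gamma\Theta_+|^m\,X_3^{\kappa(\Theta-\Theta_+)/\eta},\\
&X_3'=-\eta X_3.
\end{aligned}\right.
\end{equation*}
Here we used $\Theta_+^2+2\rho\Theta_++\lambda=0$ to cancel the constant terms in the second equation. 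The power of $r$ in front of the absorption term is $r^{\theta+2-\kappa\Theta_+-m}=r^{\kappa(\Theta-\Theta_+)}$.

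Since $\Theta_+\neq0$ and $\gamma>0$, the factor $|X_2-\gamma\Theta_+|^m$ is smooth near $\mathbf 0$. Because $\kappa(\Theta-\Theta_+)/\eta>1$, the nonlinear part $\mathbf F$ is $C^1$ with $\mathbf F(\mathbf 0)=\mathbf 0$ and $D\mathbf F(\mathbf 0)=0$. The linear part has eigenvalue $-\eta$ for $X_3$, together with the eigenvalues of the $2\times2$ block $\begin{pmatrix}-\Theta_+&-1\\ \lambda&\Theta_+-2\rho\end{pmatrix}$. That block has trace $-2\rho$ and determinant $\Theta_+(2\rho-\Theta_+)+\lambda=\rho^2-\lambda+\Theta_+\cdot0$ after simplification. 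In fact the eigenvalues are $0$ and $-2\sqrt{\rho^2-\lambda}$, i.e.\ $\Theta_--\Theta_+$, shifted appropriately.

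\textbf{The main obstacle} is this zero eigenvalue. It reflects the free parameter $\gamma$, and it makes the origin non-hyperbolic. To remove it, we treat $\gamma$ not as fixed but as determined by the limit; equivalently, we replace the coordinates by ones adapted to $\mathfrak B$ and $\mathfrak H$. We set
\[
X_1=r^{\Theta_+}u-\gamma-\tfrac{\mathfrak H(r)}{2\sqrt{\rho^2-\lambda}}\,r^{\Theta_+-\Theta_-}\ \ \text{(suitably weighted by } r^{-\eta}\text{)},\qquad X_2=\mathfrak B(r)+\gamma(\Theta_+-\Theta_-),
\]
where $\mathfrak B(r)=r^{\Theta_++1}u'+\Theta_-r^{\Theta_+}u$. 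The derivative of $\mathfrak B$ is the pure absorption term, which is $O(r^{\kappa(\Theta-\Theta_+)-1})$. The derivative of $r^{\Theta_+-\Theta_-}\cdot(\cdot)$ carries the positive rate $2\sqrt{\rho^2-\lambda}$. Using the weights $r^{-\eta}$ as in Lemma~\ref{viz2}, with $\eta$ as above, the linearization becomes $\mathrm{diag}[\eta,\ 2\sqrt{\rho^2-\lambda}-\eta\ (\text{or }\eta),\ -\eta]$, which is hyperbolic.

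With a hyperbolic origin, the stable manifold in these coordinates has dimension at least one. For each $x_{03}=r_0^\eta$ it produces a trajectory tending to $\mathbf 0$. Reconstructing $u$ from $X_1,X_3$ exactly as in \eqref{mibb}, differentiating, and using the $X_2$ equation, we recover \eqref{ez} together with \eqref{zin}.

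For infinitely many solutions with a fixed $\gamma$, we vary $r_0$, equivalently the initial point on the stable manifold, and then use the scaling $T_j$ from \eqref{scale}. This rescales $\gamma$ to $j^{\Theta-\Theta_+}\gamma$, so $j$ can be chosen to restore any prescribed $\gamma$. Distinct initial data yield distinct solutions. Alternatively, when $\chi>0$ the free constant $\mathfrak a_1$ in \eqref{refi} parametrizes a one-parameter family directly.
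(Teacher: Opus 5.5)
Your argument has a genuine gap at the step that is supposed to remove the zero eigenvalue: the two coordinates you introduce are not independent. From the definitions of $\mathfrak H$ and of your $\mathfrak B$,
\begin{equation*}
r^{\Theta_+-\Theta_-}\mathfrak H(r)-\mathfrak B(r)=(\Theta_+-\Theta_-)\,r^{\Theta_+}u(r)=2\sqrt{\rho^2-\lambda}\;r^{\Theta_+}u(r).
\end{equation*}
So your unweighted $X_1=r^{\Theta_+}u-\gamma-\frac{r^{\Theta_+-\Theta_-}\mathfrak H}{2\sqrt{\rho^2-\lambda}}$ equals $-X_2/(2\sqrt{\rho^2-\lambda})$ identically. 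After the $r^{-\eta}$ weight it is still a function of $(X_2,X_3)$.

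Hence $(X_1,X_2,X_3)$ records only $\mathfrak B$, which is a first-order expression in $u$, and not the pair $(u,u')$. Three things follow. No closed system equivalent to \eqref{ez} results. The hyperbolic matrix $\mathrm{diag}[\eta,\,2\sqrt{\rho^2-\lambda}-\eta\ (\text{or }\eta),\,-\eta]$ is asserted rather than derived. And $u$ cannot be rebuilt from $X_1,X_3$ as in \eqref{mibb}: solving $r^{\Theta_++1}u'+\Theta_-r^{\Theta_+}u=\mathfrak B$ reintroduces exactly the free $r^{-\Theta_-}$-component you wanted to control.

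Your preliminary system also has a slip. The coefficient of $X_2$ in $X_2'$ is $-(\Theta_++2\rho)=\Theta_-$, so the block has trace $\Theta_--\Theta_+$ and determinant $\lambda-\Theta_+\Theta_-=0$. The eigenvalues $0$ and $-2\sqrt{\rho^2-\lambda}$ are right, but not by the computation you wrote.

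The multiplicity argument fails as well. With two positive eigenvalues the stable manifold is one-dimensional, a graph over $X_3$. Since $X_3(t)=r^\eta$ for every choice of $r_0$, the orbit is $(w_1(r^\eta),w_2(r^\eta),r^\eta)$, so varying $r_0$ only restricts one and the same $u$. A rescaled solution $T_j[\widetilde u]$ with limit $\gamma$ whose coordinates tend to $\mathbf 0$ lies on that same local stable manifold, so scaling adds nothing. Compare Lemmas~\ref{viz1} and \ref{probc}, where a one-dimensional stable manifold yields a single solution. The $\mathfrak a_1$-family exists only for $\chi>0$ and needs $\mathfrak a_0$ built into the coordinates, as in Lemma~\ref{exo2}; for $\chi\le 0$ one has $\mathfrak a_1=-\infty$ by Lemma~\ref{dimi1}.

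The missing idea is to use $\mathfrak H$ itself, damped by a positive power of $r$, as an independent coordinate. With $t=\log(R/r)$, set
\begin{equation*}
X_1=r^{-\beta_1}\left(r^{\Theta_+}u-\gamma\right),\qquad X_2=r^{\beta_2}\mathfrak H(r),\qquad X_3=r^{\beta_3},
\end{equation*}
where $\max\{0,-\chi\}<\beta_2<2\sqrt{\rho^2-\lambda}$, $0<\beta_1<2\sqrt{\rho^2-\lambda}-\beta_2$ and $0<\beta_3<\min\{\beta_1,\,2\sqrt{\rho^2-\lambda}-\beta_1-\beta_2,\,\beta_2+\chi\}$. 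Two facts drive the computation: $r\frac{d}{dr}(r^{\Theta_+}u)=r^{\Theta_+-\Theta_-}\mathfrak H$, and $r\mathfrak H'(r)$ is the pure absorption term, $\sim\gamma^{q+m}|\Theta_+|^m r^{\chi}$. They give $X_1'=\beta_1X_1-X_2X_3^{(2\sqrt{\rho^2-\lambda}-\beta_1-\beta_2)/\beta_3}$ and $X_2'=-\beta_2X_2-X_3^{(\beta_2+\chi)/\beta_3}g(\mathbf X)$, i.e.\ $A=\mathrm{diag}[\beta_1,-\beta_2,-\beta_3]$.

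The stable manifold $X_1=w(X_2,X_3)$ is then two-dimensional. Fixing $x_{03}=R^{\beta_3}$ while varying $x_{02}$ gives infinitely many solutions, uniformly in the sign of $\chi$. The weight $r^{\beta_2}$ kills $\mathfrak H$ whether it tends to $\mathfrak a_0$ or blows up like $r^\chi$ or $\log r$.
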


\begin{proof}
Let $u$ be any positive radial solution of \eqref{eq1} in $B_R(0)\setminus \{0\}$ with $R>0$ such that \eqref{zin} holds.
Fix $\beta_2\in (0,2\sqrt{\rho^2-\lambda})$ close enough to $2\sqrt{\rho^2-\lambda}$ such that $2\sqrt{\rho^2-\lambda}-\beta_2<\kappa\left(\Theta-\Theta_+\right)$. Using $\chi$ defined in \eqref{chid}, these conditions on $\beta_2$ are equivalent to 
	\begin{equation} \label{zina}
\max\,\{0,-\chi\}<\beta_2<2\sqrt{\rho^2-\lambda}.
	\end{equation}
Then, we choose $\beta_1>0$ and $\beta_3>0$ small enough such that 
	\begin{equation} \label{zina2}
 0<\beta_1<2\sqrt{\rho^2-\lambda}-\beta_2\quad \mbox{and}\quad 
 0<\beta_3<\min\,\{\beta_1, 2\sqrt{\rho^2-\lambda}-\beta_2-\beta_1,\beta_2+\chi \}.
\end{equation}
We next fix $\varepsilon\in (0,1)$ small such that 
$$ 0<\varepsilon<\gamma/2\quad \mbox{and}\quad 
\left(1+|\Theta_+|\right)\varepsilon<|\Theta_+|\gamma.  $$
The assumption that $\Theta_+\not=0$ is used here essentially to make possible the choice of $\varepsilon$.  

	For every $\xi=(\xi_1,\xi_2,\xi_3)\in \mathbb R^3$ satisfying $|\xi_j|<\varepsilon$ for $j\in \{1,2,3\}$, we define 
$$ 
g(\xi)=\left(\gamma+\xi_1\,|\xi_3|^{\frac{\beta_1}{\beta_3}}\right)^q\,\left|\xi_2\,|\xi_3|^\frac{2\sqrt{\rho^2-\lambda}-\beta_2}{\beta_3}-\Theta_+\left(
\gamma+\xi_1\,|\xi_3|^{\frac{\beta_1}{\beta_3}}\right) \right|^m
$$ 
and introduce 
$\mathbf F(\xi)=(F_1(\xi),F_2(\xi),0)$ with $F_1(\xi)$ and $F_2(\xi)$ given by
\begin{equation} \label{f12def} F_1(\xi)=-\xi_2 \,|\xi_3|^\frac{2\sqrt{\rho^2-\lambda}-\beta_1-\beta_2}{\beta_3},
\quad 
F_2(\xi)=-|\xi_3|^{\frac{\beta_2+\chi}{\beta_3}} \,g(\xi).  
\end{equation}
Remark that $\mathbf{F}$ is a $C^1$-function on the open ball $B_\varepsilon (\mathbf 0)\subset \mathbb R^3$. Using our choice of $\beta_1,\beta_2$ and $\beta_3$ in \eqref{zina} and \eqref{zina2}, respectively, we find that 
\begin{equation} \label{zero1} \mathbf{F}(\mathbf 0)=\mathbf 0\quad \mbox{and}\quad  D\mathbf{F}(\mathbf 0)=0.\end{equation}

Since $\beta_2>-\chi$ and $2\sqrt{\rho^2-\lambda}-\beta_1>\max\,\{0,-\chi\}$ (see \eqref{zina}), by Lemma~\ref{dimi1}, we have $$
\lim_{r\to 0^+}  r^{-\beta_1}\left(r^{\Theta_+} u(r)-\gamma \right)=0\quad \mbox{and}\quad 
\lim_{r\to 0^+} r^{\beta_2} |\mathfrak{H}(r)|=0
$$
irrespective of the case for $\chi$ in \eqref{chid}, where $\mathfrak H$ is defined in \eqref{reti}.  
By diminishing $R>0$, we can assume that $R<\varepsilon^{1/\beta_3}$ and the following hold
\begin{equation} \label{ins}
\left|r^{-\beta_1}\left(r^{\Theta_+} u(r)-\gamma \right) \right|<\varepsilon\quad \mbox{and}\quad 
r^{\beta_2} \left|\mathfrak{H}(r)\right|<\varepsilon\quad \mbox{for every } r\in (0,R].
\end{equation}
 	For every $r\in (0, R]$, we set $t=\log \,(R/r)$ and define $\mathbf X(t)=(X_1(t), X_2(t), X_3(t))$ as follows
 \begin{equation}\label{soc}
 X_1(t)=r^{-\beta_1}\left( r^{\Theta_+}u(r)-\gamma\right),
 \quad X_2(t)=r^{\beta_2} \mathfrak{H}(r),
 \quad  \mbox{and}\quad X_3(t)=r^{\beta_3}.
 \end{equation}
  Let $A=\mbox{diag}\,[\beta_1, -\beta_2, -\beta_3]$ be the diagonal matrix with the 
elements on the main diagonal being $\beta_1$, $-\beta_2$ and $-\beta_3$. Then, $\mathbf X(t)$ solves for $t\geq 0$ the first order nonlinear differential system
 \begin{equation}
 \label{ram1}
 \mathbf X'(t)=A\,\mathbf X(t)+\mathbf F(\mathbf X(t)),
 \end{equation}
 where $\mathbf F(\mathbf X)=(F_1(\mathbf X),F_2(\mathbf X),0)$ with $F_1$ and $F_2$ given by \eqref{f12def}. 
  
 Remark that the eigenvalues of $A$ are $\mu_1=\beta_1>0$, $\mu_2=-\beta_2<0$ and $\mu_3=-\beta_3<0$. Hence, 
 in light of \eqref{zero1}, the origin $\mathbf 0\in \mathbb R^3$ is a saddle critical point of \eqref{ram1}, the behaviour of which near $\mathbf 0$ 
 is approximated by the behavior of its linearization $\mathbf{X'}=A\mathbf{X}$ at $\mathbf{X}=\mathbf 0$. 
 Let $\phi_t$ be the flow of the nonlinear system \eqref{ram1}. The Stable Manifold Theorem applied to  \eqref{ram1} gives the existence of a (local) stable manifold $S$ that is two-dimensional. 
 More precisely, 
 there exist $\varepsilon_0\in (0,\varepsilon)$ small and a differentiable function $w:(-\varepsilon_0,\varepsilon_0)\times (-\varepsilon_0,\varepsilon_0)\to (-\varepsilon_0,\varepsilon_0)$ such that
 the equation 
 $X_1=w(X_2,X_3)$ defines a two-dimensional stable differentiable manifold $S$.  
For all $t\geq 0$, we have $\phi_t(S)\subseteq S$ and for all $\mathbf{x}_0 \in S$, $\lim_{t\rightarrow \infty} \phi_t(\mathbf{x}_0)=0$. 
 Therefore,  
 for every $x_{02},x_{03} \in (-\varepsilon_0, \varepsilon_0)$, the system \eqref{ram1} subject to the initial condition
 \begin{equation}\label{mari2}
 \mathbf{X}(0)=(w(x_{02},x_{03}), x_{02},x_{03})
 \end{equation}
 has a solution $\mathbf{X}$ on $[0,\infty)$ satisfying 
 \begin{equation} \label{iot1} \lim_{t\to \infty} \mathbf X(t)=\mathbf 0. 
 \end{equation}
  Since $X_3'(t)=-\beta_3 X_3(t)$ for $t\geq 0$, we have 
 $X_3(t)=x_{03}\,e^{-\beta_3 t}$ for all $t\geq 0$. 
  As we aim to recover \eqref{soc}, we will choose $x_{03}\in (0,\varepsilon_0)$ to obtain that $X_3(t)>0$ for all $t\geq 0$. 
  
  \vspace{0.2cm}
  To conclude the proof of Lemma~\ref{dimi2},  we
  fix $R\in (0,\varepsilon_0^{1/\beta_3})$. We choose $x_{03}=R^{\beta_3}\in (0,\varepsilon_0)$
and let $x_{02}\in (-\varepsilon_0,\varepsilon_0)$ be arbitrary. We set $r=R\,e^{-t}$ for $t\geq 0$ so that $X_3(t)=r^{\beta_3}$ and define 
$$ u(r):=\left(\gamma+X_1(t)\,(X_3(t))^{\frac{\beta_1}{\beta_3}}\right)(X_3(t))^{-\frac{\Theta_+}{\beta_3}}=
\left(\gamma+r^{\beta_1} X_1(t)\right) r^{-\Theta_+}.  
$$
Using \eqref{iot1}, 
we recover \eqref{soc} and that $u$ is a positive radial solution of \eqref{eq1} in $B_R(0)\setminus \{0\}$ satisfying \eqref{zin}. 
We find infinitely many such positive solutions
by varying $x_{02}\in (-\varepsilon_0,\varepsilon_0)$. 
\end{proof}

	In view of Lemma~\ref{dimi1}, our Lemma~\ref{dimi2} gives the construction of infinitely many positive solutions of \eqref{eq1} satisfying, on the one hand, \eqref{ros2} when $\chi<0$ and, on the other hand, \eqref{ros3} when $\chi=0$. See also Remark~\ref{rain} when $\chi<0$.

When $\chi>0$, we refine the conclusion of Lemma~\ref{dimi2}.
More exactly,  
given $\gamma\in (0,\infty)$ and any $\mathfrak{a}_1\in \mathbb R$, it is natural to ask whether 
for some $R>0$, there exists a positive radial solution $u$ of \eqref{eq1} in $B_R(0)\setminus \{0\}$ satisfying \eqref{refi}. 
We next give an affirmative answer to this question. 

\begin{lemma} \label{exo2} 
	Let $\chi>0$. Then, for every $\gamma\in (0,\infty)$ and each $\mathfrak{a}_1\in \mathbb R$, there exists 
				$R>0$ such that \eqref{eq1} in $B_R(0)\setminus \{0\}$ has a positive radial solution $u$ satisfying \eqref{refi}. 	 
\end{lemma}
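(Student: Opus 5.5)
We mimic the stable-manifold constructions of Lemma~\ref{dimi2} and Lemma~\ref{probc}, but now centre the variables at the prescribed three-term expansion \eqref{refi}. This pins down both $\gamma$ and $\mathfrak a_1$.

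Let $\gamma>0$ and $\mathfrak a_1\in\mathbb R$ be given, and set $\delta:=2\sqrt{\rho^2-\lambda}=\Theta_+-\Theta_-$. Since $\chi>0$, the limit $\mathfrak a_0=\delta\,\mathfrak a_1$ of $\mathfrak H$ is finite. By \eqref{ros1}, $\mathfrak H(r)-\mathfrak a_0\sim(\mathfrak a_2\kappa(\Theta-\Theta_+)/\chi)\,r^{\chi}$; write $C_2:=\mathfrak a_2\kappa(\Theta-\Theta_+)/\chi$. Fix a small $\eta>0$, say $\eta<\min\{\chi,\delta\}/2$. For $t=\log(R/r)$ we introduce
$$X_1(t)=r^{-\eta}\bigl(r^{\Theta_+}u(r)-\gamma-\mathfrak a_1 r^{\delta}\bigr),\quad X_2(t)=r^{-\chi}\bigl(\mathfrak H(r)-\mathfrak a_0\bigr)-C_2,\quad X_3(t)=r^{\eta}.$$
The first coordinate is only normalised by $r^{-\eta}$; this keeps the exponents harmless. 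By Lemma~\ref{dimi1}(I), any solution satisfying \eqref{refi} gives $\mathbf X(t)\to\mathbf 0$.

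Next we derive the system. We use the identity $(r^{\Theta_+}u)'=r^{\Theta_+-\Theta_--1}\mathfrak H(r)=r^{\delta-1}\mathfrak H$, which follows from \eqref{reti} and $\Theta_++\Theta_-=-2\rho$. We also use $\mathfrak H'=r^{\Theta_-+1+\theta}u^q|u'|^m$, where $u$ and $u'$ are expressed through $X_1,X_3$ and $\mathfrak H$:
\begin{equation*}
r^{\Theta_++1}u'=r^{\delta}\mathfrak H-\Theta_+ r^{\Theta_+}u .
\end{equation*}
Differentiating in $t$ then gives
\begin{equation*}
X_1'=\eta X_1-\tfrac{1}{\delta+\chi}\ldots,\qquad
X_2'=\chi(X_2+C_2)-g(\mathbf X),\qquad
X_3'=-\eta X_3 .
\end{equation*}
In the $X_1'$ equation the forcing is $-X_3^{(\delta+\chi-\eta)/\eta}(X_2+C_2)$, which is $o(|\mathbf X|)$ because $\delta+\chi>2\eta$. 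Here $g$ is built as in \eqref{gif}/\eqref{yop}, namely
$$g=\bigl(\gamma+\mathfrak a_1X_3^{\delta/\eta}+X_1X_3\bigr)^{q}\,\Bigl|\ldots-\Theta_+\bigl(\gamma+\cdots\bigr)\Bigr|^m\,|\cdot|,$$
normalised so that $g(\mathbf 0)=\gamma^{q+m}|\Theta_+|^m=\chi C_2$. The condition $\Theta_+\ne0$ ensures that $g$ is $C^1$ near $\mathbf 0$. The exponent $\chi$ arising from the weight $r^{\Theta_-+1+\theta}\,r^{-(q+m)\Theta_+-m}$ is exactly $\kappa(\Theta-\Theta_+)-\delta$, matching \eqref{chid}.

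Linearising at $\mathbf 0$, the diagonal matrix is $A=\mathrm{diag}[\eta,\chi,-\eta]$ plus a term coming from $\partial g/\partial X_1$. That term is multiplied by a positive power of $X_3$, so it vanishes at $\mathbf 0$, and $\mathbf F(\mathbf 0)=\mathbf 0$, $D\mathbf F(\mathbf 0)=0$. The origin is therefore a hyperbolic saddle with a one-dimensional stable manifold $X_1=w_1(X_3)$, $X_2=w_2(X_3)$. For $R<\varepsilon_0^{1/\eta}$ we take the trajectory through $(w_1(R^\eta),w_2(R^\eta),R^\eta)$ and set
$$u(r)=r^{-\Theta_+}\bigl(\gamma+\mathfrak a_1r^{\delta}+r^{\eta}X_1\bigr).$$
Reversing the computation, the $X_1$ equation recovers $X_2$ as defined from $\mathfrak H$, and the $X_2$ equation recovers \eqref{ez}. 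Positivity holds for small $R$, and $\mathbf X\to\mathbf 0$ yields the limits $r^{\Theta_+}u\to\gamma$ and $\mathfrak H\to\mathfrak a_0$. Lemma~\ref{dimi1}(I) then gives \eqref{refi} with the chosen $\mathfrak a_1$; this is the same constant because $\mathfrak a_1=\mathfrak a_0/\delta$ by \eqref{frak22}.

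The main obstacle is choosing the normalising powers so that every nonlinear term is genuinely $C^1$ with vanishing derivative at $\mathbf 0$, with no fractional power of $X_3$ below $1$ entering. If the $X_1$ coupling produces an exponent below $1$, we instead replace $\eta$ by a smaller $\beta_3$ as in \eqref{zina2}, taking it small relative to $\chi$ and $\delta$. We also have to check carefully that the reconstruction reproduces $\mathfrak a_1$ exactly rather than a shifted constant; this follows from the $r^{\delta}$ term being built into $X_1$.
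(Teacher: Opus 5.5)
Your proposal is correct and follows the paper's proof. It uses the same three-variable system built from $r^{\Theta_+}u$, from $r^{-\chi}(\mathfrak H-\mathfrak a_0)$ minus its limit $C_2$ (the paper's $\mathfrak a_3$), and from $r^{\eta}$; the same saddle with $A=\mathrm{diag}[\eta,\chi,-\eta]$ and a one-dimensional stable manifold; the same reconstruction of $u$; and the same identification of $\mathfrak a_1$ through $\lim_{r\to0^+}\mathfrak H=\mathfrak a_0$ and Lemma~\ref{dimi1}. The only difference is cosmetic: subtracting $\mathfrak a_1 r^{\delta}$ inside $X_1$ absorbs the $\mathfrak a_0$ forcing term, which spares you the paper's split into $\mathfrak a_1\neq0$ and $\mathfrak a_1=0$ with two exponents $\eta_1,\eta_3$. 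Two small corrections: the stray factor $\tfrac{1}{\delta+\chi}$ in your displayed $X_1'$ should be dropped, as your next sentence already shows. Your final hedge about shrinking $\eta$ is unnecessary, since with $\eta<\min\{\chi,\delta\}/2$ every exponent of $X_3$ is at least $1$, and $X_1X_3$ is quadratic.
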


\begin{proof} 
Let $\gamma\in (0,\infty)$ and $\mathfrak a_1\in \mathbb R$ be arbitrary. First, we assume that $u$ is a positive radial solution of \eqref{eq1} in $B_R(0)\setminus \{0\}$ for $R>0$ such that \eqref{refi} holds. Then, we also have \eqref{woam}. Set
$$ \mathfrak a_3:=\frac{\gamma^{q+m} |\Theta_+|^m}{\chi}.$$

(a) Let $\mathfrak a_1\not=0$. 
Fix $\varepsilon\in (0,1)$ small such that 
$\varepsilon^2<\gamma$ and $
\varepsilon^2 (|\Theta_+|+\mathfrak a_3+\varepsilon)+\varepsilon |\mathfrak{a}_0| <\gamma\,|\Theta_+|$. 
We choose $\eta_1>0$ and 
$\eta_3>0$ small such that 
\begin{equation} \label{etar} 0<\eta_1<2\sqrt{\rho^2-\lambda}\quad \mbox{and}\quad
0<\eta_3<\min\left\{\chi, \eta_1,2\sqrt{\rho^2-\lambda}-\eta_1 \right\}. 
\end{equation}
Then, using \eqref{refi} and \eqref{woam}, we can find 
$r_0\in (0,R)$ small satisfying $r_0<\varepsilon^{1/\eta_3}$ such that 
\begin{equation} \label{iob}
\left|r^{-\eta_1} \left(r^{\Theta_+}u(r)-\gamma \right)\right|<\varepsilon\quad \mbox{and}\quad
\left| r^{-\chi}\left(\mathfrak{H}(r)-\mathfrak{a}_0\right) -\mathfrak{a}_3\right|<\varepsilon\quad \mbox{for all } r\in (0,r_0]. 
\end{equation}
For the definition of $\mathfrak{H}$, we refer to \eqref{reti}. 
For every $\xi=(\xi_1,\xi_2,\xi_3)\in \mathbb R^3$ with $\max_{1\leq j\leq 3} |\xi_j|<\varepsilon$, we define $F_1(\xi)$ and $F_2(\xi)$ as follows
\begin{equation} \label{key1}
\begin{aligned}
& F_1(\xi)=-\left[\left(\xi_2+\mathfrak{a}_3\right)|\xi_3|^{\frac{\chi}{\eta_3}}+\mathfrak{a}_0 \right]|\xi_3|^{\frac{2\sqrt{\rho^2-\lambda}-\eta_1}{\eta_3}},\\
& F_2(\xi)=\chi\mathfrak{a}_3-
\left( \gamma+\xi_1 |\xi_3|^\frac{\eta_1}{\eta_3}\right)^q
\left| -F_1(\xi)\,|\xi_3|^{\frac{\eta_1}{\eta_3}} 
-\Theta_+\left(\gamma+\xi_1 |\xi_3|^\frac{\eta_1}{\eta_3} \right) \right|^m
\end{aligned}
\end{equation}
and we write $\mathbf F(\xi)=(F_1(\xi),F_2(\xi),0)$.
Thus, 
using \eqref{etar}, we get that $\mathbf F$ is a $C^1$-function on the open ball $B_\varepsilon(\mathbf 0)\subset \mathbb R^3$ such that  
$\mathbf F(\mathbf 0)=\mathbf 0$ and $D\mathbf F(\mathbf 0)=0$.

For every
$r\in (0,r_0]$, we set 
$ t=\log \,(r_0/r)$
and define 
$\mathbf X(t)=(X_1(t),X_2(t), X_3(t))$ by 
\begin{equation} \label{trogi}
X_1(t)=r^{-\eta_1}\left( r^{\Theta_+}u(r)-\gamma \right),\quad X_2(t)=r^{-\chi}\left(\mathfrak{H}(r)-\mathfrak{a}_0\right) -\mathfrak{a}_3,\quad \mbox{and}\quad X_3(t)=r^{\eta_3}.
\end{equation}
It follows that $\mathbf{X}(t)$ solves for $t\geq 0$ 
the following system
\begin{equation}\label{sas1}
\mathbf{X}'(t)=A\mathbf{X}+\mathbf{F}(\mathbf{X}),
\end{equation}
where $A=\mbox{diag}\,[\eta_1, \chi,-\eta_3]$ is a diagonal matrix with the entries on the main diagonal given by $\eta_1$, $ \chi$ and $-\eta_3$. Since $\eta_1,\chi>0$ and $-\eta_3<0$, we obtain that $\mathbf 0$ is a saddle critical point for \eqref{sas1}, the behaviour of which near zero is approximated by that of 
its linearization $\mathbf{X'}=A\mathbf{X}$ at $\mathbf{X}=\mathbf 0$.

Let $\phi_t$ be the flow of the nonlinear system \eqref{sas1}. By the Stable Manifold Theorem, 
there exist $\varepsilon_0\in (0,\varepsilon)$ small and two differentiable functions $w_1,w_2:(-\varepsilon_0,\varepsilon_0)\rightarrow (-\varepsilon_0,\varepsilon_0)$ such that the stable manifold $S$ for \eqref{sas1} is 
defined by  
$X_1=w_1(X_3)$ and $X_2=w_2(X_3)$ 
such that for all $t\geq 0$, we have $\phi_t(S)\subseteq S$ and for all $\mathbf{x}_0 \in S$, $\lim_{t\rightarrow \infty} \phi_t(\mathbf{x}_0)=0$. 
Therefore,  
for every $x_{03}\in (-\varepsilon_0, \varepsilon_0)$, the system \eqref{sas1} subject to the initial condition
\begin{equation}\label{mib2}
\mathbf{X}(0)=(w_1(x_{03}), w_2(x_{03}),x_{03})
\end{equation}
has a solution $\mathbf{X}$ on $[0,\infty)$ satisfying \begin{equation} \label{mib3} \lim_{t\to \infty} \mathbf X(t)=\mathbf 0.  
\end{equation}
Since $X_3'(t)=-\eta_3 X_3(t)$, we get that $X_3(t)=x_{03}\,e^{-\eta_3 t}$ for all $t\geq 0$.

\vspace{0.2cm}
Let $0<r_0<\varepsilon_0^{1/\eta_3}$ be arbitrary.
We next construct a positive radial solution $u$ of \eqref{eq1} in $B_{r_0}(0)\setminus \{0\}$ such that \eqref{refi} and \eqref{woam} hold. To this aim, let $\mathbf{X}$ be the above solution of \eqref{sas1}, subject to \eqref{mib2}, corresponding to $x_{03}=r_0^{\eta_3}\in (0,\varepsilon_0)$.  
We set $r=r_0\,e^{-t}$ for $t\geq 0$ and define 
\begin{equation} \label{miv}
u(r)= \left(X_1(t)(X_3(t))^{\frac{\eta_1}{\eta_3}}+\gamma\right)(X_3(t))^{-
	\frac{\Theta_+}{\eta_3}}\quad \mbox{for every } r\in (0,r_0].
\end{equation}
Since $X_3(t)=r^{\eta_3}$, by differentiating \eqref{miv} with respect to $t$ and using \eqref{sas1}, we regain \eqref{trogi}. Then, 
$u$ is a positive radial solution of \eqref{eq1} in $B_{r_0}(0)\setminus \{0\}$ such that \eqref{refi} and \eqref{woam} hold.

\vspace{0.2cm}
(b) Let $\mathfrak a_1=0$.  
We fix $\varepsilon\in (0,1)$ small such that 
$ \varepsilon^2<\gamma/2$ and $
\varepsilon^2 (|\Theta_+|+\mathfrak a_3+\varepsilon) <\gamma\,|\Theta_+|$.   

By our assumptions, we can choose $\eta>0$ small satisfying
$ 0<\eta< \kappa(\Theta-\Theta_+)/2 $.  
Recall that $u$ satisfies \eqref{woam} with $\mathfrak a_0=\mathfrak a_1=0$. Then, let  
$r_0\in (0,R)$ be small satisfying $r_0<\varepsilon^{1/\eta}$ such that 
\begin{equation} \label{ineo}
\left|r^{-\eta} \left(r^{\Theta_+}u(r)-\gamma \right)\right|<\varepsilon\quad \mbox{and}\quad
\left| r^{-\chi}\,\mathfrak{H}(r) -\mathfrak{a}_3\right|<\varepsilon
\end{equation}
for every $r\in (0,r_0]$, where $\mathfrak a_3=\gamma^{q+m}\,|\Theta_+|^m/\chi$. 

For every $\xi=(\xi_1,\xi_2,\xi_3)\in \mathbb R^3$ with $\max_{1\leq j\leq 3} |\xi_j|<\varepsilon$, we define $\mathbf F(\xi)=(F_1(\xi),F_2(\xi),0)$ with $F_1(\xi)$ and $F_2(\xi)$ given by 
\begin{equation} \label{defx}
\begin{aligned} 
&F_1(\xi)=
-\left(\xi_2+\mathfrak a_3\right)|\xi_3|^{\frac{\kappa (\Theta-\Theta_+)}{\eta}-1},\\
& F_2(\xi)=\chi\mathfrak a_3-(\gamma+\xi_1\xi_3)^q \left|(\xi_2+\mathfrak a_3)|\xi_3|^\frac{\kappa(\Theta-\Theta_+)}{\eta}-\Theta_+(\gamma+\xi_1\xi_3)\right|^m.
\end{aligned} \end{equation}

For every
$r\in (0,r_0]$, we set 
$ t=\log \,(r_0/r)$
and define 
$\mathbf X(t)=(X_1(t),X_2(t), X_3(t))$ as follows 
\begin{equation} \label{trog}
X_1(t)=r^{-\eta}\left( r^{\Theta_+}u(r)-\gamma \right),\quad X_2(t)=r^{-\chi}\,\mathfrak{H}(r) -\mathfrak{a}_3,\quad \mbox{and}\quad X_3(t)=r^\eta.
\end{equation}
We have $ \lim_{t\to \infty}\mathbf X(t)=\mathbf 0$ using \eqref{ros2} and our choice of $\eta$. 
Then, $\mathbf{X}(t)$ solves  
\begin{equation}\label{sis}
\mathbf{X}'(t)=A\mathbf{X}+\mathbf{F}(\mathbf{X})\quad \mbox{for  } t\geq 0,
\end{equation}
where $A=\mbox{diag}\,[\eta, \chi,-\eta]$ is a diagonal matrix with the entries on the main diagonal given by $\eta$, $\chi$ and $-\eta$. 
Now, $\mathbf F$ is a $C^1$-function on the open ball $B_\varepsilon(\mathbf 0)\subset \mathbb R^3$ satisfying 
$\mathbf F(\mathbf 0)=\mathbf 0$ and $D\mathbf F(\mathbf 0)=0$. Since $A$ has two positive eigenvalues ($\eta$ and $\chi$) and one negative eigenvalue ($-\eta$), it follows that $\mathbf 0$ is a saddle critical point for the system \eqref{sis}. We can apply the Stable Manifold Theorem to \eqref{sis}.  
Let $\phi_t$ be the flow of the nonlinear system \eqref{sis} in relation to which we obtain the existence of a (local) stable manifold $S$ of dimension one. As in part (a) above,  
there exist $\varepsilon_0\in (0,\varepsilon)$ small and two differentiable functions $w_1,w_2:(-\varepsilon_0,\varepsilon_0)\rightarrow (-\varepsilon_0,\varepsilon_0)$ such that the stable manifold $S$ is 
defined by  
$X_1=w_1(X_3)$ and $X_2=w_2(X_3)$ 
such that for all $t\geq 0$, we have $\phi_t(S)\subseteq S$ and for all $\mathbf{x}_0 \in S$, $\lim_{t\rightarrow \infty} \phi_t(\mathbf{x}_0)=0$. 

Hence,  
for every $x_{03}\in (-\varepsilon_0, \varepsilon_0)$, the system \eqref{sis} subject to the initial 
condition \eqref{mib2} 
has a solution $\mathbf{X}$ on $[0,\infty)$ satisfying 
\eqref{mib3}. 
Moreover, 
$X_3(t)=x_{03}\,e^{-\eta t}$ for all $t\geq 0$.

Let $0<r_0<\varepsilon_0^{1/\eta}$ be arbitrary. 	
Let $\mathbf{X}$ be the above solution of \eqref{sis}, subject to \eqref{mib2}, corresponding to $x_{03}=r_0^\eta$. 
By setting $r=r_0\,e^{-t}$ for $t\geq 0$, we get $X_3(t)=r^\eta$. We define
\begin{equation} \label{micc}
u(r)= \left(X_1(t)X_3(t)+\gamma\right)(X_3(t))^{-
	\frac{\Theta_+}{\eta}}\quad \mbox{for every } r\in (0,r_0].
\end{equation}
By differentiating \eqref{micc} with respect to $t$, we recover the expression of $X_2$ in \eqref{trog}. Hence, using the differential equation for $X_2$ in \eqref{sis}, we conclude that $u$ is a positive  radial solution of \eqref{eq1} in $B_{r_0}(0)\setminus \{0\}$ satisfying \eqref{refi} thanks to \eqref{mib3}. This completes the proof of Lemma~\ref{exo2}. 
\end{proof}

\begin{rem} \label{rain} When $\chi<0$, the same approach as in the proof of Lemma~\ref{exo2} (with $\mathfrak a_1=0$)  yields directly that for every $\gamma\in (0,\infty)$, there exists $r_0>0$ such that for all $R\in (0,r_0)$, equation \eqref{eq1} in 
$B_R (0) \setminus \{0\}$ 
has infinitely many positive radial solutions satisfying \eqref{ros2}.  
	Indeed, the transformation in \eqref{trog} leads to the system in \eqref{sis}, which has $\mathbf 0$ as a saddle critical point. However, in this case the matrix $A$ has {\em two} negative eigenvalues and only one positive eigenvalue. Thus, when describing the behaviour of \eqref{sis} near $\mathbf 0$, the local stable manifold $S$ has dimension two. So, there exist $\varepsilon_0\in (0,\varepsilon)$ small and a differentiable function $w:(-\varepsilon_0,\varepsilon_0)\times (-\varepsilon_0,\varepsilon_0)\to (-\varepsilon_0,\varepsilon_0)$ such that
	$S$ is defined by the equation 
	$X_1=w(X_2,X_3)$. Then, for every $x_{02},x_{03} \in (-\varepsilon_0, \varepsilon_0)$, the system \eqref{sis}, subject to \eqref{mari2}, 
	has a solution $\mathbf{X}$ on $[0,\infty)$ satisfying \eqref{iot1}. 
	We
	fix $r_0\in (0,\varepsilon_0^{1/\eta})$. We choose $x_{03}=r_0^{\eta}$
	and let $x_{02}\in (-\varepsilon_0,\varepsilon_0)$ be arbitrary. We set $r=r_0\,e^{-t}$ for $t\geq 0$ and define $u$ as in \eqref{micc}, which 
	a positive radial solution of 
	\eqref{eq1} in $B_{r_0}(0)\setminus \{0\}$ satisfying \eqref{ros2}. We obtain 
infinitely many such solutions by varying $x_{02}\in (-\varepsilon_0,\varepsilon_0)$. 
\end{rem}

%%%

\section{The asymptotic profile $\Phi_{\rho,\lambda}^+$ for $\lambda=\rho^2$ and $\Theta>\Theta_{\pm}$ (Proof of Theorem \ref{th81})}

Throughout this section, we let \eqref{cond1} hold, $\rho\in \mathbb{R}, \lambda=\rho^2$ and $\Theta>\Theta_\pm=-\rho$. 

\begin{lemma}\label{biq1} Let $u$ be any positive radial solution of \eqref{eq1} in $B_R(0)\setminus \{0\}$ for $R>0$ such that 
\begin{equation}\label{zob}
\lim_{r\rightarrow 0^+}\frac{u(r)}{\Phi_{\rho,\lambda}^+(r)}=\lim_{r\rightarrow 0^+}\frac{r^{-\rho} \, u(r)}{\log \,(1/r)}=\gamma\in (0,\infty).
\end{equation} 
Then, by defining $\vartheta=\kappa \left(\Theta+\rho\right)>0$, there exists $\mathfrak{c}_0\in \mathbb R$ such that as $r\to 0^+$, we have
\begin{equation} \label{wimn}
u(r)=\left\{ \begin{aligned}
&\gamma \,r^{\rho}\log \,(1/r)+\mathfrak{c}_0\,r^\rho+\frac{\gamma^{q+m} |\rho|^m}{\vartheta^2} \,r^{\vartheta+\rho}\left|\log r\right|^{m+q}(1+o(1))&& \mbox{if } \rho\not=0,&	\\	
& \gamma\,\log \, (1/r)+\mathfrak{c}_0+
\frac{\gamma^{q+m}}{\vartheta^2} \,r^{\vartheta}\left| \log r\right|^q (1+o(1)) && \mbox{if } \rho=0.& 
\end{aligned} \right.
\end{equation}
\end{lemma}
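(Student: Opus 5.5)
Since $\lambda=\rho^2$, the two roots of $t^2+2\rho t+\lambda$ coincide at $\Theta_\pm=-\rho$, and $r^\rho$, $r^\rho\log(1/r)$ span the kernel of the radial operator. The plan is to imitate Lemma~\ref{dimi1}, using the first-integral quantity
\[
\mathfrak H(r):=r^{1-\rho}u'(r)-\rho\, r^{-\rho}u(r)=r\,\frac{d}{dr}\bigl(r^{-\rho}u(r)\bigr).
\]
A direct computation from \eqref{ez}, using $1-2\rho$ and $\lambda=\rho^2$, gives
\[
\frac{d}{dr}\bigl(r\,\mathfrak H'(r)\bigr)=r^{\theta+1-\rho}u^q|u'|^m\ge 0 ,
\]
since $(r^{-\rho}u)''+\frac{1}{r}(r^{-\rho}u)'=r^{-\rho}\bigl(u''+(1-2\rho)u'/r+\rho^2u/r^2\bigr)$. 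Thus $r\mathfrak H'(r)=r\frac{d}{dr}\bigl[r\frac{d}{dr}(r^{-\rho}u)\bigr]$ is nondecreasing on $(0,R)$, and its limit $\ell_0\in[-\infty,\infty)$ exists as $r\to0^+$.

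First I determine the leading behaviour of $u'$. Write $w(s)=r^{-\rho}u(r)$ with $s=\log(1/r)$. Then $w''(s)=r^{2-\rho}r^{\theta}u^q|u'|^m\ge0$, so $w$ is convex in $s$. By \eqref{zob}, $w(s)/s\to\gamma$, and convexity then gives $w'(s)\to\gamma$, that is, $\mathfrak H(r)\to-\gamma$. Consequently
\[
r^{1-\rho}u'(r)=\rho\gamma\log(1/r)(1+o(1))\quad \text{if } \rho\ne0,\qquad r\,u'(r)\to-\gamma \quad \text{if } \rho=0 .
\]
Substituting into the right-hand side yields, as $r\to0^+$,
\[
w''(s)=\gamma^{q+m}|\rho|^m s^{q+m}e^{-\vartheta s}(1+o(1)) \quad \text{if } \rho\neq0,
\]
since $r^{2-\rho+\theta}\cdot r^{\rho q}\cdot r^{(\rho-1)m}=r^{\vartheta}$ by the definition of $\vartheta$. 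When $\rho=0$ the same computation gives $w''(s)=\gamma^{q+m}s^q e^{-\vartheta s}(1+o(1))$.

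Next I integrate twice from $s=\infty$. Because $\vartheta>0$, $w''$ is integrable at infinity, so $w'(s)=\gamma-\int_s^\infty w''=\gamma-\frac{\gamma^{q+m}|\rho|^m}{\vartheta}s^{q+m}e^{-\vartheta s}(1+o(1))$, with the $s^{q+m}$ replaced by $s^q$ when $\rho=0$; the asymptotics follow by L'H\^opital. Then $w(s)-\gamma s$ has a finite limit $\mathfrak c_0$, and
\[
w(s)=\gamma s+\mathfrak c_0+\frac{\gamma^{q+m}|\rho|^m}{\vartheta^2}s^{q+m}e^{-\vartheta s}(1+o(1)).
\]
Multiplying by $r^\rho$ and using $s=|\log r|$ gives \eqref{wimn} in both cases.

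The main obstacle is justifying $w'(s)\to\gamma$, and hence the asymptotics of $u'$, from \eqref{zob} alone. Convexity handles this cleanly: $w'$ is monotone, so its limit exists, and L'H\^opital identifies it as $\gamma$. A smaller issue is that $u'$ has a fixed sign near $0$ when $\rho\ne0$, which is needed for $|u'|^m$; this follows from the asymptotic $r^{1-\rho}u'\sim\rho\gamma\log(1/r)$.
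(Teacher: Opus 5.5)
Your proof is correct and is essentially the paper's argument written in the variable $s=\log(1/r)$. Convexity of $w$ is the paper's monotonicity of $\mathfrak H$, since $w'(s)=-\mathfrak H(r)$, and $w(s)-\gamma s$ is the paper's $\mathfrak T(r)=r^{-\rho}u(r)+\gamma\log r$, handled by the same L'H\^opital step ($\mathfrak H\to-\gamma$) followed by two integrations.

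One slip: your first display should read $\mathfrak H'(r)=r^{\theta+1-\rho}u^q|u'|^m$ (equivalently $w''(s)=r\,\mathfrak H'(r)$), not $\frac{d}{dr}\bigl(r\mathfrak H'(r)\bigr)=\cdots$. Consequently the claim that $r\mathfrak H'$ is nondecreasing is unfounded, but you never use it, so nothing downstream is affected.
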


\begin{proof}
We define $\mathfrak{H}$ as in \eqref{reti}, that is, $\mathfrak{H}(r):=r\left(r^{-\rho}\,u(r) \right)'$ for every $r\in (0,R)$ (since
$\Theta_\pm=-\rho$). As before, $\mathfrak H$ is non-decreasing on $(0,R)$.   
From \eqref{zob} and L'H\^opital's rule, we have
\begin{equation}\label{bun1} \lim_{r\to 0^+} \mathfrak H(r)=\lim_{r\to 0^+} \frac{r^{-\rho} u(r)}{\log r}=
-\gamma,
\end{equation}
which implies that 
 \begin{equation} \label{llim} \lim_{r\to 0^+} \frac{r^{1-\rho}\,u'(r)}{\log \,(1/r)}=\gamma\rho \ \mbox{ if } \rho\not=0,\quad 
\mbox{while } \lim_{r\to 0^+} ru'(r)=-\gamma \ \mbox{ if } \rho=0.   
\end{equation}
For every $ r\in (0,\min\{1,R\}) $, we define 
$ \mathfrak T(r):=r^{-\rho} u(r)+ \gamma \log r$. 
Using \eqref{bun1} and \eqref{llim}, we obtain that  $r\mathfrak T'(r)=\mathfrak H(r)+\gamma\to 0$ as $r\to 0^+$ and 
\begin{equation*} %\label{bun0}
\left(r\mathfrak T'(r)\right)'=\mathfrak H'(r)=r^{-\rho+1+\theta} u^q(r)|u'(r)|^m=
\left\{\begin{aligned}
	& \gamma^{q+m} \,|\rho|^m\, r^{\vartheta-1}\left|\log r\right|^{q+m}(1+o(1))
	&& \mbox{if } \rho\not=0,\\
	& \gamma^{q+m}\,
	r^{\vartheta-1}\left|\log r\right|^{q}(1+o(1))
		&& \mbox{if } \rho=0.
	\end{aligned} \right. 
	\end{equation*}
Hence, using  
L'H\^opital's rule, as $r\to 0^+$, we get 
\begin{equation} \label{bun3}  \vartheta \,\frac{\mathfrak T'(r)}{r^{\vartheta-1}}= 
\left\{\begin{aligned}
& \gamma^{q+m} |\rho|^m \left| \log r\right|^{q+m}(1+o(1))
&& \mbox{if } \rho\not=0,\\
&  \gamma^{q+m} \,
\left| \log r\right|^{q}(1+o(1))
&& \mbox{if } \rho=0.
\end{aligned} \right. 
\end{equation}
Since $\vartheta>0$, we conclude that there exists $\lim_{r\to 0^+} \mathfrak T(r):=\mathfrak c_0\in \mathbb R$ and \eqref{wimn} holds. 
 \end{proof}
 
 \begin{lemma}\label{biq2}
 For every $\gamma\in (0,\infty)$, there exists $R>0$ small such that \eqref{eq1} in $B_R(0)\setminus\{0\}$ has infinitely many positive radial solutions satisfying \eqref{zob}.
 \end{lemma}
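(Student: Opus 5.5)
Set $t=\log(1/r)$ and write $u(r)=r^{\rho}w(t)$. Since $\lambda=\rho^2$, the indicial polynomial $\tau^2+2\rho\tau+\lambda=(\tau+\rho)^2$ has $-\rho$ as a double root. A direct computation from \eqref{ez} should then give
\[
w''(t)=e^{-\vartheta t}\,w^q(t)\,\bigl|\rho\, w(t)-w'(t)\bigr|^m ,\qquad \vartheta=\kappa\left(\Theta+\rho\right)>0,
\]
using $r u'(r)=r^{\rho}\left(\rho w-w'\right)$ and $2+\theta-\rho+\rho(q+m)-\rho=\vartheta$. Condition \eqref{zob} becomes $w(t)/t\to\gamma$ as $t\to\infty$. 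Lemma~\ref{biq1} tells us which solutions to look for: for each $\mathfrak c_0\in\mathbb R$ we seek $w(t)=\gamma t+\mathfrak c_0+o(1)$ with $w'(t)\to\gamma$. Each pair $(\gamma,\mathfrak c_0)$ will give a solution, and distinct values of $\mathfrak c_0$ give distinct solutions by \eqref{wimn}. This is what produces infinitely many solutions.

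This setup sidesteps the non-hyperbolic critical point mentioned in the introduction, because I would not use a three-dimensional dynamical system. Instead I would solve directly the integral equation
\[
w(t)=\gamma t+\mathfrak c_0-\int_t^\infty (s-t)\,e^{-\vartheta s}\,w^q(s)\,\bigl|\rho w(s)-w'(s)\bigr|^m\,ds ,\qquad t\geq T,
\]
which is equivalent to the ODE with $w-\gamma t-\mathfrak c_0\to 0$ and $w'\to\gamma$. Work in the closed set $\mathcal K$ of $C^1$ functions $w$ on $[T,\infty)$ with $|w(t)-\gamma t-\mathfrak c_0|\le 1$ and $|w'(t)-\gamma|\le 1$, with the norm $\|h\|=\sup\left(|h|+|h'|\right)$. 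On $\mathcal K$ and for $T$ large:
\begin{itemize}
\item $w\ge \gamma t/2>0$;
\item $\rho w-w'=\rho\gamma t+O(1)$ is bounded away from $0$ when $\rho\neq 0$, and equals $-\gamma+O(1)$ when $\rho=0$. In the latter case I shrink the ball (replace $1$ by $\gamma/2$ in the derivative bound) so that $|w'|\ge\gamma/2$.
\end{itemize}
Consequently the map $(w,w')\mapsto w^q|\rho w-w'|^m$ is locally Lipschitz along $\mathcal K$, even when $q<1$ or $m<1$, with Lipschitz constant and size both $O\!\left(t^{q+m}\right)$.

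The estimates are then routine. The integrand is bounded by $C s^{q+m}e^{-\vartheta s}$, so the operator maps $\mathcal K$ into itself: its deviation from $\gamma t+\mathfrak c_0$ and its derivative deviation $\int_t^\infty(\cdots)\,ds$ are both $O\!\left(t^{q+m}e^{-\vartheta t}\right)$, which is small for large $T$. The operator is also a contraction with constant $C\int_T^\infty s^{q+m+1}e^{-\vartheta s}\,ds<1$. Banach's fixed point theorem gives a unique $w=w_{\mathfrak c_0}$.

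Setting $R=e^{-T}$ and $u(r)=r^{\rho}w(\log(1/r))$ yields a positive radial solution of \eqref{eq1} in $B_R(0)\setminus\{0\}$ satisfying \eqref{zob}, with $\mathfrak c_0$ as the second-order coefficient in \eqref{wimn}. Letting $\mathfrak c_0$ vary over a bounded interval keeps a uniform $T$, and hence a uniform $R$, for a fixed $\gamma$. This gives infinitely many distinct solutions. Alternatively, one can vary $\mathfrak c_0$ after applying the scaling $T_j$ from \eqref{scale}.

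The main obstacle is the nonsmoothness of $|\cdot|^m$ and $(\cdot)^q$, that is, keeping $\rho w-w'$ away from zero uniformly on $\mathcal K$. When $\rho\neq0$ the growth $\rho\gamma t$ handles this. When $\rho=0$ it relies on $w'\approx\gamma>0$, which is why the derivative component is built into the norm. I would also double-check the exponent bookkeeping that yields exactly $e^{-\vartheta t}$ in the transformed equation.
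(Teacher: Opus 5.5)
Your argument is correct, and it takes a genuinely different route from the paper's proof.

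\textbf{The paper's route.} It works with the variables $X_1=r^{-\eta_1}(\mathfrak H(r)+\gamma)$, $X_2=r^{\rho}/u(r)$ and $X_3=r^{\eta_3}u(r)$ from \eqref{s22}. These give the autonomous system \eqref{s22p}, whose linearization $\mathrm{diag}\,[\eta_1,0,-(\rho+\eta_3)]$ has a zero eigenvalue. Standard stable-manifold theory therefore does not apply. The paper instead verifies the structural conditions \eqref{hup22}, in particular $F_2(\xi)\le -C_1|\xi_2|^2$, and invokes Theorem 7.1 in the appendix of \cite{CRV}. It then gets infinitely many solutions by varying $x_{02}$.

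\textbf{Your route.} You exploit the double root. For $\lambda=\rho^2$, the substitution $u=r^{\rho}w(\log(1/r))$ turns $r^{2-\rho}\,\mathbb L_{\rho,\lambda}(u)$ into exactly $w''$. The problem thus becomes an exponentially small, locally Lipschitz perturbation of $w''=0$, which a contraction on a $C^1$ ball around $\gamma t+\mathfrak c_0$ solves.

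\textbf{What each buys.} Your proof is more elementary and self-contained, since it needs no non-hyperbolic invariant-manifold theorem. It also gives uniqueness within your class $\mathcal K$. It proves more than the lemma: every pair $(\gamma,\mathfrak c_0)$ is realized (with $R$ depending on $|\mathfrak c_0|$). So the constant $\mathfrak c_0$ in \eqref{wimn} is a free parameter, and distinctness of the solutions is immediate, in the spirit of Lemma~\ref{exo2}. The paper's route fits its uniform dynamical-systems methodology, whereas yours relies on the exact factorization available only at the double root.

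\textbf{Two slips to fix.} First, since $w''=e^{-\vartheta t}w^q|\rho w-w'|^m>0$ and $w'\to\gamma$, the integral equation must read
\begin{equation*}
w(t)=\gamma t+\mathfrak c_0+\int_t^\infty (s-t)\,e^{-\vartheta s}\,w^q(s)\,\bigl|\rho w(s)-w'(s)\bigr|^m\,ds ,\qquad w'(t)=\gamma-\int_t^\infty e^{-\vartheta s}\,w^q(s)\,\bigl|\rho w(s)-w'(s)\bigr|^m\,ds .
\end{equation*}
The plus sign is essential: your version yields $w''=-e^{-\vartheta t}w^q|\rho w-w'|^m$. The corrected sign also matches the positive third term in \eqref{wimn}.

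Second, the exponent bookkeeping should read $2+\theta-m+\rho(q+m)-\rho=\kappa(\Theta+\rho)=\vartheta$, not $2+\theta-\rho+\rho(q+m)-\rho$. Your transformed equation is nonetheless correct. Neither slip affects the self-map or contraction estimates, which use only absolute values.
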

 
\begin{proof}
 Let $u$ be a positive radial solution of \eqref{eq1} in $B_R(0)\setminus \{0\}$ for $R>0$ such that \eqref{zob} holds.
	We first fix $\eta_1>0$ small and then $\eta_3$ greater than (and close enough to) $-\rho$ such that
	\begin{equation} \label{chic}
	0<\eta_1<\vartheta \quad\mbox{and}\quad 0<\eta_3+\rho<\min\left\{ \eta_1, \frac{\vartheta-\eta_1}{m+q+1}\right\}.
	\end{equation}

	Let $\varepsilon\in (0,1)$ be small such that 
	$\varepsilon^2<\gamma/2$. When $\rho\not=0$, we also require that $\varepsilon(\gamma-\varepsilon^2)<|\rho|/2$.  
	Since $u$ satisfies \eqref{bun3} and \eqref{chic} holds, we can diminish $R>0$ to ensure that 
	\begin{equation*}
	\left|r^{-\eta_1}(\mathfrak H(r)+\gamma)\right|<\varepsilon, \quad \frac{r^{\rho}}{u(r)}<\varepsilon\quad \text{and}\quad  r^{\eta_3}u(r)<\varepsilon\quad \mbox{for every } 0<r\leq R.
	\end{equation*}
	
We set $t=\log \,(R/r)$ for $r\in (0,R]$ and define $\mathbf{X}(t)=(X_1(t),X_2(t),X_3(t))$ by
	\begin{equation}\label{s22}
	X_1(t)=r^{-\eta_1}\left(\mathfrak H(r)+\gamma\right),\quad X_2(t)=\frac{r^{\rho}}{u(r)}\quad\text{and}\quad  X_3(t)=r^{\eta_3}\,u(r).
	\end{equation}
		For all $\xi=(\xi_1,\xi_2,\xi_3)\in \mathbb{R}^3$ with $\max_{1\leq j\leq 3}|\xi_j|<\varepsilon$, we define $\mathbf{F}(\xi)=(F_1(\xi),F_2(\xi),F_3(\xi))$ by
	\begin{equation*}\begin{aligned}
	&F_1(\xi)=-|\xi_2|^{\frac{\vartheta-\eta_1}{
			\eta_3+\rho}-(m+q)}|\xi_3|^{\frac{\vartheta-\eta_1}{\eta_3+\rho}}
	\left |\rho-\xi_2\left(\gamma-\xi_1|\xi_2\,\xi_3|^{\frac{\eta_1}{\eta_3+\rho}}\right)\right |^m,\\
	&F_2(\xi)=\xi_2^2\left(-\gamma+\xi_1|\xi_2\,\xi_3|^{
		\frac{\eta_1}{\eta_3+\rho}}\right),\quad 
	F_3(\xi)=\gamma \xi_2\xi_3-\xi_1|\xi_2\xi_3|^{1+\frac{\eta_1}{\eta_3+\rho}}.
	\end{aligned}
	\end{equation*}	
	We obtain that 
	$\lim_{t\rightarrow \infty} \mathbf{X}(t)=0$ and $\mathbf{X}(t)$ solves for $t\geq 0$ the following system
	\begin{equation}\label{s22p}
	\mathbf{X}'(t)=A\mathbf{X}+\mathbf{F}(\mathbf{X}),
	\end{equation}
	where $A=\mbox{diag}\,[\eta_1,0,-(\rho+\eta_3)]$ is a diagonal matrix with the elements on the main diagonal $\eta_1$, $0$ and $-(\rho+\eta_3)$, respectively.
The function $\mathbf{F}$ is $C^1$ on the open ball $B_\varepsilon(\mathbf 0)\subset \mathbb R^3$ with $\mathbf{F}(\mathbf{0})=0$ and $D\mathbf{F}(\mathbf{0})=0$.
Then, $\mathbf 0$ is a critical point for the system \eqref{s22p}. However, we cannot apply the stable manifold theory here since $\mathbf 0$ is a non-hyperbolic critical point. Indeed, the matrix $A$ has a positive eigenvalue $\eta_1$, a null eigenvalue and a negative eigenvalue given by $-(\rho+\eta_3)$.  

Nevertheless, our choice of $\eta_1$ and $\eta_3$ in \eqref{chic} ensure that the hypotheses of Theorem 7.1 in the Appendix of \cite{CRV} are satisfied for the system \eqref{s22p}. Indeed, 
there exists a positive constant $C_1$ such that for all $\xi=(\xi_1,\xi_2,\xi_3)$ with $\max_{1\leq j\leq 3}|\xi_j|<\varepsilon$, we have
	\begin{equation}\label{hup22}
	\left \{
	\begin{aligned}
	&\sum_{j=1}^{3}|F_j(\xi)|\leq C_1\sum_{j=1}^{3}\xi_j^2\quad\text{and}\quad \sum_{j=1}^{3}|\nabla F_j(\xi)|\leq C_1\sum_{j=1}^{3}|\xi_j|\\
	&F_2(\xi)\leq -C_1|\xi_2|^2 \quad\text{and}\quad F_2(\xi_1,0,\xi_3)=0.
	\end{aligned}
	\right.
	\end{equation}

	Hence, there exist $\varepsilon_0\in (0, \varepsilon/2)$ and a Lipschitz function $w:[0,\varepsilon_0]\times[0,\varepsilon_0]\rightarrow [0,\varepsilon_0]$ such that for all $(x_{02},x_{03})\in [0,\varepsilon_0]\times[0,\varepsilon_0]$, the system \eqref{s22p} subject to the initial condition
	\begin{equation}\label{co22}
	\mathbf{X}(\mathbf{0})=(w(x_{02},x_{03}), x_{02},x_{03})
	\end{equation}
	has a solution $\mathbf{X}$ on $[0,\infty)$ satisfying $\lim_{t\rightarrow \infty}\mathbf{X}(t)=0$.
	Moreover, the parametrized surface $(X_2,X_3)\rightarrow (w(X_2, X_3), X_2, X_3)$ is stable in the sense that $X_1(t)=w(X_2(t),X_3(t))$ for all $t\geq 0$.
	
	We fix $R\in (0,\varepsilon_0^{2/(\eta_3+\rho)})$. 
	For $x_{02}\in (R^{\eta_3+\rho}/\varepsilon_0, \varepsilon_0)$ arbitrary, we choose $x_{03}$ such that
	\begin{equation*}
	x_{03}=\frac{R^{\eta_3+\rho}}{x_{02}}\in (0,\varepsilon_0).
	\end{equation*}
For these $x_{02}$ and $x_{03}$, the system \eqref{s22p} subject to \eqref{co22}, has a solution $\mathbf{X}(t)$ for all $t\geq 0$ satisfying $\lim_{t\to \infty}\mathbf{X}(t)=0$ and, moreover, $X_2(t)$ and $X_3(t)$ are positive for all $t\geq 0$. Using such a solution $\mathbf{X}$ and setting $r=R\,e^{-t}$, we define
	\begin{equation} \label{maxi}
	u(r)=\left [(X_2(t))^{-\eta_3}(X_3(t))^{\rho}\right ]^{\frac{1}{\eta_3+\rho}}\quad\text{for all } r\in (0,R].
	\end{equation}
	Observe that $X_2(0)X_3(0)=x_{02}x_{03}=R^{\eta_3+\rho}$ and $$ \frac{d}{dt}(X_2(t)X_3(t))=-(\eta_3+\rho)X_2(t)X_3(t)\quad \mbox{for }t\geq 0.$$ It follows that 
	$
	X_2(t)\,X_3(t)=R^{\eta_3+\rho}\,e^{-(\eta_3
		+\rho)t}=r^{\eta_3+\rho}$ for all $t\geq 0$.
	Using this fact in \eqref{maxi}, we obtain the expressions of $X_2$ and $X_3$ in \eqref{s22}. 
	 By differentiating $X_2(t)=r^\rho /u(r)$ with respect to $t$ and using the differential equation for $X_2$ in \eqref{s22p}, we obtain the expression of $X_1$ in \eqref{s22}. Then, by differentiating the expression of $X_1$ with respect to $t$ and using \eqref{s22p}, we conclude that $u$ is a positive radial  solution of \eqref{eq1} in $B_{R}(0)\setminus \{0\}$ satisfying  $\lim_{r\to 0^+} \mathfrak H(r)=-\gamma$ and $r^{-\rho}u(r)\to +\infty$ as $r\to 0^+$ since $\lim_{t\to \infty}\mathbf{X}(t)=0$. Hence,  l'H\^opital's rule yields \eqref{zob} for our solution. By varying $x_{02}\in (R^{\eta_3+\rho}/\varepsilon_0, \varepsilon_0)$, we get infinitely many positive solutions of \eqref{eq1} in $B_{R}(0)\setminus \{0\}$ with the desired asymptotics in \eqref{zob}.    
	This finishes the proof of Lemma \ref{biq2}.
\end{proof}

%%%%%

%%%%%%
\section{The asymptotic profile $\Phi_{\rho,\lambda}^-$ for $\lambda\leq \rho^2$ and $\Theta>\Theta_-\not=0$ (Proof of Theorem~\ref{mix1})}

Throughout this section, we let 
\eqref{cond1} hold, $\lambda\leq \rho^2$ and $\Theta>\Theta_-\not=0$. 
 We define $$ \vartheta=\kappa \left(\Theta-\Theta_-\right)>0.$$ 
 
The proof of Theorem~\ref{mix1} follows from Lemmas~\ref{kiop1} and \ref{viz1}. 

\begin{lemma} \label{kiop1} 
Let $u$ be any positive radial solution of \eqref{eq1} in $B_{R}(0)\setminus \{0\}$ for $R>0$ such that 
\begin{equation} \label{lawn2}
\lim_{r\to 0^+} r^{\Theta_-} \,u(r)=\gamma\in (0,\infty).
\end{equation}
Then, we have 
\begin{equation} \label{flo1}
 \lim_{r\to 0^+} \frac{r^{\Theta_-}u(r)-\gamma}{r^\vartheta}=
 \lim_{r\to 0^+} 
\frac{ r^{1+\Theta_-} u'(r)+\Theta_- r^{\Theta_-} u(r) }{ \vartheta\, r^\vartheta}=\frac{\gamma^{q+m} |\Theta_-|^m} { \vartheta \left( \vartheta+2\sqrt{\rho^2-\lambda} \right)} 
=:C_\gamma>0. 
\end{equation} 
\end{lemma}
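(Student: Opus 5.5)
I will mirror the proof of Lemma~\ref{dimi1}, using the two ``first integrals'' obtained by factoring the operator through the two roots $\Theta_\pm$. Since $\Theta_\pm$ are the roots of $t^2+2\rho t+\lambda$, the ODE \eqref{ez} can be written as
\[
\bigl(r^{1+\Theta_\mp}\,u'(r)+\Theta_\pm\, r^{\Theta_\mp}u(r)\bigr)'=r^{1+\Theta_\mp+\theta}u^q(r)|u'(r)|^m .
\]
I will use the function $\mathfrak B(r)=r^{1+\Theta_+}u'(r)+\Theta_- r^{\Theta_+}u(r)$, which is non-decreasing and so has a limit in $[-\infty,\infty)$, together with
\[
\mathfrak K(r):=r^{1+\Theta_-}u'(r)+\Theta_-r^{\Theta_-}u(r)=r\bigl(r^{\Theta_-}u(r)\bigr)'.
\]

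First I determine the derivative. We have $\mathfrak B(r)=r^{\Theta_+-\Theta_-}\cdot r\bigl(r^{\Theta_-}u\bigr)'$. Write $r^{\Theta_-}u=\gamma+o(1)$. A L'H\^opital argument shows that $\mathfrak K(r)=r(r^{\Theta_-}u)'\to 0$. One way to do this is to note that $\mathfrak B$ has a limit and to use $\Theta_+-\Theta_-=2\sqrt{\rho^2-\lambda}\ge 0$. When $\lambda=\rho^2$ this exponent is $0$, and then $\mathfrak B=\mathfrak K$ is itself monotone; its limit must be $0$, since otherwise $r^{\Theta_-}u$ would behave like $\log r$. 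When $\lambda<\rho^2$, the identity $\mathfrak K(r)=r^{\Theta_--\Theta_+}\mathfrak B(r)$ shows that $\mathfrak K$ has a limit in $[-\infty,\infty]$, and that limit must be $0$ because $r^{\Theta_-}u$ converges. This gives
\[
\lim_{r\to0^+}r^{1+\Theta_-}u'(r)=-\gamma\Theta_-\ne0 .
\]
This is the step that uses $\Theta_-\neq0$. Consequently
\[
r^{1+\Theta_++\theta}u^q|u'|^m=\gamma^{q+m}|\Theta_-|^m\, r^{\Theta_+-\Theta_-+\vartheta-1}(1+o(1)),
\]
since $\theta+2-\Theta_+-\kappa\Theta_--m\Theta_-+m-\ldots$ simplifies, via $\theta+2-m=\kappa\Theta$, to the exponent $\vartheta+2\sqrt{\rho^2-\lambda}-1$.

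Next I identify the limit of $\mathfrak B$ and then expand. The exponent $\vartheta+2\sqrt{\rho^2-\lambda}$ is positive, so $\mathfrak B'$ is integrable at $0$ and $\mathfrak B(r)\to \mathfrak b_0\in\mathbb R$. I claim $\mathfrak b_0=0$. If $\lambda<\rho^2$, then $\mathfrak B=r^{2\sqrt{\rho^2-\lambda}}\mathfrak K$ with $\mathfrak K\to0$, which forces $\mathfrak b_0=0$. If $\lambda=\rho^2$, the claim is already contained in the first step. Integrating from $0$ then gives
\[
\mathfrak B(r)=\frac{\gamma^{q+m}|\Theta_-|^m}{\vartheta+2\sqrt{\rho^2-\lambda}}\,r^{\vartheta+2\sqrt{\rho^2-\lambda}}(1+o(1)),
\]
that is,
\[
\mathfrak K(r)=\vartheta C_\gamma\, r^{\vartheta}(1+o(1)).
\]
This is the middle limit in \eqref{flo1}. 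Finally, $\bigl(r^{\Theta_-}u-\gamma\bigr)'=\mathfrak K(r)/r$, so L'H\^opital's rule applied to $(r^{\Theta_-}u-\gamma)/r^\vartheta$ yields $C_\gamma$.

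The main obstacle is justifying that $\mathfrak K(r)\to0$ rigorously in the case $\lambda<\rho^2$, that is, excluding $\mathfrak K$ oscillating or tending to $\pm\infty$. Monotonicity of $\mathfrak B$ reduces this to showing $\lim\mathfrak B=0$. If instead $\mathfrak B\to\mathfrak b\neq0$ (finite or $-\infty$), then $\mathfrak K\sim\mathfrak b\, r^{-2\sqrt{\rho^2-\lambda}}$ is not integrable against $dr/r$, which contradicts the convergence of $r^{\Theta_-}u$. I will carry out this argument first, and only then do the asymptotic computations above.
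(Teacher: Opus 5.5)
Your overall architecture matches the paper's proof. Your $\mathfrak B$ is the paper's $\mathfrak D=r^{\vartheta+2\sqrt{\rho^2-\lambda}}\mathfrak G$, and your $\mathfrak K$ is $r^{\vartheta}\mathfrak G$. Once $r^{1+\Theta_-}u'(r)\to-\gamma\Theta_-$ is known, the rest is correct and agrees with the paper: the asymptotics of $\mathfrak B'$ (the exponent $\vartheta+2\sqrt{\rho^2-\lambda}-1$ is right, although your displayed simplification is garbled), the fact that $\lim\mathfrak B=0$, the integration, and the final L'H\^opital step. Your first step is also fine in the case $\lambda=\rho^2$, where $\mathfrak K=\mathfrak B$ is itself monotone.

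The gap is the first step when $\lambda<\rho^2$, and everything downstream depends on it.

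The identity $\mathfrak K=r^{-2\sqrt{\rho^2-\lambda}}\,\mathfrak B$ shows that $\mathfrak K$ has a limit only when $\lim\mathfrak B\neq 0$. Your own argument shows that $\lim\mathfrak B=0$, and in that case the product is an indeterminate $0\cdot\infty$ form.

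For the same reason, the sentence ``monotonicity of $\mathfrak B$ reduces this to showing $\lim\mathfrak B=0$'' is exactly the claim you have not proved. Ruling out $\mathfrak b\neq 0$ gives $\mathfrak B\to 0$, but that alone says nothing about $\mathfrak B(r)/r^{2\sqrt{\rho^2-\lambda}}$. Without $\mathfrak K\to 0$ you do not get $r^{1+\Theta_-}u'\to-\gamma\Theta_-$, and so you do not get the asymptotics of $\mathfrak B'$.

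There are two short repairs.

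The first is the paper's route. Use the other first integral you already wrote down,
$$\mathfrak H(r)=r^{1+\Theta_-}u'(r)+\Theta_+r^{\Theta_-}u(r).$$
It is also non-decreasing, and it is related to $\mathfrak K$ additively rather than multiplicatively:
$$\mathfrak H=\mathfrak K+2\sqrt{\rho^2-\lambda}\,r^{\Theta_-}u.$$
Hence $\mathfrak K$ has a limit in $[-\infty,\infty)$, and your integrability argument forces that limit to be $0$. The paper phrases this as L'H\^opital's rule applied with the monotone function $\mathfrak H$.

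The second repair keeps your $\mathfrak B$ and adds a Tauberian estimate. Since $\mathfrak B$ is non-decreasing with limit $0$, we have $\mathfrak B\ge 0$, hence $\mathfrak K\ge 0$. With $a=2\sqrt{\rho^2-\lambda}$,
\begin{equation*}
0\le \frac{1-2^{-a}}{a}\,\mathfrak K(r)\le \int_r^{2r} s^{-1-a}\,\mathfrak B(s)\,ds=(2r)^{\Theta_-}u(2r)-r^{\Theta_-}u(r)\to 0 .
\end{equation*}
Either way, the remainder of your proof goes through unchanged.
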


\begin{proof}
We first remark that for every small $r>0$, we have $u'(r)\not=0$. Indeed, if 
$\lambda\not=0$, then this assertion follows as in the proof of Lemma~\ref{wisp0}.
In turn, if $\lambda=0$, then the assumption $\Theta_-\not=0$ implies that $\rho>0$ and $\Theta_-=-2\rho<0$. 
From \eqref{azi1}, $r^{1-2\rho} u'(r)$ is non-decreasing on $(0,R)$ and 
\begin{equation} \label{lola1} 
\lim_{r\to 0^+} r^{1-2\rho} u'(r)=L_0\in [-\infty,\infty).
\end{equation}   
Using \eqref{lawn2}, we get $ \lim_{r\to 0^+} u(r)=0 $ and
	$ \lim_{r\to 0^+}  r^{-2\rho} u(r)= L_0/(2\rho)\geq 0$. Hence, 
$u'(r)> 0$ for every $r\in (0,R)$. (If $u'(r_0)=0$ for some $r_0\in (0,R)$, then $u\equiv 0$ on $(0,r_0]$, which is impossible.)

\vspace{0.2cm}
For every $r\in (0,R)$, we define $\mathfrak H(r)$ as in \eqref{reti} and 
\begin{equation} \label{hsi} 
\quad \mathfrak G(r):= 
\frac{ r^{1+\Theta_-}u'(r)+\Theta_-r^{\Theta_-}u(r)}{ r^\vartheta}.
\end{equation} 
	Then, $\mathfrak H(r)$ and  $ \mathfrak D(r):= r^{\vartheta+2\sqrt{\rho^2-\lambda}}\,\mathfrak G(r)$ are non-decreasing on $(0,R)$
since 
\begin{equation} \label{dri} \mathfrak H'(r)=r^{\Theta_-+1+\theta} u^q(r)|u'(r)|^m \quad \mbox{and} \quad
\mathfrak{D}'(r)=r^{\Theta_++\theta+1}u^q(r)\,|u'(r)|^m  
\end{equation}
for all $ r\in (0,R)$.    
Recalling that $\Theta_-\not=0$ and \eqref{lawn2} holds, by L'H\^opital's rule, we get
\begin{equation} \label{forv}  \lim_{r\to 0^+} r^{1+\Theta_-}\,u'(r)=-\gamma\,\Theta_-.
\end{equation}
Thus, $\lim_{r\to 0^+} r^\vartheta \mathfrak G(r)=0$ and 
$\lim_{r\to 0^+} \mathfrak{D}(r)=0$. 
By \eqref{lawn2}, \eqref{dri} and \eqref{forv}, we find that
\begin{equation} \label{nove} \frac{\mathfrak D'(r)}{r^{\vartheta+2\sqrt{\rho^2-\lambda}-1}}=
 r^{\theta+2+\Theta_--\vartheta}u^q(r)|u'(r)|^m\to 
\gamma^{q+m}\, |\Theta_-|^m\quad \mbox{as } r\to 0^+.  
\end{equation}
Using  \eqref{nove} and L'H\^opital's rule, we get
$$  \mathfrak G(r)=\frac{\mathfrak D(r)}{r^{\vartheta+2\sqrt{\rho^2-\lambda}}}
\to \frac{\gamma^{q+m}\, |\Theta_-|^m}{\vartheta+2\sqrt{\rho^2-\lambda}}=\vartheta\,C_\gamma\quad \mbox{as } r\to 0^+.
$$ 
By the definition of $\mathfrak G$ in \eqref{hsi} and L'H\^opital's rule, we get \eqref{flo1}. 
This completes the proof. 
\end{proof}

\begin{lemma} \label{viz1}  
For every $\gamma\in (0,\infty)$, there exists $R>0$ small such that 
\eqref{eq1} in $B_{R}(0)\setminus \{0\}$ has a
positive radial solution satisfying \eqref{flo1}.  
\end{lemma}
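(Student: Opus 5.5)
The plan is to follow the same stable-manifold construction already used in Lemma~\ref{viz2}, Lemma~\ref{dimi2} and Lemma~\ref{exo2}. Since the target asymptotics \eqref{flo1} involve two quantities tending to finite limits, I will encode both in the unknowns of a $3\times 3$ system: $r^{\Theta_-}u(r)-\gamma$, suitably rescaled, and $\mathfrak G(r)$ from \eqref{hsi}. The origin should then be a hyperbolic saddle.

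Concretely, I fix $\eta\in(0,\vartheta/2)$ and set $t=\log(r_0/r)$. The variables are
\begin{equation*}
X_1(t)=r^{-\eta}\left(r^{\Theta_-}u(r)-\gamma\right),\quad X_2(t)=\mathfrak G(r)-\vartheta C_\gamma,\quad X_3(t)=r^\eta .
\end{equation*}
I use $\mathfrak H(r)=r\,(r^{\Theta_-}u)'$, so that $\mathfrak G=r^{-\vartheta}\mathfrak H$. I also use $\mathfrak H'=r^{\Theta_-+1+\theta}u^q|u'|^m$, together with the identity $\Theta_--\Theta_+=-2\sqrt{\rho^2-\lambda}$. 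From these, and writing $a:=\vartheta+2\sqrt{\rho^2-\lambda}$, I expect the following system:
\begin{itemize}
\item $X_1'=\eta X_1-(X_2+\vartheta C_\gamma)X_3^{\vartheta/\eta-1}$;
\item $X_2'=a(X_2+\vartheta C_\gamma)-g(\mathbf X)$, where $g(\mathbf X)=(\gamma+X_1X_3)^q\,\bigl|(X_2+\vartheta C_\gamma)X_3^{\vartheta/\eta}-\Theta_-(\gamma+X_1X_3)\bigr|^m$;
\item $X_3'=-\eta X_3$.
\end{itemize}
Since $g(\mathbf 0)=\gamma^{q+m}|\Theta_-|^m=a\,\vartheta C_\gamma$, the constant terms cancel. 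The choice $\vartheta/\eta>2$ makes the $X_3$-power terms $C^1$ with vanishing derivative at $\mathbf 0$, and $\Theta_-\neq0$ lets $g$ be smooth near $\mathbf 0$ once $\varepsilon$ is small. The system therefore has the form $\mathbf X'=A\mathbf X+\mathbf F(\mathbf X)$, with $\mathbf F(\mathbf 0)=\mathbf 0$, $D\mathbf F(\mathbf 0)=0$ and $A={\rm diag}[\eta,\,a,\,-\eta]$.

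The main obstacle is the linearisation in the $X_2$ direction. The entry $a$ is only the leading term: $\partial_{X_2}g(\mathbf 0)$ is killed by the factor $X_3^{\vartheta/\eta}$, but $\partial_{X_1}g(\mathbf 0)$ need not vanish. If it does not, it contributes an off-diagonal entry, which still leaves eigenvalues $\eta,a>0$ and $-\eta$, because the matrix is triangular after ordering $(X_3,X_1,X_2)$. Checking this carefully is where I would spend effort. An alternative that avoids the issue entirely is to multiply $X_1$ by $X_3$ inside $g$, which already makes its $X_1$-derivative vanish at $\mathbf 0$, as written above. In either case the origin is a hyperbolic saddle with a one-dimensional stable manifold $X_1=w_1(X_3)$, $X_2=w_2(X_3)$, which is consistent with the statement giving one solution rather than infinitely many.

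Given that manifold, I take $x_{03}=r_0^\eta$ small and the corresponding solution $\mathbf X\to\mathbf 0$. I then define
\begin{equation*}
u(r)=r^{-\Theta_-}\left(\gamma+X_1(t)X_3(t)\right).
\end{equation*}
Differentiating in $t$ with the $X_1$-equation recovers $X_2$ as $\mathfrak G-\vartheta C_\gamma$. Differentiating again with the $X_2$-equation gives \eqref{eq1} for radial $u$, since it is equivalent to $\mathfrak H'=r^{\Theta_-+1+\theta}u^q|u'|^m$. Positivity of $u$ follows from $|X_1X_3|<\gamma/2$. Finally, $\mathbf X\to\mathbf 0$ together with L'H\^opital's rule, exactly as in Lemma~\ref{kiop1}, gives \eqref{lawn2} and \eqref{flo1}, taking $R=r_0$.
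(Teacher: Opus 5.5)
Your proposal is correct and is essentially the paper's proof: the same variables $X_1=r^{-\eta}(r^{\Theta_-}u-\gamma)$, $X_2=\mathfrak G-\vartheta C_\gamma$, $X_3=r^\eta$ with $\eta<\vartheta/2$, the same saddle system with $A={\rm diag}\,[\eta,\vartheta+2\sqrt{\rho^2-\lambda},-\eta]$ and one-dimensional stable manifold, and the same reconstruction $u=r^{-\Theta_-}(\gamma+X_1X_3)$. Two harmless slips: the identity $\mathfrak H'=r^{\Theta_-+1+\theta}u^q|u'|^m$ holds for the $\mathfrak H$ of \eqref{reti} (which has $\Theta_+$ in the zero-order term), whereas $\frac{d}{dr}[r(r^{\Theta_-}u)']$ carries an extra term $-2\sqrt{\rho^2-\lambda}\,(r^{\Theta_-}u)'$ that is exactly what produces your coefficient $a$ (so your system is nonetheless right); and the concern about $\partial_{X_1}g(\mathbf 0)$ does not arise, since $X_1$ enters $g$ only through $X_1X_3$.
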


\begin{proof}
Suppose that $u$ is a positive radial solution of \eqref{eq1} in $B_{R}(0)\setminus \{0\}$ with $R>0$ such that \eqref{flo1} holds. 
We define $\mathfrak G(r)$ as in \eqref{hsi} and $\vartheta_1:=\vartheta>0$.

Let $L_{\mathfrak G}=\lim_{r\to 0^+} \mathfrak G(r)=\vartheta\,C_\gamma>0$. 
We fix $\eta>0$ such that $\eta<\vartheta_1/2$.  
Let $r_0\in (0,R)$ be as small as needed. 
For every
$r\in (0,r_0]$, we set 
$ t=\log \,(r_0/r)$
and define 
\begin{equation} \label{trg}
X_1(t)=r^{-\eta}\left( r^{\Theta_-}u(r)-\gamma \right),\quad X_2(t)=\mathfrak{G}(r) -L_{\mathfrak G}\quad \mbox{and}\quad X_3(t)=r^\eta.
\end{equation}

By Lemma~\ref{kiop1}, we have $\mathbf X(t)=(X_1(t),X_2(t), X_3(t))\to (0,0,0)$ as $r\to 0^+$. Hence, for every $\varepsilon>0$, 
we can take $r_0=r_0(\varepsilon)>0$ small such that 
$|X_j(t)|<\varepsilon$ for every $r\in (0,r_0)$ and $j=1,2,3$.  
In what follows, we fix $\varepsilon>0$ small so that $g$ in \eqref{ceva1} is well-defined and smooth on the 
open ball $B_\varepsilon(\mathbf 0)$ in $\mathbb R^3$, where 
$\mathbf 0=(0,0,0)$. 
For every $\xi=(\xi_1,\xi_2,\xi_3)\in B_\varepsilon(\mathbf 0)$, we define 
\begin{equation} \label{ceva1} g(\xi)= |\Theta_-|^m (\xi_1\xi_3+\gamma)^m 
 \left ( 1- \frac{(\xi_2+L_{\mathfrak G})|\xi_3|^{\frac{\vartheta}{\eta}}}{ \left(\xi_1\xi_3+\gamma\right) \Theta_-} \right)^m  
\end{equation}

Let $a_2>0$ be defined by
\begin{equation} \label{aare} a_2= \vartheta+ 2\sqrt{\rho^2-\lambda}.
\end{equation}
Hence, for every $t\geq 0$, we arrive at 
\begin{equation}
\left\{\begin{aligned}
& X_1'(t)=\eta \,X_1(t) -\left(X_2(t)+L_{\mathfrak G}\right) \left(X_3(t)\right)^{\frac{\vartheta_1}{\eta} -1},\\
& X_2'(t)=a_2 \left(X_2(t)+L_{\mathfrak G}\right)
-\left( X_1(t) X_3(t)+\gamma \right)^q  g(\mathbf X(t)),\\
& X_3'(t)=-\eta \,X_3(t).
\end{aligned}
\right.
\end{equation}

We find that 
$\mathbf{X}(t)$ solves for $t\geq 0$ 
the following system
\begin{equation}\label{shs}
\mathbf{X}'(t)=A\mathbf{X}+\mathbf{F}(\mathbf{X}),
\end{equation}
where $A$ is a $3\times 3$ diagonal matrix given by $A={\rm diag}\, [\eta,a_2,-\eta]$. 
Now, $\mathbf F$ in \eqref{shs} is a $C^1$-function on the open ball $B_\varepsilon(\mathbf 0)\subset \mathbb R^3$ satisfying 
$\mathbf F(\mathbf 0)=\mathbf 0$ and $D\mathbf F(\mathbf 0)=0$ since $0<\eta<\vartheta_1/2$. Thus, $\mathbf 0$ is a saddle critical point for \eqref{shs} since $A$ has two positive eigenvalues and one negative eigenvalue. We can apply the Stable Manifold Theorem to \eqref{shs} so that the behaviour of \eqref{shs}   
near $\mathbf{X}=\mathbf 0$ is approximated by the behaviour of its linearization $\mathbf{X}'=A\mathbf X$ at $\mathbf X=\mathbf 0$. 
Let $\phi_t$ be the flow of the system \eqref{shs} in relation to which we obtain the existence of a (local) stable manifold $S$, which is 
one-dimensional.
So, there exist $\varepsilon_0>0$ small and two differentiable functions $w_1,w_2:(-\varepsilon_0,\varepsilon_0)\rightarrow (-\varepsilon_0,\varepsilon_0)$ such that the stable manifold $S$ is 
defined by  
$X_1=w_1(X_3)$ and $X_2=w_2(X_3)$ 
such that for all $t\geq 0$, we have $\phi_t(S)\subseteq S$ and for all $\mathbf{x}_0 \in S$, $\lim_{t\rightarrow \infty} \phi_t(\mathbf{x}_0)=0$. 
Thus,  
for every $x_{03}\in (-\varepsilon_0, \varepsilon_0)$, the system \eqref{shs} subject to the initial condition
\begin{equation}\label{mia2}
\mathbf{X}(0)=(w_1(x_{03}), w_2(x_{03}),x_{03})
\end{equation}
has a solution $\mathbf{X}$ on $[0,\infty)$ satisfying \begin{equation} \label{imc} \lim_{t\to \infty} \mathbf X(t)=\mathbf 0.  
\end{equation}
In particular, we find that 
$X_3(t)=x_{03}\,e^{-\eta t}$ for all $t\geq 0$. 

Let $0<r_0<\varepsilon_0^{1/\eta}$ be arbitrary. 
We construct a positive radial solution $u$ of \eqref{eq1} in $B_{r_0}(0)\setminus \{0\}$ satisfying \eqref{flo1}. 	
Let $\mathbf{X}$ be the above solution of \eqref{shs}, subject to \eqref{mia2}, corresponding to $x_{03}=r_0^\eta$. 
By setting $r=r_0\,e^{-t}$ for $t\geq 0$, we get $X_3(t)=r^\eta$. We define 
\begin{equation} \label{mib}
u(r)= \left(X_1(t)X_3(t)+\gamma\right)(X_3(t))^{-
	\frac{\Theta_-}{\eta}}\quad \mbox{for every } r\in (0,r_0].
\end{equation}
In view of \eqref{shs}, by differentiating \eqref{mib} with respect to $t$, we regain the expression of $X_2$ in \eqref{trg}. By differentiating such an expression with respect to $t$ and using the differential equation for $X_2$ in \eqref{shs}, we obtain that $u$ is a positive radial solution of \eqref{eq1} in $B_{r_0}(0)\setminus \{0\}$ satisfying \eqref{flo1} 
since \eqref{imc} holds. 
This finishes the proof of Lemma~\ref{viz1}.   
\end{proof}

%%%%

%%%%%

\section{The asymptotic profile $V_0$ when $\lambda<\rho^2$ and $\Theta=\Theta_-\not=0$ (Proof of Theorem~\ref{nom})}

Throughout this section, we assume that \eqref{cond1} holds, $\rho \in \mathbb{R}$, $\lambda<\rho^2$ and $\Theta=\Theta_-\neq 0$. Unless otherwise mentioned, we assume that 
$u$ is an arbitrary positive radial solution of \eqref{eq1} in $B_R(0)\setminus \{0\}$ with $R>0$ such that \eqref{mino} holds, that is 
\begin{equation}\label{mino1}
u(r)\sim 
\left( \mathfrak p \kappa\right)^{-\frac{1}{\kappa}}\,r^{-\Theta_-}\left[\log \, (1/r)\right]^{-\frac{1}{\kappa}}:=V_0(r)\ \mbox{as } r\to 0^+,\quad \mbox{where } \mathfrak p:=
\frac{|\Theta_-|^m}{2\sqrt{\rho^2-\lambda}}>0.
\end{equation}

For every $r\in (0,R)$, we define $\mathfrak B_1$ and $\mathfrak B_2$ as in \eqref{jim11}, which means that 
\begin{equation} \label{jima11} \mathfrak B_1(r)=r^{\Theta_{-}}\,u(r)\quad \mbox{and}\quad
\mathfrak B_2(r)=\frac{ru'(r)}{u(r)}+\Theta_{-}=\frac{r\mathfrak B_1'(r)}{\mathfrak B_1(r)}.\end{equation}

\begin{lemma} \label{azi5}
	There exists $r_0>0$ small such that $\mathfrak B_1'(r)>0$, $\mathfrak B_2(r)>0$ and $\mathfrak B_2'(r)>0 $ for every $r\in (0,r_0)$. 
	Moreover, we have 
	\begin{equation}  \label{alo1} \lim_{r\to 0^+} \mathfrak B_2(r)=0\quad \mbox{and} \quad \lim_{r\to 0^+}\mathfrak B_1^{-\kappa} (r)\, \mathfrak B_2(r)=\mathfrak p>0.
	\end{equation}	
\end{lemma}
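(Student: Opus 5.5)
The plan is to work with the two quantities $\mathfrak B_1,\mathfrak B_2$ from \eqref{jima11}, using that $\Theta=\Theta_-$ forces $\ell=\Theta_-^2+2\rho\Theta_-+\lambda=0$ and $2\rho+2\Theta_-=-2\sqrt{\rho^2-\lambda}$. With $\ell=0$, identity \eqref{oras} becomes
\begin{equation*}
r\mathfrak B_2'(r)=-\mathfrak B_2(r)\left(\mathfrak B_2(r)+2\sqrt{\rho^2-\lambda}\right)+\mathfrak B_1^\kappa(r)\,|\Theta_--\mathfrak B_2(r)|^m .
\end{equation*}
By \eqref{mino1}, $\mathfrak B_1(r)\to 0$ as $r\to 0^+$. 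The first step is to show that $u'(r)\neq 0$ for small $r$, exactly as in Lemma~\ref{wisp0}. If $\lambda\neq 0$, a sequence of critical points would force $u''$ to have constant sign at them. If $\lambda=0$, then $\Theta_-=-2\rho\neq 0$, and we use the monotonicity of $r^{1-2\rho}u'(r)$ from \eqref{azi1}.

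Next, for the sign of $\mathfrak B_1'$, we redo the computation \eqref{azi2}, now with $M_0=0$. At any critical point $r_n$ of $\mathfrak B_1$ this gives $r_n^2\mathfrak B_1''(r_n)=|\Theta_-|^m\mathfrak B_1^{1+\kappa}(r_n)>0$. So every critical point near $0$ is a strict local minimum, which is impossible for infinitely many accumulating at $0$. Hence $\mathfrak B_1'$ has a constant sign near $0$. Since $\mathfrak B_1>0$ and $\mathfrak B_1\to 0$, that sign must be positive, so $\mathfrak B_1'>0$ and therefore $\mathfrak B_2=r\mathfrak B_1'/\mathfrak B_1>0$.

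For $\mathfrak B_2'$, differentiating the displayed identity at a point where $\mathfrak B_2'(r_n)=0$ gives, as in Lemma~\ref{wisp1}, $r_n^2\mathfrak B_2''(r_n)=\kappa\,\mathfrak B_2(r_n)\mathfrak B_1^\kappa(r_n)|\Theta_--\mathfrak B_2(r_n)|^m>0$. So again every critical point is a strict minimum, and $\mathfrak B_2$ is monotone near $0$. It therefore has a limit $L\in[0,\infty]$. By L'H\^opital's rule applied to $\log\mathfrak B_1/\log r$, and using $\log\mathfrak B_1(r)\sim-\frac1\kappa\log\log(1/r)=o(\log(1/r))$ from \eqref{mino1}, we get $L=0$. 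Since $\mathfrak B_2>0$ and $\mathfrak B_2\to 0$, monotonicity forces $\mathfrak B_2'>0$ on some $(0,r_0)$. This proves the first part of \eqref{alo1}.

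The main step is the second limit in \eqref{alo1}. Set $W(r)=\mathfrak B_1^{-\kappa}(r)\mathfrak B_2(r)>0$. Using $r\mathfrak B_1'/\mathfrak B_1=\mathfrak B_2$ and the displayed identity, we get
\begin{equation*}
rW'(r)=-\left(2\sqrt{\rho^2-\lambda}+(1+\kappa)\mathfrak B_2(r)\right)W(r)+|\Theta_--\mathfrak B_2(r)|^m .
\end{equation*}
This is a linear first-order equation whose coefficients converge as $r\to 0^+$: $a(r)\to 2\sqrt{\rho^2-\lambda}>0$ and $b(r)\to|\Theta_-|^m$. Introduce the integrating factor $E(r)=\exp\bigl(\int_{r_0}^r a(s)\,ds/s\bigr)$. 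Then $E(r)\to 0$ as $r\to 0^+$, and $E(r)\le r^{a_*}$ up to a constant for some $a_*>0$. We have $(EW)'=Eb/r$.

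Positivity of $W$ and $E\to 0$ then give $E(r)W(r)\to 0$. Indeed, $EW$ is increasing, and it is bounded, since the integral of $Eb/r$ converges near $0$ because $E$ decays like a power. Now L'H\^opital's rule applied to $W=(EW)/E$ yields
\begin{equation*}
\lim_{r\to0^+}W(r)=\lim_{r\to0^+}\frac{E(r)b(r)/r}{E(r)a(r)/r}=\frac{|\Theta_-|^m}{2\sqrt{\rho^2-\lambda}}=\mathfrak p .
\end{equation*}
The delicate point is the sign bookkeeping in $rW'$: the coefficient of $W$ must be negative for the limit to be the stable value $\mathfrak p$ rather than a repelling one. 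This relies precisely on $2\rho+2\Theta_-=-2\sqrt{\rho^2-\lambda}$, so I will recheck it against \eqref{muzia}.
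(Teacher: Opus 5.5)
Your overall route matches the paper's. The sign arguments via $r_n^2\mathfrak B_1''(r_n)=|\Theta_-|^m\mathfrak B_1^{\kappa+1}(r_n)$ and $r_n^2\mathfrak B_2''(r_n)=\kappa\,\mathfrak B_2\mathfrak B_1^{\kappa}|\Theta_--\mathfrak B_2|^m$ are the paper's, as is L'H\^opital on $\log\mathfrak B_1/\log r$, and your equation $rW'=-(2\sqrt{\rho^2-\lambda}+(1+\kappa)\mathfrak B_2)W+|\Theta_--\mathfrak B_2|^m$ is correct. The gap is in the last step: your claim $E(r)W(r)\to0$ does not follow from what you cite. Positivity of $W$, $E\to0$, monotonicity of $EW$ and convergence of $\int_0 E b\,ds/s$ only give that $EW$ has a limit $c\ge0$. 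Regard $a,b$ as given functions and take any $W=\bigl(c+\int_0^r E(s)b(s)\,ds/s\bigr)/E$ with $c>0$. It solves the same linear equation, is positive, and has $EW$ increasing and bounded, yet $EW\to c\neq0$ and $W\to\infty$ at least as fast as $r^{-2\sqrt{\rho^2-\lambda}}$. Your closing heuristic is also reversed. As $r\to0^+$, $\mathfrak p$ is the \emph{repelling} equilibrium of $rW'=-a\,(W-b/a)$, which is exactly why a priori information on $u$ must be used to select it.

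The fix uses facts you already have. Since $\mathfrak B_2=r\mathfrak B_1'/\mathfrak B_1$, you get $\int_r^{r_0}\mathfrak B_2(s)\,ds/s=\log\bigl(\mathfrak B_1(r_0)/\mathfrak B_1(r)\bigr)$. Hence $E(r)=(r/r_0)^{2\sqrt{\rho^2-\lambda}}\bigl(\mathfrak B_1(r)/\mathfrak B_1(r_0)\bigr)^{1+\kappa}$, and $E(r)W(r)$ is a constant times $r^{2\sqrt{\rho^2-\lambda}}\mathfrak B_1(r)\mathfrak B_2(r)$, which tends to $0$ because $\mathfrak B_1,\mathfrak B_2\to0$. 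Equivalently, $W=\mathfrak B_1^{-\kappa}\mathfrak B_2=o(\log(1/r))$ by \eqref{mino1}, while $E(r)\le(r/r_0)^{2\sqrt{\rho^2-\lambda}}$. Since $r^{\Theta_+}u=r^{2\sqrt{\rho^2-\lambda}}\mathfrak B_1$, your $EW$ and $E$ are, up to constants, the paper's numerator $r^{\Theta_+}u\,\mathfrak B_2$ and denominator $r^{\Theta_+}u\,\mathfrak B_1^{\kappa}$. With both tending to zero, your L'H\^opital step gives $\lim W=\mathfrak p$ as claimed.
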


\begin{proof}
We remark that there exists $r_0\in (0,R)$ such that $u'(r)\not=0$ for every $r\in (0,r_0)$. Indeed, when $\lambda\not=0$ this claim follows with a similar argument to that in Lemma~\ref{wisp0} in Section~\ref{uzpro}. If $\lambda=0$, then the assumption $\Theta=\Theta_-\not=0$ implies that 
$\rho>0$ and $\Theta_-=-2\rho<0$. So, using \eqref{mino1} and \eqref{azi1}, we get that $\lim_{r\to 0^+} u(r)=0$, as well as  	
$r^{1-2\rho} u'(r)$ is non-decreasing on $(0,R)$ and $\lim_{r\to 0^+} r^{1-2\rho} u'(r)=0 $ by L'H\^opital's rule. Moreover, $u'(r)> 0$ for every $r\in (0,R)$. 

We show that $\mathfrak B_1'(r)\not=0$ for every $r>0$ small. 
Suppose by contradiction that there exists a sequence $\{r_n\}_{n\geq 1}$ in $(0,R)$ decreasing to $0$ as $n\to \infty$ such that $\mathfrak B_1'(r_n)=0$ for all $n\geq 1$. With similar calculations to those in \eqref{azi2} and using that $\Theta=\Theta_-$, we obtain that
$$ r_n^2  \mathfrak B_1''(r_n)= |\Theta_-|^m \mathfrak{B}_1^{\kappa+1}(r_n)
\quad \mbox{for all } n\geq 1.$$
Hence, $\mathfrak B_1''(r_n)>0$ for all $n\geq 1$, that is, $r_n$ is a local minimum point for $\mathfrak B_1$, which is impossible. 	
	
Following the same reasoning as in Lemma~\ref{wisp1} in  Section~\ref{uzpro}, we infer that $\mathfrak B_2'(r)\not=0$ for every $r>0$ small. 
From \eqref{mino1}, we get that
$\log \mathfrak B_1(r)/\log (1/r)\to 0$ as $r\to 0^+$. Thus, using \eqref{jima11} and L'H\^opital's rule, we infer that $\lim_{r\to 0^+} \mathfrak B_2(r)=0$. 
	Hence, since $\Theta=\Theta_-$, we find that
	$$ \lim_{r\to 0^+} 
	\frac{  \frac{d}{dr} \left( r^{\Theta_+ } u(r)\,\mathfrak B_2(r)\right) }
	{ \frac{d}{dr} \left( r^{\Theta_+}u(r) \,\mathfrak B_1^{\kappa}(r)\right)}=\lim_{r\to 0^+} \frac{ |\mathfrak B_2(r)+\Theta_-|^m}{\Theta_+-\Theta_- + \left(\kappa+1\right) \mathfrak B_2(r)}=\mathfrak{p}.
	$$
By \eqref{mino1} and L'H\^opital's rule, we conclude the second limit of \eqref{alo1}. 
\end{proof}

We define
\begin{equation}\label{meee}
\mathfrak M:=\frac{\kappa+1}{2\sqrt{\rho^2-\lambda}}+\frac{m}{\Theta_-}=\frac{\left(\kappa+1\right)\mathfrak p}{|\Theta_-|^m}+ \frac{m}{\Theta_-}. 
\end{equation}

For $\mathfrak p$ given by \eqref{mino1} and every $r>0$ small, we define
\begin{equation} \label{dfx} 
t=(r^{\Theta_-} u(r))^{-\kappa}\quad \mbox{and} \quad
X(t)=t\left( \frac{ru'(r)}{u(r)}+\Theta_-\right)-\mathfrak{p}=\mathfrak B_1^{-\kappa}(r)\, \mathfrak B_2(r)-\mathfrak p. 
\end{equation}

	\begin{lemma}\label{bx}
	Assume that either $\mathfrak M\neq 0$ or $m\neq 1$. For $t$ and $X(t)$ defined in \eqref{dfx}, we have $t\rightarrow \infty$ and $X(t)\rightarrow 0$ as $r\rightarrow 0^+$. Moreover, for $t>0$ large, we have $|X(t)|<\mathfrak p/2$ and 
	\begin{equation} \label{dust}  \frac{dX}{dt}=\frac{2\sqrt{\rho^2-\lambda}}{\kappa \left(X(t)+\mathfrak p\right)}\,X(t)+\left(1+\frac{1}{\kappa}\right) \frac{X(t)+\mathfrak p}{t}+\frac{|\Theta_-|^m}{\kappa \left(X(t)+\mathfrak p\right)} \left[  1-\left( 1-\frac{X(t)+\mathfrak p}{t\,\Theta_-}\right)^m\right].
\end{equation}
In addition, it holds
\begin{equation}\label{miso}
\begin{aligned}
&\lim_{t\rightarrow \infty}t X(t)=-\mathfrak M \,\mathfrak p^2&\quad& \text{if }\mathfrak M\neq 0,\\
&\lim_{t\rightarrow \infty}t^2 X(t)=\frac{m\left(m-1\right)\mathfrak p^3}{2\left(\Theta_-\right)^2}&\quad& \text{if }\mathfrak M=0\text{ and } m\neq 1.
\end{aligned}
\end{equation}
	\end{lemma}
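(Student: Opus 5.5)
We follow the template of Lemma~\ref{stepint} and Lemma~\ref{gara1}. By Lemma~\ref{azi5}, $\mathfrak B_1(r)\to 0$ (since $V_0(r)r^{\Theta_-}\to 0$), so $t=\mathfrak B_1^{-\kappa}(r)\to\infty$. The second limit in \eqref{alo1} gives $X(t)\to 0$, and hence $|X(t)|<\mathfrak p/2$ for $t$ large.

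\textbf{Derivation of \eqref{dust}.} Since $r\,dt/dr=-\kappa t\,\mathfrak B_2=-\kappa(X+\mathfrak p)$, we have $dr/dt=-r/[\kappa(X+\mathfrak p)]<0$. Next we differentiate $X+\mathfrak p=t\,\mathfrak B_2$. We use \eqref{oras} with $\Theta=\Theta_-$, so that $\ell=0$ and $-2\rho-2\Theta_-=\Theta_+-\Theta_-=2\sqrt{\rho^2-\lambda}$. This gives
\[
r\mathfrak B_2'=-\mathfrak B_2^2+2\sqrt{\rho^2-\lambda}\,\mathfrak B_2+t^{-1}|\Theta_- -\mathfrak B_2|^m .
\]
Substituting $\mathfrak B_2=(X+\mathfrak p)/t$ and $|\Theta_- -\mathfrak B_2|^m=|\Theta_-|^m(1-\tfrac{X+\mathfrak p}{t\Theta_-})^m$, and using $2\sqrt{\rho^2-\lambda}\,\mathfrak p=|\Theta_-|^m$ to cancel the $\mathfrak p$-term, yields \eqref{dust} after routine algebra.

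\textbf{Monotonicity of $X$.} We show $X'(t)\neq0$ for $t$ large by the critical-point argument used throughout the paper. Suppose $X'(t_n)=0$ along $t_n\to\infty$. Expanding \eqref{dust}, the bracket equals $m(X+\mathfrak p)/(t\Theta_-)-\tfrac{m(m-1)}{2}(X+\mathfrak p)^2/(t^2\Theta_-^2)+O(t^{-3})$. The right side is therefore
\[
\frac{2\sqrt{\rho^2-\lambda}}{\kappa\mathfrak p}X+\frac{\mathfrak p\,\mathfrak M}{\kappa t}\,\cdot\frac{|\Theta_-|^m}{\mathfrak p}\cdots ,
\]
and after simplification the $1/t$ coefficient is $\frac{2\sqrt{\rho^2-\lambda}}{\kappa}\mathfrak p\mathfrak M$.
\begin{itemize}
\item If $\mathfrak M\neq0$, then $t_nX(t_n)\to-\mathfrak M\mathfrak p^2$.
\item If $\mathfrak M=0$ and $m\neq1$, the $t^{-2}$ term gives $t_n^2X(t_n)\to \frac{m(m-1)\mathfrak p^3}{2\Theta_-^2}$.
\end{itemize}
Differentiating \eqref{dust} at $t_n$ and using $X'(t_n)=0$, the leading behaviour of $X''(t_n)$ has a fixed nonzero sign: it is a multiple of $\mathfrak M$, or of $m(m-1)$, times $t_n^{-2}$ (resp.\ $t_n^{-3}$). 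Every such $t_n$ would then be a strict extremum of the same type, which is a contradiction. Hence $X$ is eventually monotone.

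\textbf{Limits \eqref{miso}.} Following Lemma~\ref{stepint}, let $a=2\sqrt{\rho^2-\lambda}/(\kappa\mathfrak p)>0$ and multiply \eqref{dust} by $e^{-at}$. Eventual monotonicity combined with $X\to0$ gives ${\rm sgn}\,X=-{\rm sgn}\,X'$. From \eqref{dust} this yields a bound $t|X|\le C$ (resp.\ $t^2|X|\le C$), so the quadratic remainder $X^2$ is $o(1/t)$ (resp.\ $o(1/t^2)$). L'H\^opital applied to $X e^{-at}$ over $e^{-at}/t$ (resp.\ $e^{-at}/t^2$) then returns exactly the constants computed above.

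\textbf{Main obstacle.} The delicate step is the expansion of the right side of \eqref{dust}, and in particular verifying that the $1/t$ coefficient reduces to a multiple of $\mathfrak M$ via the identity in \eqref{meee}. The a priori bound on $t|X|$ also needs care, since the sign information from monotonicity must be combined with the structure of \eqref{dust} to obtain it.
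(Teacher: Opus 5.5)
Your proposal follows the paper's proof in all essentials. It uses the same variables $t=\mathfrak B_1^{-\kappa}$ and $X=t\,\mathfrak B_2-\mathfrak p$, and the same second-derivative argument at critical points, with $t_n^2X''(t_n)\to-\mathfrak M|\Theta_-|^m/\kappa$, resp.\ $t_n^3X''(t_n)\to m(m-1)\mathfrak p|\Theta_-|^{m-2}/\kappa$. It also uses the same a priori bound from ${\rm sgn}\,X=-{\rm sgn}\,X'$, and the same L'H\^opital argument after multiplying \eqref{dust} by $e^{-at}$, $a=2\sqrt{\rho^2-\lambda}/(\kappa\mathfrak p)$. Your expansion of the right-hand side and the limiting constants you obtain are correct.

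There is, however, a sign error in your displayed formula for $r\mathfrak B_2'$, and as written it does not lead to \eqref{dust}. In \eqref{oras} the linear term is $+(2\rho+2\Theta)\mathfrak B_2$, and $2\rho+2\Theta_-=-2\sqrt{\rho^2-\lambda}$. So the correct identity is
\begin{equation*}
r\mathfrak B_2'=-\mathfrak B_2^2-2\sqrt{\rho^2-\lambda}\,\mathfrak B_2+t^{-1}|\Theta_--\mathfrak B_2|^m .
\end{equation*}
This matters because $\frac{dX}{dt}=\frac{X+\mathfrak p}{t}-\frac{t\,r\mathfrak B_2'}{\kappa(X+\mathfrak p)}$. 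With your sign, this would contain $-\frac{2\sqrt{\rho^2-\lambda}}{\kappa}-\frac{|\Theta_-|^m}{\kappa(X+\mathfrak p)}(\cdots)^m$, giving $X'(t)\to-4\sqrt{\rho^2-\lambda}/\kappa\neq 0$. That is incompatible with $X\to 0$, and no cancellation via $|\Theta_-|^m=2\sqrt{\rho^2-\lambda}\,\mathfrak p$ is possible.

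With the correct sign the constant is $+\frac{2\sqrt{\rho^2-\lambda}}{\kappa}$. Writing it as $\frac{2\sqrt{\rho^2-\lambda}\,X}{\kappa(X+\mathfrak p)}+\frac{|\Theta_-|^m}{\kappa(X+\mathfrak p)}$ gives exactly the first and third terms of \eqref{dust}.

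You should also record, as the paper does when choosing $T_0$, that $1-\frac{X+\mathfrak p}{t\Theta_-}>0$ for large $t$. You need this to remove the absolute value in $|\Theta_--\mathfrak B_2|^m$.
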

	
	\begin{proof}
	Since \eqref{mino1} holds, we have 
\begin{equation}\label{poll}
\frac{t}{\log\,(1/r)}\rightarrow \mathfrak p\kappa \quad \text{as } r\rightarrow 0^+.
\end{equation}
 This implies that $t\rightarrow \infty$ as $r\rightarrow 0^+$. 
 
By Lemma~\ref{azi5}, we know that $X(t)\to 0$ as $r\to 0^+$. 
	For $t>0$ large, we find that 
	\begin{equation} \label{comp} \frac{dt}{dr}=-\frac{\kappa}{r}\left(X(t)+\mathfrak p\right)\ \mbox{or, equivalently, } 
	\frac{dr}{dt}=-\frac{r}{\kappa\left(X(t)+\mathfrak p\right)}.
	\end{equation}
	Since $X(t)\rightarrow 0$ as $t\rightarrow \infty$, we can take $T_0>4\,\mathfrak p/|\Theta_-|$ large such that 
	$$ |X(t)|\leq \frac{\mathfrak p}{2}\quad \mbox{and}\quad  1-\frac{X(t)+\mathfrak p}{t\,\Theta_-}>0 
	\quad \mbox{for all } t\geq T_0.$$
	 Using \eqref{ez} and $\Theta_-^2+2\rho\, \Theta_-+\lambda=0$, we find that 
	 \begin{equation} \label{uv1}
	 t \,r\frac{d}{dr}\left(\frac{ru'(r)}{u(r)}\right)=
	-\frac{(X(t)+\mathfrak p)^2}{t}
	 -2\,\sqrt{\rho^2-\lambda}\left(X(t)+\mathfrak p\right)
	+|\Theta_-|^m \left( 1-\frac{X(t)+\mathfrak p}{t\,\Theta_-}
	 \right)^m.
	 \end{equation}
	 In view of \eqref{comp}, by differentiating \eqref{dfx} with respect to $t$, we get 
	 \begin{equation} \label{uv2} \frac{dX}{dt}=\frac{X(t)+\mathfrak p}{t}-\frac{t\,r}{\kappa \left( X(t)+\mathfrak p\right)}\,\frac{d}{dr}\left(\frac{ru'(r)}{u(r)}\right)\quad \mbox{for } t\geq T_0.
	 \end{equation}
	 Employing \eqref{uv1} into \eqref{uv2}, we arrive at \eqref{dust}.
	 
	 For $t>0$ large, we define $ \mathfrak I(t):=\mathfrak I_1(t)+\mathfrak I_2(t)$, where 
	 \begin{equation} 
	 \label{azi6} 
	 \mathfrak I_1(t):=	\left(1+\frac{1}{\kappa}\right) \frac{X(t)+\mathfrak p}{t}\quad \mbox{and}\quad 
	 \mathfrak I_2(t):=\frac{|\Theta_-|^m}{\kappa \left(X(t)+\mathfrak p\right)} \left[  1-\left( 1-\frac{X(t)+\mathfrak p}{t\,\Theta_-}\right)^m\right] .
	 \end{equation} 
	Hence, after multiplying \eqref{dust} by $e^{-\frac{2\sqrt{\rho^2-\lambda}}{\kappa\mathfrak p}t}$, we get
	  \begin{equation}\label{bezit}
	  \frac{d}{dt}\left ( X(t)\,e^{-\frac{2\sqrt{\rho^2-\lambda}}{\kappa\mathfrak p}t} \right )= \left [\frac{-2\sqrt{\rho^2-\lambda}}{\kappa\,\mathfrak p \left(X(t)+\mathfrak p\right)}\,X^2(t)+\mathfrak I(t)\right ]e^{-\frac{2\sqrt{\rho^2-\lambda}}{\kappa\mathfrak p}t}.
	  	\end{equation}
	Using \eqref{meee}, we observe that as $t\to \infty$, 
	 \begin{equation} \label{azi7} \begin{aligned} 
 & \mathfrak I_2(t)= \frac{m|\Theta_-|^m}{\kappa \Theta_-} \frac{1}{t}-\frac{m\left(m-1\right)\mathfrak p|\Theta_-|^m }{2 \kappa \left(\Theta_-\right)^2} \frac{1}{t^2} (1+o(1)),\\
	& \mathfrak I(t)=
	\frac{\mathfrak M\, |\Theta_-|^m}{\kappa}
	 \frac{1}{t} +\frac{\left(\kappa+1\right)}{\kappa} \frac{X(t)}{t} -\frac{m\left(m-1\right)\mathfrak p|\Theta_-|^m }{2 \kappa \left(\Theta_-\right)^2} \frac{1}{t^2} (1+o(1)). 
	 \end{aligned}
	 \end{equation} 	
	  	
We next prove that $X'(t)\not=0$ for every $t>0$ large. Here, $X'(t)$ stands for the derivative of $X(t)$ with respect to $t$. Suppose by contradiction that there exists an  
increasing sequence $\{t_n\}_{n\geq 1}$ in $[T_0,\infty)$ such that $\lim_{n\to \infty} t_n=\infty$ and 
	 $ X'(t_n)=0$ for all $ n\geq 1$. In light of \eqref{dust} and \eqref{azi7}, we obtain that
\begin{equation} \label{azi8}
\begin{aligned}
&\lim_{n\rightarrow \infty}t_n X(t_n)=-\mathfrak M \,\mathfrak p^2&\quad& \text{if }\mathfrak M\neq 0,\\
&\lim_{n\rightarrow \infty}t_n^2 X(t_n)=\frac{m\left(m-1\right)\mathfrak p^3}{2\left(\Theta_-\right)^2}&\quad& \text{if }\mathfrak M=0\text{ and } m\neq 1.
\end{aligned}
\end{equation}	
By differentiating \eqref{dust}, we arrive at 
$$ t_n^2 X''(t_n)= -\left(1+\frac{1}{\kappa}\right) \left(X(t_n)+\mathfrak p\right)-\frac{m|\Theta_-|^m}{\kappa \Theta_-}\left( 1-\frac{X(t_n)+\mathfrak p}{t_n\,\Theta_-}\right)^{m-1}.
$$
In view of \eqref{azi8}, we get that 	
  	\begin{equation} \label{azi9}
\begin{aligned}
&\lim_{n\rightarrow \infty}t^2_n X''(t_n)=-\frac{\mathfrak M |\Theta_-|^m}{\kappa}\not=0 &\quad& \text{if }\mathfrak M\neq 0,\\
&\lim_{n\rightarrow \infty}t_n^3 X''(t_n)=\frac{m\left(m-1\right)\mathfrak p |\Theta_-|^{m-2}}{\kappa}\not=0&\quad& \text{if }\mathfrak M=0\text{ and } m\neq 1.
\end{aligned}
\end{equation}	
	Hence, for every $n\geq 1$ large, $X''(t_n)$ has constant sign so that the critical points $t_n$ of $X$ will be either local maxima or local minima. This is impossible. Therefore, $X'
	(t)\not=0$ for $t>0$ large. Since $\lim_{t\to \infty} X(t)=0$, it follows that 
	$X(t)$ and $X'(t)$ have opposite signs for $t>0$ large.  Thus, from \eqref{dust}, we can find a constant $C>0$ such that for every $t>0$ large, 
	\begin{equation} \label{azi09} \begin{aligned} 
 &	t|X(t)|<C &\quad& \text{if }\mathfrak M\neq 0,\\
 &  t^2 |X(t)|<C &\quad& \text{if }\mathfrak M=0\text{ and } m\neq 1.
 \end{aligned}
	\end{equation}  	
In either case, we obtain that 
\begin{equation}\label{bezit1}
	\lim_{t\rightarrow \infty} t X^2(t)=0.
	\end{equation}
	  	
	  	\vspace{0.2cm}
First, we assume that $\mathfrak M\neq 0$. 	
	Using L'H\^opital's rule, \eqref{bezit} and \eqref{bezit1}, we obtain that 
	$$\lim_{t\rightarrow \infty} t X(t)=\lim_{t\rightarrow \infty} \frac{\frac{d}{dt}\left ( X(t)\,e^{-\frac{2\sqrt{\rho^2-\lambda}}{\kappa\mathfrak p}t}\right )}{\frac{d}{dt}\left (\frac{e^{-\frac{2\sqrt{\rho^2-\lambda}}{\kappa\mathfrak p}t}}{t}\right )}=\lim_{t\to \infty} \frac{t \mathfrak I(t)}{-\frac{2\sqrt{\rho^2-\lambda}}{\kappa\mathfrak p}}
	=-\mathfrak M\, \mathfrak p^2\,.$$

Second, we assume that $\mathfrak M=0$ and $m\neq 1$. In this case, 
	\eqref{azi09} implies that $\lim_{t\rightarrow \infty} t X(t)=0$. Then, using \eqref{bezit}, \eqref{azi7} and  L'H\^opital's rule, we infer that 
	$$\lim_{t\rightarrow \infty} t^2 X(t)=\lim_{t\rightarrow \infty} \frac{\frac{d}{dt}\left ( X(t)\,e^{-\frac{2\sqrt{\rho^2-\lambda}}{\kappa\mathfrak p}t}\right )}{\frac{d}{dt}\left (\frac{e^{-
\frac{2\sqrt{\rho^2-\lambda}}{\kappa\mathfrak p}t}}{t^2}\right )}=\lim_{t\to \infty} \frac{t^2 \mathfrak I(t)}{-\frac{2\sqrt{\rho^2-\lambda}}{\kappa\mathfrak p}}
=\frac{m\left(m-1\right)\mathfrak p^3}{2\left(\Theta_-\right)^2}.$$
This completes the proof of Lemma~\ref{bx}. 
	\end{proof}

\begin{lemma}
If $\mathfrak M=0$ and $m=1$, then there exists a constant $C\in \mathbb R$ such that
 \begin{equation} \label{bre} u(r) =V_0(r) \left(1+\frac{C}{\log \frac{1}{r}} \right)^{-\frac{1}{\kappa}}\quad \mbox{for every } r>0\ \mbox{small}.
	 \end{equation}  
	
\end{lemma}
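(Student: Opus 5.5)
The plan is to reuse the change of variables \eqref{dfx}, namely $t=(r^{\Theta_-}u(r))^{-\kappa}$ and $X(t)=\mathfrak B_1^{-\kappa}(r)\,\mathfrak B_2(r)-\mathfrak p$. The hypothesis $\mathfrak M\neq 0$ or $m\neq 1$ in Lemma~\ref{bx} is only used for the limits \eqref{miso}. The other facts we need do not depend on it:
\begin{itemize}
\item $t\to\infty$ and $X(t)\to 0$ as $r\to 0^+$, which come from \eqref{poll} and Lemma~\ref{azi5};
\item the relation \eqref{comp} between $dr$ and $dt$;
\item the ODE \eqref{dust}, which is a direct computation from \eqref{ez}, \eqref{uv1} and \eqref{uv2}.
\end{itemize}
So all three remain valid when $\mathfrak M=0$ and $m=1$.

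Next, specialise \eqref{dust} to $m=1$. Then
$$1-\Bigl(1-\frac{X+\mathfrak p}{t\Theta_-}\Bigr)=\frac{X+\mathfrak p}{t\Theta_-},$$
so the last term of \eqref{dust} equals $\frac{|\Theta_-|}{\kappa\Theta_-}\frac1t$. Collecting all the terms that carry no factor $X$, we get
$$\frac{1}{\kappa t}\Bigl[(\kappa+1)\mathfrak p+\frac{|\Theta_-|}{\Theta_-}\Bigr]=\frac{|\Theta_-|\,\mathfrak M}{\kappa t}=0,$$
where we used \eqref{meee} with $m=1$. Hence \eqref{dust} reduces to a linear homogeneous equation
$$X'(t)=X(t)\Bigl[\frac{2\sqrt{\rho^2-\lambda}}{\kappa(X(t)+\mathfrak p)}+\frac{\kappa+1}{\kappa t}\Bigr],\qquad t\ge T_0,$$
and the coefficient in brackets is bounded below by a positive constant $c_0$, since $|X|<\mathfrak p/2$.

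By uniqueness, either $X\equiv 0$ on $[T_0,\infty)$, or $X$ never vanishes there. In the second case $|X(t)|\ge |X(T_0)|e^{c_0(t-T_0)}$, which contradicts $X(t)\to 0$. Therefore $X\equiv 0$ for $t$ large, which means $\mathfrak B_1^{-\kappa}\mathfrak B_2=\mathfrak p$ for all $r>0$ small.

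Using $\mathfrak B_2=r\mathfrak B_1'/\mathfrak B_1$, this identity says $r\,\frac{d}{dr}\bigl(\mathfrak B_1^{-\kappa}\bigr)=-\kappa\mathfrak p$. Integrating gives $\mathfrak B_1^{-\kappa}(r)=\kappa\mathfrak p\log(1/r)+c$ for some constant $c$. Consequently
$$u(r)=r^{-\Theta_-}\bigl(\kappa\mathfrak p\log(1/r)+c\bigr)^{-1/\kappa}=V_0(r)\Bigl(1+\frac{C}{\log(1/r)}\Bigr)^{-1/\kappa},\qquad C=\frac{c}{\kappa\mathfrak p},$$
which is \eqref{bre}. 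The only delicate point is confirming that \eqref{dust} was derived without using the excluded hypotheses, and that the constant term cancels exactly; both are direct checks.
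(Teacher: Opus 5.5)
Your proposal is correct and follows the paper's proof. You specialise \eqref{dust} to $m=1$ and use $\mathfrak M=0$ to cancel the inhomogeneous term, which yields the linear equation \eqref{unt} with a coefficient bounded below by a positive constant; you then get $X\equiv 0$ from $X(t)\to 0$ and integrate $r\frac{d}{dr}(\mathfrak B_1^{-\kappa})=-\kappa\mathfrak p$. Your explicit check that the constant term equals $|\Theta_-|\mathfrak M/(\kappa t)$, and your uniqueness/exponential-growth argument, simply spell out steps the paper states without detail.
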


\begin{proof}
 Assume that $\mathfrak M=0$ and $m=1$. Then, from \eqref{dust}, we see that $X(t)$ satisfies 
	 \begin{equation} \label{unt} \frac{dX}{dt}=\left[  \frac{ 2\sqrt{\rho^2-\lambda}}{\kappa \left(X(t)+\mathfrak p\right)} +\left( 1+\frac{1}{\kappa}\right) \frac{1}{t}\right] X(t)\quad \mbox{for all } t\geq T_0.\end{equation} 
	 For the right-hand side of \eqref{unt}, we observe that $X(t)$ is multiplied by a positive quantity that converges to $ 2\sqrt{\rho^2-\lambda}/(\kappa\mathfrak p)$ as $t\to \infty$.   
	 As $X(t)\to 0$ as $t\to \infty$, the only solution of \eqref{unt} is $X\equiv 0$ on $[T_0,\infty)$. From the definition of $X$ in \eqref{dfx}, we conclude that 
	 $$ \frac{d}{dr} \left( (r^{\Theta_-} u(r))^{-\kappa} \right)=-\frac{\mathfrak p \kappa}{r} \quad \mbox{for every } r>0 \ \mbox{small}.
	 $$
	 Consequently, there exists a constant $C\in \mathbb R$ such that for every $r>0$ small, we have \eqref{bre}. 
\end{proof}

\begin{lemma}
We assume that either $\mathfrak M\neq0$ or $m\not=1$.  

$\bullet$ If $\mathfrak M\neq0$, then
\begin{equation} \label{azi11}
	\frac{u(r)}{V_0(r)}= 1+\frac{\mathfrak M}{\kappa^2}\,\frac{\log \log\, (1/r)}{\log \, (1/r)}(1+o(1))\quad \mbox{as } r\to 0^+.
\end{equation}

$\bullet$ If $\mathfrak M=0$ and $ m\neq 1$, then there exists a constant $C\in \mathbb R$ such that as $r\rightarrow 0^+$, 
\begin{equation} \label{sac12}
	\frac{u(r)}{V_0(r)}= 1+ \frac{C}{\log \,(1/r)}+
	\left[ \frac{m\left(m-1\right)}{2\kappa^3\,(\Theta_-)^2}+\frac{\left(\kappa+1\right) C^2}{2}\right] \frac{1}{\log^2 \,(1/r)}\,(1+o(1)) .
	\end{equation}
\end{lemma}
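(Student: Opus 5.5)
The plan is to mirror Lemma~\ref{step3}. For $r>0$ small I set
\[
\Psi(r):=t-\mathfrak p\kappa\log\,(1/r),
\]
with $t=(r^{\Theta_-}u(r))^{-\kappa}=\mathfrak B_1^{-\kappa}(r)$ as in \eqref{dfx}. By \eqref{comp},
\[
\frac{dt}{dr}=-\frac{\kappa}{r}\,(X(t)+\mathfrak p),
\]
so that
\[
\Psi'(r)=-\frac{\kappa X(t)}{r}.
\]
Since $u/V_0=(t/(\mathfrak p\kappa\log(1/r)))^{-1/\kappa}$, every asymptotic statement about $u/V_0$ reduces to the expansion of
\[
\frac{t}{\mathfrak p\kappa\log(1/r)}=1+\frac{\Psi(r)}{\mathfrak p\kappa\log(1/r)}.
\]
The argument then rests on \eqref{miso} from Lemma~\ref{bx} and on \eqref{poll}, which gives $t\sim\mathfrak p\kappa\log(1/r)$.

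\emph{Case $\mathfrak M\neq0$.} By \eqref{miso}, $tX(t)\to-\mathfrak M\mathfrak p^2$, so
\[
\frac{\Psi'(r)}{\frac{d}{dr}\log\log(1/r)}=\kappa X(t)\log(1/r)\to\kappa\cdot(-\mathfrak M\mathfrak p^2)\cdot\frac{1}{\mathfrak p\kappa}=-\mathfrak M\mathfrak p .
\]
Hence $|\Psi|\to\infty$, and L'H\^opital's rule gives
\[
\frac{\Psi}{\log\log(1/r)}\to-\mathfrak M\mathfrak p .
\]
Substituting,
\[
\frac{t}{\mathfrak p\kappa\log(1/r)}=1-\frac{\mathfrak M}{\kappa}\,\frac{\log\log(1/r)}{\log(1/r)}\,(1+o(1)).
\]
Raising to the power $-1/\kappa$ yields the coefficient $\mathfrak M/\kappa^2$ in \eqref{azi11}. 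If a cleaner route is needed, I would instead apply L'H\^opital directly to $u/V_0-1$ against $\log\log(1/r)/\log(1/r)$, as the paper does in Lemma~\ref{step3}.

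\emph{Case $\mathfrak M=0$, $m\neq1$.} Here \eqref{miso} gives
\[
t^2X(t)\to L:=\frac{m(m-1)\mathfrak p^3}{2\Theta_-^2},
\]
so $\Psi'(r)\sim-\kappa L/(r t^2)$, which is integrable at $0$ since $t\sim\mathfrak p\kappa\log(1/r)$. Therefore $\Psi(r)\to L_1\in\mathbb R$. Moreover,
\[
\Psi(r)-L_1=\int_0^r\Psi'\sim-\frac{\kappa L}{(\mathfrak p\kappa)^2\log(1/r)} .
\]
Writing $\ell=\log(1/r)$, this gives
\[
t=\mathfrak p\kappa\,\ell+L_1-\frac{L}{\mathfrak p^2\kappa\,\ell}\,(1+o(1)).
\]
Then
\[
\frac{u}{V_0}=\Bigl(1+\frac{L_1}{\mathfrak p\kappa\,\ell}-\frac{L}{\mathfrak p^3\kappa^2\,\ell^2}(1+o(1))\Bigr)^{-1/\kappa},
\]
and a second-order binomial expansion gives
\[
\frac{u}{V_0}=1+\frac{C}{\ell}+\Bigl[\frac{L}{\mathfrak p^3\kappa^3}+\frac{(\kappa+1)C^2}{2}\Bigr]\frac{1}{\ell^2}\,(1+o(1)),\qquad C:=-\frac{L_1}{\mathfrak p\kappa^2}.
\]
One checks that $L/(\mathfrak p^3\kappa^3)=m(m-1)/(2\kappa^3\Theta_-^2)$, which matches \eqref{sac12}.

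\emph{Main obstacle.} The delicate step is the second case: the $o(1)$ in $t^2X\to L$ must be integrated against $1/(rt^2)$, and the residual integral must be shown to be $o(1/\log(1/r))$. I would handle this by applying L'H\^opital once more to $(\Psi(r)-L_1)\log(1/r)$, exactly as in the proof of \eqref{luxi1}.
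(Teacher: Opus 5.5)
Your proposal is correct and follows the paper's proof essentially step for step. You use the same $\Psi(r)=t-\mathfrak p\kappa\log(1/r)$ with $\Psi'(r)=-\kappa X(t)/r$, apply L'H\^opital against $\log\log(1/r)$ when $\mathfrak M\neq0$ and against $1/\log(1/r)$ when $\mathfrak M=0$, $m\neq1$, and arrive at the same constants $-\mathfrak M\mathfrak p$, $-m(m-1)\mathfrak p/(2\kappa\Theta_-^2)$ and $C=-L_1/(\mathfrak p\kappa^2)$. The step you flag as delicate is routine: $\Psi'(r)\sim -\frac{L}{\mathfrak p^2\kappa}\,\frac{1}{r\log^2(1/r)}$ has eventually constant sign and is integrable at $0$, so the asymptotic equivalence integrates directly, which is exactly the L'H\^opital step the paper uses.
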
	
	
\begin{proof}
For every $r>0$ small, we define $\Psi(r)$ by 
\begin{equation} \label{azi12} \Psi(r):=t-\mathfrak p \kappa \log\,(1/r)=(r^{\Theta_-}u(r))^{-\kappa}-\mathfrak p \kappa \log\,(1/r).
\end{equation}
By the definition of $X$ in \eqref{dfx}, for every $t\geq T_0$, we have
	 \begin{equation} \label{mis2} X(t)=-\frac{r}{\kappa}\,\frac{d}{dr} \left( (r^{\Theta_-}u(r))^{-\kappa}\right) -\mathfrak p=-\frac{r}{\kappa} \frac{d}{dr}( \Psi(r)).
	 \end{equation}

$\bullet$ First, we assume that  $\mathfrak M\neq0$. Using \eqref{miso}, \eqref{poll} and \eqref{mis2},  
we infer that    
	 \begin{equation*}  
	 \frac{\frac{d}{dr}\left (\Psi(r)\right)}{\frac{d}{dr}[\log \log \, (1/r)]}
=\kappa X(t) \log\, (1/r) \to -\mathfrak M\,\mathfrak p
 \quad \mbox{as } r\to 0^+.
	 \end{equation*}
This implies that $\lim_{r\to 0^+} \Psi(r)=-\infty\, {\rm sgn}\, (\mathfrak M) \,$ and by L'Hopital's rule,
\begin{equation} \label{star1} \lim_{r\rightarrow 0^+}\frac{\Psi(r)}{\log \log \, (1/r)}=-\mathfrak M\,\mathfrak p.
\end{equation}
The claim in \eqref{azi11} follows from \eqref{azi12} and \eqref{star1}.

\vspace{0.2cm}
$\bullet$
Second, we assume that  $\mathfrak M=0$ and $m\not=1$. 
By \eqref{mis2} and Lemma~\ref{bx}, we get that 
	 \begin{equation} \label{nut} \lim_{r\to 0^+} \frac{\frac{d}{dr} \left( \Psi(r)\right)}{\frac{d}{dr} \left [1/\log (1/r)\right]} =-\frac{m\left(m-1\right)\mathfrak p}{2\kappa \left(\Theta_-\right)^2}\not=0.
	 \end{equation} 
	 Thus, ${\rm sgn}\, (\Psi'(r))={\rm sgn} \,(1-m)$ and                            
there exists $ \lim_{r\to 0^+}\Psi(r)=L_1\in \mathbb R $. Moreover, by L'H\^opital's rule and \eqref{nut}, we get
$$ \Psi(r)-L_1= -\frac{m\left(m-1\right)\mathfrak p}{2\kappa \left(\Theta_-\right)^2} \frac{1}{\log \, (1/r)} (1+o(1))\quad \mbox{as } r\to 0^+.
$$
Hence, using \eqref{azi2}, we find that
$$ \frac{u(r)}{V_0(r)}=\left( 1+\frac{L_1}{\mathfrak p \kappa \log \,(1/r)} 
-\frac{m\left(m-1\right)}{2\kappa^2 \left(\Theta_-\right)^2} \frac{1}{\log^2 \, (1/r)} (1+o(1))
\right)^{-\frac{1}{\kappa}} \quad \mbox{as } r\to 0^+.
$$
This implies the claim in \eqref{sac12}, where $C=-L_1/(\mathfrak p \kappa^2)$. \end{proof}

\begin{lemma} \label{simp2} 
Let $R>0$ be arbitrary. Then, equation \eqref{eq1} in $B_R(0)\setminus \{0\}$ has infinitely many positive radial solutions $u$ satisfying \eqref{mino1}. 	
\end{lemma}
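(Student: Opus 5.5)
We will follow the pattern of Lemma~\ref{step4} and Lemma~\ref{wisp5}: reverse the change of variables \eqref{dfx} and build solutions from solutions of the scalar ODE \eqref{dust} that tend to $0$. The case $\mathfrak M=0$, $m=1$ is explicit. There \eqref{bre} with any constant $C\geq 0$, i.e.
\[u(r)=(\mathfrak p\kappa)^{-1/\kappa}r^{-\Theta_-}\bigl(\log(R/r)+C'\bigr)^{-1/\kappa},\]
gives a family of solutions on all of $(0,R)$, indexed by $C'>0$. To confirm this, note that $X\equiv0$ solves \eqref{unt}, and the reconstruction described next turns it into a solution. So we may assume $\mathfrak M\neq0$ or $m\neq1$.

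\textbf{Step 1 (solutions of \eqref{dust}).} For $T_0$ large, the right-hand side of \eqref{dust} has the form $aX+f(t,X)$, where $a=2\sqrt{\rho^2-\lambda}/(\kappa\mathfrak p)>0$ and $f(t,X)=O(1/t)$ is smooth for $|X|<\mathfrak p/2$. Because the linear part is repelling, there is a unique solution $X$ on $[T_0,\infty)$ with $X(t)\to0$. We construct it as the fixed point of
\[X(t)=-\int_t^\infty e^{-a(s-t)}\bigl[f(s,X(s))+(\text{quadratic part})\bigr]\,ds\]
in a small ball of $C_b([T_0,\infty))$. Equivalently, we apply the stable manifold theorem to the autonomous system in $(X,1/t)$, as in Lemma~\ref{easi}. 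This is the only analytic step, and I expect it to be the main, though mild, obstacle: one must check that the map is a contraction for $T_0$ large. Here the nonlinearity $(1-(X+\mathfrak p)/(t\Theta_-))^m$ is smooth because $T_0>4\mathfrak p/|\Theta_-|$.

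\textbf{Step 2 (reconstruction).} Given this $X$, solve \eqref{comp} with $r(T_0)=R$, namely
\[r(t)=R\exp\Bigl(-\int_{T_0}^t\frac{ds}{\kappa(X(s)+\mathfrak p)}\Bigr).\]
This is decreasing and tends to $0$, so it can be inverted to $t=t(r)$ on $(0,R]$. Then define $u(r)=r^{-\Theta_-}t^{-1/\kappa}$. By construction,
\[t\Bigl(\frac{ru'}{u}+\Theta_-\Bigr)=-\frac{r}{\kappa}\frac{dt}{dr}-\text{const}=X(t)+\mathfrak p-\mathfrak p,\]
which recovers \eqref{dfx}. Differentiating once more and using \eqref{dust} reverses the computation in \eqref{uv1}--\eqref{uv2}, which yields \eqref{ez}, i.e. $u$ solves \eqref{eq1}. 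Positivity holds trivially. The sign of $u'$ is irrelevant, since $|u'|$ is exactly what enters through $|\Theta_-|^m(1-(X+\mathfrak p)/(t\Theta_-))^m$. Finally, $X\to0$ gives $t/\log(1/r)\to\mathfrak p\kappa$, which is \eqref{poll}, and this is precisely \eqref{mino1}.

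\textbf{Step 3 (infinitely many).} Because $\Theta=\Theta_-$, the scaling $T_j[u](r)=j^{\Theta_-}u(jr)$ with $j\in(0,1)$ preserves \eqref{eq1} and also \eqref{mino1}, since $\log(1/(jr))\sim\log(1/r)$. Starting from a solution on $B_{R/j}$ we can therefore produce many solutions on $B_R$. Alternatively, varying the initial time $T_0$ (equivalently, the value $t(R)$) gives distinct values of $r^{\Theta_-}u(r)$ at $r=R$, hence distinct solutions on $B_R(0)\setminus\{0\}$ for the arbitrary $R$.
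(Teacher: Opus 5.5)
Your proposal is correct and is essentially the paper's proof. It uses the unique decaying solution of \eqref{dust}, inverts \eqref{comp} and sets $u=r^{-\Theta_-}t^{-1/\kappa}$, then gets multiplicity by shifting the initial time $T_0$ (equivalently, by the scaling $T_j$, since $\Theta=\Theta_-$), with distinctness read off from $R^{\Theta_-}u(R)=T_0^{-1/\kappa}$, just as the paper does via $\widetilde u(r_0)<u(r_0)$. Your contraction-mapping argument in Step 1 is the right justification, since the autonomous system in $(X,1/t)$ has a zero eigenvalue and the stable manifold theorem does not apply verbatim; your separate explicit treatment of $\mathfrak M=0$, $m=1$ is correct but not needed.
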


\begin{proof}	 
	We remark that for $T_0>0$ large enough, the differential equation in \eqref{dust} on $[T_0,\infty)$ admits a unique solution $X(t)$ satisfying $\lim_{t\to \infty} X(t)=0$. %(see Remark~\ref{rt1}). 
	
	For $r_0>0$ fixed arbitrary, we consider the differential equation for $r$ in \eqref{comp} for $t\in [T_0,\infty)$, subject to $r(T_0)=r_0$, namely,  
		\begin{equation} \label{compi} 
 \left\{ 	\begin{aligned} 
 & \frac{dr}{dt}=-\frac{r}{\kappa\left(X(t)+\mathfrak p\right)}\quad \mbox{for } t\geq T_0, \\
 & r(T_0)=r_0. 
\end{aligned}
\right.
	\end{equation}
This problem has a unique solution given by
	\begin{equation} \label{bil} r(t)=r_0 \exp\left( -\frac{1}{\kappa} \int_{T_0}^ t \frac{ds}{X(s)+\mathfrak p}\right) \quad \mbox{for } t\geq T_0. 
	\end{equation}
	Since $r=r(t)$ is invertible, from \eqref{bil}, we can express $t$ as a function of $r$. 
	We define $u(r)$ by 
	\begin{equation} \label{ba1} u(r)=t^{-\frac{1}{\kappa}}\,r^{-\Theta_-}\quad \mbox{for every } r\in (0,r_0].  
	\end{equation} Then, using \eqref{compi}, jointly with \eqref{dust} and $\lim_{t\to \infty} X(t)=0$, we obtain that $u$ is a positive solution of \eqref{ez} in $(0,r_0]$ satisfying $u(r_0)=T_0^{-1/\kappa} r_0^{-\Theta_-}$ and \eqref{mino1}. 
	
	\vspace{0.2cm}
	We can simply obtain infinitely many positive solutions of \eqref{ez} in $(0,r_0]$ satisfying \eqref{mino1} by following the above procedure in which $T_0$ is replaced by $T_1$ with $T_1>T_0$. This change will apply in \eqref{compi} and in \eqref{bil} so that the solution of \eqref{compi} on $[T_1,\infty)$, subject to $r(T_1)=r_0$, say $ \widetilde r$, is a constant multiple of the solution $r$ given in \eqref{bil}. We write $\widetilde r=\mu\, r$ for some constant $\mu>1$ given by 
	$\mu=\exp\left(\frac{1}{\kappa} \int_{T_0}^{T_1}\frac{ds}{X(s)+\mathfrak p}\right)$.  
	Then, using $\widetilde r$ instead of $r$ in \eqref{ba1}, we obtain 
	$$\widetilde u(\widetilde r)=t^{-\frac{1}{\kappa}}\,\widetilde r^{-\Theta_-}=\mu^{-\Theta_-}\, u(\widetilde r/\mu )\quad \mbox{for every } \widetilde r\in (0,r_0].$$
	Using the transformation $T_j[u]$ defined in \eqref{scale} with $j=1/\mu$ and the fact that $\Theta=\Theta_-$, we simply recover that $T_{1/\mu}[u](\widetilde r)=\widetilde u(\widetilde r)$ is also a positive solution of \eqref{ez} on $(0,r_0]$ with the same properties near zero as $u$, namely, $\widetilde u(r)/V_0(r)\to 1$ as $r\to 0^+$. Moreover, since $r\longmapsto r^{\Theta_-}u(r)$ is increasing on $(0,r_0]$, it is clear that $\widetilde u(r_0)<u(r_0)$. Thus, $u$ and $\widetilde u$ are different solutions of \eqref{ez} on $(0,r_0]$, although they are obtained one from the other via the above scaling transformation.  
	
	Since $r_0>0$ was arbitrary in the above reasoning, the proof of Lemma~\ref{simp2} is complete. 
\end{proof}

\section{The asymptotic profile $W_0$ when $\lambda=\rho^2$, $\Theta=\Theta_\pm\not=0$ (Proof of Theorem~\ref{noma})} 

Throughout this section, we assume that \eqref{cond1} holds, $\lambda=\rho^2\not=0$ and $\Theta=\Theta_\pm=-\rho$. 
Unless otherwise stated, 
$u$ is a positive radial solution of \eqref{eq1} in $B_R(0)\setminus \{0\}$ for $R>0$ satisfying \eqref{wzer}:
\begin{equation}
\label{wod}
u(r)\sim \left(\frac{2\left(\kappa+2\right)}{\kappa^{2}
|\Theta_-|^{m}}\right)^
{\frac{1}{\kappa}}r^{-\Theta_-}\left[\log \, (1/r)\right]^{-\frac{2}{\kappa}}:=
W_0(r)\quad \mbox{as } r\to 0^+.  
\end{equation}

The assertion of Theorem~\ref{noma} is proved in Lemmas~\ref{nods} and \ref{soad3} below.

Let $\mathfrak B_1$ and 	$\mathfrak B_2$ be given by \eqref{jim11} (see also \eqref{jima11}). 
Since $\Theta=\Theta_-=-\rho$, this means that
$$ \mathfrak B_1(r)= r^{-\rho} u(r)\quad \mbox{and}\quad 
\mathfrak B_2(r)=\frac{ru'(r)}{u(r)}-\rho\quad \mbox{for every } r\in (0,R).
$$

\begin{lemma} \label{octa1}
Then, there exists $r_0>0$ small such that $\mathfrak B_1'(r)>0$, $\mathfrak B_2(r)>0$ and $\mathfrak B_2'(r)>0 $ for every $r\in (0,r_0)$. 
	Moreover, we have 
	\begin{equation}  \label{alm1} \lim_{r\to 0^+} \mathfrak B_2(r)=0\quad \mbox{and} \quad \lim_{r\to 0^+}\mathfrak B_1^{-\frac{\kappa}{2}} (r)\, \mathfrak B_2(r)=
	\left( \frac{2 \,|\rho|^m}{\kappa+2}\right)^{\frac{1}{2}}:=\mathfrak Q>0.
	\end{equation}	
\end{lemma}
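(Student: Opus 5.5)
The plan follows the proof of Lemma~\ref{azi5}, adapted to the degenerate case $\lambda=\rho^2$, $\Theta=\Theta_\pm=-\rho$.

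\textbf{Step 1: $u'$ has constant sign near $0$.} Since $\lambda=\rho^2>0$, this follows exactly as in Lemma~\ref{wisp0}. At any critical point $r_n$ of $u$ we have $r_n^2u''(r_n)=-\lambda u(r_n)<0$. So every critical point is a strict local maximum, and critical points cannot accumulate at $0$.

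\textbf{Step 2: $\mathfrak B_1'$ and $\mathfrak B_2'$ do not vanish near $0$.} Suppose $\mathfrak B_1'(r_n)=0$ along $r_n\searrow0$. The computation giving \eqref{azi2}, with $\ell=\Theta^2+2\rho\Theta+\lambda=0$, yields
$$r_n^2\mathfrak B_1''(r_n)=|\rho|^m\mathfrak B_1^{\kappa+1}(r_n)>0.$$
So every such $r_n$ is a strict local minimum, which is impossible. Hence $\mathfrak B_1'$ has a fixed sign on some $(0,r_0)$. The vanishing of $\mathfrak B_2'$ is excluded by the argument of Lemma~\ref{wisp1}: at a zero of $\mathfrak B_2'$, the identity \eqref{oras} differentiates to
$$r_n^2\mathfrak B_2''(r_n)=\kappa\,\mathfrak B_2(r_n)\,\mathfrak B_1^\kappa(r_n)\,|\Theta-\mathfrak B_2(r_n)|^m,$$
whose sign is that of $\mathfrak B_2=r\mathfrak B_1'/\mathfrak B_1$, which is constant. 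This is a contradiction.

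\textbf{Step 3: $\mathfrak B_2\to0$ and the sign is positive.} Now $\mathfrak B_2$ is monotone, so $\lim_{r\to0^+}\mathfrak B_2(r)$ exists. By \eqref{wod}, $\log\mathfrak B_1(r)/\log(1/r)\to0$, so L'H\^opital gives $\lim\mathfrak B_2=0$. Since $\mathfrak B_1\sim c[\log(1/r)]^{-2/\kappa}\to0$ and $\mathfrak B_1$ is monotone, $\mathfrak B_1$ must be increasing. Thus $\mathfrak B_1'>0$, hence $\mathfrak B_2>0$. Because $\mathfrak B_2$ is monotone and decreases to $0$ as $r\to0^+$, we also get $\mathfrak B_2'>0$.

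\textbf{Step 4: the second limit in \eqref{alm1}.} With $\ell=0$ and $\Theta+\rho=0$, \eqref{oras} becomes
$$r\mathfrak B_2'=-\mathfrak B_2^2+\mathfrak B_1^\kappa|\rho+\mathfrak B_2|^m,$$
since $\Theta-\mathfrak B_2=-(\rho+\mathfrak B_2)$. Put $Z=\mathfrak B_1^{-\kappa/2}\mathfrak B_2$ and $s=\mathfrak B_1^{-\kappa/2}$, so that $s\to\infty$ and $r\,ds/dr=-\tfrac{\kappa}{2}s\mathfrak B_2$. Differentiating gives
$$r\frac{dZ}{dr}=s\Big(|\rho+\mathfrak B_2|^m\mathfrak B_1^{\kappa}-(1+\tfrac{\kappa}{2})\mathfrak B_2^2\Big),$$
and the candidate limit is the zero of the bracket, $Z^2=|\rho|^m/(1+\kappa/2)=\mathfrak Q^2$.

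I would make this rigorous with a L'H\^opital argument along the lines of Lemma~\ref{azi5}. Take the ratio
$$\frac{\frac{d}{dr}\big(r^{a}\mathfrak B_2^2\big)}{\frac{d}{dr}\big(r^{a}\mathfrak B_1^\kappa\big)}$$
for a suitable weight $r^a$, or more simply compare $\frac{d}{dr}(\mathfrak B_2^2)$ with $\frac{d}{dr}(\mathfrak B_1^\kappa)$. Using $r\mathfrak B_1'=\mathfrak B_1\mathfrak B_2$, this ratio equals
$$\frac{2(-\mathfrak B_2^2+\mathfrak B_1^\kappa|\rho+\mathfrak B_2|^m)}{\kappa\,\mathfrak B_1^\kappa}=\frac{2}{\kappa}\big(|\rho+\mathfrak B_2|^m-Z^2\big).$$
This shows that the limit, if it exists, satisfies $L=\tfrac{2}{\kappa}(|\rho|^m-L)$, i.e. $L=\mathfrak Q^2$.

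\textbf{Main obstacle.} Existence of $\lim Z$ is the hardest step. I would first prove that $Z$ is eventually monotone, by the critical-point trick used repeatedly in the paper. At a zero of $Z'$, the relation $Z^2=|\rho+\mathfrak B_2|^m/(1+\kappa/2)$ holds. Differentiating once more shows that the sign of $Z''$ is governed by the $O(\mathfrak B_2)$ correction $m\rho|\rho|^{m-2}\mathfrak B_2$, whose sign is fixed. Hence all critical points of $Z$ are of the same type, a contradiction. Once $Z$ is monotone it has a limit in $[0,\infty]$. I would rule out $0$ and $\infty$ using the equation for $\mathfrak B_1$ together with the known rate $\mathfrak B_1\sim c[\log(1/r)]^{-2/\kappa}$ from \eqref{wod}, which forces $\mathfrak B_2\sim \tfrac{2}{\kappa}/\log(1/r)$. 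Comparing this with \eqref{wod} directly gives $\mathfrak B_1^{-\kappa/2}\mathfrak B_2\to\mathfrak Q$; this last comparison is exactly where the constant in $W_0$ enters.
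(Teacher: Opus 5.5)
Your proposal is correct. Steps 1--3 are the paper's argument: it simply reruns Lemma~\ref{azi5} with $\lambda=\rho^2$ and $\Theta_-=-\rho$.

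For the second limit in \eqref{alm1} you take a different and longer route.

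\textbf{The paper's route.} It never needs monotonicity of $Z=\mathfrak B_1^{-\kappa/2}\mathfrak B_2$. It uses the exact identity $r\frac{d}{dr}[\mathfrak B_1\mathfrak B_2]=|\mathfrak B_2+\rho|^m\mathfrak B_1^{\kappa+1}$ together with the rate $\mathfrak B_1\sim c\,[\log(1/r)]^{-2/\kappa}$ from \eqref{wod}. One application of L'H\^opital gives $\mathfrak B_1\mathfrak B_2\sim\frac{\kappa|\rho|^m}{\kappa+2}c^{\kappa+1}[\log(1/r)]^{-(\kappa+2)/\kappa}$. Dividing by $\mathfrak B_1^{1+\kappa/2}$ then yields $\mathfrak Q$.

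\textbf{Your route.} You first prove eventual monotonicity of $Z$ by the critical-point trick, then use the self-consistent L'H\^opital relation $L=\frac{2}{\kappa}(|\rho|^m-L)$. This works. At a zero of $Z'$ one has $r\mathfrak B_2'=\frac{\kappa}{2}\mathfrak B_2^2$, and the $\mathfrak B_2^3$ terms cancel exactly. What remains is $r^2Z''=\frac{m\kappa(\kappa+2)}{4}\,\mathfrak B_1^{-\kappa/2}\,\frac{\mathfrak B_2^4}{\rho+\mathfrak B_2}$, which has the sign of $\rho\neq0$. You should write this cancellation out: it is the only nontrivial computation, and your sketch hides it.

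Once $\lim Z\in[0,\infty]$ exists, your relation already excludes $0$ and $\infty$. So the closing appeal to \eqref{wod} is unnecessary.

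\textbf{What each route buys.} Your route obtains $\mathfrak Q$ from the equation alone, using only $\mathfrak B_1,\mathfrak B_2\to0$. This explains why the constant in $W_0$ is forced, whereas the paper's shorter argument consumes that constant as input.

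\textbf{A shortcut.} You can drop the monotonicity step entirely. Compare $(\mathfrak B_1\mathfrak B_2)^2$ with $\mathfrak B_1^{\kappa+2}$ instead of $\mathfrak B_2^2$ with $\mathfrak B_1^{\kappa}$. The ratio of derivatives is then $\frac{2}{\kappa+2}|\rho+\mathfrak B_2|^m$, which tends to $\mathfrak Q^2$ outright.
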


\begin{proof}
We follows the same argument as in the proof of Lemma~\ref{azi5} in which we replace $\lambda$ and
$\Theta_-$ by $\rho^2$ and $-\rho$, respectively. 
This leads to $\lim_{r\to 0^+} \mathfrak B_2(r)=0$. 
We need only prove the second limit in \eqref{alm1}.
From \eqref{wod}, we have 
\begin{equation} \label{foam1} \mathfrak B_1(r)=r^{-\rho} u(r)\sim  \left(\frac{2\left(\kappa+2\right)}{\kappa^{2}
|\rho|^{m}}\right)^
{\frac{1}{\kappa}}\left[\log \, (1/r)\right]^{-\frac{2}{\kappa}}\quad \mbox{as } r\to 0^+.
\end{equation}
Since $u$ is a positive solution of \eqref{ez}, 
using that $\lambda=\rho^2$ and $\Theta=\Theta_-=-\rho$, we get
\begin{equation} \label{wim} 
 r\frac{d}{dr} [\mathfrak B_1(r)\,\mathfrak B_2(r)]=
|\mathfrak B_2(r)+\rho|^m \mathfrak B_1^{\kappa+1}(r)\quad \mbox{for all } r\in (0,R).  
\end{equation}
 From \eqref{foam1} and \eqref{wim}, we see that
 $$\lim_{r\to 0^+} \frac{ \frac{d}{dr}[\mathfrak B_1(r) \mathfrak B_2(r)]}{ \frac{d}{dr} \left([\log \,(1/r)]^{-\frac{\kappa+2}{\kappa}}\right)}=\frac{\kappa\,|\rho|^{m}}{\kappa+2}  \left(\frac{2\left(\kappa+2\right)}{\kappa^{2}
|\rho|^{m}}\right)^
{\frac{\kappa+1}{\kappa}}.
 $$
 Hence, by L'H\^opital's rule and \eqref{foam1}, we conclude the proof of the second limit in \eqref{alm1}.  
\end{proof}

For every $r>0$ small, we define
\begin{equation} \label{dafx} 
e^t=\mathfrak B_1^{-\frac{\kappa}{2}}(r)\quad \mbox{and} \quad
X(t)=
e^t\left( \frac{ru'(r)}{u(r)}-\rho\right)-\mathfrak Q=e^t\,\mathfrak B_2(r)-\mathfrak Q.
\end{equation}

\begin{lemma} \label{nods}
The following assertions hold. 
\begin{itemize}
	\item[(a)] As $r\to 0^+$, we have $e^t/\log \,(1/r)\to  \kappa\, \mathfrak Q /2$ and $X(t)\to 0$.
\item[(b)] There exists $T_0>\log \left(4\mathfrak Q/|\rho|\right)$ large such that for all $t\geq T_0$, we have $|X(t)|\leq \mathfrak Q/2$ and
\begin{equation} \label{dust2}  \frac{dX}{dt}=
\left(1+\frac{2}{\kappa} \right)\left(X+ \mathfrak Q\right)\left[ 1-\frac{\mathfrak Q^2}{(X+\mathfrak Q)^2 } \left( 1+
\frac{X+\mathfrak Q}{\rho\, e^t }\right)^m
\right].
\end{equation}

\item[(c)] We have 
\begin{equation} \label{asfa1} e^t X(t)\to \frac{2m|\rho|^m}{\rho \left(3\kappa+4\right)}\quad \mbox{as } t\to \infty. \end{equation} 
\item[(d)] The claim in \eqref{samiii} holds, that is,
	\begin{equation} \label{sami}
	\frac{u(r)}{W_0(r)}=1-\frac{4m \left(2+\kappa\right)}{\rho\kappa^2 \left(3\kappa+4\right)}\,\frac{\log \log \, (1/r)}{\log \, (1/r)}(1+o(1)) \quad \mbox{as } r\to 0^+.
	\end{equation}
\end{itemize}
\end{lemma}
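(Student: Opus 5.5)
This lemma is proved by running the scheme of Lemmas~\ref{bx} and the subsequent refinement lemma of the $V_0$ section with the exponent $-\kappa/2$ in place of $-\kappa$.

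\emph{Part (a).} From \eqref{foam1} we have $e^t=\mathfrak B_1^{-\kappa/2}(r)\sim \big(\kappa^2|\rho|^m/(2(\kappa+2))\big)^{1/2}\log(1/r)$. Since $\mathfrak Q^2=2|\rho|^m/(\kappa+2)$, this constant equals $\kappa\mathfrak Q/2$. The limit $X(t)\to 0$ is exactly the second limit in \eqref{alm1} of Lemma~\ref{octa1}.

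\emph{Part (b).} Differentiating $e^t$ gives $\frac{dt}{dr}=-\frac{\kappa}{2}\frac{\mathfrak B_2(r)}{r}$, that is, $\frac{dr}{dt}=-\frac{2r\,e^{-t}}{\kappa(X+\mathfrak Q)}$. Now set $X+\mathfrak Q=e^t\mathfrak B_2$ and use \eqref{wim}. Written for $\mathfrak B_2$, it reads $r\mathfrak B_2'=-\mathfrak B_2^2+|\mathfrak B_2+\rho|^m\mathfrak B_1^\kappa$, with $\mathfrak B_1^\kappa=e^{-2t}$. Then
$$\frac{dX}{dt}=(X+\mathfrak Q)+e^t r\mathfrak B_2'\cdot\frac{-2e^{-t}}{\kappa(X+\mathfrak Q)}.$$
Substituting $|\mathfrak B_2+\rho|^m=|\rho|^m\big(1+\frac{X+\mathfrak Q}{\rho e^t}\big)^m$ and $|\rho|^m=\frac{\kappa+2}{2}\mathfrak Q^2$ gives \eqref{dust2}. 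The choice $T_0>\log(4\mathfrak Q/|\rho|)$ guarantees that the bracket $1+\frac{X+\mathfrak Q}{\rho e^t}$ stays positive.

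\emph{Part (c).} This is the main step. Linearize \eqref{dust2} at $X=0$:
$$X'=(1+\tfrac{2}{\kappa})\Big[2X-\tfrac{m\mathfrak Q^2}{\rho}e^{-t}+O(X^2)+O(Xe^{-t})+O(e^{-2t})\Big].$$
The linear coefficient $2(\kappa+2)/\kappa$ is positive, so the forcing term of order $e^{-t}$ selects the decaying solution.

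First, show $X'(t)\neq0$ for large $t$ by the critical-point-sign argument used throughout the paper. At a sequence of critical points $t_n$, the relation $X'=0$ gives $e^{t_n}X(t_n)\to \frac{m\mathfrak Q^2}{2\rho}$. Differentiating \eqref{dust2} then shows $e^{t_n}X''(t_n)$ tends to a nonzero limit of fixed sign (of order $\frac{m\mathfrak Q^2}{\rho}$ times a constant). This contradicts the fact that consecutive critical points must alternate between maxima and minima. Hence $X$ is monotone, and since $X\to0$, $X$ and $X'$ have opposite signs. Feeding this into \eqref{dust2} bounds $e^t|X|$, so $e^tX^2\to0$.

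Next, multiply \eqref{dust2} by $e^{-2(\kappa+2)t/\kappa}$ and apply L'H\^opital to $\frac{Xe^{-ct}}{e^{-(c+1)t}}$ with $c=2(\kappa+2)/\kappa$:
$$\lim e^tX=\frac{(1+2/\kappa)\,m\mathfrak Q^2/\rho}{c+1}=\frac{(\kappa+2)m\mathfrak Q^2}{\rho(3\kappa+4)}=\frac{2m|\rho|^m}{\rho(3\kappa+4)}.$$
This is \eqref{asfa1}. The delicate points are the sign bookkeeping of $\rho$, for which I would rely on $\mathfrak B_2>0$ from Lemma~\ref{octa1}, and the precise expansion constant.

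\emph{Part (d).} Set $\Psi(r)=e^t-\frac{\kappa\mathfrak Q}{2}\log(1/r)$. Using $\frac{dt}{dr}$ we get $r\Psi'(r)=-\frac{\kappa}{2}X$, so
$$\frac{\Psi'}{(\log\log(1/r))'}=\frac{\kappa}{2}X\log(1/r).$$
By (a) and (c), $X\log(1/r)\sim \frac{2}{\kappa\mathfrak Q}e^tX\to\frac{2}{\kappa\mathfrak Q}\cdot\frac{2m|\rho|^m}{\rho(3\kappa+4)}$. L'H\^opital then yields $\Psi\sim \frac{2m|\rho|^m}{\mathfrak Q\rho(3\kappa+4)}\log\log(1/r)$.

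Finally, since $u/W_0=\big(e^t/(\tfrac{\kappa\mathfrak Q}{2}\log(1/r))\big)^{-2/\kappa}$ and $|\rho|^m=\frac{\kappa+2}{2}\mathfrak Q^2$, expanding gives
$$\frac{u}{W_0}=1-\frac{2}{\kappa}\cdot\frac{2}{\kappa\mathfrak Q}\cdot\frac{2m|\rho|^m}{\mathfrak Q\rho(3\kappa+4)}\cdot\frac{\log\log}{\log}(1+o(1)).$$
The coefficient simplifies to $\frac{4m(\kappa+2)}{\rho\kappa^2(3\kappa+4)}$, which is \eqref{sami}.
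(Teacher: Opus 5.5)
Your proposal is correct and follows the paper's proof essentially step for step: (a) comes from \eqref{foam1} and Lemma~\ref{octa1}; (b) comes from $dr/dt$ together with \eqref{wim}; (c) uses the critical-point sign argument, the bound $e^t|X|\le C$ (so $e^tX^2\to0$), and L'H\^opital after multiplying \eqref{dust2} by $e^{-2(1+2/\kappa)t}$; (d) applies L'H\^opital to $\Psi$ against $\log\log(1/r)$ and then expands. One slip in (b) needs fixing: since $\mathfrak B_2=e^{-t}(X+\mathfrak Q)$, the correct formula is $\frac{dr}{dt}=-\frac{2r\,e^{t}}{\kappa(X+\mathfrak Q)}$, not $e^{-t}$, and only with $e^{t}$ does the factor $\mathfrak B_1^\kappa=e^{-2t}$ in $r\mathfrak B_2'$ cancel to give \eqref{dust2}.
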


\begin{proof}
(a) The claim follows from \eqref{foam1} and Lemma~\ref{octa1}.   

(b) In view of (a), we can find $T_0>\log \left(4\mathfrak Q/|\rho|\right)$ large such that $|X(t)|\leq \mathfrak Q/2$ for all $t\geq T_0$ and $1+e^{-t} (X(t)+\mathfrak Q)/\rho>0$. 
From \eqref{dafx}, we derive that 
\begin{equation} \label{mic1}  
 \frac{1}{r} \,\frac{dr}{dt}=- \frac{2e^{t}}{\kappa\left( X(t)+\mathfrak Q\right)}. \end{equation} 
for all $t\geq T_0$. 
By using \eqref{mic1} and \eqref{muzia}, we see that 
$$ \begin{aligned} 
\frac{d}{dt} [\mathfrak B_2(r)]& =-\frac{2\,r e^t }{\kappa \left(X(t)+\mathfrak Q\right)} \frac{d}{dr}[\mathfrak B_2(r)]\\
&=-\frac{2 \,e^{-t}}{\kappa \left(X(t)+\mathfrak Q\right)} \left[ |\rho|^m \left( 1+\frac{X(t)+\mathfrak Q}{\rho\, e^t} 
\right)^{m} -(X(t)+\mathfrak Q)^2\right].
 \end{aligned} $$
Thus, by differentiating $X(t)$ in \eqref{dafx} with respect to $t$ and using $\mathfrak Q$ in \eqref{alm1}, we arrive at \eqref{dust2}.  

(c) From \eqref{dust2} and $\lim_{t\to \infty} X(t)=0$, we have
\begin{equation} \label{6b} X'(t)= \left(1+\frac{2}{\kappa}\right)\frac{X(X+2\mathfrak Q)}{X+\mathfrak Q}-\left(1+\frac{2}{\kappa}\right)\frac{m\mathfrak Q^2}{\rho}\,e^{-t} (1+o(1))\quad \mbox{as } t\to \infty.
\end{equation}

We first see that $X'(t)$ has a constant sign for every $t\geq T_0$ large enough. Indeed, if we assume the contrary, then $X'(t_n)=0$ for some sequence $\{t_n\}_{n\geq 1}$ in $[T_0,\infty)$
 such that $\lim_{n\to \infty} t_n=\infty$. Using that $\lim_{n\to \infty} X(t_n)=0$, we find that 
 $$ \lim_{n\to \infty} e^{t_n} X''(t_n)=\left(1+\frac{2}{\kappa}\right) \frac{m\mathfrak Q^2}{\rho}\not=0.
 $$
 Then, for every $n\geq 1$ large, we obtain that the sign of $X''(t_n)$ is that of $\rho$, which leads to a contradiction. 
 We have thus proved that $X'(t)$ has a constant sign for each $t\geq T_0$ large.
 
Since $\lim_{t\to \infty} X(t)=0$, it follows that  for large $t>0$, $X(t)$ and $X'(t)$ have opposite signs.  
So, using that $\mathfrak Q$, $m$ and $\kappa$ are positive, from \eqref{6b}, we get
${\rm sgn}\,(X(t))={\rm sgn}\,(\rho)=-{\rm sgn} (X'(t))  $ for every $t\geq T_0$ large. Moreover, for some constant $C>0$, we have 
$$ e^t |X(t)|\leq C\quad \mbox{for every large } t\geq T_0.$$
This implies that 
\begin{equation} \label{fzc} 
\lim_{t\to \infty} e^{t} X^2(t)=0.
\end{equation}

By multiplying \eqref{dust2} by $e^{-2(1+2/\kappa)t}$, we find that as $t\to \infty$, 
$$ \frac{d}{dt}\left( e^{-2\left(1+\frac{2}{\kappa}\right)t} X(t)\right) =
-\left(1+\frac{2}{\kappa}\right)  \left[ 
\frac{X^2(t) e^{-2\left(1+\frac{2}{\kappa}\right)t}}{X(t)+\mathfrak Q}
+\frac{m\mathfrak Q^2}{\rho} \, e^{-\left( 3+\frac{4}{\kappa}\right)t} (1+o(1))\right].
$$
Then, by L'H\^opital's rule and \eqref{fzc}, we derive that 
$$ \lim_{t\to \infty} e^t X(t)=\lim_{t\to \infty} \frac{e^{-2\left(1+\frac{2}{\kappa}\right)t} X(t)}{ e^{-\left(3+\frac{4}{\kappa}\right)t}} 
=\frac{\kappa+2}{3\kappa+4} 
\left( \frac{m\mathfrak Q^2}{\rho}+
\frac{1}{\mathfrak Q} \lim_{t\to \infty} e^t X^2(t)\right)
=\frac{2m|\rho|^m}{\rho\left(3\kappa+4\right)}.
$$
This proves the claim in \eqref{asfa1}.

(d)
Using \eqref{asfa1} and (a), we obtain that 
\begin{equation} \label{guw} \lim_{r\to 0^+} X(t) \log \, (1/r)=\frac{4m|\rho|^m}{\rho\kappa\left(3\kappa+4\right) \mathfrak Q}\not=0.
\end{equation}
From \eqref{dafx} and \eqref{mic1}, we observe that 
$$ X(t)=-\frac{2r}{\kappa} \frac{d}{dr} \left[  \left( r^{\rho} u(r)\right)^{-\frac{\kappa}{2}}-\frac{\kappa \mathfrak Q}{2} \log \, (1/r)
\right]. 
$$
Hence, using \eqref{guw} and L'H\^opital's rule, we infer that there exists
$$ \lim_{r\to 0^+} \frac{ \left( r^{\rho} u(r)\right)^{-\frac{\kappa}{2}}-\frac{\kappa \mathfrak Q}{2} 
\log \, (1/r)}{\log \log \, (1/r) } =\frac{2m|\rho|^m}{\rho\left(3\kappa+4\right)\mathfrak Q}, 
$$ 
which implies that \eqref{sami} holds. The proof of Lemma~\ref{nods} is now complete. \end{proof}

\begin{lemma} \label{soad3}
Let $R>0$ be arbitrary. Then, equation \eqref{eq1} in $B_R(0)\setminus \{0\}$ has infinitely many positive radial solutions $u$ satisfying \eqref{wod}. 	
\end{lemma}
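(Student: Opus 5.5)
We mirror the existence arguments for $V_0$ (Lemma~\ref{simp2}) and $E_0$ (Lemma~\ref{step4}), using the one-dimensional reduction of Lemma~\ref{nods} in reverse.

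\textbf{Step 1: a decaying solution of \eqref{dust2}.} Fix $T_0>\log(4\mathfrak Q/|\rho|)$ large. Write \eqref{dust2} as
\[ X'=(1+2/\kappa)\,\frac{X(X+2\mathfrak Q)}{X+\mathfrak Q}-(1+2/\kappa)\frac{m\mathfrak Q^2}{\rho}e^{-t}(1+o(1)). \]
The linear part $X'=2(1+2/\kappa)X+\dots$ is unstable, and the forcing decays like $e^{-t}$. Hence there is exactly one solution on $[T_0,\infty)$ with $X(t)\to 0$. To obtain it, use the variation-of-constants formula
\[ X(t)=-\int_t^\infty e^{-a(s-t)}\,G(s,X(s))\,ds,\qquad a=2\left(1+\tfrac{2}{\kappa}\right), \]
where $G$ collects the nonlinear and forcing terms. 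Run a contraction argument in the weighted space $\{\sup_t e^{t}|X(t)|\le C\}$ with $T_0$ large. Equivalently, apply the stable manifold theorem to the autonomous system in $(X_1,X_2)=(X,e^{-t})$. Its linearization at the origin has eigenvalues $a>0$ and $-1$, so the one-dimensional stable manifold is a graph $X_1=w(X_2)$. This gives $|X(t)|\le\mathfrak Q/2$ and $1+e^{-t}(X+\mathfrak Q)/\rho>0$ on $[T_0,\infty)$.

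\textbf{Step 2: reconstructing $r$ and $u$.} Solve \eqref{mic1} with $r(T_0)=r_0$:
\[ r(t)=r_0\exp\left(-\frac{2}{\kappa}\int_{T_0}^t\frac{e^{s}\,ds}{X(s)+\mathfrak Q}\right). \]
This map is strictly decreasing with $r\to0$ as $t\to\infty$, so we can invert it to get $t=t(r)$. Motivated by \eqref{dafx}, define
\[ u(r)=r^{\rho}e^{-2t/\kappa}\quad\text{for } r\in(0,r_0]. \]
Then $\mathfrak B_1=e^{-2t/\kappa}$, and $\mathfrak B_2=r\mathfrak B_1'/\mathfrak B_1=e^{-t}(X+\mathfrak Q)$ by \eqref{mic1}. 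Reading the derivation of \eqref{dust2} backwards, the relation $d\mathfrak B_2/dt=\dots$ is equivalent to \eqref{muzia}, hence to \eqref{ez} with $\lambda=\rho^2$ and $\Theta=-\rho$. Here we use that $|\nabla u|^m$ corresponds to $|\mathfrak B_2+\rho|^m=|\rho|^m\left(1+\frac{X+\mathfrak Q}{\rho e^t}\right)^m$, which is legitimate because the bracket stays positive. So $u$ is a positive radial solution in $B_{r_0}(0)\setminus\{0\}$.

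\textbf{Step 3: the asymptotics.} Since $X\to0$, \eqref{mic1} gives
\[ \frac{d\log(1/r)}{dt}\sim\frac{2e^t}{\kappa\mathfrak Q}, \]
so $e^t/\log(1/r)\to\kappa\mathfrak Q/2$. Then
\[ \mathfrak B_1=e^{-2t/\kappa}\sim\left(\frac{\kappa\mathfrak Q}{2}\log(1/r)\right)^{-2/\kappa}. \]
Substituting $\mathfrak Q^2=2|\rho|^m/(\kappa+2)$ shows this equals $W_0(r)\,r^{-\rho}\,(1+o(1))$, which is \eqref{wod}.

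\textbf{Step 4: infinitely many solutions, for every $R$.} Take $r_0=R$. Replacing $T_0$ by any $T_1>T_0$, with the same $X$ restricted to $[T_1,\infty)$, rescales $r$ by a constant factor. As in the proof of Lemma~\ref{simp2}, this produces $T_j[u]$ with $\Theta=-\rho$, which is again a solution on $(0,R]$ satisfying \eqref{wod}. These solutions are distinct because $r^{-\rho}u$ is monotone near zero (Lemma~\ref{octa1}), so they take different values at $R$.

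The main obstacle is Step 1: justifying existence of the decaying solution on a prescribed half-line despite the non-autonomous $e^{-t}$ forcing. The weighted contraction, or the two-dimensional stable manifold argument, handles this.
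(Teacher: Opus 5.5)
Your proposal is correct and follows the paper's proof of Lemma~\ref{soad3} essentially step by step: the same decaying solution of \eqref{dust2}, the same formula \eqref{bila} for $r(t)$, the same definition $u(r)=r^{\rho}e^{-2t/\kappa}$, and the scaling $T_j[u]$ with $\Theta=-\rho$ for multiplicity. You also justify the existence of the decaying $X$ (weighted contraction, or the hyperbolic saddle in $(X,e^{-t})$), which the paper only asserts. One small fix: distinctness should not rest on Lemma~\ref{octa1}, which gives monotonicity of $r^{-\rho}u$ only near $0$. It is immediate from your construction, since $r^{-\rho}u=e^{-2t/\kappa}$ is strictly increasing on all of $(0,R]$; equivalently, the solution built from $T_1$ takes the value $R^{\rho}e^{-2T_1/\kappa}$ at $R$.
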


\begin{proof}
We follow a similar argument to that in Lemma~\ref{simp2}, working here with \eqref{dafx} and \eqref{dust2} instead of \eqref{dfx} and \eqref{dust}, respectively. 
Hence, for $T_0>0$ large, we have a unique solution $X(t)$ of \eqref{dust2} on $[T_0,\infty)$ such that $\lim_{t\to \infty} X(t)=0$.  

For $r_0>0$ fixed arbitrary, we see that the differential equation in \eqref{mic1} for $t\in [T_0,\infty)$, subject to $r(T_0)=r_0$, 
has a unique solution $r=r(t)$ given by 
	\begin{equation} \label{bila} r(t)=r_0 \exp\left( -\frac{2}{\kappa} \int_{T_0}^ t \frac{e^s}{X(s)+\mathfrak Q}\,ds\right) \quad \mbox{for } t\geq T_0. 
	\end{equation}
	Moreover, we find that 
	$$ \lim_{r\to 0^+} \frac{\log \, (1/r)}{e^t} =-\lim_{r\to 0^+} \frac{\frac{dr}{dt}}{r\,e^t}=\frac{2}{\kappa \mathfrak Q}= \left( \frac{2\left(\kappa+2\right)}{\kappa^2 |\rho|^m}\right)^{\frac{1}{2}}.
	$$
	For every $r\in (0,r_0]$, 
	we define $u(r)$ by 
	\begin{equation} \label{bacv} u(r)=\,r^{\rho}\,e^{-\frac{2}{\kappa}t}.  
	\end{equation} 
	Then, $u$ is a positive radial solution of \eqref{eq1} in $B_{r_0}(0)\setminus \{0\}$ satisfying \eqref{wod}. 
Recall that $\Theta=\Theta_\pm=-\rho$. Thus, for each $j\in (0,1)$, by applying the transformation $T_j[u](r)=j^{-\rho} u(jr)$ for all $r\in (0,r_0/j)$, we get that $T_j [u]$ is another positive radial solution of \eqref{eq1} in $B_{r_0/j} (0)\setminus \{0\}$ satisfying \eqref{wod}. We have $T_{j_1}[u](r_0)<T_{j_2}[u](r_0)<u(r_0)$ for every $0<j_1<j_2<1$ since $r\longmapsto r^{-\rho} u(r) $ is increasing on $(0,r_0]$. 
This ends the proof of Lemma~\ref{soad3}. 
\end{proof}	

%%%%

\section{The asymptotic profile $Y_0$ when $\lambda=\Theta=0< \rho\left(m-1\right)$  (Proof of Theorem~\ref{ytam})}

Throughout this section, we assume that 
$$ \eqref{cond1} \ \mbox{holds}, \ \rho \in \mathbb{R}\setminus \{0\},\  \Theta=\lambda=0\ \mbox{and }\rho \left (m-1\right )>0.$$ 
We prove Theorem~\ref{ytam} by combining Lemmas~\ref{bx111} and \ref{gigg}. 
Unless otherwise stated, 
$u$ is an any positive radial solution of \eqref{eq1} in $B_R(0)\setminus \{0\}$ with $R>0$ such that \eqref{yoo} holds, that is 
\begin{equation}\label{yoo1}
u(r)\sim \left[  \mathfrak C\,\log \, (1/r)\right]^{\frac{m-1}{\kappa}}:=Y_0(r)\ \mbox{as } r\to 0^+,\quad \mbox{where } \mathfrak C:=\frac{\kappa\left (2|\rho|\right )^{\frac{1}{m-1}}}{|m-1|}>0.
\end{equation}

By \eqref{azi1}, if $m<1$, then $\lim_{r\rightarrow 0^+}u(r)=0$ and $u'(r)>0$ for every $r>0$ small. If $m>1$, then $\lim_{r\rightarrow 0^+}u(r)=\infty$ and $u'(r)<0$ for every $r>0$ small. Thus,  ${\rm sgn}\, (u'(r))={\rm sgn}\, (1-m)$.

For $\mathfrak C$ given in \eqref{yoo1} and every $r>0$ small, we define
\begin{equation} \label{dfx1} 
t=(u(r))^{\frac{\kappa}{m-1}}\  \mbox{ and}\ \  \frac{dt}{dr}=-\frac{X(t)+\mathfrak C}{r},\mbox{ where }
X(t)=\frac{\kappa}{1-m}\,t\,\frac{ru'(r)}{u(r)}-\mathfrak{C}. 
\end{equation} 
Remark that $X(t)+\mathfrak C>0$ for every $r>0$ small and \eqref{yoo1} yields that
 \begin{equation}\label{poll1}
\frac{t}{\log\,(1/r)}\rightarrow \mathfrak C\quad \text{as } r\rightarrow 0^+.
\end{equation}

\begin{lemma}\label{bx111}
Let $t$ and $X(t)$ be defined as in \eqref{dfx1}. Then, 
there exists $T_0>0$ large such that for every $t\geq T_0$, it holds $X(t)+\mathfrak C>0$ and
\begin{equation} \label{dust1}  \frac{dX}{dt}=\frac{q}{\kappa}\frac{X(t)+\mathfrak C}{t}-2\rho \left[  1-\left( 1+\frac{X(t)}{\mathfrak C}\right)^{m-1}\right].
\end{equation}
\begin{itemize}
\item [(a)] If $q\neq 0$, then we have
\begin{equation}\label{miso11}
\lim_{t\rightarrow \infty}t X(t)=-\frac{q\,\mathfrak C^2}{2\rho\kappa\,(m-1)},
\end{equation} 
which implies that \begin{equation}\label{tine}
\frac{u(r)}{Y_0(r)}=1-\frac{q}{2\rho\,\kappa^2}\frac{\log\,\log\,(1/r)}{\log\,(1/r)}\left (1+o(1)\right )\quad\text{as }r\rightarrow 0^+.
\end{equation}
\item [(b)] If $q=0$, then necessarily $m>1$ and there exists a constant $C\in \mathbb{R}$ such that 
\begin{equation}\label{cec1}
u(r)=(2\rho)^{\frac{1}{m-1}}\log\,(1/r)+C\quad\text{for every }r>0\text{ small}.
\end{equation}
\end{itemize}
	\end{lemma}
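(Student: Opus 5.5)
We follow the pattern of Lemma~\ref{bx} (profile $V_0$) and Lemma~\ref{stepint} (profile $E_0$). There are four steps: derive the ODE, show $X$ is eventually monotone, get the decay rate of $X$ by L'H\^opital, and integrate back to $u$.

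\emph{The ODE \eqref{dust1}.} Since $\Theta=\lambda=0$, we have $\theta=m-2$ and $r^\theta u^q|u'|^m=r^{-2}u^{q}|ru'|^m$. Write $\mathfrak B=ru'/u$, so that \eqref{muzia} reads
$$r\mathfrak B'=2\rho\mathfrak B-\mathfrak B^2+u^\kappa|\mathfrak B|^m .$$
By \eqref{dfx1}, $\mathfrak B=\frac{1-m}{\kappa}\frac{X+\mathfrak C}{t}$ and $u^\kappa=t^{m-1}$. Hence
$$u^\kappa|\mathfrak B|^m=\Big(\frac{|1-m|}{\kappa}\Big)^m (X+\mathfrak C)^m t^{-1}.$$
Differentiate $X=\frac{\kappa}{1-m}t\mathfrak B-\mathfrak C$ in $t$ using $dr/dt=-r/(X+\mathfrak C)$. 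We also use ${\rm sgn}\,(u')={\rm sgn}\,(1-m)$ and $\mathfrak C^{m-1}=\kappa^{m-1}2|\rho|/|m-1|^{m-1}$. The quadratic terms combine into $\frac{q}{\kappa}\frac{X+\mathfrak C}{t}$, and the linear and gradient terms combine into $-2\rho[1-(1+X/\mathfrak C)^{m-1}]$. This is a direct computation; \eqref{poll1} and the positivity of $X+\mathfrak C$ give $t\to\infty$.

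\emph{$X\to0$ and eventual monotonicity.} From $t\sim\mathfrak C\log(1/r)$ and $dt/dr=-(X+\mathfrak C)/r$, L'H\^opital yields $X(t)\to0$. Suppose $X'(t_n)=0$ along $t_n\to\infty$ (case $q\neq0$). Differentiating \eqref{dust1} at $t_n$ gives
$$X''(t_n)=-\frac{q}{\kappa}\frac{X+\mathfrak C}{t_n^2}\to -\frac{q\mathfrak C}{\kappa}\,t_n^{-2}(1+o(1)),$$
which has a fixed sign. This is the same contradiction argument as in Lemma~\ref{bx}, so $X'\neq0$ eventually. Consequently $X$ and $X'$ have opposite signs for large $t$.

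\emph{The limit \eqref{miso11} for $q\neq0$.} Linearize: $-2\rho[1-(1+X/\mathfrak C)^{m-1}]=a X+O(X^2)$ with $a:=2\rho(m-1)/\mathfrak C>0$, because $\rho(m-1)>0$. Thus $X'=aX+\frac{q\mathfrak C}{\kappa t}+O(X^2)+O(X/t)$. Since $X\to0$ and $a>0$, the opposite-sign property forces the boundedness $t|X|\le C$, as in \eqref{azi09}, so $tX^2\to0$. Multiply by $e^{-at}$ and apply L'H\^opital to $X e^{-at}\big/(e^{-at}/t)$. This gives $tX\to -\frac{q\mathfrak C}{\kappa a}=-\frac{q\mathfrak C^2}{2\rho\kappa(m-1)}$.

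\emph{Passing to \eqref{tine}.} Set $\Psi(r)=t-\mathfrak C\log(1/r)$, so that $r\Psi'=-X$. Then
$$\frac{\Psi'(r)}{(\log\log(1/r))'}=X\log(1/r)\to-\frac{q\mathfrak C}{2\rho\kappa(m-1)},$$
and L'H\^opital gives $\Psi\sim-\frac{q\mathfrak C}{2\rho\kappa(m-1)}\log\log(1/r)$. Therefore
$$u/Y_0=(t/(\mathfrak C\log(1/r)))^{(m-1)/\kappa}=1+\frac{m-1}{\kappa}\frac{\Psi}{\mathfrak C\log(1/r)}(1+o(1)),$$
which is exactly \eqref{tine}.

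\emph{Case $q=0$.} Here $\kappa=m-1>0$, so $m>1$ and then $\rho>0$. The ODE \eqref{dust1} becomes autonomous, $X'=-2\rho[1-(1+X/\mathfrak C)^{m-1}]$, which has the same sign as $X$. A nonzero $X$ would then grow away from $0$, contradicting $X\to0$. Hence $X\equiv0$ for large $t$, so $u'=-\mathfrak C/r$ with $t=u$, and $\mathfrak C=(2\rho)^{1/(m-1)}$ because $\kappa=m-1$. Integrating gives \eqref{cec1}.

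The main obstacle is justifying the a priori bound $t|X|\le C$ needed for $tX^2\to0$ before applying L'H\^opital. We handle it as in Lemma~\ref{bx}: if $X>0$ were eventually decreasing with $tX\to\infty$, then \eqref{dust1} would force $X'\ge aX/2>0$, a contradiction; the sign-reversed case is treated symmetrically.
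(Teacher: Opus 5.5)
Your proposal is correct and follows the paper's proof essentially step by step: the same ODE \eqref{dust1} derived from \eqref{muzia}, the same sign argument $X''(t_n)=-\frac{q}{\kappa}\frac{X(t_n)+\mathfrak C}{t_n^2}<0$ at critical points, the same bound $t|X|\le C$ followed by L'H\^opital on $Xe^{-at}$ versus $e^{-at}/t$, the same comparison of $t-\mathfrak C\log(1/r)$ with $\log\log(1/r)$, and the same $X\equiv0$ argument when $q=0$. The one step to reorder is the claim $X\to0$: it cannot be read off from $t\sim\mathfrak C\log(1/r)$ by L'H\^opital before you know that $\lim X$ exists. So, as the paper does, first establish that $X$ is eventually monotone. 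For $q\neq0$, the sign of $X''(t_n)$ uses only $X+\mathfrak C>0$, not $X\to0$. For $q=0$, the autonomous right-hand side has the sign of $X$. Only then conclude $X\to0$.
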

	
	\begin{proof}
	Let $T_0>0$ be large such that $X(t)+\mathfrak C>0$ for all $t\geq T_0$. Using \eqref{muzia} and  \eqref{dfx1}, we get 
	$$ \begin{aligned} 
	\frac{dX}{dt}=X'(t)&=\frac{X(t)+\mathfrak C}{t} -\frac{\kappa t\,r}{(1-m) (X(t)+\mathfrak C)} \,\frac{d}{dr} \left(\frac{r u'(r)}{u(r)}\right)\\
	& =\frac{X(t)+\mathfrak C}{t} -\frac{\kappa t}{(1-m) (X(t)+\mathfrak C)} \left[ 2\rho \frac{r u'(r)}{u(r)} -\left( \frac{r u'(r)}{u(r)}\right)^2
	+ u^\kappa \left|  \frac{r u'(r)}{u(r)}\right|^m
	\right]
	\end{aligned}
	$$ for every $t\geq T_0$. 
	Since $r u'(r)/u(r)=(X(t)+\mathfrak C) (1-m)/(\kappa t)$, we arrive at \eqref{dust1}. 
	
\vspace{0.2cm}	  	
(a) Let $q\neq 0$. We prove that $X'(t)\not=0$ for every $t>0$ large.  
Suppose by contradiction that there exists an  
increasing sequence $\{t_n\}_{n\geq 1}$ in $[T_0,\infty)$ such that $\lim_{n\to \infty} t_n=\infty$ and 
	 $ X'(t_n)=0$ for all $ n\geq 1$. In light of \eqref{dust1}, we obtain that
	 $$X''(t_n)=-\frac{q}{\kappa}\frac{X(t_n)+\mathfrak C}{t_n^2}< 0.$$
	Hence, all the critical points $t_n$ of $X$ will be local maxima. This is impossible. Therefore, $X'
	(t)\not=0$ for every $t>0$ large. Then, $\lim_{t\to \infty} X(t)=0$ by L'H\^opital's rule and \eqref{poll1} since 
	$$\mathfrak C= \lim_{r\to 0^+}  \frac{t}{\log \,(1/r)}=-\lim_{r\to 0^+}  r \frac{dt}{dr}=\lim_{t\to \infty} (X(t)+\mathfrak C). 
	$$
Hence, as $t\rightarrow \infty$, we find that
	\begin{equation} \label{dust11} 
	 \frac{dX}{dt}=\frac{q}{\kappa}\frac{X(t)+\mathfrak C}{t}+2\rho \left (m-1\right )\frac{X(t)}{\mathfrak C}+\frac{\rho\left (m-1\right )\left (m-2\right )}{\mathfrak C^2}X^2(t)\left (1+o(1)\right ).
\end{equation}	
	
Since $\rho \left(m-1\right)>0$ and ${\rm sgn}\, (X(t))=-{\rm sgn}\, (X'(t))$ for $t>0$ large, from \eqref{dust11}, we obtain 
a constant $C>0$ such that $t|X(t)|<C$ for every $t>0$ large, which leads to 
\begin{equation}\label{bezit11}
	\lim_{t\rightarrow \infty} t X^2(t)=0.
	\end{equation}
Hence, after multiplying \eqref{dust11} by $e^{-\frac{2\rho\left (m-1\right )}{\mathfrak C}t}$, we get
	  \begin{equation}\label{bezitt}
	  \frac{d}{dt}\left ( X(t)\,e^{-\frac{2\rho\left (m-1\right )}{\mathfrak C}t} \right )= \frac{q\, \mathfrak C}{\kappa \, t}
	   e^{-\frac{2\rho\left (m-1\right )}{\mathfrak C}t} (1+o(1))  \quad \mbox{as } t\to \infty.
	  	\end{equation}	  	
Using L'H\^opital's rule and \eqref{bezitt}, we conclude the claim of \eqref{miso11}.  
From \eqref{dfx1}, we see that 
$$X(t)=-r\frac{d}{dr}\left [u^{\frac{\kappa}{m-1}}(r)-\mathfrak C \log\,(1/r)\right ] \quad \mbox{for every } r>0\ \mbox{small}.$$
Hence, using \eqref{poll1}, \eqref{miso11} and L'H\^opital's rule,  we infer that there exists
$$\lim_{r\rightarrow 0^+}\frac{u^{\frac{\kappa}{m-1}}(r)-\mathfrak C \log\,(1/r)}{\log\,\log\,(1/r)}=\lim_{r\rightarrow 0^+}X(t)\log\,(1/r)=-\frac{q\,\mathfrak C}{2\rho\kappa \left (m-1\right )},$$
which implies that \eqref{tine} holds. 

\vspace{0.2cm}
(b) Let $q=0$. Since $\rho\left(m-1\right)>0$ and $\lim_{t\to \infty} X(t)=0$, there exists $T_1>T_0$ large such that %\eqref{dust11} gives
$$   \frac{dX}{dt}>\rho \left (m-1\right )\frac{X(t)}{\mathfrak C}\quad \mbox{for every } t\geq T_1.
$$
Then, $X\equiv 0$ on $[T_1,\infty)$. Indeed, if there exists $T\geq T_1$ such that $X(T)>0$ (resp., $X(T)<0)$, then $X'(T)>0$ (resp., $X'(T)<0)$. 
Therefore, $X(t)>X(T)$ for all $t\geq T$ (resp., $X(t)<X(T) $ for all $t\geq T$), which is a contradiction with $X(t)\rightarrow 0$ as $t\rightarrow \infty$. 

Since $q=0$ and $\kappa:=m+q-1>0$, then necessarily, $m>1$. Using \eqref{dfx1}, by direct calculations, we obtain \eqref{cec1}.
The proof of Lemma \ref{bx111} is complete. 
	\end{proof}
	
\begin{lemma} \label{gigg} Let $R>0$ be arbitrary. Then, equation \eqref{eq1} in $B_R(0)\setminus \{0\}$ has infinitely many positive radial solutions $u$ satisfying \eqref{yoo1}. 
\end{lemma}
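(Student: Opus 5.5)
We construct the solutions by reversing the change of variables in \eqref{dfx1}, in the same way as in Lemmas~\ref{simp2} and \ref{soad3}. The first step is to get a solution of the first order ODE \eqref{dust1} on some $[T_0,\infty)$ with $\lim_{t\to\infty}X(t)=0$.

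Linearizing \eqref{dust1} at $X=0$ gives
$$X'=\frac{2\rho(m-1)}{\mathfrak C}X+\frac{q}{\kappa}\,\frac{X+\mathfrak C}{t}+O(X^2).$$
Since $\rho(m-1)>0$, the linear part is repelling as $t\to\infty$, and the forcing term $q\mathfrak C/(\kappa t)$ tends to zero.

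If $q=0$, we simply take $X\equiv0$. This gives the explicit family \eqref{cec1}, $u(r)=(2\rho)^{1/(m-1)}\log(1/r)+C$ with $C$ arbitrary. All of these are positive on $(0,R)$ once $C$ is large enough, so they are infinitely many and all satisfy \eqref{yoo1}.

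If $q\neq0$, we obtain the decaying solution as a fixed point of the integral formulation
$$X(t)=-\int_t^\infty e^{-a(s-t)}\Big[\frac{q}{\kappa}\frac{X(s)+\mathfrak C}{s}+N(X(s))\Big]ds,\qquad a=\frac{2\rho(m-1)}{\mathfrak C}>0,$$
where $N(X)=O(X^2)$. We solve it by a contraction argument in the space of functions on $[T_0,\infty)$ with $\sup t|X(t)|\le K$, taking $T_0$ large. Alternatively, as in the proof of Lemma~\ref{easi}, we can apply the stable manifold theorem to the autonomous system in $(X,1/t)$; the origin is then a hyperbolic saddle with one unstable direction. This step is the main technical point, but the forcing is $O(1/t)$ and integrable against $e^{-a(s-t)}$, so the contraction closes routinely.

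Given $X$, fix $r_0\in(0,R]$ and define
$$r(t)=r_0\exp\Big(-\int_{T_0}^t\frac{ds}{X(s)+\mathfrak C}\Big),$$
which solves $dr/dt=-r/(X+\mathfrak C)$ from \eqref{dfx1}. This map is decreasing with $r\to0^+$ as $t\to\infty$. Then set $u(r)=t^{(m-1)/\kappa}$, which is positive.

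Differentiating, we get $ru'/u=(1-m)(X+\mathfrak C)/(\kappa t)$. Substituting into \eqref{muzia} with $\lambda=\Theta=0$, the ODE \eqref{dust1} is exactly equivalent to \eqref{ez}, so $u$ solves \eqref{eq1} in $B_{r_0}(0)\setminus\{0\}$. Moreover $X\to0$ gives $t/\log(1/r)\to\mathfrak C$ by L'H\^opital's rule, hence $u\sim Y_0$.

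To reach an arbitrary radius $R$, take $r_0=R$. For multiplicity, note that $\Theta=0$, so the scaling \eqref{scale} reduces to $T_j[u](r)=u(jr)$ with $j\in(0,1)$. These are again solutions on $B_{R/j}\supset B_R$ and satisfy \eqref{yoo1}, because $\log(1/(jr))\sim\log(1/r)$. They are distinct, since $u$ is strictly monotone near zero (${\rm sgn}\,u'={\rm sgn}(1-m)$). Equivalently, we can replace $T_0$ by any $T_1>T_0$, exactly as in Lemma~\ref{simp2}.
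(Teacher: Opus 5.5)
Your proof is correct and follows the same route as the paper. You invert the substitution \eqref{dfx1} using a solution of \eqref{dust1} tending to $0$, treat $q=0$ through the explicit family \eqref{cec1}, and obtain infinitely many solutions from the scaling $T_j[u](r)=u(jr)$. Your weighted contraction argument for the decaying solution of \eqref{dust1} usefully fills in a step the paper only asserts. Drop the alternative claim that $(X,1/t)$ gives a hyperbolic saddle: $\frac{d}{dt}(1/t)=-(1/t)^2$ has zero linearization at the origin, so that route would need a center-manifold argument instead.
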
	
	
\begin{proof}	
If $q=0$, then the existence claim follows from Lemma~\ref{bx111} by taking 
$$C=(2\rho)^{1/(m-1)} \log R+C_0, $$ where $C_0\geq 0$ is any non-negative constant. 

We next assume that $q\not=0$.  
Let $T_0>0$  be large enough such that \eqref{dust1}  has a solution $X(t)$ satisfying $\lim_{t\to \infty} X(t)=0$. Then, for $r_0>0$ arbitrary, we consider the differential equation in \eqref{dfx1} for $t\geq T_0$, subject to $r(T_0)=r_0$.
This gives the unique solution 
$$ r(t)= r_0 \exp\left(- \int_{T_0}^t \frac{ds}{\mathfrak C+ X(s)}\right)\quad \mbox{for every } t\geq T_0. $$
We get that $t/\log \,(1/r)\to \mathfrak C$ as $r\to 0^+$. In light of Lemma~\ref{bx111}, we define $u(r)$ by
$$ u(r)=t^{\frac{m-1}{\kappa}} \quad \mbox{for every } r\in (0,r_0].
$$ Since $X$ solves \eqref{dust1}, we find that $u$ is a positive radial solution of \eqref{eq1} on $B_{r_0}(0)\setminus \{0\}$ such that \eqref{yoo1} holds. 
Then, for any $j\in (0,1)$, by applying the transformation $T_j$ 
in \eqref{scale} to the solution $u$ constructed above, we get another positive radial solution $T_j[u]$ of \eqref{eq1} in 
$B_{r_0/j}(0)\setminus \{0\}$ satisfying \eqref{yoo1}. 
Since $r_0>0$ is arbitrary, the proof of Lemma~\ref{gigg} is complete. 
\end{proof}

%%%%
\section{The asymptotic profile $Z_0$ for $\lambda=\Theta=\rho=0$, $m\in (0,2)$ (Proof of Theorem \ref{zzz0})}

Let \eqref{cond1} hold, $\Theta=\lambda=\rho=0$ and $m\in (0,2)$. Unless otherwise stated, $u$ is a positive radial solution of \eqref{eq1} in $B_R(0)\setminus \{0\}$ for $R>0$ such that \eqref{zoo} holds, that is 
\begin{equation}\label{zoo1}
u(r)\sim \left (\frac{(q+1)(2-m)^{1-m}}{\kappa^{2-m}}\right )^{\frac{1}{\kappa}}\left (\log\,(1/r)\right )^{\frac{m-2}{\kappa}}
:=Z_0(r)\quad \mbox{as } r\to 0^+. \end{equation}
From \eqref{eq1}, we infer that 
\begin{equation} \label{sbn} 
\left(ru'(r) \right)'=r^{\theta+1} u^q(r)|u'(r)|^m\geq 0\quad \mbox{for all }r\in (0,R).\end{equation}
Hence, we have $\lim_{r\to 0^+} ru'(r)=0$ in view of \eqref{zoo1} and L'H\^ opital's rule.
Since  $\lim_{r\to 0^+} u(r)=0$, it follows that $u'>0$ on $(0,R)$. For every $r\in (0,R)$, we define
\begin{equation}
\mathfrak{T}_m(r)=r^{2-m}(u'(r))^{2-m}-\frac{2-m}{q+1}\,
u^{q+1}(r).
\end{equation}
Then, $\mathfrak T_m(r)$ is differentiable on $(0,R)$ and  
$\mathfrak T_m'(r)=0$ for all $r\in (0,R)$. Hence,  
\begin{equation} 
\mathfrak T_m(r)= r^{2-m}(u'(r))^{2-m}-\frac{2-m}{q+1}\,u^{q+1}(r)=0\quad \mbox{for all } r\in (0,R). 
\end{equation}
 Thus, there exists a constant $\eta\geq 0$ such that for every $r\in (0,R)$, 
$$ u(r)=\left[ \frac{(2-m)^{1-m}(q+1)}{\kappa^{2-m}}\right]^\frac{1}{\kappa}
\left[ 
\log \left(\frac{R}{r}\right) + \eta
\right]^{-\frac{2-m}{\kappa}}. 
$$

%%%%

 \vspace{0.1cm}
{\em Acknowledgements.} The research of both authors was supported by the Australian Research Council (ARC) under grants DP190102948 and 
DP220101816.

\end{document}